\newtheorem{dummy}{}[section]
\newtheorem{theorem}[dummy]{Theorem}
\newtheorem{proposition}[dummy]{Proposition}
\newtheorem{lemma}[dummy]{Lemma}
\newtheorem{corollary}[dummy]{Corollary}
\theoremstyle{definition}
\newtheorem{definition}[dummy]{Definition}
\newtheorem{remark}[dummy]{Remark}
\newtheorem{condition}[dummy]{Condition}
\newcommand{\Z}{\ensuremath{\mathbb{Z}}}
\newcommand{\Q}{\ensuremath{\mathbb{Q}}}
\newcommand{\C}{\ensuremath{\mathbb{C}}}
\newcommand{\vir}{\mathrm{vir}}
\newcommand{\ovir}{\ca{O}^{\vir}}
\renewcommand{\tilde}{\widetilde}
\newenvironment{Comment}[2]{\noindent \color{#1}{\texttt #2: }}{\par \noindent}
\newcommand{\Ming}[1]{\begin{Comment}{red}{Ming}#1 \end{Comment}}
\newcommand{\Yang}[1]{\begin{Comment}{olive}{Yang}#1 \end{Comment}}
\newcommand{\Mingg}[1]{\begin{Comment}{blue}{Ming}#1 \end{Comment}}
\newcommand{\git}{/ \! \!/}
\newcommand{\ChYang}[1]{{\color{olive} #1}}
\newcommand{\mb}{\mathbf}
\newcommand{\bb}{\mathbb}
\newcommand{\ca}{\mathcal}
\newcommand{\fr}{\mathfrak}
\newcommand{\vb}{\vec{\beta}}
\newcommand{\ub}{\underline{\beta}}
\newcommand{\fbi}{f_{\vec{\beta}_{(i)}}}
\newcommand{\ev}{\underline{\mathrm{ev}}}
\title{$K$-theoretic quasimap wall-crossing}
\author[M.~Zhang]{Ming Zhang}
\address{Department of Mathematics, the University of British Columbia}
\email{zhangming@math.ubc.ca}
\author[Y.~Zhou]{Yang Zhou}
\thanks{The second author is supported by the Simons Collaboration Grant for Mathematicians and the Center
  of Mathematical Sciences and Applications, Harvard University.}
\address{Center of Mathematical Sciences and Applications, Harvard University}
\email{yangzhou@cmsa.fas.harvard.edu}
\begin{document}

%%% Version 1%%%%
\maketitle
\begin{abstract}
  In this paper, we prove a $K$-theoretic wall-crossing formula for
  $\epsilon$-stable quasimaps for all GIT targets in all genera. It recovers the
  genus-0 $K$-theoretic toric mirror theorem by Givental--Tonita
  \cite{Givental-Tonita} and Givental \cite{givental14, givental15}, and the
  genus-0 mirror theorem for quantum $K$-theory with level structure by
  Ruan-Zhang
  \cite{RZ1}. The proofs are based on $K$-theoretic virtual localization on the
  master space introduced by the second author in \cite{Zhou2}.
\end{abstract}

\tableofcontents
\addtocontents{toc}{\protect \setcounter{tocdepth}{1}}

\section{Introduction}
\subsection{Overview}
Quantum $K$-theory was introduced by Givental \cite{WDVV} and Y.P. Lee
\cite{Lee} as the $K$-theoretic version of Gromov-Witten theory. Let $X$ be a
smooth Deligne--Mumford stack with projective coarse moduli over $\bb{C}$. The
classical Gromov-Witten invariants of $X$ are defined as integrals of cohomology
classes over the moduli space $\overline{M}_{g, n}(X, d)$ of stable maps. In
quantum $K$-theory, the basic objects are vector bundles and coherent sheaves.
Quantum $K$-invariants of $X$ are defined as holomorphic Euler characteristics
of natural $K$-theory classes over $\overline{M}_{g, n}(X, d)$. Recently there
has been increased interest in studying quantum $K$-invariants due to their
connections to 3d gauge theories \cite{Jockers-Mayr1, Jockers-Mayr2,
  Jockers-Mayr3, Ueda-Yoshida} and representation theory \cite{Okounkov,
  Aganagic-Okounkov-1, Aganagic-Okounkov-2, PSZA, KPSZ, RZ2}. For example, the
level structure defined in \cite{RZ1} is related to Chern-Simon levels in 3d
$\ca{N} = 2$ gauge theory (c.f. \cite{Jockers-Mayr1, Jockers-Mayr3,
  Ueda-Yoshida}) and the space of conformal blocks (c.f. \cite{RZ2}).

As shown in \cite{WDVV, Lee}, the WDVV equation and most of Kontsevich-Manin
axioms hold in quantum $K$-theory. In this sense, quantum $K$-theory has similar
structures to Gromov-Witten theory. Moreover, it is shown in
\cite{Givental-Tonita}, \cite{Tonita1} and \cite{givental19} that quantum
$K$-invariants and their permutation-equivariant versions are determined by
cohomological Gromov-Witten invariants of the same manifold or orbifold.
However, the precise relationship between $K$-theoretic and cohomological
invariants is sophisticated. In general, Gromov-Witten invariants are rational
numbers, while their $K$-theoretic counterparts are always integers. To relate
them, one needs to apply the (virtual) Kawasaki-Riemann-Roch formula
\cite{Tonita3} to express holomorphic Euler characteristics of coherent sheaves
on $\overline{M}_{g, n}(X, d)$ in terms of intersection numbers over the
corresponding \emph{inertia stack} $I \, \overline{M}_{g, n}(X, d)$. The
complexity comes from the combinatorics of the strata of the inertia stack.

Unlike cohomological
Gromov-Witten theory, the computations in quantum $K$-theory have been rather
rare. One important way to compute genus-0 quantum $K$-invariants is by using
$K$-theoretic mirror theorems, see e.g., \cite{Tonita2}, \cite{Jockers-Mayr2},
and \cite[Section 6.3]{Jockers-Mayr1}. To established the general mirror
theorems in the $K$-theory setting, Givental introduced and studied a variant of
quantum $K$-invariants in a series of papers \cite{givental11, givental12,
  givental13, givental14, givental15, givental16, givental17, givental18,
  givental19, givental21, givental22}. These invariants are called
\emph{permutation-equivariant} quantum $K$-invariants. In the toric case, the
mirror theorem \cite{Givental-Tonita, givental14, givental15} states that up to
a change of variable (mirror map), a specific generating series ($J$-function)
of genus-0 permutation-equivariant quantum $K$-invariants with descendants can
be identified with an explicit $q$-hypergeometric series ($I$-function). Their
proofs are based on the so-called adelelic characterization of the range of the
$J$-function. This strategy has been generalized to quantum $K$-theory with
level structure in \cite{RZ1}.

In cohomological Gromov-Witten theory, the most general mirror theorem is
obtained by using the wall-crossing strategy. Note that the moduli space
$\overline{M}_{g, n}(X, d)$ of stable maps is a compactification of the space of
algebraic maps from smooth curves to $X$. In many cases, there are other natural
and simpler compactifications. For a large class of GIT quotients of affine
varieties, a family of compactifications has been constructed in \cite{kim1,
  Kim4}, unifying and generalizing many previous constructions \cite{mustata,
  kim5, MOP, toda}. These new theories are called $\epsilon$-stable quasimap
theories, where $\epsilon \in \bb{Q}_{>0}$ is the stability parameter. Examples
of those GIT targets include complete intersections in toric Deligne--Mumford
stacks, flag varieties of classical types, zero loci of
sections of homogeneous bundles, and Nakajima quiver varieties.

The space of stability parameters have wall-and-chamber structure. Once the
degree of quasimaps is fixed, there are only finitely many walls. There are two
extreme chambers corresponding to $\epsilon$ being sufficiently large or
sufficiently close to 0. We denote them by $\epsilon = \infty$ and $\epsilon =
0 + $,
respectively. For $\epsilon = \infty$, the notion of $\epsilon$-stable
quasimaps
coincides with that of stable maps and therefore one obtains the
Gromov–Witten
theory. Roughly speaking, as $\epsilon$ decreases, domain curves of
$\epsilon$-stable quasimaps have fewer and
  fewer rational tails. Hence the moduli
space $Q^\epsilon_{g, n}(X, \beta)$ of $\epsilon$-stable quasimaps becomes
simpler
as $\epsilon$ approaches 0. Similar to $\overline{M}_{g, n}(X, d)$, all
$\epsilon$-stable quasimap spaces have canonical perfect obstruction theories
and hence are equipped with virtual fundamental cycles and virtual structure
sheaves. This allows us to define cohomological and $K$-theoretic quasimap
invariants, which depend on the stability parameter $\epsilon$. In some cases,
the $(\epsilon = 0 +) $-stable quasimap invariants are easier to compute. A
\emph{wall-crossing} formula is a relation between invariants from different
stability chambers. It enables us to recover Gromov-Witten invariants from
$(\epsilon = 0 +) $-stable quasimap invariants. This idea has been applied
successfully in cohomological Gromov-Witten theory, see, for example,
\cite{Lho-Pandharipande, Kim-Lho, Guo-Janda-Ruan}. In particular, the genus-0
mirror theorem can be deduced from such wall-crossing formulas.

The cohomological quasimap wall-crossing formulas have been proved in various
generalities in \cite{kim2, Kim4, Kim6, Clader-Janda-Ruan} and the most general
version is proved by the second author in \cite{Zhou2}.
The goal of this paper is to generalize the analysis in \cite{Zhou2} to the
$K$-theory setting and establish wall-crossing formulas in $K$-theoretic
quasimap theory for all GIT targets in all genera. These formulas also work in
the presence of twisting \cite{Tonita2, givental22} and level structure
\cite{RZ1}. In genus zero, our formulas implies the $K$-theoretic toric mirror
theorems proved in \cite{Givental-Tonita, givental14, givental15, RZ1} and the
wall-crossing formula in \cite{Tseng-You}

\subsection{Wall-crossing formulas}
Let $W$ be an affine variety with a right action of a reductive group $G$. We
assume that $W$ has at worst local complete intersection singularities. Let
$\theta$ be a character of $G$ such that the $\theta$-stable locus
$W^{s}(\theta)$ is smooth, nonempty, and coincides with the
$\theta$-semistable locus $W^{ss}(\theta)$. In this paper, we consider the
``stacky'' GIT quotient
\[
  X = [W^{ss}(\theta)/G].
\]
Let $\underline{X}$ denote the coarse moduli of $X$ and let $W \git_{0} G =
\operatorname{Spec} H^0(W, \mathcal O_W)^G$ denote the affine quotient. The GIT set-up gives (see \cite[\textsection 1.2]{Zhou2} and
\cite[\textsection{3.1}]{kim2} for details) a morphism
\begin{equation}\label{eq:equation-GIT-embedding}
[W/G]\rightarrow[\C^{N+1}/\C^*]
\end{equation}
for some $N\in\Z_{>0}$, inducing a closed embedding
$\underline{X}\rightarrow \bb{P}^N\times W\git_0 G$.
Hence $X$ is a smooth proper Deligne--Mumford stack over the affine
quotient
$W \git_{0} G$.

For $\epsilon \in \bb{Q}_{>0}$, let $Q^\epsilon_{g, n}(X, \beta)$ denote the
moduli
space of $\epsilon$-stable quasimaps of class $\beta$.
In this paper, the source
curve of a quasimap is a twisted curve with balanced nodes and
\emph{trivialized} gerbe markings. This agrees with the convention in~\cite{Zhou2} and slightly
from that in~\cite{Kim4}. See~\cite[\textsection{6.1.3}]{AGV} for the
comparison. According to
\cite{Kim4},
the moduli space $Q^\epsilon_{g, n}(X, \beta)$ is a Deligne-Mumford stack
proper
over the affine quotient $W \git_{0} G$.

Let $\Lambda$ denote the ground $\lambda$-algebra which contains the Novikov
ring $\bb{Q}[[Q]]$ (see Section \ref{K-theoretic-quasimap-invariants} for the
precise definition). Let $
IX = \coprod_r I_rX$
be the cyclotomic inertia stack of $X$ and let $\bar{I}X=\coprod_r \bar{I}_rX$
be the \emph{rigidified} cyclotomic inertia stack. There is a
natural projection 
\[
\varpi:IX\rightarrow \bar{I}X,
\]
which exhibits $IX$ as the universal gerbe over $\bar{I}X$ (cf.~\cite{AGV}). Let
  $\mathrm{ev}_i:Q^\epsilon_{g, n}(X, \beta) \rightarrow  IX$ be the evaluation
  map
  at the $i$-th marked point. We consider
  $\ev_i=\varpi\circ\mathrm{ev}_i:Q^\epsilon_{g, n}(X, \beta) \rightarrow
  \bar{I}X$ and refer to it as the \emph{rigidified evaluation map} at the $i$-th
  marked point.
  We denote by $K(\bar{I} X)$ the
Grothendieck group of topological complex vector bundles on $\bar{I} X$ with
rational coefficients. Let $q$ be a formal variable and let
$\mb{t}(q) = \sum_{j \in \bb{Z}} \mb{t}_j q^j$ be a general Laurent polynomial
in $q$
with coefficients $t_j \in K(\bar{I}X)\otimes \Lambda$.
%\marginpar{\footnotesize{
%    \Yang{
 %     Which convention are we using: the marking are trivialized gerbes or
%      just gerbes? In \cite{Kim4}, they use the latter and thus the evaluation
 %     maps land in the rigidified (cyclotomic) inertia stacks.
 %   }
 %   \Ming{Use the same convention as your paper. The markings are trivialized gerbes.}
%  }}
Let $L_i$ denote the $i$-th cotangent line bundle
at the $i$-th marked point of \emph{coarse} curves. When $W \git_0G$ is a
point,
the GIT quotient $X$ is proper. Consider the $S_n$-action on the quasimap moduli space
$Q^\epsilon_{g, n}(X, \beta)$ defined by permuting the $n$ markings. Then there is a natural (virtual) $S_n$-module
\[
  \big[\mb{t}(L), \dots, \mb{t}(L) \big]^\epsilon_{g, n, \beta}: =
  \sum_{m \geq 0}(-1)^mH^m \big(Q^\epsilon_{g, n}(X, \beta),
  \ca{O}_{Q^\epsilon_{g, n}(X, \beta)}^{\text{vir}} \cdot
 \prod_{i = 1}^n \big( \sum_{j} \ev_i^*( \mb{t}_j)L_i^j \big)
  \big),
\]
where $\ca{O}_{Q^\epsilon_{g, n}(X, \beta)}^{\text{vir}}$ is the virtual
structure
sheaf and the operation $\cdot$ denotes the tensor product.
For simplicity, we
extend the definition of $\ev_i^*$ by linearity and denote
$
\ev_i^*(\mb{t}(L_i))
:=  \sum_{j} \ev_i^*(
\mb{t}_j)L_i^j$.

The \emph{permutation-equivariant}
$K$-theoretic $\epsilon$-stable quasimap invariant is defined as the complex dimension
of its $S_n$-invariant submodule. Equivalently, we define
\[
  \big \langle
  \mb{t}(L), \dots, \mb{t}(L) \big \rangle_{g, n, \beta}^{S_n, \epsilon}: =
  p_*
  \Big(  \ca{O}_{Q^\epsilon_{g, n}(X, \beta)}^{ \text{vir}} \cdot
  \prod_{i = 1}^n
\ev_i^*(\mb{t}(L_i))
  \Big),
\]
where $p_*$ is the proper pushforward along the projection
\[
  p: \big[Q^ \epsilon_{g, n}(X, \beta)/S_n \big] \rightarrow \text{Spec} \,
  \bb{C}
\]
in $K$-theory. If $W \git G$ is only quasiprojective but has a torus-action with good
  properties, we can define
  \emph{torus-equivariant} $K$-theoretic quasimap invariants via the
  $K$-theoretic
  localization formula.
%\marginpar{
 % \footnotesize{
  %  \Yang{
   %   This is imprecise.
    %  We need the torus to act on $[W/G]$.
   % }
   % \Ming{Yes, I think it is okay to be vague in the introduction. I will
    %  mention the definitions in the non-compact case is analogous to those in
     % Section 6.3 of \cite{kim1} }
 % }
%}
For simplicity of notation, we assume that $W \git_0G$
is a point from now on. The theorems hold true in general and the proofs are
verbatim.

In many applications, it is necessary to modify the virtual structure sheaf
$\ca{O}_{Q^\epsilon_{g, n}(X, \beta)}^{\text{vir}}$ by tensoring it with a
  determinant line bundle
%\marginpar{\footnotesize{
 %   \Yang{
  %    What is ``a determinant line bundle?''
   % }
 % }}
or a $K$-theory class coming from the universal family
over $Q^\epsilon_{g, n}(X, \beta)$. We discuss all such twistings in Section
\ref{twisted-theory}. Using the modified virtual structure sheaves, we can
define twisted quasimap invariants with level structure. In this introduction,
we will use the same notation for the corresponding invariants and generating
series in the twisted theory.

The wall-crossing formula involves an important generating series of
$K$-theoretic residues
%\marginpar{\footnotesize{
 %   \Yang{
  %    What is ``$K$-theoretic residue''?
   % }}}
over $g = 0$, ($\epsilon = 0 + $)-stable quasimap graph space.
We denote it by $I(Q, q)$ and call it the small $I$-function. We refer the
reader
to Definition \ref{K-theoretic-I-function} for the precise definition of the
$K$-theoretic small $I$-function. These functions are rational functions in $q$
modulo any power of Novikov’s variables, and explicitly computable in general
(see Remark \ref{remark-small-I-function}). Given a ($K$-group valued) rational
function $f(q)$, we denote by $[f(q)]_ + $ the Laurent polynomial part in the
partial fraction decomposition of $f(q)$. For a curve class $\beta$,
we
define the
$K$-group valued Laurent polynomial
\[
  \mu_{\beta}(q) \in K(\bar{I}X) \otimes \Lambda[q, q^{-1}]
\]
to be the coefficient of $Q^\beta$ in $[(1-q)I(Q, q)-(1-q)]_ + $.

For a fixed curve class $\beta$, the space $\mathbb Q_{> 0} \cup \{0 + , \infty
\}$
is divided into stability chambers by finitely many walls
  $\{1/d \mid d \in \mathbb Q_{>0}, d \leq \text{deg}(\beta) \}$.
%\marginpar{\footnotesize{
 %   \Yang{
  %    The walls are in general not integers unless you raise $\theta$ to some
   %   multiple of itself.
    %}
 % }}
Let $\epsilon_0 = 1/d_0$ be a wall,
where $d_0 \leq \text{deg}(\beta)$. Let
$\epsilon_-< \epsilon_ + $ be the stability conditions in the two
adjacent chambers separated by $\epsilon_0$. We also fix the genus $g$ and the
number of markings $n$ such that $2g -2 + n + \epsilon_0 \,
\text{deg}(\beta)>0$. To compare the two virtual structure sheaves $\ca{O}_{Q^{\epsilon_+}_{g,
    n}(X, \beta)}^{ \text{vir}}$ and $\ca{O}_{Q^{\epsilon_-}_{g, n}(X, \beta)}^{
  \text{vir}}$, we recall several natural morphisms between various moduli
spaces. First, we have
\[
  \begin{tikzcd}
Q^{\epsilon_+}_{g,n}(X, \beta)\arrow[d,"\iota"]
&
Q^{\epsilon_-}_{g,n}(X, \beta))\arrow[d,"\iota"]
\\
Q^{\epsilon_+}_{g,n}(\bb{P}^N, d)
\arrow[r,"c"]
&
Q^{\epsilon_-}_{g,n}(\bb{P}^N, d)
\end{tikzcd}.
\]
Here $d:=\mathrm{deg}(\beta)$, the $\iota$'s are defined by composing the quasimap
with~\eqref{eq:equation-GIT-embedding} and forgetting the orbifold structure of
the domain curves, and $c$ is defined by contracting all degree-$d_0$ rational
tails to length-$d_0$ base points (\cite[\textsection{3.2.2}]{kim2}). It is
obvious that the morphisms $\iota$ and $c$ commute with the $S_n$-actions on the
quasimap moduli spaces. Consider the $S_k$-action on
$Q^{\epsilon_-}_{g,n+k}(\bb{P}^N, d-kd_0)$ defined by permuting the last $k$
markings. Let
\begin{equation*}
b_k:[Q^{\epsilon_-}_{g,n+k}(\bb{P}^N, d-kd_0)/S_k]\rightarrow Q^{\epsilon_-}_{g,n}(\bb{P}^N, d)
\end{equation*}
be the map that replaces the last $k$ markings by base points of length $d_0$
(\cite[\textsection{3.2.3}]{kim2}). For any $\epsilon$, let
\[
  \ev:Q^{\epsilon}_{g,n+k}(X, \beta)\rightarrow (\bar{I}X)^{n}
\]
be the product of the rigidified evaluation
maps at the first $n$ markings.

Let $K_\circ([Q^{\epsilon_-}_{g,n}(\bb{P}^N, d)/S_n])_{\Q}$ denote the Grothendieck
group of coherence sheaves on $[Q^{\epsilon_-}_{g,n}(\bb{P}^N, d)/S_n]$ with $\Q$
coefficients. It can be identified with the rational Grothendieck group
$K_\circ^{S_n}(Q^{\epsilon_-}_{g,n}(\bb{P}^N, d))_{\Q}$ of
$S_n$-equivariant coherent sheaves on $Q^{\epsilon_-}_{g,n}(\bb{P}^N, d)$.

\begin{theorem}[Theorem \ref{all-genus-single-wall-crossing}]
  \label{all-genus-ovir-single-wall-crossing-intro}
  Assuming that $2g-2 + n + \epsilon_0 \, \emph{deg}(\beta)>0$, we have
  \begin{equation}\label{eq:equation-all-genus-ovir-single-wall-crossing-intro}
 \begin{aligned}
   &(\iota\times \ev)_*\ca{O}^{\emph{vir}}_{Q^{\epsilon_-}_{g,n}(X, \beta)}
   -((c\circ \iota)\times\ev )_*\ca{O}^{\emph{vir}}_{Q^{\epsilon_+}_{g,n}(X, \beta)}\\
   =&\sum_{k\geq 1}\bigg(\sum_{\vec{\beta}}((b_{k}\circ c\circ \iota )\times \ev)_*
   \bigg(
     \prod_{a=1}^k\ev_{n+a}^*\,\mu_{\beta_a}(L_{n+a})
     \cdot
     \ca{O}^{\emph{vir}}_{Q^{\epsilon_+}_{g,n+k}(X, \beta')}
   \bigg)\bigg)
 \end{aligned}
  \end{equation}
 in $K_0\big([(Q^{\epsilon_-}_{g,n}(\bb{P}^N, d)\times (\bar{I}X)^n\big)/S_n]\big)_{\Q}$, where $\vec{\beta}$ runs
 through all ordered tuples
  \[
    \vec \beta = (\beta^\prime, \beta_1, \ldots, \beta_k)
  \]
  such that $\beta = \beta^\prime + \beta_1 + \cdots + \beta_k$ and
  $\deg(\beta_i) = d_0$ for all $i = 1 , \ldots, k$. The same formula also holds
  for the
  twisted virtual structure sheaves with level structure define in Section \ref{twisted-theory}.

\end{theorem}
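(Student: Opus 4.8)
The plan is to prove the wall-crossing formula by applying $K$-theoretic virtual localization on a master space, following the strategy of \cite{Zhou2} but upgrading every cohomological statement to $K$-theory. I would begin by constructing the \emph{master space} $M\!S^{\epsilon_0}_{g,n}(X,\beta)$: a Deligne--Mumford stack, proper over $W\sslash_0 G$, carrying a $\mathbb C^*$-action whose fixed loci interpolate between the $\epsilon_+$- and $\epsilon_-$-moduli. Concretely, one puts a $\mathbb P^1$-worth of stability parameters on the degree-$d_0$ rational tails, so that over the two poles of this $\mathbb P^1$ one recovers $Q^{\epsilon_+}$ and $Q^{\epsilon_-}$ (pushed forward along the contraction $c$), while the intermediate fixed loci are fibered products of $Q^{\epsilon_+}_{g,n+k}(X,\beta')$ with moduli of the $k$ contracted tails. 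The master space must be equipped with a $\mathbb C^*$-equivariant perfect obstruction theory, hence a $\mathbb C^*$-equivariant virtual structure sheaf $\mathcal O^{\mathrm{vir}}_{M\!S}$, whose restriction to each fixed locus is the product of the virtual structure sheaf of that stratum with the $K$-theoretic Euler class contributions from the moving part.

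The second step is the localization argument itself. The $K$-theoretic virtual localization theorem (in the form of Thomason, and its virtual refinement) gives, in the localized equivariant $K$-group,
\[
  \mathcal O^{\mathrm{vir}}_{M\!S} \;=\; \sum_{F}\; \frac{\iota_{F*}\,\mathcal O^{\mathrm{vir}}_{F}}{\lambda_{-1}\big(N^{\mathrm{vir},\vee}_{F}\big)}
\]
summed over the $\mathbb C^*$-fixed loci $F$. Pushing this identity forward to $Q^{\epsilon_-}_{g,n}(\mathbb P^N,d)\times(\bar I X)^n$ (via $\iota$ and the rigidified evaluation maps) and then taking a suitable residue/contour-integral operation in the equivariant parameter $z$ of $\mathbb C^*$, the left-hand side — being pulled back from a non-equivariant class — contributes only a "regular" piece, and the sum of residues over the fixed loci must vanish. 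Isolating the two extreme fixed loci gives the difference $(\iota\times\ev)_*\mathcal O^{\mathrm{vir}}_{Q^{\epsilon_-}} - ((c\circ\iota)\times\ev)_*\mathcal O^{\mathrm{vir}}_{Q^{\epsilon_+}}$ on one side, and the intermediate loci assemble — after summing over the number $k$ of tails and over the distributions $\vec\beta=(\beta',\beta_1,\dots,\beta_k)$ of degree — into the right-hand side. The crucial point is that the $K$-theoretic residue of the tail contributions at the intermediate fixed points is exactly what Definition \ref{K-theoretic-I-function} packages into the small $I$-function, and taking $[(1-q)I-(1-q)]_+$ extracts precisely the Laurent polynomial $\mu_{\beta_a}(q)$, with $q$ specializing to the cotangent line $L_{n+a}$ at the node where each tail is attached.

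I expect the main obstacle to be the analysis of the intermediate fixed loci and the identification of their localization contributions with $\mu_{\beta_a}(L_{n+a})$. Several technical issues converge here: (i) the fixed loci are themselves quotients by symmetry groups $S_k$ (permuting identical tails) and by the automorphisms of each tail, and one must carefully track the $S_n\times S_k$-equivariant structure so that the final identity lands in $K_0([\,(Q^{\epsilon_-}_{g,n}(\mathbb P^N,d)\times(\bar I X)^n)/S_n\,])_{\mathbb Q}$; (ii) the virtual normal bundle has a subtle contribution from smoothing the node attaching a tail to the main component, which is where the cotangent line $L_{n+a}$ and the geometric series $1/(1-q)$ enter, and matching this with the "$(1-q)$" normalization in the definition of $\mu_\beta$ requires care; (iii) one must show the various $K$-groups and pushforwards are well-defined despite $X$ being only a smooth DM stack proper over an affine base, using the embedding \eqref{eq:equation-GIT-embedding} into $\mathbb P^N\times W\sslash_0 G$ to reduce finiteness and properness statements to the projective case; and (iv) in the orbifold setting the rigidified evaluation maps and the cyclotomic inertia stack must be handled so that gerbe structures on the markings are correctly accounted for. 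The twisted and level-structure cases then follow by noting that all twisting classes come from the universal curve and are naturally $\mathbb C^*$-equivariant on the master space, so the same localization computation applies verbatim with $\mathcal O^{\mathrm{vir}}$ replaced by its twist, which is why the last sentence of the theorem holds.
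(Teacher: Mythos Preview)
Your overall strategy is correct and matches the paper's: construct a master space with a $\mathbb{C}^*$-action, apply $K$-theoretic virtual localization, push forward to $Q^{\epsilon_-}_{g,n}(\mathbb{P}^N,d)\times(\bar{I}X)^n$, and extract the wall-crossing by taking a residue that kills the non-equivariant class. You also correctly flag the permutation-equivariant bookkeeping and the twisted case as following formally.

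However, your description of the master space and the intermediate fixed loci glosses over the single most difficult step. The master space is not simply ``a $\mathbb{P}^1$-worth of stability parameters''; it is built from the moduli $\tilde{\mathfrak{M}}_{g,n,d}$ of curves with \emph{entangled tails}, obtained by an iterated blowup along the strata $\mathfrak{Z}_i$ of curves with $i$ degree-$d_0$ rational tails. This blowup introduces a chain of boundary divisors $\mathfrak{D}_0,\ldots,\mathfrak{D}_{m-1}$, and the virtual normal bundle of each intermediate fixed locus $F_{\underline\beta}$ contains a factor $(1-q^{-1}\mathcal{O}(\sum_{i}\mathfrak{D}_i'))^{-1}$ coming from the calibration bundle. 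The raw localization contribution is therefore \emph{not} of the form $\prod_a\mu_{\beta_a}(L_{n+a})$: one obtains instead (equation~\eqref{eq:contr-Q-3}) an expression with a denominator involving $\mathcal{O}(\sum\mathfrak{D}_i')$ on $\tilde{Q}^{\epsilon_+}_{g,n+k}(X,\beta')$.

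Eliminating these divisors is the heart of the proof and occupies Proposition~\ref{general-case}, Lemma~\ref{special-lemma}, Lemma~\ref{combinatorial-identity}, and the Appendix on inflated projective bundles. The argument is a recursion: restricting to $\mathfrak{D}_{r-1}'$ splits off $r$ more tails via the inflated projective bundle structure (Lemma~\ref{lem:str-D}), and pushing forward along this bundle using Lemma~\ref{lem:inflated-projective-bundle} yields a sum over further decompositions. Iterating produces a complicated sum (Proposition~\ref{general-case}) which is then matched to $\sum_k\langle\prod\mu_{\beta_i}(L)\rangle^{S_k}$ by a combinatorial identity (Lemma~\ref{combinatorial-identity}) proved via directional derivatives. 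Your sentence ``taking $[(1-q)I-(1-q)]_+$ extracts precisely $\mu_{\beta_a}(q)$'' is the desired conclusion, not a step; the paper spends roughly half of Section~\ref{section-wall-crossing} establishing it, and this combinatorics is explicitly flagged in the introduction as ``much more sophisticated than that in the cohomological setting''.
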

The above theorem implies a numerical wall-crossing formula. Before stating the result, let us introduce some notation so that the numerical wall-crossing terms are indexed
by \emph{unordered} tuples. Consider the permutation action of $S_k$ on the set of all ordered tuples $\vec \beta = (\beta^\prime, \beta_1, \ldots, \beta_k)
$ in Theorem
\ref{all-genus-ovir-single-wall-crossing-intro}:
\begin{equation} \label{eq:S_k-permutating-index}
  \sigma(\vb): = (\beta', \beta_{\sigma(1)}, \dots,
  \beta_{\sigma(k)}),\quad \sigma\in S_k.
\end{equation}
We denote the orbit of $\vec \beta$ under the permutation action by
  \[
    \underline{\beta} = (\beta^\prime,\{ \beta_1, \ldots, \beta_k\}),
  \]
  where $\{\beta_1,\dots,\beta\}$ is unordered and hence forms a multiset.
Let $S_{\vb}\subset S_k$ denote the stabilizer subgroup that fixes
$\vb$. We define the following ($S_n\times S_{\vb}$)-permutation equivariant
invariant
\begin{align*}
 & \big \langle
  \mb{t}(L), \dots,
        \mb{t}(L),
        \mu_{ \beta_1}(L) , \dots,  \mu_{ \beta_k}(L) \big \rangle_{g, n + k,
        \beta'}^{S_n \times S_{\vb},
        \epsilon_ + }\\
      : =&
  p^{\vb}_*
  \Big(  \ca{O}_{Q^{\epsilon_+}_{g, n+k}(X, \beta')}^{ \text{vir}} \cdot
  \prod_{i = 1}^n \ev_i^*({\bf t}(L_i))
  \cdot\prod_{j=1}^k \ev_{n+j}^*(\mu_{\beta_j}(L_{n+j}))
  \Big),
\end{align*}
where $p_*^{\vb}$ is the proper pushforward along the projection
\[
  p^{\vb}: \big[Q^ {\epsilon_+}_{g, n+k}(X, \beta')/S_n\times S_{\vb} \big] \rightarrow \text{Spec} \,
  \bb{C}
\]
in $K$-theory. Note that the above invariant only depends on the orbit $\ub$ of
$\vb$.

All the maps involved in Theorem \ref{all-genus-ovir-single-wall-crossing-intro}
induce isomorphisms of the relative cotangent spaces at the first $n$ markings. By tensoring both sides of~\eqref{eq:equation-all-genus-ovir-single-wall-crossing-intro} with
$\prod_{i=1}^n{\bf{t}}(L_i)$ and taking the proper
pushforward along $[(Q^{\epsilon_-}_{g,n}(\bb{P}^N, d)\times (IX)^n\big)/S_n]\rightarrow\operatorname{Spec}\C$, we
obtain the following result:
\begin{theorem}
  \label{all-genus-single-wall-crossing-intro}
  Assuming that $2g-2 + n + \epsilon_0 \, \emph{deg}(\beta)>0$, we have
  \begin{align*}
    & \big \langle \mb{t}(L), \dots, \mb{t}(L) \big \rangle_{g, n, \beta}^{S_n,
      \epsilon_-}- \big \langle \mb{t}(L), \dots, \mb{t}(L) \big
      \rangle_{g, n,
      \beta}^{S_n, \epsilon_ + } \\
    = & \sum_{k = 1}^m \sum_{ \underline{\beta}} \big \langle \mb{t}(L), \dots,
        \mb{t}(L),
        \mu_{ \beta_1}(L) , \dots,  \mu_{ \beta_k}(L) \big \rangle_{g, n + k,
        \beta'}^{S_n \times S_{\underline{\beta}},
        \epsilon_ + },
  \end{align*}
  %\marginpar{\footnotesize{
   %   \Yang{
    %    We also have a version involving
     %   $[\mathbf t(L) , \ldots, \mathbf t(L)]_{g, n, \beta}^{\epsilon}$.
      %  We get a equation of $S_n$-modules. Do you think that is important?
      %}
   % }}
  where $\underline{\beta}$ runs through all
  \[
    \underline{\beta} = (\beta^\prime,\{ \beta_1, \ldots, \beta_k\}),
  \]
 with $\{ \beta_1, \ldots, \beta_k\}$ unordered, $\beta = \beta^\prime + \beta_1 + \cdots + \beta_k$ and
  $\deg(\beta_i) = d_0$ for all $i = 1 , \ldots, k$. The same formula also holds
  for
  twisted permutation-equivariant $\epsilon$-quasimap invariants with level structure.
\end{theorem}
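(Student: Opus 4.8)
The plan is to obtain this numerical formula as a formal consequence of the sheaf-level identity \eqref{eq:equation-all-genus-ovir-single-wall-crossing-intro}: cap both sides with the descendant insertions $\prod_{i=1}^n \ev_i^*(\mathbf{t}(L_i))$, regarded as a class on $[(Q^{\epsilon_-}_{g,n}(\mathbb{P}^N,d)\times(\bar{I}X)^n)/S_n]$ built from the projections to the $(\bar{I}X)^n$-factors and from the coarse cotangent lines $L_i$ over $Q^{\epsilon_-}_{g,n}(\mathbb{P}^N,d)$, and then push forward to $\operatorname{Spec}\mathbb C$. The one geometric input is the compatibility recorded just above the statement: $\iota$, $c$, $b_k$ and all their composites induce isomorphisms of the relative cotangent spaces at the first $n$ markings (the contraction $c$ only alters the curve along the degree-$d_0$ rational tails, which carry none of the first $n$ markings, and $\iota$ merely forgets gerbe structure, leaving the coarse cotangent line untouched) and they intertwine the rigidified evaluation maps $\ev_1,\dots,\ev_n$. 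Hence the pullback of $\prod_{i=1}^n \ev_i^*(\mathbf{t}(L_i))$ along each of these maps is again $\prod_{i=1}^n \ev_i^*(\mathbf{t}(L_i))$, so by the projection formula this insertion can be moved inside every pushforward in \eqref{eq:equation-all-genus-ovir-single-wall-crossing-intro}. Pushing further to $\operatorname{Spec}\mathbb C$ and using functoriality of proper $K$-theoretic pushforward, the two terms on the left become $\langle\mathbf{t}(L),\dots,\mathbf{t}(L)\rangle^{S_n,\epsilon_-}_{g,n,\beta}$ and $\langle\mathbf{t}(L),\dots,\mathbf{t}(L)\rangle^{S_n,\epsilon_+}_{g,n,\beta}$ by their definitions, while for each $k$ the right-hand group becomes a sum over ordered tuples $\vb=(\beta',\beta_1,\dots,\beta_k)$ of holomorphic Euler characteristics over quotient stacks of $Q^{\epsilon_+}_{g,n+k}(X,\beta')$ of the class $\mathcal{E}_{\vb}:=\ovir\cdot\prod_{i=1}^n\ev_i^*(\mathbf{t}(L_i))\cdot\prod_{a=1}^k\ev_{n+a}^*(\mu_{\beta_a}(L_{n+a}))$.

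The remaining, more delicate step is to collapse the sum over ordered $\vb$ into the sum over their $S_k$-orbits $\ub=(\beta',\{\beta_1,\dots,\beta_k\})$. Fixing an orbit and writing $Z=Q^{\epsilon_+}_{g,n+k}(X,\beta')$, the group $S_k$ acts on $Z$ by relabelling the last $k$ markings; this action commutes with the $S_n$-action on the first $n$ markings, preserves $\ovir$ and $\prod_{i=1}^n\ev_i^*(\mathbf{t}(L_i))$, carries $\mathcal{E}_{\vb}$ to $\mathcal{E}_{\sigma(\vb)}$ as in \eqref{eq:S_k-permutating-index}, and has $S_{\vb}$ as the stabilizer of $\vb$; in particular $\mathcal{E}_{\vb}$ is $(S_n\times S_{\vb})$-equivariant, because $S_{\vb}$ only permutes the markings $n+a$ with equal $\beta_a$ and thereby preserves $\prod_a\ev_{n+a}^*(\mu_{\beta_a}(L_{n+a}))$. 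Since $b_k$ is defined on the $S_k$-quotient, summing over the ordered tuples in one orbit assembles, on $[Z/S_k]$, the induced class $\bigoplus_{\sigma\in S_k/S_{\vb}}\sigma^*\mathcal{E}_{\vb}=\mathrm{Ind}_{S_{\vb}}^{S_k}\mathcal{E}_{\vb}$; and proper $K$-theoretic pushforward along the finite representable morphism $[Z/(S_n\times S_{\vb})]\to[Z/(S_n\times S_k)]$ takes $\mathcal{E}_{\vb}$ to exactly this induced class and commutes with pushforward to a point. Hence the orbit $\ub$ contributes $\chi([Z/(S_n\times S_{\vb})],\mathcal{E}_{\vb})=\langle\mathbf{t}(L),\dots,\mathbf{t}(L),\mu_{\beta_1}(L),\dots,\mu_{\beta_k}(L)\rangle^{S_n\times S_{\ub},\epsilon_+}_{g,n+k,\beta'}$. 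Summing over the finitely many relevant $k$ (those with $kd_0\le\deg\beta$, i.e.\ $k\le m$) and over all orbits $\ub$ gives the asserted identity; the twisted and level-structure versions follow verbatim, since \eqref{eq:equation-all-genus-ovir-single-wall-crossing-intro} already holds for the corresponding twisted virtual structure sheaves.

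The genuine obstacle in this argument is precisely the last piece of bookkeeping — matching the $S_k$-symmetry implicit in the definition of $b_k$ against the stabilizer $S_{\vb}$ that enters the permutation-equivariant invariant, so that the ordered sum collapses with no spurious multiplicity. Everything else (the projection formula, the identification of the cotangent lines $L_i$ under $\iota$, $c$, $b_k$, and functoriality of proper $K$-theoretic pushforward) is routine, so that, given Theorem \ref{all-genus-ovir-single-wall-crossing-intro}, the proof is essentially formal.
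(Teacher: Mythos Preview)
Your proposal is correct and follows exactly the approach the paper takes: tensor both sides of the sheaf-level identity \eqref{eq:equation-all-genus-ovir-single-wall-crossing-intro} with $\prod_{i=1}^n\ev_i^*(\mathbf t(L_i))$ (using that $\iota$, $c$, $b_k$ preserve the coarse cotangent lines at the first $n$ markings) and push forward to $\operatorname{Spec}\mathbb C$. The paper states this in one sentence and leaves the orbit-collapsing implicit in its definition of $\langle\cdots\rangle^{S_n\times S_{\underline\beta},\epsilon_+}$, whereas you spell out the induction step $(\mathrm{Ind}_{S_{\vec\beta}}^{S_k}V)^{S_k}=V^{S_{\vec\beta}}$ that turns the sum over ordered $\vec\beta$ into the sum over $\underline\beta$; this is indeed the only nontrivial bookkeeping.
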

%\Yang{We have not introduced the notation
 % \[
  %  \langle \cdots \rangle^{S_n \times S_k}_{g, n + k, \beta^\prime}
  %\]
 % yet. It is important than summing over all $\beta$ makes the last
  %$k$-insertions carry a natural $S_k$-action.
  %Maybe we should at least comment on this here.
%}
%\Ming{I stated a wall-crossing formula in K-theory and defined the above
 % notation.}

By definition,~\eqref{eq:equation-all-genus-ovir-single-wall-crossing-intro} is
also an identity of $S_n$-equivariant $K_\circ$-classes on
$Q^{\epsilon_-}_{g,n}(\bb{P}^N, d)\times (\bar{I}X)^n$. If we tensor~\eqref{eq:equation-all-genus-ovir-single-wall-crossing-intro} with
$\prod_{i=1}^n{\bf{t}}(L_i)$ and take the proper pushforward along
$Q^{\epsilon_-}_{g,n}(\bb{P}^N, d)\times (\bar{I}X)^n\rightarrow \operatorname{Spec}\C$,
we obtain
\begin{theorem}
  \label{all-genus-single-wall-crossing-intro-module-version}
  Assuming that $2g-2 + n + \epsilon_0 \, \emph{deg}(\beta)>0$, we have
  \begin{align*}
    & \big[ \mb{t}(L), \dots, \mb{t}(L) \big]_{g, n, \beta}^{
      \epsilon_-}-
      \big [ \mb{t}(L), \dots, \mb{t}(L)
      \big]_{g, n,
      \beta}^{\epsilon_ + } \\
    = & \sum_{k = 1}^m \sum_{ \underline{\beta}} \big[ \mb{t}(L), \dots,
        \mb{t}(L),
        \mu_{ \beta_1}(L) , \dots,  \mu_{ \beta_k}(L) \big ]_{g, n + k,
        \beta'}^{S_{\ub},
        \epsilon_ + }
  \end{align*}
  as virtual $S_n$-modules, where $\underline{\beta}$ satisfies the same conditions as in
  Theorem~\ref{all-genus-single-wall-crossing-intro} and
  \[
\big[ \mb{t}(L), \dots,
        \mb{t}(L),
        \mu_{ \beta_1}(L) , \dots,  \mu_{ \beta_k}(L) \big ]_{g, n + k,
        \beta'}^{S_{\ub},
        \epsilon_ + }:= \tilde{p}^{\vb}_*
  \Big(  \ca{O}_{Q^{\epsilon_+}_{g, n+k}(X, \beta')}^{ \mathrm{vir}} \cdot
  \prod_{i = 1}^n \ev_i^*({\bf t}(L_i))
  \cdot\prod_{j=1}^k \ev_{n+j}^*(\mu_{\beta_j}(L_{n+j}))
  \Big)
\]
with $\tilde{p}^{\vb}_*:\big[Q^ {\epsilon_+}_{g, n+k}(X, \beta')/S_{\vb} \big] \rightarrow \operatorname{Spec}\bb{C}$. The same formula also holds
 in the twisted permutation-equivariant $\epsilon$-stable quasimap $K$-theory with level structure.
\end{theorem}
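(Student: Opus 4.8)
The plan is to deduce this identity of virtual $S_n$-modules from the $K$-theoretic identity~\eqref{eq:equation-all-genus-ovir-single-wall-crossing-intro} of Theorem~\ref{all-genus-ovir-single-wall-crossing-intro} by a single tensor-and-pushforward operation, performed $S_n$-equivariantly throughout. First I would reinterpret~\eqref{eq:equation-all-genus-ovir-single-wall-crossing-intro}, which a priori lives in $K_0([(Q^{\epsilon_-}_{g,n}(\bb{P}^N,d)\times(\bar{I}X)^n)/S_n])_{\Q}$, as an identity of $S_n$-equivariant $K_\circ$-classes on $Q^{\epsilon_-}_{g,n}(\bb{P}^N,d)\times(\bar{I}X)^n$, using the identification $K_\circ([M/S_n])_{\Q}\cong K_\circ^{S_n}(M)_{\Q}$ recalled just before Theorem~\ref{all-genus-ovir-single-wall-crossing-intro}.

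Next I would package the descendant insertions at the first $n$ markings into one $S_n$-equivariant $K$-class $\mathcal{T}$ on $Q^{\epsilon_-}_{g,n}(\bb{P}^N,d)\times(\bar{I}X)^n$, built from the coarse cotangent lines $L_i$ on the factor $Q^{\epsilon_-}_{g,n}(\bb{P}^N,d)$ and the tautological classes $\mb{t}_j$ pulled back from the factors of $(\bar{I}X)^n$. Since the maps $\iota$, $c\circ\iota$ and $b_k\circ c\circ\iota$ induce isomorphisms of the relative cotangent spaces at the first $n$ markings, and the first $n$ rigidified evaluations all factor through $\ev$, the pullback of $\mathcal{T}$ along each of the maps $\iota\times\ev$, $(c\circ\iota)\times\ev$ and $(b_k\circ c\circ\iota)\times\ev$ occurring in~\eqref{eq:equation-all-genus-ovir-single-wall-crossing-intro} equals $\prod_{i=1}^n\ev_i^*(\mb{t}(L_i))$ on the corresponding moduli space.

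The core step is to tensor both sides of the equivariantly reinterpreted identity with $\mathcal{T}$ and apply the $S_n$-equivariant proper pushforward $\pi_*$ along $Q^{\epsilon_-}_{g,n}(\bb{P}^N,d)\times(\bar{I}X)^n\to\operatorname{Spec}\C$; this map is proper under the standing assumption that $W\git_0 G$ is a point, since then $X$, hence $\bar{I}X$, and the quasimap spaces to $\bb{P}^N$ are all proper. By the projection formula, functoriality of proper pushforward, and the pullback formula for $\mathcal{T}$ above, the left-hand side becomes $\big[\mb{t}(L),\dots,\mb{t}(L)\big]^{\epsilon_-}_{g,n,\beta}-\big[\mb{t}(L),\dots,\mb{t}(L)\big]^{\epsilon_+}_{g,n,\beta}$, directly from the definition of these virtual $S_n$-modules as pushforwards of $\ca{O}^{\mathrm{vir}}\cdot\prod_i\ev_i^*(\mb{t}(L_i))$ to the point. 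For the right-hand side I would group the sum over ordered tuples $\vb=(\beta',\beta_1,\dots,\beta_k)$ by their $S_k$-orbits $\ub$; over a single orbit, summing the insertions $\prod_a\ev_{n+a}^*(\mu_{\beta_a}(L_{n+a}))$ produces the induction from $S_{\vb}$ to $S_k$ of the $S_{\vb}$-invariant insertion attached to a chosen representative, so by functoriality of pushforward through $[Q^{\epsilon_+}_{g,n+k}(X,\beta')/S_{\vb}]\to[Q^{\epsilon_+}_{g,n+k}(X,\beta')/S_k]\to\operatorname{Spec}\C$ the $S_k$-symmetrization implicit in the source of $b_k$ collapses each orbit sum to the single term $\tilde{p}^{\vb}_*\big(\ca{O}^{\mathrm{vir}}_{Q^{\epsilon_+}_{g,n+k}(X,\beta')}\cdot\prod_{i=1}^n\ev_i^*(\mb{t}(L_i))\cdot\prod_{j=1}^k\ev_{n+j}^*(\mu_{\beta_j}(L_{n+j}))\big)$, which is exactly $\big[\mb{t}(L),\dots,\mb{t}(L),\mu_{\beta_1}(L),\dots,\mu_{\beta_k}(L)\big]^{S_{\ub},\epsilon_+}_{g,n+k,\beta'}$. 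Summing over $k$ and the orbits $\ub$ then yields the stated identity; the twisted and level-structure versions follow verbatim, as Theorem~\ref{all-genus-ovir-single-wall-crossing-intro} holds in that generality and the manipulations above only use functoriality of pushforward together with the compatibility of the $L_i$ at the first $n$ markings.

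I expect the main obstacle to be the combinatorial bookkeeping on the right-hand side: one must keep precise track of which of $S_n$, $S_k$ and the stabilizer $S_{\vb}$ acts on which markings, and verify that passing from the ordered sum in~\eqref{eq:equation-all-genus-ovir-single-wall-crossing-intro} to the unordered sum over $\ub$ introduces no spurious combinatorial factor; this reduces to the precise normalization of $b_k$ relative to the $S_k$-quotient.
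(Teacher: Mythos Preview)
Your proposal is correct and follows exactly the approach the paper sketches in the sentence immediately preceding the theorem: reinterpret~\eqref{eq:equation-all-genus-ovir-single-wall-crossing-intro} as an identity of $S_n$-equivariant $K_\circ$-classes, tensor with $\prod_{i=1}^n\mathbf{t}(L_i)$, and take the proper pushforward to $\operatorname{Spec}\C$. Your elaboration of the orbit-grouping on the right-hand side---using that the sum over an $S_k$-orbit of $\vec\beta$ is the induction from $S_{\vec\beta}$ to $S_k$, so that its $S_k$-invariants equal the pushforward from $[Q^{\epsilon_+}_{g,n+k}(X,\beta')/S_{\vec\beta}]$---is the correct mechanism and is implicit in the paper's one-line derivation; the paper also notes, in the paragraph after the theorem, an alternative route via Schur--Weyl reciprocity from Theorem~\ref{all-genus-single-wall-crossing-intro}, but your direct route is the one it uses to state the result.
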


Theorem~\ref{all-genus-single-wall-crossing-intro-module-version} and
Theorem~\ref{all-genus-single-wall-crossing-intro} are in fact equivalent. On
the one hand, it is clear that Theorem~\ref{all-genus-single-wall-crossing-intro-module-version}
implies Theorem~\ref{all-genus-single-wall-crossing-intro}. On the other hand,
Theorem~\ref{all-genus-single-wall-crossing-intro} produces an identity in an
arbitrary $\lambda$-ring $\Lambda$. In particular, we can choose $\Lambda$ to contain the abstract algebra of symmetric functions $Q[[N_1,N_2,\dots]]$ with
the Adams operations $\Psi^r(N_m)=N_{rm}$. As explained in Example 2 and Example
3 in~\cite{givental11}, due to Schur–Weyl’s reciprocity, the
permutation-equivariant invariant captures the entire information about the
$S_n$-module $[\mb{t}(L), \dots,\mb{t}(L)]^\epsilon_{g, n, \beta}$. Hence
Theorem~\ref{all-genus-single-wall-crossing-intro} also implies Theorem~\ref{all-genus-single-wall-crossing-intro-module-version}.

We introduce the permutation-equivariant \emph{genus-$g$ descendant potential}
of $X$:
\[
  F^{\epsilon}_{g}(\mathbf t(q)) = \sum_{n = 0}^\infty \sum_{\beta \geq
    0}Q^\beta
  \langle \mathbf t(L) , \ldots, \mathbf t(L) \rangle^{S_n, \epsilon}_{g, n,
    \beta}
\]
and define the following truncation of $(1-q)I(Q, q)-(1-q)$:
\[
  \mu^{\geq \epsilon}(Q, q) = \sum_{\epsilon \leq 1/ \deg(\beta)< \infty}
  \mu_\beta(q)Q^{\beta}.
\]
By repeatedly applying Theorem~ \ref{all-genus-single-wall-crossing-intro} to
cross the walls in $[\epsilon, \infty)$, we obtain the following corollary,
which
holds for both the untwisted and twisted cases.
\begin{corollary} \label{potential-wall-crossing}
  For $g \geq 1$ and any $\epsilon$, we have
  \[
    F^{\epsilon}_{g} \big(\mathbf t(q) \big) = F^{ + \infty}_{g} \big(\mathbf
    t(q) + \mu^{\geq \epsilon}(Q, q) \big).
  \]
  For $g = 0$, the same equality holds true modulo the constant and linear terms
  in $\mb{t}$.
\end{corollary}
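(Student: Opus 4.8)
The plan is to deduce the stated equality by iterating the single-wall-crossing formula of Theorem~\ref{all-genus-single-wall-crossing-intro}. All identities below are to be read at each fixed degree in the coordinates of $\mb t(q)$ and modulo each power of the Novikov variable; at that level every sum in sight is finite and only finitely many walls are crossed in passing from $+\infty$ down to $\epsilon$, and the full statement then follows by taking the Novikov-adic limit. Writing these walls as $w_1>\dots>w_r$, I would fix chamber parameters $\epsilon^{(0)}=+\infty$ and, for $1\le i\le r$, a parameter $\epsilon^{(i)}$ in the stability chamber immediately below $w_i$, with $\epsilon^{(r)}$ chosen so that $\mu^{\ge\epsilon^{(r)}}$ agrees with $\mu^{\ge\epsilon}$ at the relevant order.

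The heart of the argument is a single-wall identity at the level of potentials. Fix a wall $w_i=1/d_0$ with adjacent chambers $\epsilon_-=\epsilon^{(i)}<w_i<\epsilon^{(i-1)}=\epsilon_+$. Multiplying the identity of Theorem~\ref{all-genus-single-wall-crossing-intro} by $Q^\beta$ and summing over all $n\ge 0$ and all effective curve classes $\beta$ produces $F^{\epsilon_-}_g(\mb t(q))-F^{\epsilon_+}_g(\mb t(q))$ on the left-hand side; for $g\ge 1$ this summation is legitimate since the stability hypothesis $2g-2+n+w_i\deg(\beta)>0$ holds for every term contributing to $F^{\epsilon_\pm}_g$ except, in genus one, $(n,\beta)=(0,0)$, which is $\epsilon$-independent and hence cancels. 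On the right-hand side I would set $\Delta_i(q):=\sum_{\deg(\beta)=d_0}\mu_\beta(q)Q^\beta$, note that $\Delta_i=\mu^{\ge\epsilon_-}-\mu^{\ge\epsilon_+}$, decompose the $k$-fold tensor power of $\Delta_i(q)$ into $S_k$-orbits of ordered tuples $(\beta_1,\dots,\beta_k)$, and invoke Frobenius reciprocity (the Schur--Weyl bookkeeping recalled in the discussion following Theorem~\ref{all-genus-single-wall-crossing-intro-module-version}) to rewrite the sum over unordered tuples $\ub$ of $(S_n\times S_{\ub})$-invariant correlators as a sum of $(S_n\times S_k)$-invariant correlators carrying $k$ insertions of $\Delta_i$. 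Finally, the input-addition rule for the permutation-equivariant potential --- which follows formally from the $S$-module decomposition of the relevant tensor powers of $(\mb t+\Delta_i)(L)$ together with the compatibility of equivariant holomorphic Euler characteristics with induced representations --- identifies the result with $F^{\epsilon_+}_g\big(\mb t(q)+\Delta_i(q)\big)-F^{\epsilon_+}_g\big(\mb t(q)\big)$. Hence $F^{\epsilon_-}_g(\mb t(q))=F^{\epsilon_+}_g\big(\mb t(q)+\mu^{\ge\epsilon_-}(Q,q)-\mu^{\ge\epsilon_+}(Q,q)\big)$, as an identity of formal functions of $\mb t(q)$.

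With this single-wall identity in hand, I would prove $F^{\epsilon^{(i)}}_g(\mb t(q))=F^{+\infty}_g\big(\mb t(q)+\mu^{\ge\epsilon^{(i)}}(Q,q)\big)$ by induction on $i$: the base case $i=0$ holds trivially because $\mu^{\ge+\infty}=0$, and the inductive step is obtained by substituting $\mb t(q)+\Delta_i(q)$ for $\mb t(q)$ in the single-wall identity (legitimate at the relevant order since $\Delta_i$ has strictly positive Novikov degree) and using $\Delta_i+\mu^{\ge\epsilon^{(i-1)}}=\mu^{\ge\epsilon^{(i)}}$. Taking $i=r$ and passing to the Novikov-adic limit gives the corollary for $g\ge 1$. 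The same argument applies verbatim in the twisted theory with level structure, since Theorem~\ref{all-genus-single-wall-crossing-intro} holds there as well. For $g=0$, the stability hypothesis fails exactly for $n\le 1$ with $\deg(\beta)$ small (the only other unstable term, $(n,\beta)=(2,0)$, being once more $\epsilon$-independent), so the summation in the core step is valid only after discarding the constant and linear parts of the potentials in $\mb t$; this is the source of the stated restriction in genus zero. I expect the one real difficulty to be in the core step --- namely, checking carefully that rewriting the unordered-tuple sum of Theorem~\ref{all-genus-single-wall-crossing-intro} as a shift of the input by $\Delta_i$ introduces no spurious combinatorial factor; this is precisely where the absence of the usual $1/k!$ in the permutation-equivariant formalism, and hence Frobenius reciprocity / Schur--Weyl, is essential, and it has to be dovetailed with the Novikov-adic bookkeeping that makes the iteration over infinitely many walls meaningful.
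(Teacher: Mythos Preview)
Your proposal is correct and takes essentially the same approach as the paper, which simply states (just before the corollary) that it follows ``by repeatedly applying Theorem~\ref{all-genus-single-wall-crossing-intro} to cross the walls in $[\epsilon, \infty)$''; you have carefully fleshed out precisely this iteration, including the permutation-equivariant multinomial bookkeeping (which the paper records in Section~\ref{K-theoretic-quasimap-invariants}) and the genus-dependent analysis of the stability constraint.
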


In genus zero, Givental introduced an important generating series called
the
$J$-function. Its generalization to the $K$-theoretic $\epsilon$-stable quasimap theory is
straightforward and we refer the reader to Definition \ref{K-theoretic-I-function}
for the precise formula. The following theorem is
proved in Section \ref{The-genus-0-case}, which again holds for both the
untwisted and twisted cases.
\begin{theorem}[Theorem~\ref{thm:genus-0-case}] \label{genus-0-mirror-theorem}
  For any $\epsilon$, we have
  \begin{equation*}
    J^{\infty}_{S_\infty}(\mathbf t(q) + \mu^{\geq \epsilon}(Q, q), Q) =
    J_{S_\infty}^{\epsilon}(\mathbf t(q), Q).
  \end{equation*}
\end{theorem}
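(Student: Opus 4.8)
The plan is to deduce the genus-0 $J$-function identity from the potential wall-crossing in Corollary \ref{potential-wall-crossing}, exactly as the cohomological case deduces Givental's mirror theorem from the wall-crossing of genus-0 generating functions. The key conceptual point is that the $J$-function is built from the genus-0 descendant potential $F^\epsilon_0$ by adding a ``dilaton-shift'' variable and forming two-point correlators, so that the graph/string and dilaton equations convert the additive shift $\mathbf t(q)\mapsto \mathbf t(q)+\mu^{\geq\epsilon}(Q,q)$ in $F_0$ into the corresponding shift of the $J$-function's argument. Concretely, I would first recall from Definition \ref{K-theoretic-I-function} that $J^\epsilon_{S_\infty}(\mathbf t(q),Q)$ is obtained as the value $(1-q) + \mathbf t(q) + \sum_\beta Q^\beta\,(\text{ev}_*$ of descendant insertions with a node-smoothing factor $\tfrac{1}{1-qL}$ at the last marking$)$; in Givental's language it is the graph of the differential of $F^\epsilon_0$ under the symplectic pairing on the loop space $\mathcal K$. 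Then Corollary \ref{potential-wall-crossing} for $g=0$ — which holds modulo constant and linear terms in $\mathbf t$ — says $F^\epsilon_0(\mathbf t(q)) = F^{+\infty}_0(\mathbf t(q)+\mu^{\geq\epsilon}(Q,q))$ up to those same ambiguities.

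Second, I would check that the constant and linear ambiguity in Corollary \ref{potential-wall-crossing} does not affect the $J$-function identity. The $J$-function only sees the \emph{range} (the Lagrangian cone $\mathcal L^\epsilon$ in $\mathcal K$), and the cone is invariant under adding constants and linear functionals to the potential, provided one tracks the dilaton shift correctly; more precisely, the constant term is irrelevant and the linear term corresponds to a reparametrization that is absorbed into the definition of the $J$-function input. This is the standard argument that ``$J$ depends only on $dF_0$ modulo the string/dilaton flows,'' and in the permutation-equivariant $K$-theoretic setting it is carried out in Givental's papers \cite{givental11,givental13}; I would cite that and verify the hypotheses hold here (in particular that our $\mathcal K$ with the rational-$K(\bar IX)$ coefficients and the $\lambda$-algebra $\Lambda$ satisfies the same formal properties).

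Third, with the potential identity in hand, applying the differential-of-potential construction to both sides and using the chain rule gives
\[
  J^\epsilon_{S_\infty}(\mathbf t(q),Q) = J^{+\infty}_{S_\infty}\big(\mathbf t(q)+\mu^{\geq\epsilon}(Q,q),\,Q\big),
\]
which is the claim. The only genuine subtlety — and the step I expect to be the main obstacle — is the bookkeeping of the dilaton shift: $\mu^{\geq\epsilon}$ contains a $q$-polynomial piece coming from $[(1-q)I - (1-q)]_+$, and one must confirm that the ``$-(1-q)$'' normalization in the definition of $\mu_\beta$ is precisely what makes the additive shift in $F_0$ match the additive shift in the argument of $J$ (rather than producing an extra $(1-q)$ term or a change of the Novikov-variable grading). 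Equivalently, one must check that the small $I$-function, with its residue normalization over the $\epsilon=0{+}$ graph space, lies on the cone $\mathcal L^{+\infty}$ after the change of variables — which is the usual reformulation of the mirror theorem. I would handle this by writing out the one-marking $J$-function correlators on the $\epsilon_-$ side, applying Theorem \ref{all-genus-single-wall-crossing-intro} with $g=0$ and $n=1$ (legitimate once $2g-2+n+\epsilon_0\deg(\beta)>0$, which holds for all $\beta$ contributing to walls $\geq\epsilon$), and identifying the resulting sum over $\underline\beta$ with the Taylor expansion of $J^{+\infty}$ around the shifted point; the combinatorics of unordered tuples $\underline\beta$ matches the combinatorics of iterated directional derivatives exactly because of the $S_{\underline\beta}$-symmetrization built into Theorem \ref{all-genus-single-wall-crossing-intro}.
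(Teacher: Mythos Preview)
Your overall strategy matches the paper's: use Corollary~\ref{potential-wall-crossing} to reduce to the constant-in-$\mathbf t$ part of the $J$-function, then treat that part directly via the single-wall formula with $g=0$, $n=1$. However, there is a genuine gap in the last step.

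You claim that Theorem~\ref{all-genus-single-wall-crossing-intro} with $g=0$, $n=1$ is ``legitimate once $2g-2+n+\epsilon_0\deg(\beta)>0$, which holds for all $\beta$ contributing to walls $\geq\epsilon$.'' This is false at the wall itself: for $g=0$, $n=1$ and $\deg(\beta)=d_0=1/\epsilon_0$ one gets $2g-2+n+\epsilon_0\deg(\beta)=-1+1=0$, not~$>0$. So the theorem does \emph{not} apply to the curve classes $\beta$ with $\deg(\beta)=d_0$, and these are exactly the classes that carry the new $I$-function term when you cross the wall $\epsilon_0$. In this boundary case $Q^{\epsilon_-}_{0,1}(X,\beta)$ is empty and there is no master-space wall-crossing of the usual shape; a separate argument is needed. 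The paper supplies it as Corollary~\ref{evaluation-last-wall}, proved by localizing on the master space $MQ^{\epsilon_0}_{0,1}(X,\beta)$ in the degenerate regime (only $F_+$ and $F_\beta$ as fixed loci), which gives
\[
(\check{\ev}_1)_*\Bigl(\frac{\ovir_{Q^{\epsilon_+}_{0,1}(X,\beta)}}{1-qL_1}\Bigr)=\bigl[(1-q)I_\beta(q)\bigr]_-.
\]
Combined with $\mu_\beta(q)=[(1-q)I_\beta(q)]_+$, this yields the missing identity $(1-q)I_\beta(q)=\mu_\beta(q)+(\check{\ev}_1)_*\bigl(\ovir/(1-qL_1)\bigr)$ for $\deg(\beta)=d_0$.

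Your Lagrangian-cone argument does not circumvent this: Corollary~\ref{potential-wall-crossing} for $g=0$ holds only modulo constant and linear terms in~$\mathbf t$, so after differentiating you still lose exactly the constant-in-$\mathbf t$ piece of~$J$. That piece is $J^\epsilon(0,Q)$ versus $J^\infty(\mu^{\geq\epsilon},Q)$, and pinning it down is precisely the $\deg(\beta)=d_0$ computation above; invoking ``the cone is invariant under adding linear functionals'' is circular here because the linear-in-$\mathbf t$ part of $F_0$ is what encodes the one-pointed correlators you need.
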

The small $I$-function is a specialization of the $J$-function in the chamber
$\epsilon = 0 + $. To be more precise, we have
\[
  (1-q)I(Q, q) = J_{S_\infty}^{0 + }(\mathbf t(q), Q)|_{\mathbf t(q) = 0}.
\]
Hence Theorem \ref{genus-0-mirror-theorem} generalizes the genus-zero toric
mirror theorems in quantum $K$-theory
\cite{Givental-Tonita, givental14, givental15} and quantum $K$-theory with
level
structure \cite{RZ1}, and the $K$-theoretic wall-crossing formula in
\cite{Tseng-You}.

\subsection{Comparison with the proof of the cohomological wall-crossing
  formulas in~\cite{Zhou2}}
The main idea of this paper is the same as that of~\cite{Zhou2}. Namely, we
deduce the wall-crossing formulas by applying the torus-localization formula to
the master space constructed in~\cite{Zhou2}. However, $K$-theory is more sensitive
to the stacky structure of the moduli spaces, comparing to cohomology theory. In
particular, we mention some new features here. The first one is the emergence of the
permutation-equivariant structure. When simplifying the localization contributions,
we need to split a certain number of nodes simultaneously and the newly-created
markings are \emph{unordered}. It `forces' us to define invariants over the quotient
stack of a usual
quasimap moduli space by some permutation group. In the cohomological setting,
such modification will only change a invariant by multiplying a constant.
However, in the $K$-theory setting, the effect of such modification is more
complicated. In fact, this was first observed by Givental in \cite{givental12}
when analyzing
the $K$-theoretic localization formula over the moduli space of stable maps.
This phenomenon motivated him to introduce permutation-equivariant $K$-theoretic
invariants.

To simplify the localization contributions, we need to refine a few key lemmas
in~\cite{Zhou2}
about the structures of certain morphisms. And the combinatorics used in the
simplification is also more involved than that in~\cite[\textsection{7}]{Zhou2}.

\subsection{Plan of the paper}
This paper is organized as follows. In Section
\ref{section-K-theoretic-quasimap}, we recall the basic notation in $K$-theory
and define permutation-equivariant $K$-theoretic quasimap invariants and their
generating functions. Twisted theories with level structure are discussed at the
end of this section. In Section \ref{section-entangled-tails}, we first recall
the construction of moduli spaces of quasimaps with entangled tails in
\cite{Zhou2}. Then we define and study the virtual structure sheaves over these
moduli spaces. In Section \ref{section-master-space}, we recall the master space
introduced in \cite{Zhou2}. It has a $\bb{C}^*$-action. We compute the
$\bb{C}^*$-equivariant $K$-theoretic Euler classes of the virtual normal bundles
of all fixed-point components in the master space. In Section
\ref{section-wall-crossing}, we first discuss some basic properties of the
residue operation. Then we apply the $K$-theoretic localization formula to the
master space and obtain the wall-crossing formula. Note that the combinatorics
in simplifying the localization contributions is much more sophisticated than
that in the cohomological setting \cite{Zhou2}.

\subsection{Acknowledgments}
The first author would like to thank David Anderson, Dan Edidin, Daniel
Halpern-Leistner, Amalendu Krishna, Jeongseok Oh and Bhamidi Sreedhar for
helpful discussions. The second author would like to thank
Huai-liang Chang,
Michail Savvas,
Arnav Tripathy and
Kai Xu
for helpful discussions.

This project started when the second author was invited by Yongbin Ruan to visit
the University of Michigan. Both authors would like to thank Yongbin Ruan for
his encouragement and support.

%%% Local Variables:
%%% mode: latex
%%% TeX-master: "main"
%%% End:

%\input{quasimap-invariants}

\section{$K$-theoretic quasimap invariants}
\label{section-K-theoretic-quasimap}

In this section, we first introduce some basic notation in $K$-theory. Then we
review the basics of (orbifold) quasimap theory and define
permutation-equivariant $K$-theoretic invariants. Twisted theories will be 
in the end.

\subsection{Basic notation in $K$-theory}
For a Deligne--Mumford stack $X$, we denote by $K_\circ(X)$ the Grothendieck
group of coherent sheaves on $X$ and by $K^\circ(X)$ the Grothendieck group of
locally free sheaves on $X$. Suppose $X$ has a $\bb{C}^*$-action. Let
$K_\circ^{\C^*}(X)$ and $K^\circ_{\C^*}(X)$ denote the equivariant $K$-groups.
We have canonical isomorphisms
\[
  K_\circ^{\C^*}(X) \cong K_\circ([X/ \C^*]),
  \quad
  K^\circ_{\C^*}(X) \cong K^\circ([X/ \C^*]).
\]
Let $K(X)$ denote the Grothendieck group of topological complex vector bundles
on $X$. Given a coherent sheaf (or vector bundle) $F$, we denote by $[F]$ or
simply $F$ its associated $K$-theory class. Tensor product makes $K_\circ(X)$ a
$K^\circ(X)$-module:
\[
K^\circ(X)\otimes K_\circ(X)\rightarrow K_\circ X,\quad \big([E],[F]\big)\mapsto [E]\cdot [F]:=[E\otimes_{\ca{O}_X}F].
\]
The unit in $K^\circ(X)$ is the class of the structure sheaf and we simply denote it by 1.
Throughout the paper, we consider Grothendieck groups with rational
coefficients $K_\circ(X)_{\Q}: = K_\circ(X) \otimes \bb{Q}$,
$K^\circ(X)_{\Q}: = K^\circ(X) \otimes \bb{Q}$ and
$K(X)_{\Q}=K(X)\otimes\Q$
%\Ming{Originally, I wrote complex coefficients. That is
%needed when considering orbifold Riemann-Roch formula. In our approach, we don't
%go to cohomology theory and the localization formula holds over $\Q$. I can't
%think of any place where we need $\C$}.

For a flat morphism $f:X \rightarrow Y$, we have the flat pullback
$f^*: K_\circ(Y) \rightarrow K_\circ(X)$.
For a proper morphism $g:X \rightarrow Y$, we
have the proper pushforward $f_*:K_\circ(X) \rightarrow K_\circ(Y)$ defined by
\[
  [F] \mapsto \sum_n(-1)^n[R^nf_*F].
\]
In the case when $X$ is proper and $Y$ is a point, the proper pushforward can
be identified with the holomorphic Euler characteristic
\[
  \chi(F): = \sum_{i \geq0} \text{dim}_{\bb{C}} \, H^i(X, F).
\]

A key tool in computing holomorphic Euler characteristics and proper
pushforwards is the localization formula in equivariant $K$-theory. Such
localization formula was first introduced by Atiyah-Segal \cite{Atiyah-Segal}
and developed by Thomason \cite{Thomason1, Thomason2} in the algebraic setting.
A
detailed discussion of the classical $K$-theoretic torus-localization formula
can be found in \cite[Chapter 5]{Chriss-Ginzburg}. Virtual localization
formulas
in $\bb{C}^*$-equivariant $K$-theory for stacks are proved in
\cite{DHL, Kiem-Savvas}. We will use the version in \cite{Kiem-Savvas} and
recall
the precise statement in Section \ref{section-wall-crossing}.

We will use $q$ to denote the equivariant parameter (or weight), which
corresponds to the standard representation $\bb{C}_{\text{std}}$ of $\bb{C}^*$.
We have $K^\circ_{\bb{C}^*}(\text{pt}) \cong \bb{Z}[q, q^{-1}]$.

Let $E$ be a vector bundle on $X$. We define the \emph{$K$-theoretic Euler
  class} of $E$ by
\[
  \lambda_{-1}(E^\vee): = \sum_i(-1)^i \wedge^iE^\vee \in K^\circ(X),
\]
where $\wedge^iE^\vee$ denote the $i$-th exterior power of $E^\vee$.
Suppose $X$ has a $\bb{C}^*$-action. For a $\bb{C}^*$-equivariant vector bundle
$E$, the same formula defines its $\bb{C}^*$-equivariant $K$-theoretic Euler
class $\lambda_{-1}^{\C^*}(E^\vee) \in K^\circ_{\bb{C}^*}(X)$.

\subsection{Stable quasimaps to GIT quotients}\label{stable-quasimaps}
Let $W$ be an affine variety with a right action of a reductive group $G$. We
assume that $W$ has at worst local complete intersection singularities. Let
$\theta$ be a character of $G$. Denote by $W^{s}(\theta)$ the $\theta$-stable
locus, and $W^{ss}(\theta)$ the $\theta$-semistable locus with respect to the
linearization $L_\theta: = W \times \bb{C}_\theta$. We assume that
$W^{s}(\theta)$
is smooth, nonempty, and coincides with $W^{ss}(\theta)$. In this paper, we
consider the ``stacky'' GIT quotient
\[
  X = [W^{ss}(\theta)/G].
\]
Note that $X$ is a smooth proper Deligne--Mumford stack over he affine quotient
$W \git_{0} G = \operatorname{Spec} H^0(W, \mathcal O_W)^G$.

Let $(C, x_1, \dots, x_n)$ be a $n$-pointed, genus $g$ twisted curve with
balanced
nodes and trivialized gerbe markings (c.f. \cite[\textsection{4}]{AGV}). A map
$[u]:C \rightarrow X$ corresponds to a pair $(P, u)$ with
\[
  P \rightarrow C
\]
a principal $G$-bundle on $C$ and
\[
  u:C \rightarrow P \times_G W
\]
a section of the fiber bundle $P \times_GW \rightarrow C$. We call $[u]$ a
quasimap to $X$ if [$u]$ is representable and $[u]^{-1}([W^{us}/G])$ is
zero-dimensional. Here $W^{us}$ denotes the unstable locus. The locus
$[u]^{-1}([W^{us}/G])$ is called the \emph{base locus} of $[u]$ and points in
the base locus are called \emph{base points}.

The class $\beta$ of a quasimap is defined to be the
group homomorphism
\[
  \beta: \text{Pic}([W/G]) \rightarrow \bb{Q}, \quad L \mapsto
  \text{deg}([u]^*(L)).
\]
We refer to the rational number $\mathrm{deg}(\beta) = \text{deg}([u]^*(L_\theta))$
as
the \emph{degree} of the quasimap $[u]$. A group homomorphism
$\beta: \text{Pic}([W/G]) \rightarrow \bb{Q}$ is called an \emph{effective}
curve
class if it is the class of some quasimap $[u]$. We denote by
$\text{Eff}(W, G, \theta)$ the semigroup of $L_\theta$-effective curve classes
on
$X$. For convenience, we write $\beta \geq0$ if $\beta \in \text{Eff}(W, G,
\theta)$
and $\beta>0$ if the effective curve class is nonzero.

Fix a positive rational number $\epsilon$. A quasimap is called
$\epsilon$-\emph{stable} if the following three conditions hold:
\begin{enumerate}
\item
  The base points are disjoint from the trivialized gerbe markings and nodes
  %\Ming{I think this is the
   % terminology of orbifold quasimap theory paper. I used the same terminology
    %in two paragraphs above where I introduced $(C,x_1,\dots,x_n)$}
 % \marginpar{\footnotesize{
  %    \Yang{
   %     The wording ``gerbe markings'' looks weird. Also, it makes the reader
    %    wonder: what about non-orbifold markings? Are those also required to be
     %   away from the base points?
     % }
   % }}
  of $(C, x_1, \dots, x_n)$.
\item
  For every $y \in C$, we have $l(y) \leq 1/ \epsilon$, where $l(y)$ is the
  length at $y$ of the subscheme $[u]^{-1}([W^{us}(\theta)/G])$.
\item
  The $\bb{Q}$-line bundle $(u^*L_\theta)^{\otimes \epsilon} \otimes
  \omega_{C, \log}$ is positive, where $ \omega_{C, \log}: = \omega_C(\sum_i x_i)$
  is the log
  dualizing sheaf.
\end{enumerate}
A quasimap is called $(0 +)$-stable if it is $\epsilon$-stable for every
sufficiently small positive rational number $\epsilon$, and $\infty$-stable if
it is $\epsilon$-stable for every sufficiently large $\epsilon$.

It is straightforward to define families of quasimaps over any base scheme $S$. 
We denote by $Q^{\epsilon}_{g, n}(X, \beta)$ the moduli of genus-$g$
$\epsilon$-stable quasimaps to $X$ of curve class $\beta$ with $n$ markings.
This is slightly different from the convention in~\cite{Kim4}, where the gerbe
markings are not trivialized. Let $\underline{Q}^{\epsilon}_{g, n}(X, \beta)$
be the moduli space defined in~\cite[\textsection{2.3}]{Kim4}. Let $\ca{G}_i$ be
the $i$-th marking in the universal curve, which is a gerbe over
$\underline{Q}^{\epsilon}_{g, n}(X, \beta)$. Then we have
\[
Q^{\epsilon}_{g, n}(X, \beta)=\ca{G}_1\underset{\underline{Q}^{\epsilon}_{g,
    n}(X, \beta)}{\times}\cdots
\underset{\underline{Q}^{\epsilon}_{g, n}(X, \beta)}{\times}
\ca{G}_n.
\]
Hence by \cite[Theorem 2.7]{Kim4},
$Q^{\epsilon}_{g, n}(X, \beta)$ is a Deligne--Mumford stack, proper over the affine quotient $W \git_0 G$, with
a perfect obstruction
theory.

 Let $
IX = \coprod_r I_rX$
be the cyclotomic inertia stack of $X$ and let $\bar{I}X=\coprod_r \bar{I}_rX$
be the rigidified cyclotomic inertia stack (c.f. \cite[\textsection{3.1}]{AGV}). There is a
natural projection 
\[
\varpi:IX\rightarrow \bar{I}X,
\]
which exhibits $IX$ as the universal gerbe over $\bar{I}X$.
For $1\leq i\leq n$, there is an evaluation map
\[
\mathrm{ev}_i:Q^\epsilon_{g, n}(X, \beta) \rightarrow I X
\]
at the $i$-th marking. Similarly, there are evaluation maps
$\overline{\mathrm{ev}}_i:\underline{Q}^\epsilon_{g, n}(X, \beta) \rightarrow
\bar{I} X$. We have a non-cartesian commutative diagram 
\begin{equation*} 
  \begin{tikzcd}
    Q^{\epsilon}_{g, n}(X, \beta)\arrow[r,"\mathrm{ev}_i"]\arrow[d] &
    IX \arrow[d,"\varpi"]\\
    \underline{Q}^{\epsilon}_{g, n}(X, \beta)\arrow[r,"\overline{\mathrm{ev}}_i"]
    \arrow[r]
      &
      \bar{I}X
  \end{tikzcd}.
  \end{equation*}
  For later applications, we define the \emph{rigidified evaluation map}
  \begin{equation}\label{eq:rigidified-evaluation-map}
  \ev_i=\varpi\circ\mathrm{ev}_i:Q^\epsilon_{g, n}(X, \beta) \rightarrow \bar{I} X.
  \end{equation}

  According to the general constructions in
\cite{Lee, Qu}, the perfect obstruction theory induces a virtual structure
sheaf
\[
  \ca{O}_{Q^\epsilon_{g, n}(X, \beta)}^{\text{vir}} \in K_\circ(Q^\epsilon_{g,
    n}(X, \beta)).
\]
See Section \ref{virtual-structure-sheaf} for the details of the construction
of the virtual structure sheaf. The moduli stack of $(0 +)$-stable (resp.
$\infty$-stable) quasimaps is denoted by $Q^{0 + }_{g, n}(X, \beta)$ (resp.
$Q^{\infty}_{g, n}(X, \beta)$).

%\Ming{To express all terms in the J-function as residues, I need to introduce
 % $QG_{0, 1+k}(X, \beta)$ instead of just $QG_{0, 1}(X, \beta)$}
To define the $J$-function and the $I$-function for the genus-0 theory, we will
also need quasimap \emph{graph spaces}. Here we recall a special case of the
definition in \cite{Kim4}. Given an effective curve class $\beta$, choose
$\epsilon\in\Q_{>0}$ and $A\in\bb{Z}_{>0}$
such that
$1/A<\epsilon<1/\operatorname{deg}(\beta)$. We view $\bb{P}^1$ as the GIT quotient $\bb{C}^2 \git
\bb{C}^*$ with the polarization $\ca{O}_{\bb{P}^1}(A)$. Then the $(0+)$-stable quasimap graph space is defined by
\[
  QG^{0+}_{0, 1}(X, \beta): = Q^{\epsilon}_{0, 1}(X \times \bb{P}^1, \beta
  \times[\bb{P}^1]).
\]
The definition is independent of the choice of $A$ and $\epsilon$. This moduli space parametrizes quasimaps to $X\times\bb{P}^1$ with a unique rational component
whose coarse moduli is mapped isomorphically onto $\bb{P}^1$. 
% \Ming{double check if the definition is correct.}

We denote the first marking
by
$x_{\star}$. Consider the $\bb{C}^*$-action on $\bb{P}^1$ given by
\begin{equation} \label{eq:C*action}
  t[\zeta_0, \zeta_1] = [t \zeta_0, \zeta_1], \quad t \in \bb{C}^*.
\end{equation}
Set $0: = [1:0]$ and $\infty: = [0:1]$. Then the tangent space of $\bb{P}^1$ at
$\infty$ (resp. 0) is isomorphic to the standard representation (resp. the dual
of the standard representation) of $\bb{C}^*$. The $\bb{C}^*$-action
(\ref{eq:C*action}) induces an action on $QG_{0, 1}^{0+}(X, \beta)$. Let $F^{0,
  \beta}_{\star, 0}$ be the distinguished fixed-point component consisting of
$\mathbb C^*$-fixed quasimaps such that only the marking $x_{\star}$ is over
$\infty$, while the other $k$ markings and the entire class
$\beta$ are over $0 \in \mathbb P^1$.

The restriction of the absolute perfect obstruction theory of $QG^{0+}_{0,
  1}(X, \beta)$ to $F^{0, \beta}_{\star, 0}$ decomposes into moving and fixed
parts. By \cite{G-P}, the fixed part of the obstruction theory defines a
perfect obstruction theory on $F^{0, \beta}_{\star, 0}$ and hence induces a
virtual structure sheaf $\ca{O}^\vir_{F^{0, \beta}_{\star, 0}} \in
K_\circ(F^{0, \beta}_{\star, 0})$. The moving part of the obstruction theory
gives rise to a virtual normal
bundle $N^{\mathrm{vir}}_{F^{0, \beta}_{\star, 0}/ QG^{0+}_{0, 1}(X, \beta)}
\in K_{\bb{C}^*}^\circ(F^{0, \beta}_{\star, 0})$.

\subsection{$K$-theoretic quasimap invariants}
\label{K-theoretic-quasimap-invariants}

We recall Givental--Tonita's \emph{$K$-theoretic symplectic loop space
  formalism} in the orbifold setting \cite{Tonita1, givental17}. To introduce
permutation-equivariant $K$-theoretic invariants, we choose the ground
coefficient ring to be a $\lambda$-algebra $\Lambda$, i.e. an algebra over
$\bb{Q}$ equipped
with abstract Adams operations $\Psi^k, k = 1, 2, \dots$. Here
$\Psi^k: \Lambda \rightarrow \Lambda$ are ring homomorphisms satisfying
$\Psi^r \Psi^s = \Psi^{rs}$ and $\Psi^1 = \text{id}$. In this paper, we assume
that $\Lambda$ is over $\bb{C}$ and includes the Novikov variables $Q^\beta,
\beta \in \text{Eff}(W, G, \theta)$ and the torus-equivariant $K$-ring of a
point if we consider torus-actions on the target. We also assume that $\Lambda$
is equipped with a maximal ideal $\Lambda_ + $ such that $\Psi^i(\Lambda_ +)
\subset(\Lambda_ +)^2$ for $i>1$. For example, one can choose
\[
  \Lambda = \bb{Q}[[N_1, N_2, \dots]][[Q]][\lambda_0^\pm, \dots,
  \lambda_N^\pm],
\]
where $N_i$ are the Newton polynomials (in infinitely or finitely many
variables), $Q$ denotes the Novikov variable(s), and $\lambda_i$ denote the
torus-equivariant parameters. The Adams operations $\Psi^r$ act on $N_m$ and
$Q$
by $\Psi^r(N_m) = N_{rm}$ and $\Psi^r(Q^\beta) = Q^{r \beta}$, respectively,
and they act trivially on the torus-equivariant parameters. One can take
$\Lambda_ + $ to be the maximal idea generated by $N_i$, $\lambda_i$ and
Novikov variables of positive degrees.

As mentioned in the introduction, for simplicity, we assume the affine quotient
$W \git_0G$ is a point and hence $X$ is proper. Let
$K(\bar{I}X)$ be the Grothendieck group of topological complex vector
bundles
on $\bar{I}X$ with rational coefficients
%\marginpar{\footnotesize{
 %   \Yang{Not complex coefficients?}
  %}
%\Ming{Why does it need to be complex?}}
(see e.g. \cite{Adem}). The \emph{Mukai
  pairing} on $K(\bar{I}X)$ is defined by
\begin{equation*} \label{eq:mukai-pairing}
  (\alpha, \beta) = \chi(\bar{I}X, \alpha \cdot \iota^* \beta),
\end{equation*}
where $\iota$ is the involution on $\bar{I}X$ reversing the banding.

%We make
%the following
%\begin{assumption}
 % The Mukai pairing $(-, -)$ is non-degenerate.
%\end{assumption}
%\Yang{When does this hold?}
%
%\Ming{Initially, I needed this condition to write down the formula of the
 % J-function. We don't really need it if we define the J-function as the
  %pushforward of the K-theoretic residue. However, I wonder if we need to use
  %this assumption when splitting along the nodes later. I need to double-check it.}

We define the \emph{$K$-theoretic loop space} by
\[
  \ca{K}: = [K(\bar{I}X) \otimes \bb{C}(q)] \widehat{\otimes} \Lambda,
\]
where $\bb{C}(q)$ is the field of complex rational functions in $q$ and
``$\widehat{\phantom{\otimes}}$''  means the completion in the $\Lambda_ +
$-adic topology. In other words, \emph{modulo any power of $\Lambda_ + $}, the
elements of $\ca{K}$ are rational functions of $q$ with vector coefficients
from $K(\bar{I}X) \otimes \Lambda$. From now on, we simplify refer to them as
rational functions.

By viewing elements in $\bb{C}(q) \widehat{\otimes} \Lambda$ as coefficients,
we extend the Mukai pairing to $\ca{K}$ via linearity. There is a natural
$\widehat{\Lambda}$-valued symplectic form $\Omega$ on $\ca{K}$ defined by
\[
  \Omega(f, g): = [\text{Res}_{q = 0} + \text{Res}_{q = \infty}](f(q), g(q^{-1}))
  \frac{dq}{q}, \quad \text{where} \ f, q \in \ca{K}.
\]

With respect to $\Omega$, there is a \emph{Lagrangian polarization} $\ca{K} =
\ca{K}_ + \oplus \ca{K}_-$, where
\[
  \ca{K}_ + = (K(\bar{I}X) \otimes \bb{C}[q, q^{-1}]) \widehat{\otimes} \Lambda
  \quad \text{and} \quad \ca{K}_- = \{f \in \ca{K}|f(0) \neq \infty, f(\infty) =
  0 \}.
\]
In other words, $\ca{K}_ + $ is the space of $K(\bar{I}X) \otimes
\Lambda$-valued Laurent polynomials in $q$ (in the $\Lambda_ + $-adic sense)
and $\ca{K}_-$ consists of \emph{proper} rational functions of $q$ regular at 0.
For $f(q) \in \ca{K}$, we write $f(q)=[f(q)]_++[f(q)]_-$, where
$[f(q)]_+\in\ca{K}_+$ and $[f(q)]_-\in\ca{K}_-$.

In a series of work \cite{givental11, givental12, givental13, givental14,
  givental15, givental16, givental17, givental18, givental19, givental21,
  givental22}, Givental introduced and studied the \emph{permutation-equivariant}
quantum $K$-theory,
which takes into account the $S_n$-action on the moduli spaces of stable maps
by
permuting the markings. We recall its natural generalization to the quasimap
setting \cite{Tseng-You}.

% We fix a finite basis $\{\phi_a \}$ of $K(I_\mu X)$ and denote by $\{\phi^a
% \}$ the dual basis with respect to the Mukai pairing $(-, -)$.

Let $\ev_i:Q^\epsilon_{g, n}(X, \beta) \rightarrow \bar{I} X$ be the rigidified
evaluation map~\eqref{eq:rigidified-evaluation-map} at the $i$-th marked point. We denote by $L_i$ the $i$-th
cotangent line bundle at the $i$-th marked point of \emph{coarse} curves.
Consider the natural $S_n$-action on the quasimap moduli space
$Q^\epsilon_{g, n}(X, \beta)$ by permuting the $n$ markings. Then for an
arbitrary Laurent polynomial $\mb{t}(q) = \sum_{j \in \bb{Z}} \mb{t}_j q^j \in
\ca{K}_ + $, there is a natural (virtual) $S_n$-module
\[
  \big[\mb{t}(L), \dots, \mb{t}(L) \big]^\epsilon_{g, n, \beta}: =
  \sum_{m \geq 0}(-1)^mH^m \big(Q^\epsilon_{g, n}(X, \beta),
  \ca{O}_{Q^\epsilon_{g, n}(X, \beta)}^{\text{vir}} \cdot \prod_{i = 1}^n
 \ev_i^*({\bf t}(L_i)) \big),
\]
where $\ev_i^*({\bf t}(L_i)):=\sum_{j} \ev_i^*(\mb{t}_j)L_i^j$.
The permutation-equivariant invariant is defined as the dimension of its
$S_n$-invariant submodule. Equivalently, we have the following
\begin{definition} \label{definition-invariant}
  The correlator of the permutation-equivariant $K$-theoretic $\epsilon$-stable
  quasimap invariants is defined by
  \[
    \big \langle \mb{t}(L), \dots, \mb{t}(L) \big \rangle_{g, n, \beta}^{S_n,
      \epsilon}: = p_* \big(\ca{O}_{Q^\epsilon_{g, n}(X, \beta)}^{\text{vir}} \cdot
    \prod_{i = 1}^n \ev_i^*({\bf t}(L_i)) \big),
  \]
  where $p_*$ is the proper pushforward along the projection
  \[
    p: \big[Q^\epsilon_{g, n}(X, \beta)/S_n \big] \rightarrow \text{Spec} \,
    \bb{C}.
  \]
\end{definition}
By definition, $K$-theoretic invariants are integers (if $\mb{t}(q)$ has
integral coefficients).
\iffalse
\Ming{There is a subtlety I overlooked before: Since $\mb{t}$ lies in the
  topological K-group. The pushforward should also be in the topological setting.
  There is such a notion, but we want to prove the localization formula in the
  algebraic setting. What is a way to get around this problem?}
\fi
Definition \ref{definition-invariant} can be easily generalized to the case
when there are different insertions.
Suppose that we are given several Laurent polynomials $\mb{t}^{(a)} = \sum_{m}
\mb{t}^{(a)}_mq^m$ with $a = 1, \dots s$. Let $(k_1, \dots, k_s)$ be a
partition of $n$. Then we generalize Definition \ref{definition-invariant} and
define permutation-equivariant $\epsilon$-stable quasimap $K$-invariants with
symmetry group $S_{k_1} \times \dots \times S_{k_s}$ by
\begin{align*}
  \big \langle \mb{t}^{(1)}, \dots, \mb{t}^{(1)}, \mb{t}^{(2)}, \dots,
  \mb{t}^{(2)}, \dots, \mb{t}^{(s)}, \dots, \mb{t}^{(s)} \big \rangle_{g, n,
  \beta}^{S_{k_1} \times \dots \times S_{k_s}, \epsilon}: = \\ \pi_*
  \big(\ca{O}_{Q^\epsilon_{g, n}(X, \beta)}^{\text{vir}} \cdot \prod_{a = 1}^s
  \prod_{i = 1}^{k_a} \big(\sum_{m} \ev_i^*(\mb{t}^{(a)}_m)L_i^m \big)
  \big),
\end{align*}
where $\pi_*$ is the proper pushforward along the projection
\[
  \pi: \big[Q^\epsilon_{g, n}(X, \beta)/S_{k_1} \times \dots \times S_{k_s} \big]
  \rightarrow \text{Spec} \, \bb{C}.
\]

According to \cite[Example 5]{givental11}, we have the
\emph{permutation-equivariant binomial formula}:
\[
  \big \langle \mb{t} + \mb{t}', \dots, \mb{t} + \mb{t}' \big \rangle_{g, n,
    \beta}^{S_n, \epsilon} = \sum_{i + j = n} \big \langle \mb{t}, \dots, \mb{t},
  \mb{t}', \dots, \mb{t}' \big \rangle_{g, n, \beta}^{S_i \times S_j, \epsilon},
  \quad \mb{t}, \mb{t}' \in \ca{K}_ + .
\]
It follows from the following equality of $S_n$-modules:
\[
 \big [ \mb{t} + \mb{t}', \dots, \mb{t} + \mb{t}' \big ]_{g, n,
    \beta}^{S_n, \epsilon} = \sum_{i + j = n} \mathrm{Ind}^{S_n}_{S_i\times S_j}\big [ \mb{t}, \dots, \mb{t},
  \mb{t}', \dots, \mb{t}' ]_{g, n, \beta}^{S_i \times S_j, \epsilon}.
\]
Here $\mathrm{Ind}^{G}_H$ denotes the operation of inducing a $G$-module from an
$H$-module. For any $H$-module $V$, we have $(\operatorname{Ind}^G_HV)^G=V^H$. By using the above binomial formula and induction on $k$, one can prove the
following \emph{permutation-equivariant multinomial formula}:
\[
  \big \langle \sum_{i = 1}^m \mb{t}_i, \dots, \sum_{i = 1}^m \mb{t}_i \big
  \rangle_{g, n, \beta}^{S_n, \epsilon} =
  \sum_{k_1 + k_2 + \cdots + k_m = n}
  \big \langle \mb{t}_1, \dots, \mb{t}_1, \mb{t}_2, \dots, \mb{t}_2, \dots,
  \mb{t}_m, \dots, \mb{t}_m \big \rangle_{g, n, \beta}^{S_{k_1} \times S_{k_2}
    \times \cdots S_{k_m}, \epsilon},
\]
where $\mb{t}_1, \dots \mb{t}_m \in \ca{K}_ + $. In general, suppose we have an
$S_n$-equivariant proper morphism
$
\tilde{\pi}:Q^\epsilon_{g, n}(X, \beta)
\rightarrow Y,
$
where $Y$ is a Deligne-Mumford stack on which $S_n$ acts trivially. The permutation-equivariant
multinomial formula still holds if we replace $\langle \cdot \rangle_{g, n, \beta}^{S_{k_1} \times S_{k_2}
  \times \cdots S_{k_m}, \epsilon}$ by the proper pushforward along the induced morphism
\[
\pi:\big[Q^\epsilon_{g, n}(X, \beta)/S_{k_1} \times \dots \times S_{k_s} \big]
  \rightarrow Y.
\]
%\Ming{In Section \ref{section-wall-crossing}, we will use a slightly more
 % general identity to deal with $K$-classes like $\sum_{\vb}*$, but I haven't
 % figured out the shortest and clearest way to state the identity and prove it.
  %I mentioned vaguely that some of the identities follow from the
 % permutation-equivariant binomial and multinomial formulas.}

%\Ming{What I omit in the actual computation in the last section are the steps of
%converting between ordered invariants and unordered invariants. It is more
%symmetric using the ordered invariants and the notation is lighter.}

%Let
%$\mathbf{t}_{i}^{(j)},(\mathbf{t}')_{i}^{(j)}$ be two sequences of elements in
%$\ca{K}_+$, where $i=1,\dots,m$ and $j$ takes values in a subset of $\Z$ which
%is bounded below. Set $\mathbf{t}_{i}:=\sum_j\mathbf{t}_{i}^{(j)}$ and $(\mathbf{t}')_{i}:=\sum_j(\mathbf{t}')_{i}^{(j)}$. We have
%\begin{align*}
 % & \big \langle
  %  \mb{t}_{1}+(\mb{t}')_1,
%  \dots,
 % \mb{t}_{1}+(\mb{t}')_1
%  ,
%\mb{t}_{2}+(\mb{t}')_2
%\dots,
%\mb{t}_{2}+(\mb{t}')_2 
%,
%\dots,
%\mb{t}_{m}+(\mb{t}')_m 
%, \dots,
%\mb{t}_{m}+(\mb{t}')_m
%\big \rangle_{g, n, \beta}^{S_{k_1} \times S_{k_2}
%  \times \cdots S_{k_m}, \epsilon}\\
%  =&\sum_{N\subsetneq\{1,\dots,m\}}
%     \big \langle
%s
%     \big \rangle_{g, n, \beta}^{S_{k_1} \times S_{k_2}
 % \times \cdots S_{k_m}, \epsilon}
%\end{align*}

Define the \emph{genus-$g$ descendant potential} of $X$:
\begin{equation}\label{eq:equation-potential}
  F^{\epsilon}_{g}(\mathbf t(q)) = \sum_{n = 0}^\infty \sum_{\beta \geq
    0}Q^\beta \langle \mathbf t(L)
  , \ldots, \mathbf t(L) \rangle^{S_n, \epsilon}_{g, n, \beta}
\end{equation}

We now introduce two genus-0 generating series, the $J$-function and the
$I$-function, using $K$-theoretic residues on the graph space $QG^{0+}_{0,
  1}(X, \beta)$. Let \begin{equation*}
  \check{\ev}_{\star}:QG^{0+}_{0, 1}(X, \beta) \rightarrow
  \bar{I}X
\end{equation*}
be the rigidified evaluation map at the unique marking $x_{\star}$ composed with the
involution on
$\bar{I}X$. Let $\mb{r}$ be the locally constant function on $\bar{I}X$ that
takes value $r$ on the component $\bar{I}_rX$ and set $\mb{r}_\star =
(\check{\ev}_{\star})^*(\mb{r})$.
\begin{definition} \label{K-theoretic-I-function}
\begin{enumerate}
\item  Define the permutation-equivariant $K$-theoretic big $J$-function by
%\begin{align*}
 % J_{S_\infty}^{\epsilon}(\mathbf t(q), Q): =
 % &
 %   1-q + \mathbf t(q) + \\
% &
% + (1-q^{-1})(1-q)\sum_{(k,\beta)\neq (0,0),(1,0)}
 %  Q^\beta (\check{\mathrm{ev}}_{\star})_{*}
  % \left(
%\frac{\theta^{\mb{r}_\star}(\tilde{L}_\star) \cdot \ca{O}_{F^{k,
 %         \beta}_{\star, 0}}^{\text{vir}}}
  %  {\lambda_{-1}^{\C^*} \big(N^\vee \big)}\cdot\prod_{i=1}^k\mathrm{ev}_i^*(\mb{t}(L_i))
  % \right),
%\end{align*}

\begin{align*}
  J_{S_\infty}^{\epsilon}(\mathbf t(q), Q): =
  &
    1-q + \mathbf t(q) 
 + (1-q^{-1})(1-q)\sum_{0<\operatorname{\beta}\leq 1/\epsilon}
    Q^\beta (\check{\ev}_{\star})_{*}
     \left(
    \frac{\ca{O}_{F^{0,
          \beta}_{\star, 0}}^{\text{vir}}}
    {\lambda_{-1}^{\C^*} \big(N_{F^{0,
          \beta}_{\star, 0}/QG^{0+}_{0,1}(X,\beta)}^{\mathrm{vir},\vee} \big)}
    \right)
  \\
  &\quad +\hspace{1cm}
    \sum_{\substack{(k \geq1, \beta \geq 0), \, (k, \beta) \neq(1, 0)\,\mathrm{or} \\k=0,\mathrm{deg}(\beta)>1/\epsilon}}
  Q^\beta (\check{\ev}_{1})_{*}
   \left(
\frac{ \ovir_{Q^{\epsilon}_{0,1+k}(X,\beta)}}
    {1-qL_1}\cdot\prod_{i=1}^k\mathrm{ev}_i^*(\mb{t}(L_i)).
  \right).
\end{align*}
where
%$N: = N^{\vir}_{F^{k, \beta}_{\star, 0}/QG_{0, 1+k}^{\epsilon}(X, \beta)}$
 % denotes the virtual normal bundle of the fixed locus
 % $F^{k, \beta}_{\star, 0}$ in $QG^{\epsilon}_{0, 1+k}(X, \beta)$,
$\tilde{L}_\star$ is
  the
  line bundle formed by the relative orbifold cotangent space at $x_\star$.
\item Define the $K$-theoretic (small) $I$-function by
  \[
I(Q,q):=J_{S_\infty}^{\epsilon=0+}(0, Q)/(1-q)= 1 + (1-q^{-1}) \cdot \sum_{\beta>0}Q^\beta  (\check{\ev}_{\star})_{*}
    \left(
    \frac{ \ca{O}_{F^{0,
          \beta}_{\star, 0}}^{\text{vir}}}
    {\lambda_{-1}^{\C^*} \big(N_{F^{0,
          \beta}_{\star, 0}/QG^{0+}_{0,1}(X,\beta)}^{\mathrm{vir},\vee} \big)}
    \right).
  \]
\end{enumerate}

\end{definition}
When $\epsilon=\infty$, the definition of $J$-function coincides with that in
quantum $K$-theory of orbifolds introduced in~\cite{Tonita1}.

In general, by the $K$-theoretic localization theorem (Theorem \ref{K-theoretic-localization}), the action of the $K$-theoretic Euler class
$\lambda_{-1}^{\C^*} \big(N_{F^{0,
          \beta}_{\star, 0}/QG^{0+}_{0,1}(X,\beta)}^{\mathrm{vir},\vee} \big)$ on $K^{\C^*}_\circ(F^{k, \beta}_{\star, 0})\otimes_{\Q[q,q^{-1}]}\Q(q)$ is invertible. Similarly, the action of $1-qL_{1}$ on $K^{\C^*}_\circ(Q^{\epsilon}_{0,1+k})\otimes_{\Q[q,q^{-1}]}\Q(q)$ is invertible. Indeed, this follows by applying~\cite[Proposition 5.13]{Kiem-Savvas} to the embedding $Q^{\epsilon}_{0,1+k}\hookrightarrow L_{1}$ as the zero section.
          Hence by definition, the $J$-function and $I$-function are elements of the loop space $\ca{K}$.
We define
\[
  \mu_{\beta}(q) \in K(\bar{I}X) \otimes \bb{C}[q, q^{-1}]
\]
to be the coefficient of $Q^\beta$ in $[(1-q)I(Q, q)-(1-q)]_ + $. For $\epsilon
\in \bb{Q}_{\geq 0} \cup \{0, \infty \}$, we define
\[
  \mu^{\geq \epsilon}(Q, q) = \sum_{0< \deg(\beta)\leq 1/ \epsilon}
  \mu_\beta(q)Q^{\beta}.
\]
If we write
\[
  I(Q, q) = 1 + \sum_{\beta>0}I_{\beta}(q)Q^\beta,
\]
then $\mu_{\beta}(q) = [(1-q)I_\beta(q)]_ + $.

\begin{remark} \label{remark-small-I-function}
  The $K$-theoretic small $I$-functions are of $q$-hypergeometric-type and
  explicitly computable in general. For example, the explicit formulas of
  $I$-functions for toric bundles and toric complete intersections can be found
  in \cite{givental15}. See also~\cite[Example 5.3]{Gonzalez-Woodward}. It is shown in \cite{RZ1} that $K$-theoretic
  $I$-functions with level structure recovers almost all Ramanujan's mock theta
  functions.
  The formulas of $I$-functions for type A flag manifolds can be found
  in \cite{Taipale} and the $I$-function of the Grassmannian with nontrivial level
  structure is studied in~\cite{Dong-Wen} and~\cite{Givental-Xiaohan}. For general nonabelian GIT quotients, one can use the
  $K$-theoretic abelian/nonabelian correspondence \cite{Wen} to compute their
  $K$-theoretic $I$-functions.

  As shown recently in~\cite{Ruan-Wen-Zhou},
  $I$-functions with non-trivial level structures play an important role in understanding 3d
  $\ca{N}=2$ mirror symmetry. 
\end{remark}

\subsection{Twisted theory and level structure} \label{twisted-theory}

One can consider two types of twistings in $K$-theoretic quasimap theory. The
first type was introduced in \cite{Tonita2, givental22} and includes Eulerian
twistings as special cases. It can be used to compute $K$-theoretic invariants
of total spaces of vector bundles or zero loci of sections of vector bundles.
The second type was introduced in \cite{RZ1}, which is related to Verlinde
algebras. We will give a common generalization here.

For each non-zero integer $m$, fix an element $E^{(m)} \in K^\circ_G(W)$. Let
$\pi: \ca{C} \rightarrow Q^\epsilon_{g, n}(X, \beta)$ be the universal curve,
with universal principal $G$-bundle $\ca{P}$ on it. Let $u: \ca{C} \rightarrow
\ca{P} \times_GW$ be the universal section. We define the following $K$-theory
class
\begin{equation} \label{eq:twisting-bundle} E^{(m)}_{g, n, \beta}: = R^\bullet
  \pi_*u^* \big(\ca{P} \times_G E^{(m)} \big)
\end{equation}
on $Q^\epsilon_{g, n}(X, \beta)$. To avoid potential divergence issues of
twisted virtual structure sheaves and invariants, we consider fiberwise scalar
actions by $\bb{C}^*$ on the classes $E^{(m)}$. We choose two copies of $\C^*$
whose equivariant parameters are denoted by $t_+$ and $t_-$. Fix an integer $a$.
Let $\C_{t_+^a}$ denote the $a$-th tensor power of the standard representation
of $\C^*$.
%\Yang{ What are $\lambda_\pm$? There seems to be too many $\lambda$:
 % $\lambda_0 , \ldots, \lambda_N$, $\lambda_{-1}$, $\lambda_{\pm}$. } \Ming{I
 % made some modifications.}
For $m>0$, we apply the construction
(\ref{eq:twisting-bundle}) to the $G \times \bb{C}^*$-equivariant bundle
$E^{(m)} \otimes \bb{C}_{t_ +^a}$ and obtain
\[
  E^{(m)}_{g, n, \beta}(t_ +) \in K^\circ(Q^\epsilon_{g, n}(X, \beta))
  \otimes\Q[t_ + , t_ + ^{-1}].
\]
Similarly, for $m<0$, we construct a class
\[
  E^{(m)}_{g, n, \beta}(t_-) \in K^\circ(Q^\epsilon_{g, n}(X, \beta)) \otimes
  \Q[t_-, t_-^{-1}]
\]
using $E^{(m)} \otimes \bb{C}_{t_ -^b}$, for some fixed integer $b$.

To introduce the determinantal twisting or \emph{level structure} (c.f.
\cite{RZ1}), we fix another $G$-equivariant bundle $R$ on $W$ and an integer
$l$.\footnote{In general, the theory works for any rational number $l$ such
  that the determinant line bundle $\ca{D}^{R, l}$ exists.} For example, we
usually choose $R$ to be of the form $W \times \tilde{R}$, where $\tilde{R}$ is
a finite-dimensional $G$-module. We define the level-$l$ determinant line bundle
over $Q^\epsilon_{g, k}(X, \beta)$ by
\begin{equation}\label{eq:equation-def-level-structure}
  \ca{D}^{R, l}: = \big(\text{det} \, R \pi_*(R_{g, n, \beta}) \big)^{-l}.
\end{equation}
For $l \neq0$, the level structure corresponds to a specific choice of
non-trivial Chern–Simons level in 3d $\ca{N} = 2$ supersymmetric gauge theory
\cite{Jockers-Mayr1, Jockers-Mayr3, Ueda-Yoshida}. It would be interesting to
find the geometric construction of twists corresponding to generic Chern-Simons
levels (c.f. \cite[\textsection{4.1}]{Ueda-Yoshida}).

The $K$-group $K^\circ(Q^\epsilon_{g,n}(X,\beta))$ has a natural
$\lambda$-ring structure and we denote its Adams operations by
$\Psi^m,m>0$. For the sake of applications, we extend the Adams operations for negative values of $m$ by
$\Psi^{m}(V):=\Psi^{-m}(V^\vee)$ for any $K$-theory class $V$. We also set $\Psi^m(t_\pm)=t_\pm^m$. 
By combining~\eqref{eq:twisting-bundle} and~\eqref{eq:equation-def-level-structure},
we define the twisting class
\begin{equation}\label{eq:combined-twisting-class}
  \ca{T}^{\mb{E}^{(\bullet)}, R, l}_{g,n,\beta}:=\text{exp}
  \Big(\sum_{m<0} \Psi^m \big(E^{(m)}_{g, n, \beta}(t_-) \big) + \sum_{m>0}
  \Psi^m \big(E^{(m)}_{g, n, \beta}(t_ +) \big) \Big) \cdot \ca{D}^{R, l}
\end{equation}
and the twisted virtual
structure sheaf of level $l$
  
\begin{equation} \label{eq:twisted-virtual-structure-sheaf} \ca{O}^{\vir,
    \mb{E}^{(\bullet)}, R, l}_{Q^\epsilon_{g, n}(X, \beta)} : =
  \ca{O}^{\vir}_{Q^\epsilon_{g, n}(X, \beta)} \cdot \ca{T}^{\mb{E}^{(\bullet)},
    R, l}.
\end{equation}
The twisting class $\ca{T}^{\mb{E}^{(\bullet)}, R, l}_{g,n,\beta}$ is an element
in $K^\circ(Q^\epsilon_{g, n}(X, \beta))
  \widehat{\otimes}\Q[t_\pm , t_\pm ^{-1}]$ where the completion depends on the
  signs of $a$ and $b$. For example, if we choose $a=b=1$, then the twisting
  class lies in $K^\circ(Q^\epsilon_{g, n}(X, \beta))_{\Q}[[t_\pm ]]$, the ring of
  formal power series in $t_+$ and $t_-$ with coefficients in $K^\circ(Q^\epsilon_{g, n}(X, \beta))_{\Q}$.

The twisted invariants of level $l$ are defined by the same formula as in
Definition \ref{definition-invariant}, i.e., we define
\[
  \big \langle \mb{t}(L), \dots, \mb{t}(L) \big \rangle_{g, n, \beta}^{S_n,
    \mb{E}^{(\bullet)}, R, l, \epsilon}: = p_* \big(\ca{O}_{Q^\epsilon_{g, n}(X,
    \beta)}^{\text{vir}, \mb{E}^{(\bullet)}, R, l} \cdot \prod_{i =
    1}^n\mathrm{ev}_i^*( \mb{t}(L_i)) \big),
\]
where $p$ denotes the unique morphism from $\big[Q^\epsilon_{g, n}(X, \beta)/S_n
\big]$ to $\text{Spec} \, \bb{C}$. The pairing on the $K$-group is modified
accordingly. To be more precise, let $\widetilde{E^{(m)}}$ and $\widetilde{R}$ denote
the induced
%\marginpar{\footnotesize{ \Yang{How? Is any kind of
 %     twisting by the normal bundle happening here?} \Ming{The twisting bundles
  %    are defined over the quotient stack $[W/G]$. The components of the inertia
      %stack are substacks. The K-theory classes are induced via restriction. I
      %don't think we need to twist the normal bundle. I will double check
      %Tseng's paper on orbifold quantum Lefschetz.} }}
$K$-theory classes on $IX$. We define the twisting class
\begin{equation}\label{eq:classical-twisting-class}
  T:=\mathrm{exp} \big(\sum_{m \neq0}
  \Psi^m(\widetilde{E^{(m)}}))/m \big) \cdot(\det \, \widetilde{R} )^{-l}\in K(IX).
\end{equation}
Recall that $\varpi:IX\rightarrow \bar{I}X$ is the natural projection. We define
the twisting class $\overline{T}=\varpi_*(T)$ in $K(\bar{I}X)$.
The twisted $K$-theoretic pairing of level $l$ is defined by
\[
  (\alpha, \beta)^{R, l}_{\mb{E}^{(\bullet)}} = \chi \Big(I X, \alpha \cdot
  \iota^* \beta \cdot \overline{T}\Big),\quad \alpha, \beta \in K(\bar{I}X)
\]

%As before, we assume this twisted pairing is
%{\color{red}non-degenerate}\Ming{The previous sentence is to be deleted}.
%\marginpar{\footnotesize{ \Yang{ There are parameters $\lambda_{\pm}$. What does
 %     non-degenerate mean? Does it mean for non-degenerate for generic
  %    $\lambda_{\pm}$ or what? } \Ming{My original motivation to include this
   %   sentence is to define J-functions. Now I don't need the pairing in the
    %  definition. I will delete it.} }}

We define the genus-$g$ twisted potential of level $l$
by~\eqref{eq:equation-potential} in which all invariants are replaced by their
twisted counterparts. The definition of the twisted $J$-function of level $l$ is
also similar to that in the untwisted theory. Note that the twisting
class~\eqref{eq:combined-twisting-class} can be defined similarly over genus-0
graph spaces. By abuse of notation, we still denote it by
$\ca{T}^{\mb{E}^{(\bullet)}, R, l}_{0,n,\beta}$. Using the notation introduced
in Definition \ref{K-theoretic-I-function}, we define the twisted small $I$-function of level $l$ by
\begin{equation*}
  I^{{\bf E}^{(\bullet)},R,l}(Q,q)
  :  = 1 + (1-q^{-1}) \cdot \overline{T}^{-1}\cdot \sum_{\beta>0}Q^\beta  (\check{\ev}_{\star})_{*}
     \left(
     \frac{ \ca{O}_{F^{0,
     \beta}_{\star, 0}}^{\text{vir},{\bf E^{(\bullet)}},R,l}}
     {\lambda_{-1}^{\C^*} \big(N_{F^{0,
     \beta}_{\star, 0}/QG^{0+}_{0,1}(X,\beta)}^{\mathrm{vir},\vee} \big)}
     \right).
\end{equation*}
Note that in the above formula, the restriction of the twisting class $ \ca{T}^{\mb{E}^{(\bullet)}, R, l}_{0,1+k,\beta}\big|_{F^{0, \beta}_{\star,
    0}}$ can have non-trivial $\C^*$-weights. Let $I_\beta^{{\bf
    E}^{(\bullet)},R,l}(q)$ be the coefficient of $Q^\beta$ in $ I^{{\bf E}^{(\bullet)},R,l}(Q,q)$. The twisted $K$-theoretic big
$J$-function of level $l$ is defined by
\begin{align*}
  J_{S_\infty}^{\epsilon,{\bf E^{(\bullet)}},R,l}(\mathbf t(q), Q)
  =
  &
    1-q + \mathbf t(q) +
    (1-q)\sum_{0< \operatorname{deg}(\beta)\leq 1/\epsilon}I_\beta^{{\bf E}^{(\bullet)},R,l}(q)Q^\beta
  \\
  +&
     \overline{T}^{-1}\sum_{\substack{
     (k \geq1, \beta \geq 0), \, (k, \beta) \neq(1, 0)\,\mathrm{or} \\k=0,\mathrm{deg}(\beta)>1/\epsilon}}
  Q^\beta (\check{\ev}_{1})_{*}
     \left(
     \frac{\ca{O}_{Q^{\epsilon}_{0,1+k}(X,\beta)}^{\text{vir},{\bf E^{(\bullet)}},R,l}}
     {1-qL_1}\cdot\prod_{i=1}^k\mathrm{ev}_i^*(\mb{t}(L_i))
     \right).
\end{align*}
By definition, we have $ I^{{\bf E}^{(\bullet)},R,l}(Q,q)= J_{S_\infty}^{\epsilon=0+,{\bf E}^{(\bullet)},R,l}(0, Q)/(1-q)$.
%\begin{align*}
 % &J_{S_\infty}^{\epsilon,{\bf E^{(\bullet)}},R,l}(\mathbf t(q), Q)\\
 % : =
 % &
 %   1-q + \mathbf t(q)  \\
 % +&
 %    (1-q^{-1})(1-q)T^{-1} \cdot\sum_{(k,\beta)\neq (0,0),(1,0)}
  %   Q^\beta(\check{\mathrm{ev}}_{\star})_{*}
   %  \left(
    % \frac{\theta^{\mb{r}_\star}(\tilde{L}_\star) \cdot \ca{O}_{F^{k,
    % \beta}_{\star, 0}}^{\text{vir},{\bf E^{(\bullet)}},R,l}}
    % {\lambda_{-1}^{\C^*} \big(N^\vee \big)}\cdot\prod_{i=1}^k\mathrm{ev}_i^*(\mb{t}(L_i))
    % \right),
%\end{align*}
%where
%\[
 % \ca{O}_{F^{k,\beta}_{\star, 0}}^{\text{vir},{\bf E}^{(\bullet)},R,l}
  %:=\ca{O}_{F^{k, \beta}_{\star, 0}}^{\text{vir}}\cdot
  %\ca{T}^{\mb{E}^{(\bullet)}, R, l}_{0,1+k,\beta}\big|_{F^{k, \beta}_{\star,
 %     0}}.
%\]

An important property of the twisting $K$-theory class is that it factorizes
``nicely'' over nodal strata. To be more precise, let $(C, P, u)$ be a quasimap
node
$p$ into two connected components $C'$ and $C''$. Consider the normalization
exact sequence on $C$:
\[
  0 \rightarrow \ca{O}_C \rightarrow \ca{O}_{C'} \oplus \ca{O}_{C''} \rightarrow
  \ca{O}_p \rightarrow0
\]
coming from splitting $C$ at the node $p$. This gives rise to the following
identity in $K$-theory:
\begin{equation}\label{eq:equation-normalization-sequence}
  H^*(C, P \times_G E) \oplus \bar{E}_{u(p)} = H^*(C', P|_{C'} \times_G E) \oplus
  H^*(C'', P|_{C''} \times_G E)
\end{equation}
for any $E \in K^\circ_G(W)$. Since the two operations
$\text{exp}(\Psi^m(\cdot)/m)$ and $\text{det}(\cdot)$ are
additive-multiplicative, the twisting $K$-theory class in
  (\ref{eq:twisted-virtual-structure-sheaf}) factors. We refer the reader to
  \cite[Proposition 2.9]{RZ1} for more details in the case of level structure.
  %\marginpar{\footnotesize{ \Yang{I kinda know what you are trying to say here but
   %   it does not seem very clear.} \Ming{The precise statement can be found in
    %  Proposition 2.9 of my Arxiv paper. arXiv:1804.06552. I am not sure if I
     % want to spell out the precise definition here. But I probably should
      %include a reference.} }}

%%% Local Variables:
%%% mode: latex
%%% TeX-master: "main"
%%% End:

%\input{entangled-tails}

\section{$K$-theoretic quasimap invariants with entangled tails}
\label{section-entangled-tails}
To construct the master space, the second author introduced weighted twisted
curves and quasimaps with entangled tails  in \cite{Zhou2}. We review their
definitions in this section and study the properties of (twisted) virtual
structure sheaves on their moduli spaces.

\subsection{The entanglement of weighted twisted curves}
\label{sec:weighted-twisted-curves}

The space $\bb{Q}_{\geq 0} \cup \{0, \infty \}$ of stability conditions is
divided into chambers by walls $\{1/d\}$. Let $\epsilon_0 =
1/d_0$ be a wall. We fix non-negative integers $g, n, d$ such that
$2g-2 + n + \epsilon_0 d \geq 0$. When $g = n = 0$, in additional we require
that
$\epsilon_0 d>2$.

We consider $n$-pointed weighted twisted
curves of genus $g$ and degree $d$. Here ``weighted'' means the assignment of a
nonnegative integer to each
irreducible component of twisted curves (c.f. \cite{costello2006higher,
  hu2010genus}). These integers will correspond to the degrees of quasimaps.
Hence we will also refer to them as degrees. We suppress the degrees from the
notation and denote an $n$-pointed weighted twisted curve by $(C, x_1 , \ldots,
x_n)$, or by
$(C, \mathbf x)$ for short. From now on, all curves are weighted twisted curves
unless otherwise specified.

A rational \emph{tail} (resp. \emph{bridge}) of $(C, \mathbf x)$ is a smooth
rational irreducible component of $C$ whose normalization has one (resp. two)
special points, i.e. \ the preimage of nodes or markings. Let $\mathfrak
M^{\mathrm{wt}}_{g, n, d}$ be the moduli stack of $n$-pointed weighted twisted
curves of genus $g$ and degree $d$.
\begin{definition}
  We define the open substack stack $\mathfrak M^{\mathrm{wt}, \mathrm{ss}}_{g,
    n, d} \subset \mathfrak
  M^{\mathrm{wt}}_{g, n, d}$ of $\epsilon_0$-semistable  weighted curves by the
  following conditions:
  \begin{itemize}
  \item the curve has no degree-$0$ rational bridge,
  \item the curve has no rational tail of degree strictly less that $d_0$.
  \end{itemize}
\end{definition}
Note that $\mathfrak M^{\mathrm{wt}, \mathrm{ss}}_{g, n, d}$ is a smooth Artin
stack of dimension $3g-3 + n$.

We recall the blowup construction of the moduli stack of
$\epsilon_0$-semistable curves with entangled tails in \cite{Zhou2}. Set
\[
  m = \lfloor d/d_0 \rfloor, \quad \mathfrak U_m = \mathfrak M^{\mathrm{wt},
    \mathrm{ss}}_{g, n, d}.
\]
The integer $m$ is the maximum of the number of degree-$d_0$ rational tails.
Let $\mathfrak Z_i \subset \mathfrak U_m$ be the reduced closed substack
parametrizing curves with at least $i$ rational tails of degree $d_0$. Note
that $\mathfrak Z_{1}$ is a normal crossing divisor and $\mathfrak Z_{i}$ is a
codimension-$i$ stratum in $\mathfrak U_m$.

We start with the deepest stratum $\mathfrak Z_{m}$ which is smooth. Let
\[
  \mathfrak U_{m-1} \rightarrow \mathfrak U_{m}
\]
be the blowup along $\mathfrak Z_{m}$ and let $\mathfrak E_{m-1} \subset
\mathfrak
U_{m-1}$ be the exceptional divisor. Inductively for $i = m-1, \ldots, 1$, let
\[ \mathfrak Z_{(i)} \subset \mathfrak U_{i}
\]
be the proper
transform of $\mathfrak Z_i$
and let
\[
  \mathfrak U_{i-1} \rightarrow \mathfrak U_{i}
\]
be the blowup along ${\mathfrak Z_{(i)}}$ with exceptional divisor $\mathfrak
E_{i-1} \subset \mathfrak U_{i-1}$.

\begin{definition}
  We call $\tilde{\mathfrak M}_{g, n, d}: = \mathfrak U_{0}$ the moduli stack
  of genus-$g$ $n$-marked $\epsilon_0$-semistable curves	of degree $d$ with
  entangled tails.
\end{definition}
We refer the reader to \cite[\textsection{2.3}]{Zhou2} for a concrete
description of the closed points in $\tilde{\mathfrak M}_{g, n, d}$.

Let
\[{\mathfrak M}^{\mathrm{wt}, \mathrm{ss}}_{g, n + k, d-kd_0} \times' {\left(
      \mathfrak
      M_{0, 1, d_0}^{\mathrm{wt}, \mathrm{ss}} \right)}^k
\]
denote the fiber product ${\mathfrak M}_{g, n + k, d-kd_0} \times_{\bb{N}^k}
{\left( \mathfrak
    M_{0, 1} \right)}^k$ formed by matching the sizes of the automorphism
groups at the last $k$-markings. Since our markings are trivialized gerbes, we
can define the gluing morphism
\[
  \mathrm{gl}_k: {\mathfrak M}_{g, n + k, d-kd_0}^{\mathrm{wt}, \mathrm{ss}}
  \times '{\left( \mathfrak
      M_{0, 1, d_0}^{\mathrm{wt}, \mathrm{ss}} \right)}^k
  \rightarrow
  \mathfrak Z_{k} \subset \mathfrak M^{\mathrm{wt, ss}}_{g, n, d}
\]
which glues the $k$ rational tails from ${\left( \mathfrak M_{0, 1,
      d_0}^{\mathrm{wt}, \mathrm{ss}} \right)}^k$ to the last $k$ markings of the
universal curve over ${\mathfrak M}_{g, n + k, d-kd_0}^{\mathrm{wt},
  \mathrm{ss}}$. Since ${\mathfrak M}_{g, n + k, d-kd_0}^{\mathrm{wt},
  \mathrm{ss}} \times' {\left( \mathfrak
    M_{0, 1, d_0}^{\mathrm{wt}, \mathrm{ss}} \right)}^k$ is smooth, the
gluing morphism factors through the
normalization $\mathfrak Z^{\mathrm{nor}}_{k}$ of $\mathfrak Z_{k}$.

For $1 \leq k \leq m$, we recall the structure of $\mathfrak Z_{(k)}$ from
\cite{Zhou2}. According to Lemma 2.2.2 and Lemma 2.2.3 in \cite{Zhou2}, there
is a unique fibered diagram
\begin{equation} \label{eq:gluing-morphism}
  \begin{tikzcd}
    \tilde{\mathfrak M}_{g, n + k, d-kd_0} \times' {\left( \mathfrak
        M_{0, 1, d_0}^{\mathrm{wt}, \mathrm{ss}} \right)}^k \arrow[r, "
    \tilde{\mathrm{gl}}_k"] \arrow[d] & \mathfrak Z_{(k)} \arrow[d] \\
    \mathfrak M^{\mathrm{wt, ss}}_{g, n + k, d-kd_0} \times' {\left(
        \mathfrak
        M_{0, 1, d_0}^{\mathrm{wt}, \mathrm{ss}} \right)}^k \arrow[r, "
    \mathrm{gl}^{\mathrm{nor}}_k" ] & \mathfrak Z_{k}^{\mathrm{nor}}
  \end{tikzcd},
\end{equation}
where the horizontal arrows are \'etale of degree
$k!/ \prod_{i = n + 1}^{n + k} \mathbf r_i$. Here $\mathbf r_i$ is the order
of the automorphism group at the $(n + i)$-th marking of $\mathfrak
M^{\mathrm{wt, ss}}_{g, n + k, d-kd_0}$.

\subsection{The boundary divisors of $\tilde{\mathfrak M}_{g, n, d}$ and
  inflated projective bundles} \label{inflated-bundle-boundary-divisor}

By the construction of $\tilde{\mathfrak M}_{g, n, d}$ we have natural
projections
\[
  \tilde{\mathfrak M}_{g, n, d} \rightarrow \mathfrak U_{i}, \quad i = 0,
  \ldots, m.
\]
\begin{definition}
  Let $\xi$ be a geometric point of $\tilde{\mathfrak M}_{g, n, d}$ and let
  $\{E_1, \ldots, E_k \}$ be
  \textit{a} set of degree-$d_0$ rational tails of $\xi$. Then
  $E_1, \ldots, E_k$ are called \textit{entangled} tails of $\xi$ if
  \begin{enumerate}[(1)]
  \item the image of $\xi$ in $\mathfrak U_k$ lies in $\mathfrak Z_{(k)}$;
  \item the tails $E_1, \ldots, E_k$ are those from  ${\left( \mathfrak
        M_{0, 1} \right)}^k$ via the gluing morphism $\tilde{\mathrm{gl}}_k$;
  \item the image of $\xi$ in $\mathfrak U_i$ does not lie in $\mathfrak
    Z_{(i)}$ for any $i<k$.
  \end{enumerate}
\end{definition}

Let $\mathfrak D_{k-1} \subset \tilde{\mathfrak{M}}_{g, n, d}$ be the
proper transform of $\mathfrak E_{k-1}$. According to \cite[Lemma
2.5.4]{Zhou2}, the boundary divisor $\mathfrak D_{k-1}$ is the closure of the
locally closed reduced locus where there are exactly $k$ entangled tails. By
construction, we have morphisms $\mathfrak D_{k-1} \to \mathfrak
Z_{(k)}$ and
\[
  \tilde{\mathrm{gl}}_{k}^* \mathfrak D_{k-1}
  \rightarrow
  \tilde{\mathfrak M}_{g, n + k, d-kd_0} \times'
  {\left(\mathfrak M_{0, 1, d_0}^{\mathrm{wt}, \mathrm{ss}} \right)}^k.
\]
To describe the structure of the above morphism, the second author introduced
the notion of \emph{inflated projective bundles} in \cite{Zhou2}. We recall the
definition here.

Let $X$ be an algebraic stack and let $L_1, \ldots, L_k$ be line bundles on
$X$. Let
\[
  P = \mathbb P(L_1 \oplus \dots \oplus L_k) \rightarrow X
\]
be the projective bundle. Consider the coordinate hyperplanes
\[
  H_i = \mathbb P(L_1 \oplus \cdots \oplus \{0 \} \oplus \cdots \oplus L_k),
\]
where the $\{0 \}$ appears in the $i$-th place only.
The construction of the inflated projective bundle is analogous to that of
$\tilde {\mathfrak M}_{g, n, d}$. More specifically, for $i = 1, \ldots, k-1$,
let $Z_i \subset P$ be the union of the codimension-$i$
coordinate subspaces,
i.e. \
\[
  \textstyle
  Z_i = \bigcup H_{j_1} \cap \cdots \cap H_{j_i},
\]
where $\{j_1, \ldots, j_i \}$ runs through all subsets of $\{1, \ldots, k \}$
of size $i$.
First we set $P_{k-1} = \mathbb P(L_1, \cdots, L_k)$. Inductively for $i = k-1,
\cdots, 1$, let
$ Z_{(i)} \subset P_i$ be the proper transform of $Z_i$ and let
\[
  P_{i-1} \rightarrow P_i
\]
be the blowup along $Z_{(i)}$ with exceptional divisor $E_{i-1} \subset
P_{i-1}$.
\begin{definition}
  \label{def:inflated-projective-bundle}
  We call $\tilde {\mathbb  P }(L_1, \ldots, L_k) : = P_0 \to X$ the
  \textit{inflated projective bundle} associated to $L_1, \ldots, L_k$.
\end{definition}

We denote by $D_i \subset \tilde {\mathbb P }(L_1, \ldots, L_k)$ the proper
transforms of $E_i$, for $i = 0, \ldots, k-2$, and refer to it as the $i$-th
tautological divisor of the inflated projective bundle. Note that
$\tilde{\mathbb P}(L_1, \ldots, L_k)$ is smooth over $X$ of relative dimension
$k-1$ and $D_i$ are relative effective Cartier divisors. We denote by $\mathcal
O_{
  \tilde{\mathbb P}}(-1) $ the pullback of the tautological bundle on $P$.

For $i = 1, \ldots, k$, let $\Theta_i$ be the line bundles on
$\tilde{\mathfrak M}_{g, n + k, d-kd_0} \times' { (\mathfrak M_{0, 1,
    d_0}^{})}^k$ formed by
the tensor product of two orbifold tangent lines to the curves, one
at the
$(n + i)$-th marking of $\tilde{\mathfrak M}_{g, n + k, d-kd_0}$, and the other
at the unique marking of the $i$-th copy of $ \mathfrak
M_{0, 1}$.

\begin{lemma}[\cite{Zhou2}]
  \label{lem:str-D}
  The diagram
  \[
    \begin{tikzcd}
      \tilde{\mathrm{gl}}_k^* \mathfrak D_{k-1}  \arrow[r, "{\iota_{\mathfrak
          D}}"] \arrow[d] &
      \tilde{\mathfrak{M}}_{g, n, d} \arrow[d] \\
      \tilde{\mathrm{gl}}_k^* \mathfrak E_{k-1}  \arrow[r, "{\iota_{\mathfrak
          E}}"] & \mathfrak U_{k-1}
    \end{tikzcd}
  \]
  is a fibered diagram. The morphism
  \begin{equation}
    \label{eq:D-as-inf-proj-bdl}
    {\tilde{\mathrm{gl}}_k^*} \mathfrak D_{k-1} \rightarrow
    \tilde{\mathfrak M}_{g, n + k, d-kd_0}
    \times' {\left(\mathfrak M_{0, 1, d_0}^{\mathrm{wt}, \mathrm{ss}}
      \right)}^k
  \end{equation} realizes ${\tilde{\mathrm{gl}}_k^*} \mathfrak D_{k-1}$ as the
  inflated projective bundle
  \[
    \tilde{\mathbb P}: = \tilde{\mathbb P} \left( \Theta_1, \ldots , \Theta_k
    \right) \rightarrow
    \tilde{\mathfrak M}_{g, n + k, d-kd_0} \times' {\left(\mathfrak M_{0, 1,
          d_0}^{\mathrm{wt}, \mathrm{ss}} \right)}^k.
  \]
\end{lemma}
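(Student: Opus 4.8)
The plan is to deduce both assertions from the local structure of the nodal degeneration together with the functoriality of blowups. First I would pass to the étale chart of $\mathfrak Z_{(k)}$ provided by \eqref{eq:gluing-morphism}: there $\mathfrak Z_{(k)}$ becomes the smooth stack $\tilde{\mathfrak M}_{g,n+k,d-kd_0}\times'(\mathfrak M_{0,1,d_0}^{\mathrm{wt, ss}})^k$, on which the $k$ glued degree-$d_0$ tails $E_1,\dots,E_k$ acquire a canonical labelling; since the formation of a blowup commutes with flat base change, it suffices to check everything after base change along $\tilde{\mathrm{gl}}_k$. Unwinding the definitions, two things remain: (i) that $\mathfrak D_{k-1}$, the proper transform of $\mathfrak E_{k-1}$ under $\tilde{\mathfrak M}_{g,n,d}=\mathfrak U_0\to\mathfrak U_{k-1}$, coincides with the scheme-theoretic preimage $\tilde{\mathfrak M}_{g,n,d}\times_{\mathfrak U_{k-1}}\mathfrak E_{k-1}$ --- this is precisely the asserted Cartesian square; and (ii) that the tower of blowups $\mathfrak U_{k-1}\leftarrow\cdots\leftarrow\mathfrak U_0$, restricted to $\mathfrak E_{k-1}$ and transported to the chart of $\mathfrak Z_{(k)}$ via $\tilde{\mathrm{gl}}_k$, agrees with the tower of Definition~\ref{def:inflated-projective-bundle} applied to $\Theta_1,\dots,\Theta_k$.

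The geometric input is the deformation theory of the $k$ nodes separating the entangled tails from the main component: along $\mathfrak Z_{(k)}$ the smoothing parameter of the $i$-th such node is a section of $\Theta_i$ --- the tensor of the two orbifold tangent lines there --- so that $\tilde{\mathrm{gl}}_k^* N_{\mathfrak Z_{(k)}/\mathfrak U_k}\cong\bigoplus_{i=1}^k\Theta_i$, and, in a neighbourhood of $\mathfrak Z_{(k)}$ in $\mathfrak U_k$, the divisor $\mathfrak Z_{(1)}$ is étale-locally a union of $k$ smooth branches cut out by the individual smoothing parameters, crossing like $k$ coordinate hyperplanes, with $\mathfrak Z_{(j)}$ being the union of the codimension-$j$ coordinate strata of this arrangement. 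Hence blowing up $\mathfrak Z_{(k)}$ produces $\tilde{\mathrm{gl}}_k^*\mathfrak E_{k-1}=\mathbb P(\Theta_1\oplus\cdots\oplus\Theta_k)$, which is the starting object $P_{k-1}$ of Definition~\ref{def:inflated-projective-bundle}, and the $k$ branches of $\mathfrak Z_{(1)}$ restrict on $\mathfrak E_{k-1}$ to the coordinate hyperplanes $H_i$. Consequently each subsequent center $\mathfrak Z_{(j)}$ ($1\le j\le k-1$) meets the successive strict transform of $\mathfrak E_{k-1}$ transversally, along the coordinate locus $Z_{(j)}$; using the standard fact that the strict transform of a smooth divisor $W$ under the blowup of a smooth center $C$ meeting $W$ transversally is $\mathrm{Bl}_{C\cap W}W$ (and that the exceptional divisor restricts accordingly), the blowup of $\mathfrak U_j$ along $\mathfrak Z_{(j)}$ restricts on $\mathfrak E_{k-1}$ to the blowup along $Z_{(j)}$. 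Iterating, the successive strict transforms of $\mathfrak E_{k-1}$ run through $P_{k-1},P_{k-2},\dots,P_0$, so $\mathfrak D_{k-1}$ is carried, over the base of \eqref{eq:D-as-inf-proj-bdl}, to $\tilde{\mathbb P}(\Theta_1,\dots,\Theta_k)$, which proves (ii). For (i), note that for $j<k$ the center $\mathfrak Z_{(j)}$ restricts on $\mathfrak E_{k-1}$ to the proper subvariety $Z_{(j)}\subsetneq\mathfrak E_{k-1}$ and is therefore contained in no strict transform of $\mathfrak E_{k-1}$; hence the multiplicity of $\mathfrak E_{k-1}$ along each such later center vanishes, so total transform equals strict transform at every stage, giving $\mathfrak D_{k-1}=\tilde{\mathfrak M}_{g,n,d}\times_{\mathfrak U_{k-1}}\mathfrak E_{k-1}$.

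The main obstacle is the local computation underpinning the middle paragraph: identifying $\tilde{\mathrm{gl}}_k^* N_{\mathfrak Z_{(k)}/\mathfrak U_k}$ with $\bigoplus_i\Theta_i$, and --- more delicately --- verifying, stage by stage, that after the $k$-th blowup the centers $\mathfrak Z_{(j)}$ with $j<k$ stay transverse to the successive strict transforms of $\mathfrak E_{k-1}$ and restrict there to the coordinate subspaces $Z_{(j)}$. This amounts to a careful reading of the blowup charts of $\tilde{\mathfrak M}_{g,n,d}$ near a point with $k$ entangled tails, where one must separate the deformations of the $k$ nodes and keep track of the coordinate arrangement; once this transversality and the combinatorial bookkeeping are established, the identification with the inflated projective bundle and the Cartesian square are formal consequences of the functoriality of blowups.
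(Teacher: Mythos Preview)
The paper does not prove this lemma; as the bracket \cite{Zhou2} in the lemma header indicates, it is quoted from \cite{Zhou2} and no argument is given here. So there is no in-paper proof to compare against directly. Your sketch is essentially the argument one expects (and, judging from the references in the proof of Lemma~\ref{lem:divisors1}, the one carried out in \cite[Lemma~2.5.1]{Zhou2}): pass to the \'etale chart, identify the exceptional divisor as a projective bundle over the glued base, and then match the remaining blowups with those of Definition~\ref{def:inflated-projective-bundle}.

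One correction worth flagging: the pullback $\tilde{\mathrm{gl}}_k^* N_{\mathfrak Z_{(k)}/\mathfrak U_k}$ is not $\bigoplus_i \Theta_i$ on the nose, but $\bigoplus_i \Theta_i$ twisted by $\mathcal O(-\sum_{i=k}^{m-1}\mathfrak E_i)$ --- the twist records the earlier blowups along the deeper strata. This is exactly what shows up in the proof of Lemma~\ref{lem:divisors1}(3) in the paper. Since the twist is by a single line bundle it does not change the projectivization or the inflated projective bundle as a space, so your identification of $\tilde{\mathrm{gl}}_k^*\mathfrak D_{k-1}$ with $\tilde{\mathbb P}(\Theta_1,\dots,\Theta_k)$ survives; but it does matter when you later want to identify $\mathcal O_{\tilde{\mathbb P}}(-1)$, which is why the twist is recorded explicitly there.

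Your justification of the Cartesian square (your point (i)) is slightly loose: the fact that $\mathfrak Z_{(j)}$ restricts on the strict transform of $\mathfrak E_{k-1}$ to a \emph{proper} subvariety $Z_{(j)}$ does not, by itself, rule out $\mathfrak Z_{(j)}\subset(\text{strict transform of }\mathfrak E_{k-1})$. The clean reason is that the generic point of any irreducible component of $\mathfrak Z_{(j)}$ has only $j<k$ degree-$d_0$ tails, whereas every point of the strict transform of $\mathfrak E_{k-1}$ carries $k$ such tails; hence no component of the center lies in the divisor, the multiplicity along each later center vanishes, and total transform equals strict transform at every stage. With that fix your outline is correct.
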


Abusing the notation, we still denote the pullback of $\Theta_i$ to
${\tilde{\mathrm{gl}}_k^*} \mathfrak D_{k-1}$ by itself. The pullbacks of the
boundary divisors $\mathfrak D_\ell$ to $\tilde{\mathrm{gl}}_k^* \mathfrak
D_{k-1}$ are described in Lemma 2.7.3 and Lemma 2.5.1 in \cite{Zhou2}. We
summarize them in the following lemma:
\begin{lemma} \label{lem:divisors1}
  Consider the morphism $\iota_{\mathfrak D}: \tilde{\mathrm{gl}}_k^* \mathfrak
  D_{k-1}  \rightarrow \tilde{\mathfrak{M}}_{g, n, d}$. Then we have
  \begin{enumerate}
  \item
    for $0 \leq \ell \leq k-2$, the divisor pullback $\iota_{\mathfrak
      D}^* \mathfrak D_\ell$ is equal to the
    $\ell$-th tautological divisor $D_\ell$ of the inflated projective bundle
    \eqref{eq:D-as-inf-proj-bdl};
  \item
    for $\ell \geq k$, the divisor pullback $\iota_{\mathfrak D}^* \mathfrak
    D_{\ell}$ is equal to $\operatorname{pr}_1^*(\mathfrak D^\prime_{\ell-k})$,
    where $\operatorname{pr}_1$ is the composition of the projections
    \[
      \mathrm{pr}_1: {\tilde{\mathrm{gl}}_k^*} \mathfrak D_{k-1}
      \rightarrow  \tilde{\mathfrak M}_{g, n + k, d-kd_0}
      \times' {\left(\mathfrak M_{0, 1, d_0}^{\mathrm{wt}, \mathrm{ss}}
        \right)}^k \rightarrow  \tilde{\mathfrak M}_{g, n + k, d-kd_0},
    \]
    and $\mathfrak D^{\prime}_{\ell-k}$ is the boundary divisor on
    $\tilde{\mathfrak M}_{g, n + k, d-kd_0}$;
  \item the pullback of the line bundle
      $\mathcal O_{\tilde{\mathfrak M}_{g, n, d}}(\mathfrak D_{k-1})$ is
      canonically isomorphic to
  %  \marginpar{\footnotesize{
   %     \Yang{This is not proved in \cite{Zhou2} so we need to prove it. This
    %      will require us to recall a lot of geometric details of the entangled
     %     tail construction in this paper, which is kind of a pain in the ass.
      %  }
       % \Ming{We can just say that we use the notation as in your other paper.}
      %}}
    \[
      \mathcal O_{ \tilde{\mathbb P}}(-1) \otimes  \iota_{\mathfrak
        D}^* \mathcal O_{\tilde{\mathfrak M}_{g, n, d}} (- \sum_{i = k}^{m-1}
      \mathfrak
      D_{i}).
    \]
  \end{enumerate}
\end{lemma}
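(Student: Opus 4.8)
The plan is to derive all three statements from the iterated-blowup descriptions of $\tilde{\mathfrak M}_{g,n,d}$ and of the inflated projective bundle, matched through the \'etale gluing diagram~\eqref{eq:gluing-morphism} and the identification in Lemma~\ref{lem:str-D}. Write $B:=\tilde{\mathfrak M}_{g,n+k,d-kd_0}\times'(\mathfrak M^{\mathrm{wt},\mathrm{ss}}_{0,1,d_0})^k$, so that by Lemma~\ref{lem:str-D} the space $\tilde{\mathrm{gl}}_k^*\mathfrak D_{k-1}$ is the inflated projective bundle $\tilde{\mathbb P}(\Theta_1,\dots,\Theta_k)\to B$, built by blowing up $P_{k-1}=\mathbb P(\Theta_1\oplus\cdots\oplus\Theta_k)$ along the coordinate strata $Z_{(i)}$ for $i=k-1,\dots,1$. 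The first and main step is to show that, in an \'etale neighbourhood of $\mathfrak Z_{(k)}$, the $k-1$ blowups of the tower $\mathfrak U_{k-1}\leftarrow\cdots\leftarrow\mathfrak U_0=\tilde{\mathfrak M}_{g,n,d}$ pull back under $\tilde{\mathrm{gl}}_k$ to the $k-1$ blowups building $\tilde{\mathbb P}$ from $P_{k-1}$. Here one uses Lemma~2.2.2 and Lemma~2.2.3 of~\cite{Zhou2}: the normalized gluing morphisms $\mathrm{gl}^{\mathrm{nor}}_j$ are \'etale and mutually compatible, the center $\mathfrak Z_{(j)}$ ($j\le k-1$) pulls back to the codimension-$(k-j+1)$ coordinate stratum of the projective bundle, and the bundles $\Theta_i$ appear because the normal line to the $i$-th glued tail at the new node is the tensor product of the two orbifold tangent lines there. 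Since blowing up commutes with flat base change and \'etale maps preserve proper transforms, for $0\le\ell\le k-2$ the exceptional divisor $\mathfrak E_\ell$ pulls back to $E_\ell$, and taking proper transforms to the bottom of the tower gives $\iota_{\mathfrak D}^*\mathfrak D_\ell=D_\ell$; this is (1), essentially~\cite[Lemma~2.7.3]{Zhou2}.

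For (2) the argument is geometric: a point of $\tilde{\mathrm{gl}}_k^*\mathfrak D_{k-1}$ is a curve whose $k$ entangled tails are precisely those glued on from the $(\mathfrak M_{0,1,d_0})^k$-factor, so any additional entanglement --- which is what $\mathfrak D_\ell$ with $\ell\ge k$ records (at least $\ell+1$ entangled tails) --- is carried entirely by the curve in the $\tilde{\mathfrak M}_{g,n+k,d-kd_0}$-factor. Hence set-theoretically $\iota_{\mathfrak D}^*\mathfrak D_\ell=\mathrm{pr}_1^{-1}(\mathfrak D'_{\ell-k})$, and comparing the reduced structures (or the local blowup charts) promotes this to the asserted equality of Cartier divisors. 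This is~\cite[Lemma~2.5.1]{Zhou2} transported through~\eqref{eq:gluing-morphism}.

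For (3) one computes $\mathcal O_{\tilde{\mathfrak M}_{g,n,d}}(\mathfrak D_{k-1})|_{\mathfrak D_{k-1}}$ by running the tower. The divisor $\mathfrak E_{k-1}$ is the exceptional divisor of $\mathfrak U_{k-1}\to\mathfrak U_k$ along $\mathfrak Z_{(k)}$, so $\mathcal O(\mathfrak E_{k-1})|_{\mathfrak E_{k-1}}$ is $\mathcal O(-1)$ of $\mathbb P(N_{\mathfrak Z_{(k)}/\mathfrak U_k})$; since projectivization is insensitive to line-bundle twists, $\mathbb P(N_{\mathfrak Z_{(k)}/\mathfrak U_k})$ pulls back to $\mathbb P(\Theta_1\oplus\cdots\oplus\Theta_k)$, but the normal bundle itself differs from $\Theta_1\oplus\cdots\oplus\Theta_k$ by twists recorded by the deeper exceptional divisors $\mathfrak E_k,\dots,\mathfrak E_{m-1}$ (equivalently their proper transforms $\mathfrak D_k,\dots,\mathfrak D_{m-1}$), because $\mathfrak Z_{(k)}$ is the proper transform through the earlier blowups along $\mathfrak Z_m,\mathfrak Z_{(m-1)},\dots,\mathfrak Z_{(k+1)}$. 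Taking proper transforms to the bottom of the tower, the correction terms from the $k-1$ shallower blowups restrict along $\mathfrak D_{k-1}$ to combinations of the tautological divisors $D_0,\dots,D_{k-2}$ whose total contribution vanishes by a multiplicity computation (the content of~\cite[Lemma~2.5.4]{Zhou2}), and one is left with $\mathcal O_{\tilde{\mathbb P}}(-1)\otimes\iota_{\mathfrak D}^*\mathcal O_{\tilde{\mathfrak M}_{g,n,d}}(-\sum_{i=k}^{m-1}\mathfrak D_i)$. The hard part throughout is exactly this bookkeeping --- correctly identifying $N_{\mathfrak Z_{(k)}/\mathfrak U_k}$ after the iterated modifications of the center and isolating which exceptional twists survive --- whereas everything else follows formally once the \'etale compatibility of~\eqref{eq:gluing-morphism} with the two blowup towers is in place. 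These verifications are carried out in~\cite[Lemma~2.5.1, Lemma~2.5.4 and Lemma~2.7.3]{Zhou2}, to which we refer for the computations.
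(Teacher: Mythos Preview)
Your approach is the same as the paper's: all three parts are deduced from the structural lemmas in \cite{Zhou2}, and your account of (1) and (2) matches the paper's proof (which cites Lemma~2.5.1 of \cite{Zhou2} for both; your citation of Lemma~2.7.3 for (1) is also correct, as the paper's preamble indicates).

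For (3), your outline reaches the right conclusion but the mechanism you describe is slightly off. You suggest that the passage from $\mathfrak E_{k-1}$ to its proper transform $\mathfrak D_{k-1}$ through the later blowups along $\mathfrak Z_{(k-1)},\dots,\mathfrak Z_{(1)}$ introduces correction terms supported on $D_0,\dots,D_{k-2}$ which then cancel. In fact there are no such corrections to begin with: for $i<k$ the center $\mathfrak Z_{(i)}$ is \emph{not} contained in the proper transform of $\mathfrak E_{k-1}$, so at each step the total (divisor) transform equals the proper transform, i.e.\ $\mathcal O(\mathfrak D_{k-1})$ is literally the pullback of $\mathcal O_{\mathfrak U_{k-1}}(\mathfrak E_{k-1})$. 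This is exactly the content of Lemma~2.7.2 of \cite{Zhou2} (``the divisor pullback of $\mathfrak E_i$ to $\tilde{\mathfrak M}_{g,n,d}$ equals $\mathfrak D_i$''), which the paper invokes both here and again at the end to convert the twist $\mathcal O(-\sum_{i=k}^{m-1}\mathfrak E_i)$ into $\iota_{\mathfrak D}^*\mathcal O(-\sum_{i=k}^{m-1}\mathfrak D_i)$. Your citation of Lemma~2.5.4 for this step is misplaced; that lemma identifies $\mathfrak D_{k-1}$ as the closure of the locus with exactly $k$ entangled tails and does not address the multiplicity computation you need. With this correction, your argument and the paper's agree.
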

  \begin{proof}
    (1) and (2) are in Lemma~2.5.1 of \cite{Zhou2}. (3) follows from the proof of
    the that lemma, where it is shown that $\iota^* \mathcal O_{\tilde{\mathfrak
        M}_{g, n, d}}(\mathfrak D_{k-1})$ is canonically isomorphic to the pullback of
    the relative $\mathcal O(-1)$ of the projective bundle
    \[
      \mathfrak E_{k-1} \rightarrow  \mathfrak Z_{(k-1)}.
    \]
    Note that in Lemma~2.5.1 of \cite{Zhou2}, we have
    \[
      \tilde{\mathrm{gl}}^*_k \mathfrak E_{k-1} \cong \mathbb P \big((\Theta_1
      \oplus
      \cdots \oplus \Theta_{k}) \otimes
      \mathcal O_{\mathfrak U_{k}}(- \sum_{i = k}^{m-1} \mathfrak E_{i})
      \big).
    \]
    This is isomorphic to $\mathbb P(\Theta_1 \oplus \cdots \oplus \Theta_{k})$
    but the relative tautological differs by $\mathcal O_{\mathfrak
      U_{k}}(- \sum_{i = k}^{m-1} \mathfrak E_{i})$. Finally observe that the divisor
    pullback of $\mathfrak E_{i}$ to $\tilde{\mathfrak M}_{g, n, d}$ is equal to
    $\mathfrak D_{i}$ by Lemma~2.7.2 of \cite{Zhou2}.
  \end{proof}

As a corollary, we have the following refinement of \cite[Lemma 2.7.4]{Zhou2}.
\begin{lemma} \label{pullback-sum-of-all-divisors}
  Along the map
  \[
    \iota_{\mathfrak D}:{\tilde{\mathrm{gl}}_k^*} \mathfrak D_{k-1}
    \rightarrow \tilde{\mathfrak
      M}_{g, n, d},
  \]
  the line bundle
  \[
    \mathcal O_{\tilde{\mathfrak M}_{g, n, d}} \left( \mathfrak D_0 + \mathfrak
      D_1 + \cdots + \mathfrak D_{m-1} \right)
  \]
  pulls back to
  \[
    \mathcal O_{{\tilde{\mathrm{gl}}_k^*} \mathfrak D_{k-1}} \left(
      D_0 + \cdots +  D_{k-2}
    \right) \otimes \mathcal O_{ \tilde{\mathbb P}}(-1).
  \]
  where the divisors $D_0, \ldots, D_{k-2}$ are the tautological divisors of
  the
  inflated projective bundle \eqref{eq:D-as-inf-proj-bdl}.
\end{lemma}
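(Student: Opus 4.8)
The plan is to reduce the statement entirely to Lemma~\ref{lem:divisors1} by splitting the divisor $\mathfrak D_0 + \cdots + \mathfrak D_{m-1}$ on $\tilde{\mathfrak M}_{g, n, d}$ into three ranges of indices according to how $\iota_{\mathfrak D}$ interacts with each $\mathfrak D_\ell$: the ``low'' range $0 \le \ell \le k-2$, the single ``critical'' divisor $\mathfrak D_{k-1}$, and the ``high'' range $k \le \ell \le m-1$. I would pull back each of the three pieces separately and then tensor the results together; the only step that is not purely formal is a cancellation between the correction term produced by the critical divisor and the high-range divisors.

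More concretely, first I would apply part~(1) of Lemma~\ref{lem:divisors1}: since $\iota_{\mathfrak D}^* \mathfrak D_\ell = D_\ell$ for $0 \le \ell \le k-2$, we get
\[
\iota_{\mathfrak D}^* \mathcal O_{\tilde{\mathfrak M}_{g, n, d}}(\mathfrak D_0 + \cdots + \mathfrak D_{k-2}) \;\cong\; \mathcal O_{\tilde{\mathrm{gl}}_k^* \mathfrak D_{k-1}}(D_0 + \cdots + D_{k-2}).
\]
Next, part~(3) gives
\[
\iota_{\mathfrak D}^* \mathcal O_{\tilde{\mathfrak M}_{g, n, d}}(\mathfrak D_{k-1}) \;\cong\; \mathcal O_{\tilde{\mathbb P}}(-1) \otimes \iota_{\mathfrak D}^* \mathcal O_{\tilde{\mathfrak M}_{g, n, d}}\Big(-\sum_{i = k}^{m-1} \mathfrak D_i\Big).
\]
Finally, keeping the high-range divisors in the form $\iota_{\mathfrak D}^* \mathcal O_{\tilde{\mathfrak M}_{g, n, d}}(\sum_{i = k}^{m-1} \mathfrak D_i)$ --- part~(2) additionally identifies this with $\mathrm{pr}_1^*$ of the boundary divisors on $\tilde{\mathfrak M}_{g, n + k, d - kd_0}$, but that description is not needed here --- tensoring the three contributions yields
\[
\iota_{\mathfrak D}^* \mathcal O_{\tilde{\mathfrak M}_{g, n, d}}\Big(\sum_{\ell = 0}^{m-1} \mathfrak D_\ell\Big) \;\cong\; \mathcal O_{\tilde{\mathrm{gl}}_k^* \mathfrak D_{k-1}}(D_0 + \cdots + D_{k-2}) \otimes \mathcal O_{\tilde{\mathbb P}}(-1) \otimes \iota_{\mathfrak D}^* \mathcal O_{\tilde{\mathfrak M}_{g, n, d}}\Big(-\sum_{i = k}^{m-1} \mathfrak D_i + \sum_{i = k}^{m-1} \mathfrak D_i\Big),
\]
and the last factor is canonically trivial, which is exactly the asserted isomorphism.

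I do not anticipate any genuine obstacle here: the proof is bookkeeping on top of Lemma~\ref{lem:divisors1}. The single point that deserves a moment of attention is that the divisors entering with a minus sign in part~(3) are \emph{literally} the pullbacks along $\iota_{\mathfrak D}$ of $\mathfrak D_k, \ldots, \mathfrak D_{m-1}$, the same divisors contributing to the high range, so the cancellation is exact at the level of line bundles and not merely a numerical coincidence. It is also worth recording the boundary case $k = m$: then the index range $k \le \ell \le m-1$ is empty, part~(3) simplifies to $\iota_{\mathfrak D}^* \mathcal O_{\tilde{\mathfrak M}_{g,n,d}}(\mathfrak D_{m-1}) \cong \mathcal O_{\tilde{\mathbb P}}(-1)$, and the formula follows immediately. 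This both recovers and refines \cite[Lemma~2.7.4]{Zhou2}.
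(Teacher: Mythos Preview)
Your proof is correct and is exactly the argument the paper has in mind: the lemma is stated there as an immediate corollary of Lemma~\ref{lem:divisors1}, and your three-range decomposition (low $0\le\ell\le k-2$, critical $\ell=k-1$, high $k\le\ell\le m-1$) with the cancellation coming from part~(3) is precisely how that corollary is obtained.
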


\subsection{The calibration bundle and $\mathbb
  M$aster space} As before, we assume that $2g-2 + n + \epsilon_0 d \geq 0$, and
$\epsilon_0 d>2$ when $g = 0$.
\begin{definition}
  \label{def:calibration-bundle}
  When $(g, n, d) \neq (0, 1, d_0)$, the universal calibration bundle is
  defined to
  be the line
  bundle $\mathcal O_{\mathfrak
    M_{g, n, d}^{\mathrm{wt, ss}}}(- \mathfrak Z_1)$; when $(g, n, d) = (0, 1,
  d_0)$, the
  universal calibration bundle is the relative cotangent bundle at the unique
  marking.

  For an $S$-family of $\epsilon_0$-semistable, genus-$g$,
  degree-$d$ weighted curves, its calibration bundle is the pullback of universal
  calibration bundle along the classifying morphism $S \to  \mathfrak
  M_{g, n, d}^{\mathrm{wt, ss}}$.
\end{definition}
Let us focus on the case $(g, n, d) \neq (0, 1, d_0)$. For a curve $C$ with
degree-$d_0$ rational tails
$E_1 , \ldots, E_k$,
its calibration bundle is naturally isomorphic to $(\Theta_1 \otimes  \cdots
\otimes \Theta_k)^{\vee}$, where $\Theta_i$ is the one dimensional vector space
of
infinitesimal smoothings of the node on $E_i$. We refer the reader to
\cite[\textsection{2.8}]{Zhou2} for more details.

\begin{definition}
  \label{def:moduli-with-calibrated-tails}
  The moduli of $\epsilon_0$-semistable curves with calibrated tails is defined
  to be
  \[
    M \tilde {\mathfrak M}_{g, n, d}  : = \mathbb P_{\tilde {\mathfrak
        M}_{g, n, d}}(\mathbb M_{\tilde {\mathfrak
        M}_{g, n, d}} \oplus \mathcal O_{\tilde {\mathfrak M}_{g, n, d}}),
  \]
  where $\mathbb M_{\tilde {\mathfrak M}_{g, n, d}}$ is the calibration bundle
  of
  the universal family over $\tilde{\mathfrak M}_{g, n, d}$.
\end{definition}
Following \cite{Zhou2}, we call $M \tilde {\mathfrak M}_{g, n, d}$ the $\mathbb
M$aster space.
Let $S$ be a scheme. An $S$-point of the $\mathbb
M$aster space consists of
\[
  (\pi: \mathcal C \to S, \mathbf x , e, N, v_1, v_2)
\]
where
\begin{itemize}
\item
  $(\pi: \mathcal C \to S, \mathbf x , e) \in \tilde {\mathfrak M}_{g, n,
    d}(S)$;
  % an $S$-point of $\tilde {\mathfrak M}_{g, n, d}$;
\item
  $N$ is a line bundle on $S$;
\item
  $v_1 \in \Gamma(S, \mathbb  M_{S} \otimes  N)$, $v_2 \in \Gamma(S,
  N)$ have no common zero, where $\mathbb  M_{S}$ is the calibration bundle for
  the family of curves $\pi: \mathcal{C} \rightarrow S$.
\end{itemize}
For two families
% We denote an $S$ point of
% By definition, an arrow between two families
\[
  (\pi: \mathcal C \to S, \mathbf x , e, N, v_1, v_2) \quad \text{and} \quad
  (\pi^\prime: \mathcal C^\prime \to S^\prime, \mathbf x^\prime
  , e^\prime, N^\prime, v^\prime_1, v^\prime_2),
\]
an arrow between them consists of a triple
\[
  (f, t, \varphi),
\] where
\begin{itemize}
\item
  $f:S \to S^\prime$ is a morphism;
\item
  $t: (\pi: \mathcal C \to S, \mathbf x , e) \to f^*(\pi^\prime: \mathcal
  C^\prime \to S^\prime, \mathbf x^\prime , e^\prime)$ is a $2$-morphism in
  $\tilde {\mathfrak M}_{g, n, d}(S)$;
\item
  $\varphi: N \to f^*N^\prime$ is an isomorphism of line bundles, such that the
  morphisms $1 \otimes \varphi: \mathbb  M_{S} \otimes N \rightarrow \mathbb
  M_{S} \otimes f^*N' = f^*(\mathbb  M_{S'} \otimes N')$ and $\varphi$ sends
  $(v_1, v_2)$ to $(f^*v_1', f^*v_2')$.
\end{itemize}

\subsection{The moduli and its virtual structure sheaf}
\label{virtual-structure-sheaf}

Fix $g, n$ and a curve class $\beta$. Let $d = \deg(\beta)$. Recall that
$L_{\theta}$ is the polarization on $[W/G]$. Let $\mathfrak {Qmap}_{g, n}(X,
\beta)$ be the stack of genus-$g$, $n$-marked quasimaps to $X$ with curve class
$\beta$. Consider the open substack $\mathfrak {Qmap}^{\mathrm{ss}}_{g, n}(X, \beta)
\subset \mathfrak {Qmap}^{\mathrm{ss}}_{g, n}(X, \beta)$ parametrizing quasimaps with no
rational tails of degree $<d_0$, no
rational bridges of degree $0$, or base point of length $>d_0$. There is a
natural forgetful morphism
\[
  \mathfrak {Qmap}^{\mathrm{ss}}_{g, n}(X, \beta) \rightarrow \mathfrak
  M^{\mathrm{wt, ss}}_{g, n, d}
\]
defined by taking the underlying curves weighted by the degrees of the
quasimaps.

\begin{definition}
  \label{def:semistable-quasimaps-entangled-tails}
  We define the stack of genus-$g$, $n$-pointed, $\epsilon_0$-semistable
  quasimaps with \emph{entangled tails} to $X$ with curve class $\beta$ to be
  \[
    \mathfrak {Qmap}^{\sim}_{g, n}(X, \beta): = \mathfrak {Qmap}^{\mathrm{ss}}_{g, n}(X,
    \beta) \times_{\mathfrak M^{\mathrm{wt, ss}}_{g, n, d}} \tilde{\mathfrak M}_{g,
      n, d}. \]
\end{definition}

\begin{definition}
  An $S$-family of $\epsilon_0$-semistable quasimaps with entangled tails is
  $\epsilon_ + $-stable if the underlying family of quasimaps is
  $\epsilon_ + $-stable. In other words, it is stable if there is no
  length-$d_0$ base point.
\end{definition}

Let $\tilde Q^{\epsilon_ + }_{g, n}(X, \beta)$ denote the moduli of
genus-$g$, $n$-pointed
$\epsilon_ + $-stable quasimaps to $X$ with entangled tails of curve class
$\beta$.
According to \cite{Zhou2}, there is a natural isomorphism
\[
  \tilde Q^{\epsilon_ + }_{g, n}(X, \beta) \cong Q^{\epsilon_ + }_{g, n}(X,
  \beta)
  \underset{\mathfrak M_{g, n, d}^{\mathrm{wt, ss}}}{\times} \tilde{\mathfrak
    M}_{g, n, d}
\]
and hence $\tilde{Q}^{\epsilon_ + }_{g, n}(X, \beta)$ is a proper
Deligne--Mumford stack.

%\marginpar{\footnotesize{
 %   \Yang{
  %    Previously we used $u$ to denote the section $C \to P \times_{G} W$ and
   %   $[u]$
    %  to denote the map to the stack.
    %}
   % \Ming{Corrected.}
  %}}
Let $\pi: \mathcal C \to
Q^{\epsilon_ + }_{g, n}(X, \beta)$ be the universal curve and let $[u]: \mathcal C \to
  [W/G]$ be the universal map.
According to \cite{kim1, Kim4}, the moduli stack $Q^{\epsilon_ + }_{g, n}(X,
\beta)$ has a relative perfect obstruction theory
\begin{equation*}
  (R \pi_*u^* \mathbb T_{[W/G]})^{\vee}
  \rightarrow
  \mathbb L_{Q^{\epsilon_ + }_{g, n}(X, \beta)/ \mathfrak M_{g, n,
      d}^{\mathrm{wt, ss}}}
\end{equation*}
for the forgetful mophism $\nu:Q^{\epsilon_ + }_{g, n}(X, \beta) \rightarrow
\mathfrak M_{g, n, d}^{\mathrm{wt, ss}}$. According to \cite[Definition
2.2]{Qu},
using the above relative perfect obstruction theory, we can define a virtual
pullback
\[
  \nu^!:K_\circ(\mathfrak M_{g, n, d}^{\mathrm{wt, ss}})
  \rightarrow
  K_\circ(Q^{\epsilon_ + }_{g, n}(X, \beta)),
\]
and a virtual structure sheaf
\[\ovir_{Q^{\epsilon_ + }_{g, n}(X, \beta)}: = \nu^! \ca{O}_{\mathfrak M_{g, n,
      d}^{\mathrm{wt, ss}}} \in K_\circ(Q^{\epsilon_ + }_{g, n}(X, \beta)).
\]
Note that we use $\mathfrak M_{g, n, d}^{\mathrm{wt, ss}}$ in place of
$\mathfrak M_{g, n}$. Let $\nu':Q^{\epsilon_ + }_{g, n}(X, \beta) \rightarrow
\mathfrak M_{g, n}$ be the composition of $\nu$ and the \'etale morphism $\mu:
\mathfrak M_{g, n, d}^{\mathrm{wt, ss}} \rightarrow \mathfrak M_{g, n}$. We
will get the same virtual structure sheaf if we take the virtual pullback of
$\ca{O}_{\mathfrak M_{g, n}}$ along $\nu'$. This follows from the functoriality
of virtual pullbacks \cite[Proposition 2.11]{Qu} and the fact that $\mu^*
\ca{O}_{\mathfrak M_{g, n}} = \ca{O}_{\mathfrak M_{g, n, d}^{\mathrm{wt,
      ss}}}$.

Similarly, let $\tilde \pi: \tilde{\mathcal C} \to \tilde
Q^{\epsilon_ + }_{g, n}(X, \beta)$ be the universal curve and let $[\tilde u]:
\tilde{\mathcal C} \to [W/G]$ be the universal quasimap. Let $\tilde{\nu}:
\tilde
Q^{\epsilon_ + }_{g, n}(X, \beta) \rightarrow \tilde{\mathfrak M}_{g, n, d}$
denote the forgetful morphism. We have a relative perfect obstruction theory
\[
  (R \tilde \pi_* \tilde u^* \mathbb T_{[W/G]})^{\vee}
  \rightarrow
  \mathbb L_{\tilde Q^{\epsilon_ + }_{g, n}(X, \beta)/ \tilde{\mathfrak M}_{g,
      n, d}},
\]
which in turn defines a virtual pullback
$\tilde{\nu}^!:K_\circ(\tilde{\mathfrak M}_{g, n, d}) \rightarrow K_\circ(
\tilde
Q^{\epsilon_ + }_{g, n}(X, \beta))$. The virtual structure of $\tilde
Q^{\epsilon_ + }_{g, n}(X, \beta)$ is defined by
\[\ovir_{\tilde Q^{\epsilon_ + }_{g, n}(X, \beta)}: = \tilde \nu^!
  \ca{O}_{\tilde{\mathfrak M}_{g, n, d}} \in K_\circ(\tilde Q^{\epsilon_ + }_{g,
    n}(X, \beta)).
\]

Suppose we choose an integer $l$ and $K$-theory classes $R, E^{(m)}\in K^\circ_G(W),\,m\neq
0$.
%\Ming{In the previous version, I write $E^{(k)}$ to denote the twisting
 % class. Unfortunately, the index $k$ is also used to represent the number of
  %markings. So I change $k$ to $m$. If you see $E^{(k)}$, reminder me to change
  %it.}
As explained in Section \ref{twisted-theory}, we can define twisted virtual structure sheaves $ \ca{O}^{\vir,\mb{E}^{(\bullet)}, R, l}_{Q^{\epsilon_ + }_{g,n}(X, \beta)} $
and $ \ca{O}^{\vir,\mb{E}^{(\bullet)}, R, l}_{\tilde Q^{\epsilon_+}_{g, n}(X, \beta)} $ by~(\ref{eq:twisted-virtual-structure-sheaf}).

Let $\tau: \tilde Q^{\epsilon_ + }_{g, n}(X, \beta) \to
Q^{\epsilon_ + }_{g, n}(X, \beta)$ be the forgetful morphism. The following
lemma compares the two virtual structures and their twisted counterparts.
\begin{lemma}
  \label{lem:vir-str-sheaf-comp-entangled-tails}
  We have
  \[
    \tau_* \ovir_{\tilde Q^{\epsilon_ + }_{g, n}(X, \beta)} = \ovir_{Q^{\epsilon_ +
      }_{g, n}(X, \beta)}
    \]
    and
      \[
    \tau_* \ca{O}^{\vir,\mb{E}^{(\bullet)}, R, l}_{\tilde Q^{\epsilon_ + }_{g, n}(X, \beta)} = \ca{O}^{\vir,\mb{E}^{(\bullet)}, R, l}_{Q^{\epsilon_ +
      }_{g, n}(X, \beta)}.
  \]
\end{lemma}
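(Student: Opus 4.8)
The plan is to realize $\tau$ as the base change of the iterated blow-down $\rho\colon\tilde{\mathfrak M}_{g,n,d}\to\mathfrak M^{\mathrm{wt,ss}}_{g,n,d}$ and to reduce the lemma to two inputs: (i) the identity $R\rho_*\ca O_{\tilde{\mathfrak M}_{g,n,d}}=\ca O_{\mathfrak M^{\mathrm{wt,ss}}_{g,n,d}}$ in the derived category, and (ii) the base-change compatibility of $K$-theoretic virtual pullbacks. By the isomorphism $\tilde Q^{\epsilon_+}_{g,n}(X,\beta)\cong Q^{\epsilon_+}_{g,n}(X,\beta)\times_{\mathfrak M^{\mathrm{wt,ss}}_{g,n,d}}\tilde{\mathfrak M}_{g,n,d}$ recalled above, we have a Cartesian square
\[
\begin{tikzcd}
\tilde Q^{\epsilon_+}_{g,n}(X,\beta)\arrow[r,"\tau"]\arrow[d,"\tilde\nu"] & Q^{\epsilon_+}_{g,n}(X,\beta)\arrow[d,"\nu"]\\
\tilde{\mathfrak M}_{g,n,d}\arrow[r,"\rho"] & \mathfrak M^{\mathrm{wt,ss}}_{g,n,d},
\end{tikzcd}
\]
in which $\rho$, and hence $\tau$, is proper.

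First I would check that the relative perfect obstruction theory of $\tilde\nu$ is the $\tau$-pullback of that of $\nu$. Since the square is Cartesian, $\mathbb L_{\tilde\nu}\cong\tau^*\mathbb L_\nu$; and since the universal curve $\tilde{\mathcal C}\to\tilde Q^{\epsilon_+}_{g,n}(X,\beta)$ together with the universal quasimap is the pullback along $\tau$ of the flat universal curve $\mathcal C\to Q^{\epsilon_+}_{g,n}(X,\beta)$ with its universal map, cohomology-and-base-change for the perfect complex $R\pi_*u^*\mathbb T_{[W/G]}$ gives $R\tilde\pi_*\tilde u^*\mathbb T_{[W/G]}\cong\tau^*R\pi_*u^*\mathbb T_{[W/G]}$. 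Hence $\tilde\nu^!$ is exactly the virtual pullback attached to the $\tau$-pulled-back obstruction theory, and the base-change property of virtual pullbacks (\cite{Qu}) yields $\tau_*\circ\tilde\nu^!=\nu^!\circ R\rho_*$ as maps $K_\circ(\tilde{\mathfrak M}_{g,n,d})\to K_\circ(Q^{\epsilon_+}_{g,n}(X,\beta))$. Applying this to $\ca O_{\tilde{\mathfrak M}_{g,n,d}}$ and using $R\rho_*\ca O_{\tilde{\mathfrak M}_{g,n,d}}=\ca O_{\mathfrak M^{\mathrm{wt,ss}}_{g,n,d}}$ gives $\tau_*\ovir_{\tilde Q^{\epsilon_+}_{g,n}(X,\beta)}=\nu^!\ca O_{\mathfrak M^{\mathrm{wt,ss}}_{g,n,d}}=\ovir_{Q^{\epsilon_+}_{g,n}(X,\beta)}$, which is the first identity. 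For the twisted statement, I would observe that the twisting class $\ca T^{\mb E^{(\bullet)},R,l}_{g,n,\beta}$ of \eqref{eq:combined-twisting-class} is built entirely from $R^\bullet(-)_*$ of pullbacks of the fixed classes $E^{(m)},R$ along the universal curve and universal principal bundle; by the same base change for the universal family, $\tau^*\ca T^{\mb E^{(\bullet)},R,l}_{g,n,\beta}$ equals the corresponding twisting class on $\tilde Q^{\epsilon_+}_{g,n}(X,\beta)$. The projection formula then gives
\[
\tau_*\,\ca O^{\vir,\mb E^{(\bullet)},R,l}_{\tilde Q^{\epsilon_+}_{g,n}(X,\beta)}
=\tau_*\bigl(\ovir_{\tilde Q^{\epsilon_+}_{g,n}(X,\beta)}\cdot\tau^*\ca T^{\mb E^{(\bullet)},R,l}_{g,n,\beta}\bigr)
=\bigl(\tau_*\ovir_{\tilde Q^{\epsilon_+}_{g,n}(X,\beta)}\bigr)\cdot\ca T^{\mb E^{(\bullet)},R,l}_{g,n,\beta}
=\ca O^{\vir,\mb E^{(\bullet)},R,l}_{Q^{\epsilon_+}_{g,n}(X,\beta)},
\]
as claimed.

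The main obstacle, and the only point requiring real work, is the identity $R\rho_*\ca O_{\tilde{\mathfrak M}_{g,n,d}}=\ca O_{\mathfrak M^{\mathrm{wt,ss}}_{g,n,d}}$. I would prove it by induction on the number of blow-ups $\mathfrak U_{i-1}\to\mathfrak U_i$ in the construction of $\tilde{\mathfrak M}_{g,n,d}$, composing $R(-)_*$. By the construction in \cite{Zhou2} each $\mathfrak U_i$ is a smooth Artin stack and each center $\mathfrak Z_{(i)}$ (and, at the bottom, $\mathfrak Z_m$) is smooth, hence regularly embedded; for a single blow-up along a smooth center the exceptional divisor is a projective bundle over the center, so $R^{>0}(\mathrm{blow\text{-}down})_*\ca O=0$ and $(\mathrm{blow\text{-}down})_*\ca O=\ca O$ by the standard computation. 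The passage from schemes to Artin stacks is harmless here, since blow-up along a smooth center commutes with smooth base change and the statement is smooth-local on the target. Two further points I would flag for care: $\nu$ need not be flat, so the base change in the second paragraph must be understood in the derived/virtual sense supplied by \cite{Qu} (this is precisely why the argument is organized through virtual pullbacks rather than naive pullback of $\ca O^{\vir}$), and one must make sure the twisting class is genuinely the $R^\bullet\pi_*$ of a $\tau$-pulled-back complex so that cohomology-and-base-change applies verbatim, which it does because the universal family on $\tilde Q^{\epsilon_+}_{g,n}(X,\beta)$ is pulled back from that on $Q^{\epsilon_+}_{g,n}(X,\beta)$.
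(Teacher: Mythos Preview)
Your proposal is correct and follows essentially the same approach as the paper: set up the Cartesian square over $\rho:\tilde{\mathfrak M}_{g,n,d}\to\mathfrak M^{\mathrm{wt,ss}}_{g,n,d}$, use that $\rho$ is a composition of blow-ups along smooth centers so that $R\rho_*\ca O=\ca O$, invoke the compatibility of virtual pullbacks with proper pushforward from \cite{Qu}, and finish the twisted case by the projection formula after observing that the twisting class is pulled back along $\tau$. Your write-up is in fact slightly more explicit than the paper's (e.g.\ spelling out cohomology-and-base-change for the obstruction theory and the induction on blow-ups), but the argument is the same.
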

\begin{proof}
  Consider the following fibered diagram.
  \[
    \begin{tikzcd}
      \tilde Q^{\epsilon_ + }_{g, n}(X, \beta) \arrow[d, " \tilde{\nu}"']  \arrow[r,
      " \tau"] & Q^{\epsilon_ + }_{g, n}(X, \beta) \arrow[d, " \nu"] \\
      \tilde{\mathfrak M}_{g, n, d} \arrow[r, " \tau'"]  & {\mathfrak M}_{g, n,
        d}^{\mathrm{wt}, \mathrm{ss}}
    \end{tikzcd}
  \]
  Since $\tau'$ is a sequence of blowups along smooth centers, we have
  \begin{equation} \label{eq:pf-str-sheaf}
    \tau'_* \ca{O}_{\tilde{\mathfrak M}_{g, n, d}} = \ca{O}_{{\mathfrak M}_{g, n,
        d}^{\mathrm{wt}, \mathrm{ss}}}.
  \end{equation}
  Note that the $\tilde{\nu}$-relative perfect obstruction theory is the
  pullback of $\nu$-relative perfect obstruction theory via $\tau$. The first part of the lemma follows from (\ref{eq:pf-str-sheaf}) and the fact that virtual
  pullbacks commute with proper pushforwards (c.f. \cite[Proposition 2.4]{Qu}).
  Since the universal principal $G$-bundle over $ \tilde Q^{\epsilon_ + }_{g,
    n}(X, \beta)$ is the pullback of that over $Q^{\epsilon_ + }_{g, n}(X,
  \beta)$, the same holds for twisting classes
  $E^{(m)}_{g,n,\beta}(t_\pm)$ and $\ca{D}^{R,l}$. Hence the second identity in
  the lemma follows from the first one and the projection formula.
 
\end{proof}

\subsection{Splitting off entangled tails} \label{split-tail}
Recall that for $k = 1, \dots, m$, the boundary divisors $\mathfrak D_{k-1}
\subset{\mathfrak M}^{\mathrm{wt, ss}}_{g, n, d}$ is the closure of the locally
closed reduced locus where there are exactly $k$ entangled tails. We describe
the pullback of the virtual structure sheaf to $\tilde{\mathrm{gl}}_k^*
\mathfrak D_{k-1}$ via the gluing morphism $\iota_{\mathfrak D}:
\tilde{\mathrm{gl}}_k^* \mathfrak D_{k-1}  \rightarrow \tilde{\mathfrak{M}}_{g,
  n, d}$.

Define $\tilde Q^{\epsilon_ + }_{g, n}(X, \beta)|_{{\tilde{\mathrm{gl}}_k^*}
  \mathfrak D_{k-1}}$ by the
following fibered diagram

\begin{equation*}
  \begin{tikzcd}
    \tilde Q^{\epsilon_ + }_{g, n}(X, \beta)|_{{\tilde{\mathrm{gl}}_k^*}
      \mathfrak D_{k-1}} \arrow[d] \arrow[r]
    & \tilde Q^{\epsilon_ + }_{g, n}(X, \beta) \arrow[d] \\
    {\tilde{\mathrm{gl}}_k^*} \mathfrak D_{k-1} \arrow[r, "{\iota_{\mathfrak
        D}}"] &  \tilde{\mathfrak M}_{g, n, d}
  \end{tikzcd},
\end{equation*}
and define
\[
  \ovir_{\tilde Q^{\epsilon_ + }_{g, n}(X, \beta)|_{{\tilde{\mathrm{gl}}_k^*}
      \mathfrak D_{k-1}}}
  : = \iota_{\mathfrak D}^!(\ovir_{\tilde Q^{\epsilon_ + }_{g, n}(X, \beta)}).
\]
Here $\iota_{\mathfrak D}^!$ denotes the Gysin pullback. The twisted
virtual structure sheaf $\ca{O}^{\vir,\mb{E}^{(\bullet)}, R, l}_{\tilde Q^{\epsilon_ +}_{g, n}(X, \beta)|_{{\tilde{\mathrm{gl}}_k^*}
      \mathfrak D_{k-1}}}$ is defined similarly as the Gysin pullback of the
  twisted virtual structure sheaf on $\tilde Q^{\epsilon_ + }_{g, n}(X, \beta)$.

According to \cite[\textsection{3.2}]{Zhou2}, there is a fibered diagram
\begin{equation*}
  \label{eq:splitting-nodes}
  \begin{tikzcd}
    \tilde Q^{\epsilon_ + }_{g, n}(X, \beta)|_{{\tilde{\mathrm{gl}}_k^*}
      \mathfrak D_{k-1}} \arrow[d]
    \arrow[r, "{p}"] & \coprod_{\vec \beta} \tilde
    Q^{\epsilon_ + }_{g, n + k}(X, \beta') \underset{(IX)^k}{\times} \prod_{i = 1}^k
    Q^{\epsilon_ + }_{0, 1}(X, \beta_i)
    \arrow[d] \\
    {\tilde{\mathrm{gl}}_k^*} \mathfrak D_{k-1} \arrow[r] & \tilde{\mathfrak
      M}_{g, n + k, d-kd_0} \times'
    (\mathfrak M_{0, 1, d_0}^{\mathrm{wt}, \mathrm{ss}})^k
  \end{tikzcd},
\end{equation*}
where $\vec \beta = (\beta', \beta_1 , \ldots, \beta_k)$ runs through all the decompositions of effective curve classes
\[
  \beta = \beta' + \beta_1+\cdots + \beta_k
\]
such that $\deg(\beta_i) = d_0$ for $i \geq 1$. By Lemma~ \ref{lem:str-D}, the
map $p$ above
is the inflated projective bundle $\tilde{\mathbb P}(\Theta_1 \oplus \cdots
\oplus
\Theta_k)$, and in particular, it is flat. Consider the fibered diagram
\[
  \begin{tikzcd}
    Q^{\epsilon_ + }_{g, n + k}(X, \beta')
\underset{(IX)^k}{\times} 
\prod_{i = 1}^k
  Q^{\epsilon_ + }_{0, 1}(X, \beta_i)
  \arrow[r,"\Phi"] \arrow[d,"\mathrm{ev}^k"]  &
    Q^{\epsilon_ + }_{g, n + k}(X, \beta') \times \prod_{i = 1}^k
    Q^{\epsilon_ + }_{0, 1}(X, \beta_i)
    \arrow[d]\\
    (IX)^k\arrow[r,"\Delta_{(IX)^k}"]  &
    (IX)^k\times (IX)^k,
  \end{tikzcd}
\]
where the bottom line is the diagonal morphism. 

\begin{lemma}
  \label{lem:int-with-D-k-1}
  We have
  \begin{equation*}
    %\label{eq:int-with-D-k-1}
    \ovir_{\tilde
      Q^{\epsilon_ + }_{g, n}(X, \beta)|_{{\tilde{\mathrm{gl}}_k^*} \mathfrak
        D_{k-1}}} =
    p^* \Big( \sum_{\vec \beta} \Delta_{ (IX)^k}^!
    \ovir_{\tilde{Q}^{\epsilon_ + }_{g, n + k}(X, \beta')} \boxtimes
    \prod_{i = 1}^k \ovir_{
      {Q}^{\epsilon_ + }_{0, 1}(X, \beta_i)} \Big)
  \end{equation*}
  and 
\[
  \ca{O}^{\vir,\mb{E}^{(\bullet)}, R, l}_{\tilde
      Q^{\epsilon_ + }_{g, n}(X, \beta)|_{{\tilde{\mathrm{gl}}_k^*} \mathfrak
        D_{k-1}}} =
    p^* \Big( \sum_{\vec \beta} \Delta_{ (IX)^k}^!
    \ca{O}^{\vir,\mb{E}^{(\bullet)}, R, l}_{\tilde{Q}^{\epsilon_ + }_{g, n + k}(X, \beta')} \boxtimes
    \prod_{i = 1}^k \ca{O}^{\vir,\mb{E}^{(\bullet)}, R, l}_{
     {Q}^{\epsilon_ + }_{0, 1}(X, \beta_i)} \Big)
                   \cdot p^*(\mathrm{ev}^k)^*(T^{-1})^{\boxtimes k},
\]
                   where $T=\operatorname{exp}\,
                    (\sum_{m \neq0}
                     \Psi^m(\widetilde{E^{(m)}}))/m )
                     \cdot(\det \, \widetilde{R} )^{-l} $ is the twisting class
                     introduced in Section \ref{twisted-theory}. 
  
                   \end{lemma}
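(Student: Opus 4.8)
The plan is to obtain both identities from the functoriality and base-change properties of $K$-theoretic virtual pullbacks (\cite{Qu}), together with the description of $\tilde{\mathrm{gl}}_k^* \mathfrak D_{k-1}$ as an inflated projective bundle (Lemma~\ref{lem:str-D}) and the splitting of the relative perfect obstruction theory, and of the twisting class, over the glued universal curve.

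First I would rewrite $\ovir_{\tilde Q^{\epsilon_+}_{g,n}(X,\beta)|_{\tilde{\mathrm{gl}}_k^* \mathfrak D_{k-1}}} = \iota_{\mathfrak D}^!\,\tilde\nu^!\,\ca{O}_{\tilde{\mathfrak M}_{g,n,d}}$. Since $\iota_{\mathfrak D}$ factors as an \'etale morphism onto the Cartier divisor $\mathfrak D_{k-1}\subset\tilde{\mathfrak M}_{g,n,d}$ and the source $\tilde{\mathrm{gl}}_k^* \mathfrak D_{k-1}$ is smooth, the refined Gysin pullback of $\ca{O}_{\tilde{\mathfrak M}_{g,n,d}}$ along $\iota_{\mathfrak D}$ is just $\ca{O}_{\tilde{\mathrm{gl}}_k^* \mathfrak D_{k-1}}$; commuting the Gysin and virtual pullbacks (\cite[Proposition 2.11]{Qu}) then gives $\ovir_{\tilde Q^{\epsilon_+}_{g,n}(X,\beta)|_{\tilde{\mathrm{gl}}_k^* \mathfrak D_{k-1}}} = \tilde\nu_{\mathfrak D}^!\,\ca{O}_{\tilde{\mathrm{gl}}_k^* \mathfrak D_{k-1}}$, where $\tilde\nu_{\mathfrak D}$ denotes the restriction of $\tilde\nu$ with the restricted relative obstruction theory. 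Now over $\tilde{\mathrm{gl}}_k^* \mathfrak D_{k-1}$ the universal curve is obtained by gluing $k$ degree-$d_0$ rational tails to the universal curve over $\tilde{\mathfrak M}_{g,n+k,d-kd_0}\times'(\mathfrak M_{0,1,d_0}^{\mathrm{wt},\mathrm{ss}})^k$, so the $\tilde\nu_{\mathfrak D}$-relative obstruction theory $(R\pi_*u^*\mathbb T_{[W/G]})^\vee$ is pulled back along the flat map $p$ from the relative obstruction theory of $\coprod_{\vec\beta}\tilde Q^{\epsilon_+}_{g,n+k}(X,\beta')\times_{(IX)^k}\prod_i Q^{\epsilon_+}_{0,1}(X,\beta_i)$ over that base. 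Both vertical maps of the square in Lemma~\ref{lem:str-D} being flat, the base-change functoriality of virtual pullbacks yields the untwisted identity with $p^*$ in front.

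It then remains to prove the product formula $\ovir_{\tilde Q^{\epsilon_+}_{g,n+k}(X,\beta')\times_{(IX)^k}\prod_i Q^{\epsilon_+}_{0,1}(X,\beta_i)} = \Delta_{(IX)^k}^!\big(\ovir_{\tilde Q^{\epsilon_+}_{g,n+k}(X,\beta')}\boxtimes\prod_i\ovir_{Q^{\epsilon_+}_{0,1}(X,\beta_i)}\big)$. Since $X$ is a smooth Deligne--Mumford stack, $IX$ is smooth and $\Delta_{(IX)^k}$ is a regular embedding, so $\Delta_{(IX)^k}^!$ is defined through the square containing $\Phi$; moreover the external product of the $\ovir$'s is the virtual structure sheaf of the product moduli, whose relative obstruction theory is the direct sum of those of the factors. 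Applying the normalization sequence $0\to\ca{O}_C\to\ca{O}_{C_{\mathrm{main}}}\oplus\bigoplus_i\ca{O}_{E_i}\to\bigoplus_i\ca{O}_{p_i}\to0$ one identifies the node term $-\bigoplus_i u^*\mathbb T_{[W/G]}|_{p_i}$ (the bar-invariant part, as in~\eqref{eq:equation-normalization-sequence}) in the obstruction theory of the glued family with the conormal contribution of $\Delta_{(IX)^k}$ via the rigidified evaluation maps at the gluing markings, and the formula then follows from functoriality. This is the $K$-theoretic counterpart of the step carried out in \cite[\textsection{3.2}]{Zhou2}.

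For the twisted statement I would run the same argument, carrying along the twisting class $\ca{T}^{\mb E^{(\bullet)},R,l}$. It is assembled from $E^{(m)}_{g,n,\beta}=R^\bullet\pi_*u^*(\ca{P}\times_G E^{(m)})$ and from $\ca{D}^{R,l}$ by the additive-multiplicative operations $\exp(\Psi^m(\cdot))$ and $\det(\cdot)^{-l}$; hence by the normalization identity~\eqref{eq:equation-normalization-sequence} the twisting class of the glued family equals the external product of the twisting classes of the $k+1$ components times the node-fiber contributions, and the latter assemble into $p^*(\mathrm{ev}^k)^*(T^{-1})^{\boxtimes k}$ with $T$ as in~\eqref{eq:classical-twisting-class}, cf.\ \cite[Proposition 2.9]{RZ1}; combining this with the projection formula and the untwisted identity gives the second equality. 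I expect the main obstacle to be the product-formula step: one must check that the splitting of $(R\pi_*u^*\mathbb T_{[W/G]})^\vee$ over the gerby nodes, the identification of the node fibers with pullbacks under the evaluation maps to $IX$, and the regular embedding $\Delta_{(IX)^k}$ all match compatibly \emph{at the level of obstruction theories} rather than merely of $K$-classes, so that the functoriality statements of \cite{Qu} apply directly.
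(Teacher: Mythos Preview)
Your proposal is correct and follows essentially the same approach as the paper: the paper's own proof is a two-line reference to the argument of \cite[Lemma 3.2.1]{Zhou2} together with the functoriality of virtual structure sheaves \cite[Proposition 4]{Lee} for the untwisted identity, and then uses exactly the normalization-sequence splitting~\eqref{eq:equation-normalization-sequence} of the twisting class and the fact that $p$ preserves the universal families to deduce the twisted identity. What you have written is a faithful unpacking of those citations, and the ``main obstacle'' you flag (compatibility of the obstruction-theory splitting with $\Delta_{(IX)^k}^!$) is precisely the content imported from \cite[Lemma 3.2.1]{Zhou2}.
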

                   %\Ming{double check if some of the evaluations are
                    % $\mathrm{ev}$ or $\check{\mathrm{ev}}$.}
\begin{proof}
  The first identity follows from the argument used in \cite[Lemma 3.2.1]{Zhou2} and the
  functoriality of virtual structure sheaves \cite[Proposition 4]{Lee}. In the
  twisted case, for various moduli spaces $\ca{M}$ of quasimaps, we denote by
  $\ca{T}_{\ca{M}}$ the twisting class~\eqref{eq:combined-twisting-class} over $\ca{M}$. The family version of~\eqref{eq:equation-normalization-sequence}
    implies that
    \[
      \Phi^*\big(\ca{T}_{ Q^{\epsilon_ + }_{g, n + k}(X, \beta')}
      \boxtimes
      \prod_{i=1}^k\ca{T}_{
        Q^{\epsilon_ + }_{0, 1}(X, \beta_i)}\big)
      =
\ca{T}_{ Q^{\epsilon_ + }_{g, n + k}(X, \beta') \times_{(IX)^k} \prod_{i = 1}^k
  Q^{\epsilon_ + }_{0, 1}(X, \beta_i)}
\cdot(\mathrm{ev}^k)^*T^{\boxtimes k}.
\]
Since the universal families and universal principal $G$-bundles are preserved
under the pullback of $p$, we have
\[
p^*(\ca{T}_{ Q^{\epsilon_ + }_{g, n + k}(X, \beta') \times_{(IX)^k} \prod_{i = 1}^k
  Q^{\epsilon_ + }_{0, 1}(X, \beta_i)})=\ca{T}_{ \tilde Q^{\epsilon_ + }_{g, n}(X, \beta)|_{{\tilde{\mathrm{gl}}_k^*}
      \mathfrak D_{k-1}}}.
\]
This concludes the proof of the second identity.
\end{proof}

%%% Local Variables:
%%% mode: latex
%%% TeX-master: "main"
%%% End:

%\input{localization}
\section{$K$-theoretic localization on the {$M$aster} space}
\label{section-master-space}
We recall in this section the definition of the master space and the
description of its $\bb{C}^*$-fixed point loci studied in \cite{Zhou2}. We will
compute the $\bb{C}^*$-equivariant $K$-theoretic Euler classes of the virtual
normal bundles of all fixed-point components.

\subsection{The $M$aster space and its $\bb{C}^*$-fixed loci}
%\marginpar{\footnotesize{
 %   \Yang{
  %    $M$ or $\mathbb M$?
   % }
  %\Ming{I am confused about the convention here. I thought
   % $\bb{M}$ is reserved for the
  %construction over the moduli space of weighted curves, see Section 3.3. $M$ is
%reserved for this section}}}
As before, we fix the numerical data $g, n, d$. Let $\epsilon_0 = 1/d_0$ be a
wall. We assume that $2g-2 + n + \epsilon_0 d \geq 0$, and $\epsilon_0 d>2$
when $g = n = 0$.

Recall from Definition~ \ref{def:moduli-with-calibrated-tails} the moduli stack
$M \tilde {\mathfrak M}_{g, n, d}$ of curves with calibrated tails. Let $S$ be
a scheme. An $S$-family of genus-$g$, $n$-pointed $\epsilon_0$-semistable
quasimaps with calibrated tails to $X$ of curve class $\beta$ is given by a
tuple
\[
  (\pi: \mathcal C \to S, \mathbf x , e, u, N, v_1, v_2)
\]
where
\begin{itemize}
\item $(\pi: \mathcal C \to S, \mathbf x , e, u)$ is an
  $\epsilon_0$-semistable, genus-$g$, $n$-pointed quasimaps to $X$ with entangled
  tails of curve class $\beta$, and
\item $(\pi: \mathcal C \to S, \mathbf x , e, N, v_1, v_2) \in M \tilde
  {\mathfrak
    M}_{g, n, d}(S)$.
\end{itemize}
Let $M \mathfrak {Qmap}^{\sim}_{g, n}(X, \beta)$ denote the category
%\marginpar{\footnotesize{
 %   \Yang{
  %    I suggest we either say ``category fibered in groupoids over the category
   %   of schemes'' or just say ``category''. Since ``groupoids'' refers to the
    %  fibers of the forgetful function from $M \mathfrak
     % {Qmap}^{\sim}_{g, n}(X, \beta)$ to the category of schemes.
    %}
  %\Ming{changed it}}}
parameterizing such families. According to
\cite[\textsection{4.1}]{Zhou2}, $M \mathfrak {Qmap}^{\sim}_{g, n}(X, \beta)$
is
an Artin stack of finite type with finite-type separated diagonal.

A degree-$d_0$ rational tail $E \subset C$ is called a \textit{constant tail}
if $E$
contains a base point of length $d_0$. We recall from
\cite[\textsection{4.1}]{Zhou2} the stability condition on the master space.
\begin{definition}
  \label{def:mspace-stability}
  An $S$-family of $\epsilon_0$-semistable quasimaps with calibrated tails
  \[
    (\pi: \mathcal C \to S, \mathbf x , e, u, N, v_1, v_2)
  \]
  is $\epsilon_0$-stable if over every geometric point $s$ of $S$,
  \begin{enumerate}
  \item any constant tail in $\mathcal C_s = \pi^{-1}(s)$ is an
    entangled tail;
  \item if $\mathcal C_s$ has at least one rational tail of degree $d_0$,
    then
    length-$d_0$ base points \textit{only} lie on  degree-$d_0$ rational tails
    of $\mathcal C_s$;
  \item if $v_1(s) = 0$, then  $(\pi: \mathcal C \to S, \mathbf x, u)|_{s}$ is
    an
    $\epsilon_ + $-stable quasimap;
  \item if $v_2(s) = 0$, then $(\pi: \mathcal C \to S, \mathbf x, u)|_{s}$ is
    an $\epsilon_-$-stable quasimap.
  \end{enumerate}
\end{definition}
Let $MQ^{\epsilon_0}_{g, n}(X, \beta)$ denote the category fibered in groupoids
parameterizing genus-$g$, $n$-pointed, $\epsilon_0$-stable quasimaps with
calibrated tails to $X$
of curve class $\beta$.
%\marginpar{\footnotesize{
 %   \Yang{
  %    Properness implies other assertions.
   % }}}
  According to Proposition 5.0.1 in
  \cite{Zhou2}, $MQ^{\epsilon_0}_{g, n}(X, \beta)$ is a  Deligne--Mumford
  % stack of
  % finite type
  proper over $\bb{C}$.
  % with finite type separated diagonal and
The space $MQ^{\epsilon_0}_{g, n}(X, \beta)$ is referred to as the $M$aster
space.

The construction of the virtual structure sheaf of $MQ^{\epsilon_0}_{g, n}(X,
\beta)$ is analogous to that of $Q^{\epsilon}_{g, n}(X, \beta)$. Let $\pi:
\mathcal C \to
MQ^{\epsilon_0}_{g, n}(X, \beta)$ be the universal curve and let $u: \mathcal C
\to
[W/G]$ be the universal map. Let $\nu_M:MQ^{\epsilon_0}_{g, n}(X, \beta)
\rightarrow M \tilde {\mathfrak
  M}_{g, n, d}$ be the forgetful morphism. There is a natural relative
perfect obstruction theory
\begin{equation*}
  (R \pi_*u^* \mathbb T_{[W/G]})^{\vee}
  \rightarrow
  \mathbb L_{\nu_M}.
\end{equation*}
We can define the virtual structure sheaf via the virtual pullback:
\[\ovir_{MQ^{\epsilon_0}_{g, n}(X, \beta)}: = \nu_M^! \ca{O}_{M \tilde
    {\mathfrak
      M}_{g, n, d}} \in K_\circ(MQ^{\epsilon_0}_{g, n}(X, \beta)).
\]
The construction of the twisted virtual structure with level structure on $MQ^{\epsilon_0}_{g, n}(X,
\beta)$ is also parallel to that on $Q^{\epsilon}_{g,n}(X,\beta)$.

Let $d=\mathrm{deg}(\beta)$. We denote by $\fr l$ the least common multiple of
$1,2,\dots,\lfloor d/d_0 \rfloor$. Consider the $\mathbb C^*$-action on $MQ^{\epsilon_0}_{g, n}(X, \beta)$
defined by scaling $v_1$:
%\marginpar{\footnotesize{
 %   \Yang{
  %    We should raise this action to its $k$-th power so that the exact
   %   sequence
    %  in Alper--Hall--Rydh splits. Also this eliminate the need of introducing
     % $q^{1/k}$.
    %}}}
\begin{equation}
  \label{eq:C*action-master-space}
  \lambda \cdot(\pi: \mathcal C \to S, \mb{x}, e, u, N, v_1, v_2) =  (\pi:
  \mathcal C \to
  S, \mb{x}, e, u, N, \lambda^{\fr l} v_1, v_2) , \quad \lambda \in \mathbb C^*.
\end{equation}
It is the $\fr l$-th power of the $\C^*$-action define in \cite[(6.1)]{Zhou2}.
The purpose of this modification is to trivialize the $\C^*$-action on the fixed-point
components and avoid fractional weights that usually show up in localization computations.

According to \cite[\textsection{6}]{Zhou2}, there are three types of
fixed-point components.

\subsubsection{$\epsilon_ + $-stable quasimaps with entangled tails}
\label{epsilon + }
Let $F_ + \subset MQ^{\epsilon_0}_{g, n}(X, \beta)$ be the Cartier divisor
defined by $v_1 = 0$. It is a fixed-point component. We have an isomorphism \[
  F_ + \cong \tilde Q^{\epsilon_ + }_{g, n}(X, \beta).
\]
which identifies the universal principal $G$-bundles over the universal families
and the perfect obstruction theories. Hence it also identifies their virtual
structure sheaves
\[
  \ovir_{F_ + } = \ovir_{\tilde Q^{\epsilon_ + }_{g, n}(X, \beta)}.
\]
and twisted virtual structure sheaves with level structure.
The virtual normal bundle is $\mathbb M_ + $, the calibration bundle of
$\tilde{Q}^{\epsilon_ + }_{g, n}(X, \beta)$ in Definition
\ref{def:calibration-bundle}, with a $\mathbb C^*$-action of weight $\fr l$.

\subsubsection{$\epsilon_-$-stable quasimaps} \label{epsilon-}
When $g = 0, n = 1, \text{deg}(\beta) = d_0$, the moduli stack
$Q^{\epsilon_-}_{g, n}(X, \beta)$ is empty and $v_2$ is non-vanishing
on $MQ^{\epsilon_0}_{g, n}(X, \beta)$. Otherwise, the
Cartier divisor $F_- \subset MQ^{\epsilon_0}_{g, n}(X, \beta)$
defined by $v_2 = 0$ is a fixed component. We have an isomorphism
\[
  F_- \cong  Q^{\epsilon_-}_{g, n}(X, \beta).
\]
Again, the above isomorphism identifies the universal principal $G$-bundles and the perfect obstruction theory, and
hence virtual structure sheaves
\[
  \ovir_{F_-} = \ovir_{Q^{\epsilon_-}_{g, n}(X, \beta)}.
\]
and their twisted counterparts.
The virtual normal bundle of $F_-$ in the {$M$aster} space is the line bundle
$\mathbb M_-^\vee$, the dual of the
calibration bundle $\mathbb M_-$ of $Q_{g, n}^{\epsilon_-}(X, \beta)$. The
$\bb{C}^*$-action on $\mathbb M_-^\vee$ has weight $(-\fr l)$.

The last type of fixed loci will be explained in the next subsection.
\subsection{The correction terms}
The other fixed-point components are closely related to the graph space $QG_{0,
  1}(X, \beta)$
where $\deg(\beta) = d_0$, and the $K$-theoretic localization contributions can
be expressed in terms of the $I$-function. Recall that
$F_{\star, \beta}: = F_{\star, 0}^{0, \beta} \subset QG_{0, 1}(X, \beta)$
denotes
the fixed-point component where the unique marking $x_\star$ is at $\infty$ and the
quasimap
$u$ has a base point of length $\text{deg}(\beta) = d_0$ at $0$. According to
\cite[\textsection{6.3}]{Zhou2}, the virtual normal bundle
$N^{\vir}_{F^{0, \beta}_{\star, 0}/QG_{0, 1}^{0 + }(X, \beta)}$ is isomorphic
to
  \[
    \big(R \pi_*(u^* \mathbb T_{[W/G]})|_{F_{\star, \beta}} \big)^{\mathrm{mv}}
    \oplus
    T_\infty \bb{P}^1
 \]
%\ChYang{
 % \begin{equation}
  %  \label{eq:decomposition-normal-bundle-graph-space}
   % \big(R \pi_*(u^* \mathbb T_{[W/G]})|_{F_{\star, \beta}} \big)^{\mathrm{mv}}
    %\ominus
   % T_0 \bb{P}^1,
  %\end{equation}}
  in the $K$-theory,
where $\pi: \ca{C} \rightarrow QG_{0, 1}(X, \beta)$ is the universal curve,
$u: \ca{C} \rightarrow[W/G]$ is the universal map, and the upper index ``mv''
denotes the moving part of the complex.
%\marginpar{\footnotesize{
 %   \Yang{
  %    Why saying this here?
   % }
  %\Ming{This detail is needed in order to compare $\ca{I}_\beta$ with the
   % $I$-action introduced in Def 2.3. But I haven't decided where to add this sentence.}}}

Define the following class in the localized $K$-group
\[
  \ca{I}_\beta(q) := \frac{1}{\lambda_{-1}^{\bb{C}^*} \big(\big(\big(R \pi_*(u^*
    \mathbb
    T_{[W/G]})|_{F_{\star, \beta}} \big)^{\mathrm{mv}} \big)^\vee \big)} \in
  K^\circ_{\bb{C}^*}(F_{\star, \beta}) \otimes_{\bb{Q}[q, q^{-1}]} \bb{Q}(q).
\]
Note that the tangent space $T_\infty \bb{P}^1$ has
  $\bb{C}^*$-weight 1 and hence
  $\lambda_{-1}^{\bb{C}^*}(T_\infty \bb{P}^1) = 1-q^{-1}$. It follows that the $I$-function in Definition \ref{K-theoretic-I-function} can be
rewritten as
\begin{equation}\label{eq:equation-two-I-functions}
  I(Q, q) = \sum_{\beta\geq  0} Q^{\beta} (\check{\ev}_\star)_*
  \Big(
  \mathcal I_{\beta}(q) \cdot
  \ovir_{F_{\star, \beta}}
  \Big).
\end{equation}
In the twisted case, we define
\[
  \ca{I}^{{\bf E}^{(\bullet)},R,l}_\beta(q):=\ca{I}_\beta(q)\cdot(\check{\ev}_\star)^*
  (\overline{T}^{-1}
   ).
 \]
 Formula~\eqref{eq:equation-two-I-functions} still holds if we replace $I(Q,q)$,
$\ca{I}_\beta(q)$ and $\ca{O}^{\vir}_{F_{\star,\beta}}$ by their twisted counterparts.

To describe the objects that the fixed-point components parametrize, we need
the following definition. Consider an $\epsilon_0$-stable quasimap with
calibrated tails
$$\xi = (C, \mathbf x , e, u, N, v_1, v_2) \in MQ^{\epsilon_0}_{g, n}(X,
\beta)(\bb{C}).$$
\begin{definition}
  Let $E \subset C$ be a degree-$d_0$ rational tail and let $y \in E$ be a node
  (or marking if $g = 0, n = 1$). The tail $E$ is called a \emph{fixed tail} if
  the automorphism group $\text{Aut}(E, y, u|_E)$ is infinite.
\end{definition}

\subsubsection{$g = 0, n = 1, \deg(\beta) = d_0$ case.}
In this case, the curve must be irreducible and $v_2$ is non-vanishing. Let
$F_\beta$ be the fixed-point component defined by
\[
  F_{\beta} : = \{\xi \mid \text{the domain curve of } \xi \text{ is a single
    fixed tail, $v_1 \neq 0, v_2 \neq 0$}
  \}.
\]
%Let $C$ be the domain curve and let $x_{\star}$ be the unique marking.
%By definition, the calibration bundle is
%the relative orbifold cotangent space at $x_{\star}$. We fix a
%nonzero
%tangent vector $v_{\infty}$ at $\infty$ to $\mathbb P^1$. Then there is unique
%morphism $C
%\to \mathbb P^1$ sending $x_{\star}$ to $\infty$, the unique base
%point to $0$, and sending $(v_2/v_1)^{\otimes \mathbf r_\star}$ to
%$v_{\infty}$.

%According to \cite[Lemma 6.4.1]{Zhou2}, there is an \'etale cyclic covering
%\[
 % \tau:F_{\beta} \longrightarrow  F_{\star, \beta}.
%\]
%of degree $\mathbf r_\star$. \Ming{Anything more needed to prove here?}
    We viewing the cotangent space $T^*_{\infty} \mathbb P^1$ at $\infty$ as a
    constant
    line bundle on $F_{\star, \beta}$. Recall that  $\tilde{L}_\star$ is the
    line bundle on $F_{\star, \beta}$ formed by the relative orbifold cotangent
    space at $x_\star$. By the definition of $F_{\star, \beta}$ we have an
    isomorphism $\tilde{L}_\star^{\otimes \mathbf r_{\star}} \cong
    T^*_{\infty} \mathbb P^1$. According to \cite[Lemma 6.4.1]{Zhou2} and
    its proof, there is a morphism
    \[
      \tau: F_{\beta} \rightarrow  F_{\star, \beta}.
    \]
    realizing $F_{\beta}$ as the cyclic covering of $\mathbf r_\star$-th roots
    in $\tilde{L}_\star$
    of the constant section of $T^*_{\infty} \mathbb P^1$ dual to $v_\infty$.
    In particular, $\tau$ is representable, finite \'etale of degree $\mathbf
    r_\star$.

\begin{lemma}
  We have
%  {\color{red}
 %   \[
  %    \tau_* \ca{O}_{F_{\star, \beta}} = \theta^{\mb{r}_\star}(\tilde{L}_*),
   % \]
    %\[
     % \tau^* \ovir_{F_{\star, \beta}} = \ovir_{F_\beta},
   % \]
   % }
    \[
      \tau_* \ca{O}_{F_{\beta}} = \theta^{\mb{r}_\star}(\tilde{L}_\star),
      \quad
      \tau^* \ovir_{F_{\star, \beta}} = \ovir_{F_\beta},
    \]
    and
  \[
    \frac{1}{\lambda^{\mathbb C^*}_{-1}
      \big(\big(N^{\mathrm{vir}}_{F_{\beta}/MQ^{\epsilon_0}_{0, 1}(X, \beta)}
      \big)^\vee \big)} =
    (1-q^{{\mathbf r_\star}\fr{l}}) \cdot \tau^* \mathcal
    I_{\beta}(q^{{\mathbf r_\star}\fr{l}}).
  \]
  In the twisted case, we have
  \[
      \tau^* \ca{O}^{\vir,{\bf E}^{(\bullet)},R,l}_{F_{\star, \beta}} =  \ca{O}^{\vir,{\bf E}^{(\bullet)},R,l}_{F_\beta}.
  \]
\end{lemma}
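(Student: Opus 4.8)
The plan is to derive all four identities from the description of $\tau$ as a Kummer cover, given in \cite[Lemma 6.4.1]{Zhou2}, together with the compatibility of universal data over $F_\beta$ and $F_{\star,\beta}$. By \cite[Lemma 6.4.1]{Zhou2} and its proof, $F_\beta$ is the closed substack of the total space of $\tilde L_\star|_{F_{\star,\beta}}$ cut out by $w^{\otimes\mb r_\star}=v_\infty^{\vee}$, where $w$ is the tautological section of the pullback of $\tilde L_\star$ and $v_\infty^{\vee}$ is the nowhere-vanishing section of $\tilde L_\star^{\otimes\mb r_\star}\cong T^*_\infty\mathbb P^1$ dual to the tangent vector $v_\infty$. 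Since $v_\infty^{\vee}$ has no zero, $\tau$ is a $\mu_{\mb r_\star}$-torsor, finite \'etale of degree $\mb r_\star$, and its $\mu_{\mb r_\star}$-eigenspace decomposition reads $\tau_*\ca O_{F_\beta}=\bigoplus_{j=0}^{\mb r_\star-1}\tilde L_\star^{\otimes(-j)}=\theta^{\mb r_\star}(\tilde L_\star)$, which is the first identity. The same reference identifies the universal curve, the universal principal $G$-bundle and the universal section over $F_\beta$ with the $\tau$-pullbacks of those over $F_{\star,\beta}$; I use this below.

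For $\tau^*\ovir_{F_{\star,\beta}}=\ovir_{F_\beta}$: since $\tau$ is \'etale, $\mathbb L_{F_\beta}=\tau^*\mathbb L_{F_{\star,\beta}}$, and as $R\pi_*u^*\mathbb T_{[W/G]}$, hence its $\C^*$-fixed part, is built from the $\tau$-pulled-back universal data, the fixed perfect obstruction theory on $F_\beta$ is the $\tau$-pullback of that on $F_{\star,\beta}$; compatibility of virtual structure sheaves with \'etale (flat) pullback for compatible obstruction theories then gives the identity (\cite[Proposition 2.11]{Qu}, \cite[Proposition 4]{Lee}). For the twisted version, the twisting class $\ca T^{\mb E^{(\bullet)},R,l}$ is obtained by applying $R\pi_*$, Adams operations and determinants to classes built from the universal $G$-bundle, which is $\tau$-pulled back, so $\tau^*\ca T^{\mb E^{(\bullet)},R,l}_{F_{\star,\beta}}=\ca T^{\mb E^{(\bullet)},R,l}_{F_\beta}$; multiplying by the untwisted identity yields $\tau^*\ca O^{\vir,\mb E^{(\bullet)},R,l}_{F_{\star,\beta}}=\ca O^{\vir,\mb E^{(\bullet)},R,l}_{F_\beta}$.

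The Euler-class identity is the real content. Recall $N^{\vir}_{F_{\star,\beta}/QG^{0+}_{0,1}(X,\beta)}=\big(R\pi_*(u^*\mathbb T_{[W/G]})|_{F_{\star,\beta}}\big)^{\mathrm{mv}}\oplus T_\infty\mathbb P^1$ and $\mathcal I_\beta(q)=1/\lambda_{-1}^{\C^*}\big(\big(\big(R\pi_*(u^*\mathbb T_{[W/G]})|_{F_{\star,\beta}}\big)^{\mathrm{mv}}\big)^{\vee}\big)$. Redoing the computation of \cite[Lemma 6.4.1]{Zhou2} in $K$-theory — the $\C^*$-fixed/moving splitting of the $\nu_M$-relative obstruction theory restricted to $F_\beta$, from the same normalization and Euler sequences, but keeping the exact $\C^*$-weights rather than their first Chern classes — yields, in $K^\circ_{\C^*}(F_\beta)$,
\[
N^{\vir}_{F_\beta/MQ^{\epsilon_0}_{0,1}(X,\beta)}
=\tau^*\big[\big(R\pi_*(u^*\mathbb T_{[W/G]})|_{F_{\star,\beta}}\big)^{\mathrm{mv}}\big]\big|_{q\mapsto q^{\mb r_\star\fr l}}-\tilde L_\star^{\otimes\mb r_\star},
\]
where the superscript substitution means every $\C^*$-weight is multiplied by $\mb r_\star\fr l$ — the factor $\fr l$ because the $\C^*$-action on $MQ^{\epsilon_0}_{0,1}(X,\beta)$ is the $\fr l$-th power of the one in \cite{Zhou2}, the factor $\mb r_\star$ because $\tau$ extracts $\mb r_\star$-th roots and so turns the fractional weight $1/\mb r_\star$ at $x_\star$ into an integer — and where $\tilde L_\star^{\otimes\mb r_\star}\cong\tau^*T^*_\infty\mathbb P^1$ enters as an obstruction and carries $\C^*$-weight $-\mb r_\star\fr l$. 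Taking $\lambda_{-1}^{\C^*}$ of the dual and using that $\lambda_{-1}^{\C^*}$ is additive-multiplicative, the first summand contributes the factor $\tau^*\mathcal I_\beta(q^{\mb r_\star\fr l})$ and the obstruction $-\tilde L_\star^{\otimes\mb r_\star}$ contributes the factor $1-q^{\mb r_\star\fr l}$; their product is the claimed formula. The twisted Euler-class statement is identical, the factor $(\check{\ev}_\star)^*(\overline{T}^{-1})$ cutting $\mathcal I^{\mb E^{(\bullet)},R,l}_\beta$ out of $\mathcal I_\beta$ being pulled back along $\tau$ with no new moving directions, exactly as in the previous paragraph.

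The hard part is this last step: in cohomology the factors $\fr l$ and $\mb r_\star$ are harmless rescalings of the equivariant parameter, but in $K$-theory they force the genuine substitution $q\mapsto q^{\mb r_\star\fr l}$, so one must reprove \cite[Lemma 6.4.1]{Zhou2} with the $\C^*$-weights made exact — in particular, verifying that the ``$\mathbb P^1$-direction'' which appears as a deformation ($+T_\infty\mathbb P^1$) on the graph-space side appears instead as the weight-$(-\mb r_\star\fr l)$ obstruction $\tilde L_\star^{\otimes\mb r_\star}$ on the master-space side, which is what converts the $1/(1-q^{-1})$ prefactor of the $I$-function into the $(1-q^{\mb r_\star\fr l})$ appearing here.
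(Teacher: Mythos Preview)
Your approach is essentially the same as the paper's: both use the cyclic-cover description of $\tau$ from \cite[Lemma~6.4.1]{Zhou2} for the pushforward identity, invoke compatibility of the universal families and functoriality of virtual structure sheaves for $\tau^*\ovir=\ovir$ and its twisted analogue, and refer to the fixed/moving analysis in \cite{Zhou2} for the Euler-class identity, with the $q\mapsto q^{\mb r_\star\fr l}$ substitution coming from the reweighting of the $\C^*$-action. Two small points: (i) your eigenspace decomposition gives $\bigoplus_{j}\tilde L_\star^{-j}$, which equals $\theta^{\mb r_\star}(\tilde L_\star)=\sum_j\tilde L_\star^{j}$ only because $\tilde L_\star^{\mb r_\star}\cong T^*_\infty\bb P^1$ is trivial --- worth saying explicitly; (ii) the paper cites \cite[Lemma~6.4.2]{Zhou2} (not 6.4.1) for the fixed/moving splitting of $\mathbb E_{MQ}|_{F_\beta}$, and your explicit formula $N^{\vir}_{F_\beta}=\tau^*[\cdots]^{\mathrm{mv}}-\tilde L_\star^{\otimes\mb r_\star}$ for the virtual normal bundle goes a step further than either paper spells out --- the extra direction really involves the interplay between the tangent to the $\bb P^1$-fiber of the master space and the infinitesimal automorphisms of the fixed tail, so calling it a single ``obstruction'' is a slight oversimplification, though the resulting Euler-class factor $(1-q^{\mb r_\star\fr l})$ is correct.
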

\begin{proof}
%  {\color{red}
  %  {The first identity follows from the fact that $\tau:F_{\beta} \to
%      F_{\star, \beta}$ is an \'etale cyclic cover obtained by taking the
 %     $\mb{r}_*$-th
 %     root of $\mu_\infty$ (c.f., for example,
   %   \cite[\textsection{3.5}]{Esnault-Viehweg}).
   % }}
    Recall that
    $\theta^{\mb{r}_\star}(\tilde{L}_\star) : = \sum_{j = 0}^{\mathbf{r}_\star-1}
    \tilde{L}^{j}_{\star}$.
    Thus the first identity is a standard fact for cyclic covers (c.f., for
    example,
    \cite[\textsection{3.5}]{Esnault-Viehweg}). Let $\mathbb E_{MQ}$ denote the
  absolute perfect obstruction theory of $MQ^{\epsilon_0}_{0, 1}(X, \beta)$. The
  second and the third equalities follow from the analysis of the fixed and moving parts of
  the restriction $\mathbb E_{MQ}|_{F_\beta}$ in \cite[Lemma 6.4.2]{Zhou2} and
  the
  functoriality of virtual structure sheaves (c.f. \cite[Proposition 4]{Lee}).
%  {\color{red}
 %   The equivariant weight $q^{\mb{r}_\star}$ comes from the fact that the
 %   weight-1
 %   $\bb{C}^*$-action on $v_1$ corresponds to a weight-$\mb{r}_\star$
 %   $\bb{C}^*$-action on the cotangent space $T^*_0 \bb{P}^1$.
 % }
    The $q^{\mb{r}_\star\fr{l}}$ comes from the fact that the
    $\mathbb C^*$-action \eqref{eq:C*action-master-space} on the master space
    % weight-1
    % $\bb{C}^*$-action on $v_1$
    corresponds to a weight-$(\mb{r}_\star\fr{l})$
    $\bb{C}^*$-action on the cotangent space $T^*_0 \bb{P}^1$.
    % The factor $(1-q^{\mathbf r_\star})$ comes from the summand $T_{\infty}
    % \mathbb
    % P^1$ in \eqref{eq:decomposition-normal-bundle-graph-space}. In fact,
    % the weight-1 $\bb{C}^*$-action on $v_1$ induces a $\mathbb C^*$-action on
    % corresponds to a weight-$\mb{r}_\star$
    % $\bb{C}^*$-action on the cotangent space $T^*_0 \bb{P}^1$.
    % The factor $(1-q^{\mathbf r_\star})$ comes from the facts that
    % $\lambda_{-1}^{\bb{C}^*}(T_\infty \bb{P}^1) = 1-q^{-1}$ with respect to the
    % $\mathbb C^*$-action on the target $\mathbb P^1$ in the definition of the
    % graph space, and that the weight-1
    % $\bb{C}^*$-action on $v_1$ corresponds to a weight-$(- \mb{r}_\star)$
    % $\bb{C}^*$-action on the cotangent space $T^*_0 \bb{P}^1$.
  The last equality between twisted virtual structure sheaves with level
  structure follows from the fact that the pullback of the universal curve over
  $F_\beta$ along with its universal principal $G$-bundle via $\tau$ are isomorphic to
  those over $F_{\star,\beta}$. 
  
\end{proof}

\begin{corollary} \label{normal-bundle-special-case}
  Write
  \[
    I(Q, q) = \sum_{\beta \geq0}  I_\beta(q)Q^{\beta}\quad and \quad I^{{\bf E}^{(\bullet)},R,l}(Q, q) = \sum_{\beta \geq0}  I^{{\bf E}^{(\bullet)},R,l}_\beta(q)Q^{\beta}
  \]Then we have
 % {\color{red}
  %  \[
   %   \check{\mathrm{ev}}_* \tau_*
    %  \frac{\ovir_{F_{\star, \beta}}}{\lambda^{\mathbb C^*}_{-1}
     %   \big(\big(N^{\mathrm{vir}}_{F_{\beta}/MQ^{\epsilon_0}_{0, 1}(X, \beta)}
      %  \big)^\vee \big)}
     % = (1-q^{\mb{r}})I_\beta(q^{\mb{r}}).
    %\]
  %}
    \begin{align*}
      (\check{\ev}_\star)_* \tau_*
      \frac{\ovir_{F_{\beta}}}{\lambda^{\mathbb C^*}_{-1}
        \big(\big(N^{\mathrm{vir}}_{F_{\beta}/MQ^{\epsilon_0}_{0, 1}(X, \beta)}
        \big)^\vee \big)}
      &= (1-q^{\mb{r}\fr{l}})I_\beta(q^{\mb{r}\fr{l}}),\quad\text{and}\\
            \overline{T}^{-1}\cdot(\check{\ev}_\star)_* \tau_*
      \frac{\ca{O}^{\vir,{\bf E}^{(\bullet)},R,l}_{F_{\beta}}}{\lambda^{\mathbb C^*}_{-1}
        \big(\big(N^{\mathrm{vir}}_{F_{\beta}/MQ^{\epsilon_0}_{0, 1}(X, \beta)}
        \big)^\vee \big)}
      &= (1-q^{\mb{r}\fr{l}})I^{{\bf E}^{(\bullet)},R,l}_\beta(q^{\mb{r}\fr{l}}),
    \end{align*}
    where $\mathbf r$ is the locally constant function on $\bar{I}X$ that takes value
    $r$ on $\bar{I}_rX$.
  %\Ming{I notice that we have defined $\bf r$ multiple times, but I guess that
   % is okay}
\end{corollary}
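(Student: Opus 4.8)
The plan is to reduce both identities to the preceding Lemma, using the projection formula along $\tau$ and the rewriting \eqref{eq:equation-two-I-functions} of the $I$-function as a pushforward from $\coprod_\beta F_{\star,\beta}$. Consider first the untwisted identity. By the preceding Lemma, inside the pushforward we may replace $\ovir_{F_\beta}$ by $\tau^*\ovir_{F_{\star,\beta}}$ and $1/\lambda^{\C^*}_{-1}\big((N^{\vir}_{F_\beta/MQ^{\epsilon_0}_{0,1}(X,\beta)})^\vee\big)$ by $(1-q^{\mb{r}_\star\fr{l}})\,\tau^*\ca{I}_\beta(q^{\mb{r}_\star\fr{l}})$. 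Since $\mb{r}_\star=\check{\ev}_\star^*\mb{r}$, the factor $1-q^{\mb{r}_\star\fr{l}}=\check{\ev}_\star^*(1-q^{\mb{r}\fr{l}})$ is itself a pullback, so the entire class $\ovir_{F_\beta}/\lambda^{\C^*}_{-1}(\cdots)$ equals $\tau^*$ of $(1-q^{\mb{r}_\star\fr{l}})\,\ca{I}_\beta(q^{\mb{r}_\star\fr{l}})\,\ovir_{F_{\star,\beta}}$.

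Pushing forward by $\tau$ and applying the projection formula together with $\tau_*\ca{O}_{F_\beta}=\theta^{\mb{r}_\star}(\tilde{L}_\star)$ (also from the Lemma), we obtain on $F_{\star,\beta}$ the class $(1-q^{\mb{r}_\star\fr{l}})\,\ca{I}_\beta(q^{\mb{r}_\star\fr{l}})\,\theta^{\mb{r}_\star}(\tilde{L}_\star)\,\ovir_{F_{\star,\beta}}$. The crucial step is then the pushforward along the rigidified evaluation map $\check{\ev}_\star=\varpi\circ\iota\circ\ev_\star$: the orbifold cotangent line $\tilde{L}_\star$ at the $\mu_{\mb{r}_\star}$-banded marking $x_\star$ is acted on by the tautological band through a primitive character, so of the summands in $\theta^{\mb{r}_\star}(\tilde{L}_\star)=\sum_{j=0}^{\mb{r}_\star-1}\tilde{L}_\star^{\otimes j}$ only the $j=0$ term descends along $\varpi$ and $\varpi_*$ annihilates the rest. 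Hence $(\check{\ev}_\star)_*\tau_*(\cdots)=(1-q^{\mb{r}\fr{l}})\,(\check{\ev}_\star)_*\big(\ca{I}_\beta(q^{\mb{r}_\star\fr{l}})\,\ovir_{F_{\star,\beta}}\big)$, the locally constant factor $1-q^{\mb{r}\fr{l}}$ having been pulled out by the projection formula. Finally, since $\mb{r}_\star$ is locally constant and $\check{\ev}_\star$ respects the decomposition $\bar{I}X=\coprod_r\bar{I}_rX$, the substitution $q\mapsto q^{\mb{r}_\star\fr{l}}$ commutes componentwise with $(\check{\ev}_\star)_*$; combining with \eqref{eq:equation-two-I-functions}, which says $(\check{\ev}_\star)_*(\ca{I}_\beta(q)\,\ovir_{F_{\star,\beta}})=I_\beta(q)$, the last expression becomes $(1-q^{\mb{r}\fr{l}})\,I_\beta(q^{\mb{r}\fr{l}})$, as claimed.

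For the twisted identity the computation is verbatim the same, now using $\tau^*\ca{O}^{\vir,{\bf E}^{(\bullet)},R,l}_{F_{\star,\beta}}=\ca{O}^{\vir,{\bf E}^{(\bullet)},R,l}_{F_\beta}$ (the last line of the Lemma) and the fact that $N^{\vir}_{F_\beta/MQ^{\epsilon_0}_{0,1}(X,\beta)}$ does not depend on the twisting data; this gives $(\check{\ev}_\star)_*\tau_*\big(\ca{O}^{\vir,{\bf E}^{(\bullet)},R,l}_{F_\beta}/\lambda^{\C^*}_{-1}(\cdots)\big)=(1-q^{\mb{r}\fr{l}})\,(\check{\ev}_\star)_*\big(\ca{I}_\beta(q^{\mb{r}_\star\fr{l}})\,\ca{O}^{\vir,{\bf E}^{(\bullet)},R,l}_{F_{\star,\beta}}\big)$. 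Multiplying by $\overline{T}^{-1}\in K(\bar{I}X)$ and moving it inside the pushforward by the projection formula absorbs it into $\ca{I}^{{\bf E}^{(\bullet)},R,l}_\beta(q)=\ca{I}_\beta(q)\cdot\check{\ev}_\star^*(\overline{T}^{-1})$, so by the twisted form of \eqref{eq:equation-two-I-functions} the right-hand side becomes $(1-q^{\mb{r}\fr{l}})\,I^{{\bf E}^{(\bullet)},R,l}_\beta(q^{\mb{r}\fr{l}})$.

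I expect the main obstacle to be the bookkeeping in the second paragraph: establishing precisely that $\varpi_*$ kills the nontrivial summands of $\theta^{\mb{r}_\star}(\tilde{L}_\star)$ — equivalently, carefully matching the orbifold structure at $x_\star$ on $F_\beta$, on $F_{\star,\beta}$, and the $\mu_{\mb{r}_\star}$-band recorded by $\ev_\star$ — while keeping the $\C^*$-weights consistent. Here one uses that $\tilde{L}_\star$ has $\C^*$-weight $-\fr{l}/\mb{r}_\star$, which is an integer exactly because $\mb{r}_\star\mid\fr{l}$ (the reason for raising the $\C^*$-action to the $\fr{l}$-th power in \eqref{eq:C*action-master-space}); this weight must be tracked consistently with the substitution $q\mapsto q^{\mb{r}\fr{l}}$, which is what produces the factor $1-q^{\mb{r}_\star\fr{l}}$ and the argument $q^{\mb{r}_\star\fr{l}}$ in the first place.
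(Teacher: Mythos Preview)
Your argument is correct and is precisely the intended deduction from the Lemma together with \eqref{eq:equation-two-I-functions}; the paper states the Corollary without proof, and you have spelled out the natural argument. The key step—that under $(\check{\ev}_\star)_*$ only the $j=0$ summand of $\theta^{\mb{r}_\star}(\tilde{L}_\star)$ survives—is exactly the gerbe-pushforward argument the paper uses later for the map $p_k$ (where it is phrased as ``$(p_k)_*\big(\prod_i\theta^{\mb r_i}(\tilde L(\ca E_i))\big)=1$''). A cleaner way to state it is to factor $\check{\ev}_\star$ through the $\mu_{\mb r_\star}$-gerbe $F_{\star,\beta}\to\underline{F}_{\star,\beta}$ that forgets the trivialization of the marking, noting that $\ca I_\beta$ and $\ovir_{F_{\star,\beta}}$ are pulled back from $\underline{F}_{\star,\beta}$; your formulation via $\varpi$ is equivalent but slightly less direct, since $\tilde L_\star$ lives on $F_{\star,\beta}$ rather than on $IX$.

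One small correction to your closing remarks: in this special case $g=0$, $n=1$, $\deg\beta=d_0$ we have $m=\lfloor d/d_0\rfloor=1$ and hence $\fr{l}=1$, so the divisibility ``$\mb r_\star\mid\fr l$'' you invoke does not hold in general. The $\fr l$-th power of the $\C^*$-action is introduced to clear the $1/k$ denominators appearing in the weights of the main case (Lemma~\ref{lem:obstruction-theory-F-beta}), not to make $-\fr l/\mb r_\star$ integral. This does not affect your proof: the substitution $q\mapsto q^{\mb r_\star\fr l}$ and the factor $1-q^{\mb r_\star\fr l}$ are already supplied by the Lemma, and no further weight bookkeeping is needed for the Corollary.
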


\subsubsection{$2g - 2 + n + \epsilon_0 d > 0$ case}
\label{subsection-general-case}
\begin{condition}
  \label{cond:condition-on-beta}
  Let $\ub = (\beta^\prime,\{ \beta_1 , \ldots, \beta_k \})$ be a tuple
  satisfying the following conditions
  \begin{enumerate}
    % \item
    %   $\beta^\prime, \beta_1 , \ldots, \beta_k$ are effective curve classes;
   % \marginpar{\footnotesize{ \Yang{ I find these conditions a little bit
    %      annoying. Maybe we can just keep (2) and (3) and claim the
     %     corresponding term is zero if the moduli is empty. }}}
  \item $\{\beta_1,\dots,\beta_k\}$ is a (unordered) multiset;
  \item $\beta = \beta^\prime + \beta_1 + \cdots + \beta_k$;
  \item $\deg(\beta_i) = d_0$ for $i = 1 , \ldots, k$;
    % \item
    %   $2g-2 + n + k + \epsilon \deg(\beta')>0$ for any $\epsilon> \epsilon_0$.
  \end{enumerate}
\end{condition}

For each $\ub = (\beta^\prime,\{ \beta_1 , \ldots, \beta_k \})$ satisfying
Condition~ \ref{cond:condition-on-beta}, we define a (possibly empty) substack
\begin{align*}
  F_{\ub} = \{\xi \mid & \xi \text{ has exactly $k$ entangled tails, }
  \\
                       & \text{which are all fixed tails,   of degrees } \beta_1, \ldots , \beta_k
                         \}
\end{align*}
of $MQ^{\epsilon_0}_{g, n}(X, \beta)$. According to
\cite[\textsection{6.5}]{Zhou2}, $ F_{\ub}$ is closed (if nonempty), and $F_ + ,
F_-$ and $F_{\ub}$ are all the fixed-point components of the $\bb{C}^*$-action
on the $\bb{M}$aster space $MQ^{\epsilon_0}_{g, n}(X, \beta)$.

We now recall the structure of $F_{\ub}$ described in
\cite[\textsection{6.5}]{Zhou2}. Recall that $\mathfrak Z_{(k)} \subset
\mathfrak U_{k}$ is the proper transform of the locus $\mathfrak Z_k \subset
\mathfrak M^{\mathrm{wt}, \mathrm{ss}}_{g, n, d}$ where there are at least $k$
rational tails of degree $d_0$. Recall from Section
\ref{sec:weighted-twisted-curves} that
\[
  \tilde{\mathrm{gl}}_k : \tilde{\mathfrak M}_{g, n + k, d-kd_0} \times' {\left(
      \mathfrak M_{0, 1, d_0}^{\mathrm{wt}, \mathrm{ss}} \right)}^k
  \rightarrow \mathfrak Z_{(k)}
\]
is the morphism that is induced by gluing the universal curve of ${\left(
    \mathfrak M_{0, 1, d_0}^{\mathrm{wt}, \mathrm{ss}} \right)}^k$ to the last
$k$ markings of the universal curve $\tilde{\mathfrak M}_{g, n + k, d-kd_0}$ as
degree-$d_0$ rational tails. According to \cite[Lemma 6.5.3]{Zhou2}, there is a
forgetful morphism $F_{\ub} \rightarrow \mathfrak Z_{(k)}$. We form the fibered
diagram
\[
  \begin{tikzcd}
    {\tilde{\mathrm{gl}}_k^*}F_{\ub} \arrow[r] \arrow[d] & \tilde{\mathfrak
      M}_{g, n + k, d-kd_0} \times' {\left( \mathfrak
        M_{0, 1, d_0}^{\mathrm{wt}, \mathrm{ss}} \right)}^k \arrow[d] \\
    F_{\ub} \arrow[r] & \mathfrak Z_{(k)}
  \end{tikzcd}.
\]
Note that the $k$ entangled tails are ordered in $
{\tilde{\mathrm{gl}}_k^*}F_{\ub}$ and hence so are the curve classes $\beta_1,
\dots \beta_k$. There is a natural $S_k$-action on
${\tilde{\mathrm{gl}}_k^*}F_{\ub}$ which permutes the $k$ entangled tails. It
motivates the following condition:
\begin{condition}
  \label{cond:condition-on-beta-ordered}
  Let $\vec \beta = (\beta^\prime, \beta_1 , \ldots, \beta_k)$ be an ordered
  tuple satisfying the last two conditions in Condition
  \ref{cond:condition-on-beta}.
\end{condition}
We will refer to $\vb$ as an ordered decomposition of the class $\beta$. Set
$\vb_{(i)}=\beta_i,i=1,\dots,k$. We will use this notation when we want to
emphasize that the class $\beta_i$ is the $i$-th component in the decomposition $\vb$.

The stack ${\tilde{\mathrm{gl}}_k^*}F_{\ub}$ has the following decomposition
\[
  {\tilde{\mathrm{gl}}_k^*}F_{\ub} = \coprod_{\vb\mapsto
    \ub}{\tilde{\mathrm{gl}}_k^*}F_{\vec \beta},
\]
where the disjoint union is over all ordered tuples satisfying Condition
\ref{cond:condition-on-beta-ordered} and having $\ub$ as the underlying multiset, and the notation ${\tilde{\mathrm{gl}}_k^*}F_{\vec \beta}$ denotes the
substack parametrizing quasimaps whose curve classes of ordered entangled tails
are given by $\vb$. Note that for a permutation $\sigma \in S_k$, it maps the
component labeled by $\vb$ to that labeled by $ \sigma(\vb)$ where
\[
  \sigma(\vb): = (\beta', \beta_{\sigma(1)}, \dots, \beta_{\sigma(k)}).
\]

Let $\ca{C}_{{\tilde{\mathrm{gl}}_k^*}F_{\ub}}$ be
% {the universal curve \color{red}over ${\tilde{\mathrm{gl}}_k^*}F_{\vec
% \beta}$}
the pullback to ${\tilde{\mathrm{gl}}_k^*}F_{\ub}$ of the universal
  curve of $MQ^{\epsilon_0}_{g, n}(X, \beta)$. Let $\ca{C}_{\beta'}$ be the
pullback to ${\tilde{\mathrm{gl}}_k^*}F_{\ub}$ of the universal curve of $
\tilde{\mathfrak M}_{g, n + k, d-kd_0}$ and let $\ca{E}_1, \dots, \ca{E}_k$ be
the pullback to ${\tilde{\mathrm{gl}}_k^*}F_{\ub}$ of the universal curve of ${(
  \mathfrak M_{0, 1, d_0}^{\mathrm{wt}, \mathrm{ss}})}^k$. Note that
$\ca{C}_{{\tilde{\mathrm{gl}}_k^*}F_{\ub}}$ is obtained by gluing $\ca{E}_1,
\dots, \ca{E}_k$ to the last $k$ markings of $\ca{C}_{\beta'}$. We call
  $\ca{C}_{\beta'}$ the main component of
  $\ca{C}_{{\tilde{\mathrm{gl}}_k^*}F_{\ub}}$. Let $T_{p_i} \ca{C}_{\beta'}$
(resp. $T_{p_i} \ca{E}_i$) be the orbifold tangent line bundles of the main
component $\ca{C}_{\beta'}$ (resp. the rational tail $\ca{E}_i$) at the orbifold
node $p_i$. Denote $T_{p_1} \ca{C}_{\beta'} \otimes T_{p_1} \ca{E}_1$ by
$\Theta$. According to Lemma \cite[Lemma 2.5.5]{Zhou2} and
\cite[\textsection{6.5}]{Zhou2}, there are canonical isomorphisms
\[
  \Theta \cong T_{p_i} \ca{C}_{\beta'} \otimes T_{p_i} \ca{E}_i, \quad i = 2,
  \dots k,
\]
and
\begin{equation} \label{eq:theta-k-th-root} \Theta^{\otimes k} \cong
  \bb{M}^\vee_{\beta'}
\end{equation}
on ${\tilde{\mathrm{gl}}_k^*}F_{\ub}$. Here, $\bb{M}^\vee_{\beta'}$ is the
(pullback of the) calibration bundle on $ \tilde{\mathfrak M}_{g, n + k,
  d-kd_0}$. Consider the stack
\[
  Y \rightarrow {\tilde Q}^{\epsilon_ + }_{g, n + k}(X, \beta^\prime)
\]
of $k$-th roots of the pullback to ${\tilde Q}^{\epsilon_ + }_{g, n + k}(X,
\beta^\prime)$ of $\bb{M}^\vee_{\beta'}$. Then (\ref{eq:theta-k-th-root})
induces a morphism
\begin{equation} \label{eq:map-to-Y} {\tilde{\mathrm{gl}}_k^*}F_{\ub}
  \rightarrow Y.
\end{equation}

Now we focus on each component $ {\tilde{\mathrm{gl}}_k^*}F_{\vec \beta}$ of $
{\tilde{\mathrm{gl}}_k^*}F_{\ub}$. As explained in
\cite[\textsection{6.5}]{Zhou2}, the restrictions of quasimaps to $\ca{E}_i$
give rise to morphisms
\begin{equation} \label{eq:map-to-Fbeta} {\tilde{\mathrm{gl}}_k^*}F_{\vec \beta}
  \rightarrow F_{\star, \beta_i}, \quad i = 1, \dots, k.
\end{equation}
Let
\[
  \text{ev}_Y:Y \rightarrow (I X)^k
\]
denote the evaluation maps at the last $k$-markings and for
each $i$, let
\[
  \check{\text{ev}}_{\star, \beta_i} : F_{\star, \beta_i} \rightarrow IX
\]
denote the evaluation map at the unique marking $x_{\star}$ composed with the
involution on $IX$. Consider the fiber product of $\text{ev}_Y$ and
$\check{\text{ev}}_{\star, \beta_i}$ over $(IX)^k$:
\[
  Y \times_{(IX)^k} \prod_{i = 1}^k F_{\star, \beta_i}
\]

By \cite[Lemma 6.5.5]{Zhou2}, the morphism
\[
  \varphi: {\tilde{\mathrm{gl}}_k^*}F_{\vec \beta} \overset{}{\rightarrow} Y
  \times_{(IX)^k} \prod_{i = 1}^k F_{\star, \beta_i},
\]
induced by (\ref{eq:map-to-Y}) and (\ref{eq:map-to-Fbeta}) is a representable,
finite, and \'etale of degree $\prod_{i = 1}^k \mathbf r_i$. Here $\mb{r}_i$ is
the locally constant function whose value is the order of the automorphism group
of the $i$-th node $p_i$. In fact, according to the proof of \cite[Lemma
6.5.5]{Zhou2}, the morphism $\varphi$ is a fiber product of cyclic \'etale
covers obtained by taking the $\mb{r}_i$-th root of $(T_{p_i}
\ca{E}_i)^{\mb{r}_i}$.

Let $\ovir_Y \in K_\circ(Y)$ be the flat pullback of $\ovir_{\tilde Q^{\epsilon_
    + }_{g, n + k}(X, \beta^\prime)}$, and let
$\ovir_{\tilde{\mathrm{gl}}_k^*F_{\vec \beta}}$ be the flat pullback of
$\ovir_{F_{\vec \beta}}$, the virtual structure sheaf defined by the fixed part
of the absolute perfect obstruction theory. We denote by $\tilde{L}(\mathcal
E_i)$ the orbifold cotangent line bundle of the rational tail $\mathcal E_i$ at
the orbifold node and by $\tilde L_{n + i}$ the orbifold cotangent line bundle
at the $(n + i)$-th marking of ${\tilde Q}^{\epsilon_ + }_{g, n + k}(X,
\beta^\prime)$. Let $L(\mathcal E_i)$ and $L_{n + i}$ denote the cotangent line
bundles on the coarse curves. Recall from
Section~\ref{inflated-bundle-boundary-divisor} that $\mathfrak D_{i} \subset \tilde{\mathfrak{M}}_{g, n, d}$ is the closure of the
locally closed reduced locus where there are exactly $(i+1)$ entangled tails.

We want to express the correction terms in terms of Euler characteristic of
sheaves over $Y \times_{(IX)^k} \prod_{i = 1}^k F_{\star, \beta_i}$. One
complication is that the pullback of the universal curve on $Y \times_{(IX)^k}
\prod_{i = 1}^k F_{\star, \beta_i}$ is not isomorphic to the universal curve
over ${\tilde{\mathrm{gl}}_k^*}F_{\vec \beta}$. Nevertheless, we have the
following comparison results between virtual structure sheaves and virtual
normal bundles.

\begin{lemma}
  \label{lem:obstruction-theory-F-beta}
  We have
  \[
    \varphi_* \big( \ca{O}_{\tilde{\mathrm{gl}}_k^*F_{\vec \beta}} \big) =
    \prod_{i = 1}^k \theta^{\mb{r}_i} \big(\tilde{L} (\ca{E}_i) \big), \]
  \[
    \ovir_{\tilde{\mathrm{gl}}_k^*F_{\vec \beta}} = \varphi^* \big(\ovir_Y
    \boxtimes \textstyle \prod_{i = 1}^k \ovir_{F_{\star, \beta_i}} \big),
  \]
  and
  \begin{equation*}
    \begin{aligned}
      \frac{ 1 } { \lambda^{\mathbb C^*}_{-1}
        \big((N^{\mathrm{vir}}_{F_{\ub}/MQ^{\epsilon_0}_{g, n}(X,
          \beta)}|_{{\tilde{\mathrm{gl}}_k^*}F_{\vec \beta}})^\vee \big) } = &
      \varphi^*\bigg(\frac {\prod_{i = 1}^k(1-q^{\frac{\mathbf
            r_i\fr{l}}{k}}L(\mathcal E_i)^\vee)} {1-q^{ - \frac{\fr{l}}{k}}
        \tilde L(\mathcal E_1) \cdot \tilde L_{n + 1} \cdot \ca{O}(\sum_{i =
          k}^{m-1} \mathfrak
        D_i))} \cdot \\
      & \mathcal I_{\beta_1}(q^{\frac{\mathbf r_1\fr{l}}{k}}L(\mathcal
      E_1)^\vee) \boxtimes \cdots \boxtimes \mathcal
      I_{\beta_k}(q^{\frac{\mathbf r_k \fr{l}}{k}}L(\mathcal E_k)^\vee) \bigg),
    \end{aligned}
  \end{equation*}
  where $\theta^{\mb{r}_i} \big(\tilde{L} (\ca{E}_i) \big)$ is the
  $\mb{r}_i$-the Bott's cannibalistic class and $m:=\lfloor d/d_0 \rfloor$.
\end{lemma}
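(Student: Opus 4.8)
The plan is to reduce all three assertions to the structural description of $F_{\ub}$ and of the restriction $\mathbb E_{MQ}|_{F_{\ub}}$ of the absolute perfect obstruction theory established in \cite[\textsection 6.5, Lemma 6.5.5]{Zhou2}, combined with the functoriality of virtual structure sheaves \cite[Proposition 4]{Lee}, \cite[Proposition 2.11]{Qu} and the standard behaviour of cyclic covers in $K$-theory. Throughout I write $\fr l$ for the least common multiple of $1,\dots,\lfloor d/d_0\rfloor$ as in~\eqref{eq:C*action-master-space}.

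\emph{The pushforward formula.} By \cite[Lemma 6.5.5]{Zhou2} and its proof, the map $\varphi$ is the fiber product over $Y\times_{(IX)^k}\prod_{i=1}^k F_{\star,\beta_i}$ of $k$ cyclic \'etale covers, the $i$-th of which extracts an $\mathbf r_i$-th root, inside $\tilde L(\ca E_i)$, of the canonical trivializing section of $(T_{p_i}\ca E_i)^{\mathbf r_i}$. For a single cyclic cover of this type the pushforward of the structure sheaf is the $\mathbf r_i$-th Bott cannibalistic class $\theta^{\mathbf r_i}(\tilde L(\ca E_i))=\sum_{j=0}^{\mathbf r_i-1}\tilde L(\ca E_i)^{j}$ by the usual computation for cyclic covers (cf.\ \cite[\textsection 3.5]{Esnault-Viehweg}), exactly as in the $(g,n,d)=(0,1,d_0)$ case treated above. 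Since the $k$ root extractions take place in line bundles pulled back from distinct factors of the fiber product, they are mutually transverse, and flat base change gives $\varphi_*\big(\ca O_{\tilde{\mathrm{gl}}_k^*F_{\vec\beta}}\big)=\prod_{i=1}^k\theta^{\mathbf r_i}(\tilde L(\ca E_i))$.

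\emph{The virtual structure sheaf.} By \cite[Lemma 6.5.5]{Zhou2}, the morphism $\varphi$ is representable finite \'etale and $Y\to{\tilde Q}^{\epsilon_+}_{g,n+k}(X,\beta^\prime)$ is flat with obstruction theory pulled back, so the perfect obstruction theory whose fixed part defines $\ovir_{F_{\vec\beta}}$ is, after pullback by $\varphi$, the exterior sum of the absolute perfect obstruction theory of ${\tilde Q}^{\epsilon_+}_{g,n+k}(X,\beta^\prime)$ and the fixed parts of those of the $F_{\star,\beta_i}$. Since $\ovir_{\tilde{\mathrm{gl}}_k^*F_{\vec\beta}}$ and $\ovir_Y$ are by definition the flat pullbacks of $\ovir_{F_{\vec\beta}}$ and $\ovir_{{\tilde Q}^{\epsilon_+}_{g,n+k}(X,\beta^\prime)}$, functoriality of virtual structure sheaves under flat and \'etale pullback (\cite[Proposition 4]{Lee}, \cite[Proposition 2.11]{Qu}) together with the $\boxtimes$-decomposition across the fiber product yields $\ovir_{\tilde{\mathrm{gl}}_k^*F_{\vec\beta}}=\varphi^*\big(\ovir_Y\boxtimes\prod_{i=1}^k\ovir_{F_{\star,\beta_i}}\big)$.

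\emph{The virtual normal bundle.} Its class is $\lambda_{-1}^{\C^*}$ of the dual of the moving part of $\mathbb E_{MQ}|_{F_{\ub}}$, computed in \cite[\textsection 6.5]{Zhou2}. After pullback by $\varphi$ this moving part splits as: $(\mathrm{i})$ the single node-smoothing line $\Theta=T_{p_1}\ca C_{\beta'}\otimes T_{p_1}\ca E_1$ of the entanglement, which by $\Theta^{\otimes k}\cong\mathbb M^\vee_{\beta'}$, Lemma~\ref{lem:str-D} and Lemma~\ref{pullback-sum-of-all-divisors} is identified with $(\tilde L(\ca E_1)\otimes\tilde L_{n+1}\otimes\ca O(\sum_{i=k}^{m-1}\mathfrak D_i))^\vee$ carrying $\C^*$-weight $-\fr l/k$ (one $k$-th of the weight-$\fr l$ scaling on $v_1$, because of the $k$-th root), contributing the denominator; $(\mathrm{ii})$ for each $i$, the moving part $((R\pi_*u^*\mathbb T_{[W/G]})_{\ca E_i})^{\mathrm{mv}}$ of the quasimap obstruction theory restricted to the $i$-th tail, which as in the graph-space computation defining $\mathcal I_{\beta_i}$ (and as in Corollary~\ref{normal-bundle-special-case}) contributes $\mathcal I_{\beta_i}$ evaluated at $q^{\mathbf r_i\fr l/k}L(\ca E_i)^\vee$, the substitution recording both the $\C^*$-weight $\mathbf r_i\fr l/k$ on $T^*_0\bb P^1$ of the $i$-th tail and the orbifold-to-coarse passage via $\tilde L(\ca E_i)^{\otimes\mathbf r_i}\cong L(\ca E_i)$; and $(\mathrm{iii})$ for each $i$, the tangent line $T_{p_i}\ca E_i$ at the node, entering with a minus sign (it is the $\infty$-tangent of the glued-away $i$-th tail) and producing the numerator factor $1-q^{\mathbf r_i\fr l/k}L(\ca E_i)^\vee$. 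Multiplicativity of $\lambda_{-1}^{\C^*}$ over direct sums then assembles these into the displayed fraction. The main obstacle is precisely this last step: items $(\mathrm{i})$ and $(\mathrm{ii})$ above are bookkeeping on top of \cite{Zhou2}, but pinning down the exact $\C^*$-weights $-\fr l/k$ and $\mathbf r_i\fr l/k$, the correct placement of the $\ca O(\sum_{i=k}^{m-1}\mathfrak D_i)$ twist via Lemma~\ref{pullback-sum-of-all-divisors} and the inflated projective bundle's $\ca O(-1)$, and the orbifold-versus-coarse cotangent identifications that turn bare equivariant parameters into $q^{\bullet}L(\ca E_i)^\vee$, is where the genuine work lies.
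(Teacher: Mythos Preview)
Your treatment of the first identity is exactly the paper's: cyclic covers and Bott's class, with the same reference to \cite{Esnault-Viehweg}.

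For the second and third identities, however, there is a genuine gap. You assume that, after pulling back along $\varphi$, the relative obstruction theory on $\tilde{\mathrm{gl}}_k^*F_{\vec\beta}$ splits as the exterior sum of the theories on the factors, and that the moving part on each tail is literally the moving part of the graph-space obstruction theory on $F_{\star,\beta_i}$. But the paper explicitly flags (just before the lemma) that the pullback via $\varphi$ of the universal curve over $Y\times_{(IX)^k}\prod_i F_{\star,\beta_i}$ is \emph{not} isomorphic to the universal curve over $\tilde{\mathrm{gl}}_k^*F_{\vec\beta}$. So $\mathbb E_1=(R\pi_{1*}u_1^*\mathbb T_{[W/G]})^\vee$ and $\varphi^*\mathbb E_2$ are a priori different complexes, and neither your statement about the fixed parts nor the claimed substitution in $\mathcal I_{\beta_i}$ follows by ``functoriality'' alone. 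The correct reference in \cite{Zhou2} is Lemma~6.5.6 (not 6.5.5, which only gives the \'etale structure of $\varphi$).

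The mechanism the paper uses to close this gap is a principal $(\mathbb C^*)^k$-bundle $Y'\to Y$ (constructed in \cite[\S6.5]{Zhou2}) over which the two universal families \emph{do} become isomorphic. One then computes $\lambda_{-1}$ of the moving part of $p_1^*\mathbb E_1$ equivariantly for $\mathbb C^*\times(\mathbb C^*)^k$, obtaining $\mathcal I_{\beta_i}(q^{\mathbf r_i\mathfrak l/k}\lambda_i^{-1})$ in the auxiliary parameters $\lambda_i$, and descends: under $K^\circ_{\mathbb C^*\times(\mathbb C^*)^k}(Y'\times_Y\tilde{\mathrm{gl}}_k^*F_{\vec\beta})\cong K^\circ_{\mathbb C^*}(\tilde{\mathrm{gl}}_k^*F_{\vec\beta})$, the weight $\lambda_i$ becomes the coarse cotangent $L(\ca E_i)$. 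This is precisely what turns a bare equivariant parameter into $q^{\mathbf r_i\mathfrak l/k}L(\ca E_i)^\vee$; your heuristic about ``orbifold-to-coarse passage'' does not explain why a \emph{line bundle} appears, and the analogy with Corollary~\ref{normal-bundle-special-case} breaks down exactly here, since in the single-tail case the substitution is $q\mapsto q^{\mathbf r_\star\mathfrak l}$ with no line-bundle twist. The same $Y'$ argument is what gives $\mathbb E_1^{\mathrm f}\cong\varphi^*\mathbb E_2^{\mathrm f}$ and hence the second identity. Finally, your item (iii) should be $L(\ca E_i)[-1]$ (the coarse line, in degree~$1$, coming from $\mathbb L_{M\tilde{\mathfrak M}_{g,n,d}}$), not the orbifold tangent $T_{p_i}\ca E_i$; this is what matches the numerator $1-q^{\mathbf r_i\mathfrak l/k}L(\ca E_i)^\vee$.
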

% \begin{remark}
%   \label{rmk:entanglement-implies-equation-of-psi}
%   Note that since $\mathcal E_1 , \ldots, \mathcal E_k$ are the entangled
%   tails, we have
%   {\color{red}
%   \[
%     \tilde L(\mathcal E_1) \otimes \tilde L_{n + 1} = \cdots =
%     \tilde L(\mathcal E_k) \otimes \tilde L_{n + k}
%   \]
% }
%\marginpar{\footnotesize{ \Yang{ This is already said before
 %     \eqref{eq:theta-k-th-root}. } \Ming{I commented out the remark.}}}
% on ${\tilde{\mathrm{gl}}_k^*} F_{\vec \beta}$.
% \end{remark}
\begin{proof} The first statement
  follows from the fact that $\varphi$ is a fiber product of cyclic \'etale
  covers obtained by taking the $\mb{r}_i$-th root of $(T_{p_i}
  \ca{E}_i)^{\mb{r}_i}$ (c.f., for example,
  \cite[\textsection{3.5}]{Esnault-Viehweg}).

  To prove the last two statements, we recall the comparison between the perfect
  obstruction theories of $\tilde{\mathrm{gl}}_k^*F_{\vec \beta}$ and $Y
  \times_{(IX)^k} \prod_{i = 1}^k F_{\star, \beta_i}$ given in the proof of
  \cite[Lemma 6.5.6]{Zhou2}.

  Let $\pi_1: \mathcal C_1 \to {\tilde{\mathrm{gl}}_k^*}F_{\vec \beta}$ be the
  universal curve and let $u_1: \mathcal C_1 \to [W/G]$ be the universal map.
  Recall that the complex $\mathbb E_1 = \big(R \pi_{1*}u_1^* \mathbb
  T_{[W/G]})^{\vee}$ defines a perfect obstruction theory relative to $M
  \tilde{\mathfrak M}_{g, n, d}$. Let $\mathbb E_{MQ}$ denote the absolute
  perfect obstruction theory on $MQ^{\epsilon_0}_{g, n}(X, \beta)$. We have a
  distinguished triangle
  \[
    \mathbb L_{M \tilde{\mathfrak M}_{g, n,
        d}}|_{{\tilde{\mathrm{gl}}_k^*}F_{\vec \beta}} \rightarrow \mathbb
    E_{MQ}|_{{\tilde{\mathrm{gl}}_k^*}F_{\vec \beta}} \rightarrow \mathbb
    E_1 \overset{ + 1}{\rightarrow}.
  \]

  Let $\pi_2: \mathcal C_2 \to Y{\times}_{(IX)^{k}} \prod_{i = 1}^k F_{\star,
    \beta_i}$ be the universal curve obtained from gluing the unique marking of
  $F_{\star, \beta_i}$ to the $(n + i)$-th marking of $Y$. Let $u_2: \mathcal
  C_2 \to [W/G]$ be the universal map. Then the $\bb{C}^*$-fixed part of the
  complex $\mathbb E_2 = (R \pi_{2*}u_2^* \mathbb T_{[W/G]})^{\vee}$ defines a
  perfect obstruction theory of $Y {\times}_{(IX)^{k}} \prod_{i = 1}^k F_{\star,
    \beta_i}$ relative to $\tilde{\mathfrak M}_{g, n + k, d-d_0k}$. By using a
  splitting-node argument similar to that of Lemma \ref{lem:int-with-D-k-1}, one
  can prove that $\ovir_Y \boxtimes \textstyle \prod_{i = 1}^k \ovir_{F_{\star,
      \beta_i}}$ is equal to the virtual pullback of $\ca{O}_{\tilde{\mathfrak
      M}_{g, n + k, d-d_0k}}$ defined by $\bb{E}_2^{\text{f}}$.

  In \cite{Zhou2}, the second author constructed a principal
  $(\bb{C}^*)^k$-bundle $Y' \rightarrow Y$ such that the base change of the two
  universal families to $Y'$ are isomorphic. More precisely, we have a fibered
  diagram
  \[
    \begin{tikzcd}
      Y' \times_Y {\tilde{\mathrm{gl}}_k^*}F_{\vec \beta} \arrow[r, " \varphi'"]
      \arrow[d, swap, "p_1"] & Y' \times_{(I X)^k} \prod_{i = 1}^k F_{\star,
        \beta_i}
      \arrow[d, "p_2"] \\
      {\tilde{\mathrm{gl}}_k^*}F_{\vec \beta} \arrow[r, " \varphi"] & Y
      \times_{(I X)^k} \prod_{i = 1}^k F_{\star, \beta_i}
    \end{tikzcd}
  \]
  which is equipped with an isomorphism of the universal curves
  \[
    \tilde{\varphi}:p_1^* \ca{C}_1 \rightarrow p_2^* \ca{C}_2
  \]
  that commutes with the universal maps to $[W/G]$. Hence it induces an
  isomorphism
  \begin{equation} \label{eq:iso-obstruction-theory} p_1^* \bb{E}_1 \cong p_1^*
    \varphi^* \bb{E}_2.
  \end{equation}

  According to the proof of \cite[Lemma 6.5.6]{Zhou2}, there is a natural
  isomorphism between the fixed parts
  \[
    \bb{E}_1^\text{f} \cong \varphi^* \bb{E}_2^\text{f}.
  \]
  Since the flat pullback of $\ca{O}_{\tilde{\mathfrak M}_{g, n + k, d-d_0k}}$
  equals $\ca{O}_{M \tilde{\mathfrak M}_{g, n + k, d-d_0k}}$, the above
  isomorphism implies the second statement of the lemma.

  By the analysis in the proof of \cite[Lemma 6.5.6]{Zhou2}, the pullback of the
  $K$-theoretic Euler class of the virtual normal bundle of $F_{\ub}$ in
  $MQ^{\epsilon_0}_{g, n}(X, \beta)$ comes from three contributions:
  \begin{enumerate}
  \item the moving part $\bb{E}_1^{\vee, \text{mv}}$;
  \item $\oplus_{i = 1}^kL(\ca{E}_i)[-1]$ where the $i$-th factor has
    $\bb{C}^*$-weight $- \mb{r}_i\fr{l}/k$;
  \item $\Theta_1 \otimes \mathcal O (- \sum_{i = k}^{\infty} \mathfrak D_{i})$
    with $\bb{C}^*$-weight $\fr{l}/k$.
  \end{enumerate}
  The contributions (2) and (3) give rise to the equality
  \begin{equation} \label{eq:first-part-contribution}
    \frac{1}{\lambda_{-1}^{\bb{C}^*} \big((\mathbb L_{M \tilde{\mathfrak M}_{g,
          n, d}}|_{{\tilde{\mathrm{gl}}_k^*}F_{\vec \beta}})^{\text{mv}} \big)}
    = \frac {\prod_{i = 1}^k(1-q^{\frac{\mathbf r_i\fr{l}}{k}}L(\mathcal
      E_i)^\vee)} {1-q^{ - \frac{\fr{l}}{k}} \tilde L(\mathcal E_1) \cdot \tilde
      L_{n + 1} \cdot \ca{O}(\sum_{i = k}^{m-1} \mathfrak D_i)}.
  \end{equation}

  To compute the contribution (1), we consider group actions on $p_i^* \ca{C}_i,
  i = 1, 2$. There is a $\bb C^* \times(\bb{C}^*)^k$ action on $p_1^* \ca{C}_1$
  where the first factor acts by (\ref{eq:C*action-master-space}) and the second
  factor acts on $Y'$. There is a $(\bb C^*)^k \times(\bb{C}^*)^k$ action on
  $p_2^* \ca{C}_2$ where the first factor acts by the product of the
  $\bb{C}^*$-actions (\ref{eq:C*action}) on the graph spaces $QG_{0, 1}(X,
  \beta_i)$ and the second factor acts on $Y'$. Let
  \[
    (q_1, \dots, q_k, \lambda_1, \dots, \lambda_k)
  \]
  be the equivariant parameters of $(\bb C^*)^k \times(\bb{C}^*)^k$ and let $q$
  be the equivariant parameter of $\bb{C}^*$ as before. By definition, we have
  \[
    \frac{1}{\lambda_{-1}^{(\bb{C}^*)^k \times(\bb{C}^*)^k} \left(p_2^*
        \bb{E}_2^{\text{mv}} \right)} = \ca{I}_{\beta_1}(q_1) \boxtimes \cdots
    \boxtimes \ca{I}_{\beta_k}(q_k).
  \]
  Then it follows from \cite[Lemma 6.5.4]{Zhou2} and
  (\ref{eq:iso-obstruction-theory}) that
  \[
    \frac{1}{\lambda_{-1}^{\bb{C}^* \times(\bb{C}^*)^k} \left(p_1^*
        \bb{E}_1^{\text{mv}} \right)} = \ca{I}_{\beta_1}(q^{\mb{r}_1\fr{l}/k}
    \lambda_1^{-1}) \boxtimes \cdots \boxtimes
    \ca{I}_{\beta_k}(q^{\mb{r}_k\fr{l}/k} \lambda_k^{-1}).
  \]
  By the construction of $Y'$ (c.f. \cite[Section 6.5]{Zhou2}), under
    the natural equivalence between
    \[
      K_{\mathbb C^* \times (\mathbb C^*)^k}^{\circ}(Y' \times_Y
      {\tilde{\mathrm{gl}}_k^*}F_{\vec \beta}) \quad \text{and} \quad K_{\mathbb
        C^*}^{\circ}(\tilde{\mathrm{gl}}_k^*F_{\vec \beta}),
    \]
    % the $\mathbb C^* \time (\mathbb
    % C^*)^k$-equivariant $K$-theory of
  the $(\bb{C}^*)^k$-weights $\lambda_i$ over the principal bundle $Y'
  \times_Y {\tilde{\mathrm{gl}}_k^*}F_{\vec \beta} \rightarrow
  {\tilde{\mathrm{gl}}_k^*}F_{\vec \beta}$ correspond to the pullbacks of the
  coarse cotangent line bundles $L(\ca{E}_i)$. Therefore the above equality
  descends to
  \[
    \frac{1}{\lambda_{-1}^{\bb{C}^*} \left(\bb{E}_1^{\text{mv}} \right)} =
    \ca{I}_{\beta_1}(q^{\mb{r}_1\fr{l}/k}L(\ca{E}_1)^\vee) \boxtimes \cdots
    \boxtimes \ca{I}_{\beta_k}(q^{\mb{r}_k\fr{l}/k}L(\ca{E}_k)^\vee).
  \]
  Combining with (\ref{eq:first-part-contribution}), we conclude the proof of
  the third statement.
\end{proof}
\begin{remark}\label{rem:compare-twisting}
For the various moduli spaces $\ca{M}$ in the previous lemma, we denote by
  $\ca{T}_{\ca{M}}$ the twisting class~\eqref{eq:combined-twisting-class} over
  $\ca{M}$. Let $\mathrm{ev}^k$ denote the evaluation map from $ Y' \times_{(I X)^k} \prod_{i = 1}^k F_{\star,
        \beta_i}$ (or $ Y \times_{(I X)^k} \prod_{i = 1}^k F_{\star,
        \beta_i}$) to $(IX)^k$. It follows from the proof of Lemma
      \ref{lem:int-with-D-k-1} that
      \[
\ca{T}_{Y' \times_{(I X)^k} \prod_{i = 1}^k F_{\star,
    \beta_i}}=\big(\ca{T}_{Y'}\boxtimes
\prod_{i=1}^k\ca{T}_{F_{\star,
    \beta_i}}(q_i)
\big)
\cdot
 (\mathrm{ev}^k)^*(T^{-1})^{\boxtimes k}
      \]
By using similar arguments as in the proof of Lemma
\ref{lem:obstruction-theory-F-beta}, we can show that
 \[
 \ca{T}_{{\tilde{\mathrm{gl}}_k^*}F_{\vec \beta}}
 =\varphi^*\big(\ca{T}_Y\boxtimes\prod_{i=1}^k\ca{T}_{F_{\star,\beta_i}}(q^{\mb{r}_i\fr{l}/k}L(\ca{E}_i)^\vee)\cdot
 (\mathrm{ev}^k)^*(T^{-1})^{\boxtimes k}\big).
\]
\end{remark}

\iffalse \bibliography{ref} \fi

%%% Local Variables:
%%% mode: latex
%%% TeX-master: "main"
%%% End:

%\input{wallcrossing-formula}
\section{$K$-theoretic wall-crossing formula} \label{section-wall-crossing}

\iffalse
\Ming{The pairing on the state space is defined by
  \[
    \chi(\alpha, \beta) = \chi(I \ca{X}, \alpha \otimes \iota^* \beta),
  \]
  for $\alpha, \beta \in K^\circ(I \ca{X})$. We ASSUME this pairing is
  non-degenerate. I am unable to find the reference on when it holds, so it is
  necessary to make this assumption.}

\Yang{
  If we are in the algebraic setting, this is almost never finite dimensional.
  We could either stay in the algebraic setting and write everything in terms
  of
  pushforward, pullback and Gysin pullback along diagonal; or we could pass to
  topological K-theory and using the pairing there. The pairing on algebraic
  K-theory you are defining here passes through the topological K-theory, of
  course.
}

\Ming{Here I refer to topological K-theory. I do need to check the "Kunneth"
  formula in K-theory and see when decomposing the diagonal, whether $K^1(X)$
  shows up. The Chern character gives an isomorphism between $K^\circ$ and the
  EVEN
  cohomology group. }
\Ming{double checked. $K^1(X)$ does show up.}
\fi

In this section, we first recall the virtual $K$-theoretic localization formula
in \cite{Kiem-Savvas} and review some elementary properties of the residue
operation. Then we prove the $K$-theoretic wall-crossing formulas by applying
the localization formula to the master space.
\subsection{Virtual $K$-theoretic localization formula}
Let $\ca{X}$ be an admissible Deligne-Mumford stack of finite type with a $\bb{C}^*$-action and a
$\bb{C}^*$-equivariant almost perfect obstruction theory
%%\marginpar{\footnotesize{
  %  \Yang{
   %   According to Michael Savvas, a perfect obstruction is not automatically
    %  an
     % almost perfect obstruction theory, since in the definition of almost
      %perfect obstruction theory we need an equivariant \'etale atlas by a
      %scheme. Some remarks are needed here.
   % }
 % \Ming{In their newest version, they clarified the relation: a POT is
  %  automatically an almost-POT. They allow atlas to be DM stacks}}}
in the sense of Definition 2.10 and Definition 5.11 in
\cite{Kiem-Savvas}. Denote the $\bb{C}^*$-equivariant weight
by
$q$. We have the following virtual $K$-theoretic localization formula.
\begin{theorem}[\cite{Kiem-Savvas}] \label{K-theoretic-localization}
  Let $F$ denote the $\bb{C}^*$-fixed locus of $\ca{X}$ and let $\iota:F \rightarrow
  \ca{X}$ be the embedding. Under the assumption that the virtual normal bundle
  $N^\vir$ of $F$ in $\ca{X}$ has a global resolution $[N_0 \rightarrow N_1]$ of
  locally free sheaves on $F$, we have
  \begin{equation}
    \label{eq:K-theoretic-localization}
    [\ovir_{\ca{X}}] = \iota_*
    \frac{[\ovir_F]}{\lambda_{-1}^{\C^*}((N^\vir)^\vee)} \in
    K_\circ^{\bb{C}^*}(\ca{X}) \otimes_{\bb{Q}[q, q^{-1}]} \bb{Q}(q).
  \end{equation}
\end{theorem}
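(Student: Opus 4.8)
This is the virtual $K$-theoretic localization formula of \cite{Kiem-Savvas} (see also \cite{DHL}); the plan is to recall and, where convenient, re-derive their argument. The overall idea is to reduce the statement to the classical $K$-theoretic concentration theorem of Thomason \cite{Thomason1, Thomason2} for the equivariant $K$-theory of the smooth ambient stacks appearing in the local charts of the almost perfect obstruction theory, combined with a $\bb{C}^*$-equivariant analysis of the intrinsic normal cone. Throughout, one works with equivariant $K$-theory of Deligne--Mumford stacks and passes between the stack and its étale charts by descent, as in \cite{Thomason1, Thomason2}.

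\textbf{Step 1: inputs on the fixed locus.} Restricting each local perfect obstruction theory to $F$ and decomposing into $\bb{C}^*$-weight-zero and nonzero parts, one checks --- as in the classical argument of \cite{G-P} --- that the fixed parts again form an (almost) perfect obstruction theory on $F$, compatibly with the gluing data, and that this is the theory defining $\ovir_F$. The moving parts assemble to a two-term complex whose $K$-theory class is $N^{\vir}$. The hypothesis that $N^{\vir}$ admits a global resolution $[N_0 \to N_1]$ by locally free sheaves is precisely what makes $\lambda_{-1}^{\C^*}((N^\vir)^\vee)$ a well-defined element of $K^\circ_{\C^*}(F)$ expressed as a ratio of the $K$-theoretic Euler classes of $N_0$ and $N_1$; since $N_0$ and $N_1$ have no $\bb{C}^*$-fixed part, Thomason's concentration theorem guarantees that this element becomes invertible after tensoring with $\bb{Q}(q)$ over $\bb{Q}[q,q^{-1}]$, so the right-hand side of~\eqref{eq:K-theoretic-localization} makes sense.

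\textbf{Step 2: the cone computation.} Recall that, locally, $\ovir_{\ca{X}}$ is obtained by embedding the intrinsic normal cone $\mathfrak{C}_{\ca{X}}$ into the vector bundle stack $\mathfrak{E} = h^1/h^0(\mathbb{E}^\vee)$ and applying the refined Gysin pullback $0_{\mathfrak{E}}^{!}$ to $[\ca{O}_{\mathfrak{C}_{\ca{X}}}]$ (the semi-perfect/almost perfect version of the constructions of \cite{Lee, Qu}). The key geometric input, again from \cite{G-P}, is that $(\mathfrak{C}_{\ca{X}}\times_{\ca{X}} F)^{\bb{C}^*} = \mathfrak{C}_F$, sitting inside the fixed part $\mathfrak{E}^{\mathrm{fix}}$, with the transverse directions recorded by $\mathfrak{E}^{\mathrm{mv}}$. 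Applying Thomason's localization isomorphism in the ambient charts to $[\ca{O}_{\mathfrak{C}_{\ca{X}}\times_{\ca{X}} F}]$, and then commuting the zero-section refined Gysin pullback with this isomorphism (they commute because refined Gysin pullbacks are compatible with flat and proper base change and with $\iota_*$), one obtains~\eqref{eq:K-theoretic-localization}, with the denominator $\lambda_{-1}^{\C^*}((N^\vir)^\vee)$ produced exactly by the moving directions of $\mathbb{E}$. Finally one patches the local identities into a global one in $K_\circ^{\bb{C}^*}(\ca{X})\otimes_{\bb{Q}[q,q^{-1}]}\bb{Q}(q)$.

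\textbf{Main obstacle.} I expect the principal difficulty to be the bookkeeping in the almost perfect obstruction theory setting: unlike an honest global perfect obstruction theory, here one has only étale-local perfect obstruction theories glued along the obstruction sheaf, so each step --- that the fixed parts again glue to an almost perfect obstruction theory, that $\ovir_F$ is well-defined and independent of choices, and that the local localization formulas patch --- relies on the semi-perfect obstruction theory descent framework underlying \cite{Kiem-Savvas}. A secondary technical point is the systematic use of equivariant $K$-theory of Deligne--Mumford stacks (rather than schemes) and the étale-descent needed to invoke Thomason's theorem in that generality.
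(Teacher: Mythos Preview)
The paper does not prove this theorem: it is stated with attribution to \cite{Kiem-Savvas} and used as a black box, with only the remarks following it explaining why the hypotheses (admissibility, global resolution of $N^{\vir}$) are satisfied in the master-space application. Your proposal is therefore not comparable to any argument in the paper; rather, it is a sketch of the proof in the cited reference.

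As such a sketch it is reasonable: you correctly identify the main ingredients --- Thomason's concentration theorem, the Graber--Pandharipande decomposition of the restricted obstruction theory into fixed and moving parts, the intrinsic normal cone description of $\ovir$, and the gluing issues specific to the almost perfect obstruction theory framework. For the purposes of this paper, however, none of that is needed: the authors simply invoke the result.
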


According to~\cite[Remark 5.2]{Kiem-Savvas}, a $\C^*$-equivariant perfect obstruction
theory is a special case of a $\C^*$-equivariant almost perfect obstruction theory. By Proposition 5.13 and Remark 5.14 in~\cite{Kiem-Savvas},
 A Deligne-Mumford stack with a $\bb{C}^{*}$-action is always admissible after possibly reparameterizing the action of $\bb{C}^{*}$.
It is clear from Lemma~\ref{lem:obstruction-theory-F-beta} and the discussions
in Section~\ref{epsilon + } and Section~\ref{epsilon-} that the virtual normal
bundle of each $\C^*$-fixed-point component in $MQ^{\epsilon_0}_{g, n}(X, \beta)$ has a global two-term locally-free
resolution. Hence the virtual $K$-theoretic localization
formula~\eqref{eq:K-theoretic-localization} is valid for the $\C^*$-action on the
{$M$aster} space.

\subsection{Residue operation and its properties}
%\Ming{I changed the base field from $\C$ to $\Q$. If you spot any $\C$, please
 % change it to $\Q$.}
Let $\ca{X}$ be a Deligne--Mumford stack. Consider the vector space
$K_\circ(\ca{X}) \otimes \Q(q)$ of rational function in $q$ with
coefficients in the $K$-group $K_\circ(\ca{X})_{\Q}:=K_\circ(\ca{X})\otimes\Q$. We define a
residue operation
\[
  \text{Res}(-): K_\circ(\ca{X}) \otimes \bb{Q}(q) \rightarrow
  K_\circ(\ca{X})_{\Q}
\]
by
\[
  \text{Res} \big(f(q) \big): = \big[\text{Res}_{q = 0} + \text{Res}_{q = \infty}
  \big] \big(f(q) \big) \, \frac{dq}{q}.
\]
Here $\text{Res}_{q = 0}(-)dq/q$ and $\text{Res}_{q = \infty}(-)dq/q$ are the
obvious extensions of the corresponding residue operations on $\bb{Q}(q)$. From
now on, we simply refer to $\text{Res} \big(f(q) \big)$ as the residue of
$f(q)$.

We first summarize two elementary properties of the residue operation in the
following lemma .
\begin{lemma}
  \phantomsection
  \label{elementary-properties-of-residues}
  \begin{enumerate}
  \item For any Laurent polynomial $g(q) \in K_\circ(\ca{X}) \otimes \bb{Q}[q,
    q^{-1}]$, we have
    \[
      \emph{Res}(g(q)) = 0.
    \]
  \item Let $r$ be an integer and $L$ be a line bundle on $\ca{X}$. For any $f(q)
    \in K_\circ(\ca{X}) \otimes \bb{Q}(q)$, the change of variable $q \mapsto q^rL$
    does not change the residue, i.e., we have
    \[
      \emph{Res}(f(q)) = \emph{Res}(f(q^rL)).
    \]
  \end{enumerate}
\end{lemma}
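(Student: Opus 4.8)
The plan is to reduce both assertions to the classical residue theorem on $\bb{P}^1$, applied with coefficients in the $\Q$-vector space $K_\circ(\ca{X})_\Q$. Since every residue operation in sight is $\Q$-linear and commutes with multiplication by classes in $K^\circ(\ca{X})_\Q$, it suffices to argue coefficient-by-coefficient, i.e.\ for scalar-valued rational functions. For part (1): if $g(q)=\sum_j a_jq^j$ is a Laurent polynomial, then $g(q)\,\tfrac{dq}{q}$ is a rational $1$-form on $\bb{P}^1_q$ whose only poles lie at $q=0$ and $q=\infty$; hence $\text{Res}(g(q))$ is exactly the sum of all residues of $g(q)\,\tfrac{dq}{q}$ over $\bb{P}^1$, which vanishes by the residue theorem. (Concretely $\text{Res}(q^j)=0$ for every $j$, the contributions at $0$ and at $\infty$ being $\delta_{j,0}$ and $-\delta_{j,0}$.)

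For part (2) I would first make sense of the substitution $q\mapsto q^rL$ on $K_\circ(\ca{X})_\Q\otimes\Q(q)$ for $r\ne 0$. Indeed $L$ is invertible in $K^\circ(\ca{X})_\Q$ and $L-1$ is nilpotent there (as $\ca{X}$ is of finite type, e.g.\ because $c_1(L)$ is nilpotent), so $1-q^rL=(1-q^r)\big(1-\tfrac{q^r}{1-q^r}(L-1)\big)$ is a unit in $K^\circ(\ca{X})_\Q\otimes\Q(q)$; more precisely, for a scalar $f\in\Q(q)$ one defines $f(q^rL)$ by its finite Taylor expansion
\[
  f(q^rL)=\sum_{k\ge 0}\frac{(L-1)^k}{k!}\,(q^r)^k\,f^{(k)}(q^r),
\]
each summand lying in $K^\circ(\ca{X})_\Q\otimes\Q(q)$ since $(L-1)^{N+1}=0$ for some $N$. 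By $\Q(q)$-linearity we may assume $f\in\Q(q)$, and then applying $\text{Res}$ term by term reduces the claim to two facts: (a) $\text{Res}(f(q^r))=\text{Res}(f(q))$ for every $f\in\Q(q)$ and every positive integer $r$ (the only case needed in the wall-crossing application); and (b) $\text{Res}\!\big(q^kf^{(k)}(q)\big)=0$ for all $k\ge 1$, which together with (a) forces every $k\ge1$ term above to have zero residue.

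I would prove (a) by pulling back along the degree-$r$ cover $\pi\colon\bb{P}^1_q\to\bb{P}^1_Q$, $Q=q^r$, which is étale away from $\{0,\infty\}$: from $\pi^*\!\big(f(Q)\tfrac{dQ}{Q}\big)=r\,f(q^r)\tfrac{dq}{q}$ and the residue theorem applied both upstairs and downstairs — using that residues add up over the cardinality-$r$ fibres of an étale map — one obtains $r\,\text{Res}(f(q^r))=r\,\text{Res}(f(q))$. I would prove (b) by writing $q^kf^{(k)}(q)\tfrac{dq}{q}=q^{k-1}f^{(k)}(q)\,dq$ and using $f^{(k)}(q)\,dq=d\big(f^{(k-1)}(q)\big)$ together with $q^{k-1}\,d(f^{(k-1)})=d\big(q^{k-1}f^{(k-1)}\big)-(k-1)\,q^{k-2}f^{(k-1)}\,dq$; since exact forms have vanishing residues, an induction on $k$ gives $\text{Res}_c\big(q^{k-1}f^{(k)}(q)\,dq\big)=0$ at every finite $c\ne 0$, and as these are the only poles of the form away from $\{0,\infty\}$, the residue theorem yields $\text{Res}_{q=0}+\text{Res}_{q=\infty}=0$. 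Feeding (a) and (b) back into the Taylor expansion gives $\text{Res}(f(q^rL))=\text{Res}(f(q^r))=\text{Res}(f(q))$.

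Everything here is formal once the two substantive points are in place: interpreting $f(q^rL)$ as an honest element of $K_\circ(\ca{X})_\Q\otimes\Q(q)$ via the nilpotence of $L-1$ (this also explains why $r=0$ is excluded, $1-L$ being nilpotent and hence non-invertible), and the bookkeeping with the covering degree and ramification over $0$ and $\infty$ in step (a). I expect the latter to be the only place requiring genuine care; I would also remark in passing that for $q\mapsto q^{-1}$ one instead finds $\text{Res}(f(q^{-1}))=-\text{Res}(f)$, which is why $r$ is taken to be a positive integer in all the applications.
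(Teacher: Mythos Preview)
The paper's own proof is a single line: ``By linearity it suffices to assume $g(q)\in\mathbb{Q}[q,q^{-1}]$. Thus the results are standard.'' Your argument is far more detailed, and part (1) together with your steps (a) and (b) for part (2) are correct. Your observation that negative $r$ flips the sign of $\mathrm{Res}$ is also correct and worth recording; the paper only ever applies the lemma with positive exponent.

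There is, however, a genuine gap in your treatment of the line bundle: the claim that $L-1$ is nilpotent in $K^\circ(\ca{X})_\Q$ for a finite-type Deligne--Mumford stack is false. Your justification via nilpotence of $c_1(L)$ is a statement about cohomology or Chow groups; it transfers to $K$-theory for \emph{schemes} through the coniveau filtration, but not for stacks. The basic counterexample is $\ca{X}=B\mu_n$ with $n>1$, where $K^\circ(\ca{X})_\Q\cong\Q[L]/(L^n-1)$ and $L-1$ is a zero-divisor but not nilpotent. Since the moduli spaces in the paper's applications carry nontrivial isotropy, this is not a hypothetical concern. Without nilpotence your Taylor expansion need not terminate, and indeed the very meaning of $f(q^rL)$ as an element of $K_\circ(\ca{X})\otimes\Q(q)$ becomes unclear in full generality --- a subtlety the paper itself glosses over. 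One repair, valid whenever $L$ satisfies a monic polynomial over $\Q$ (e.g.\ when the relevant $K$-ring is Artinian), is to base-change so that on each local factor $L=\lambda\cdot(1+\text{nilpotent})$; the scalar $\lambda$ is handled by the automorphism $q\mapsto\lambda q$ of $\bb{P}^1$ fixing $0$ and $\infty$ (which preserves $dq/q$), and the nilpotent part is then handled exactly by your Taylor argument and step (b).
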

%\begin{proof}
 % The first statement follows from Cauchy's residue theorem and the fact that
 % Laurent polynomials do not have poles other than 0 and $\infty$. The second
 % statement follows from the fact that the residue {\color{red}of a rational
  %  function $f(q)$}
 % \marginpar{\footnotesize{
  %    \Yang{
   %     Should be residue of $f(q) \frac{dq}{q}$, right?
    %  }
    %\Ming{I commented out the proof since it is standard.}}}
% at 0 or $\infty$ equals the constant term in the Laurent expansion of $f(q)$ at
 % 0 or $\infty$. The change of variable $q \mapsto q^rL$ preserves the constant
  %terms.
%\end{proof}

  \begin{proof}
    By linearity it suffices to assume $g(q) \in \mathbb Q[q, q^{-1}]$. Thus
    the
    results are standard.
  \end{proof}

  % = = = = = = = = = = = = = = = = = = =
  % Remember to comment the ``addtocounter'' before releasing the paper.
  % = = = = = = = = = = = = = = = = = = =

 % \Ming{What does this addtocounter thing do? You can add this corollary directly.}
%  \addtocounter{dummy}{-1}
  \begin{corollary}
    \label{cor:master-space-technique}
    In the situation of Theorem~ \ref{K-theoretic-localization}, for any proper
    morphism $q:\ca{X}\rightarrow \ca{Y}$ between Deligne-Mumford stacks such that  $q$ is $\C^*$-equivariant
    with respect to the trivial $\C^*$-action on $\ca{Y}$, we have
    \[
      \emph{Res} \Big(q_*\big(F,
      \frac{[\ovir_F]\cdot\ca{E}}{\lambda_{-1}^{\C^*}((N^\vir)^\vee)}\big) \Big) = 0,
    \]
    for any $\ca{E}\in K_{\bb{C}^{*}}^\circ(\ca{X})_{\Q}$.
  \end{corollary}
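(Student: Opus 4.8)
The plan is to obtain this as a formal consequence of the virtual $K$-theoretic localization formula (Theorem~\ref{K-theoretic-localization}) and the vanishing of residues of Laurent polynomials (Lemma~\ref{elementary-properties-of-residues}(1)). Write $\iota\colon F\hookrightarrow\ca{X}$ for the inclusion of the $\C^*$-fixed locus, and read the expression inside $\operatorname{Res}(\cdot)$ on the left-hand side of the claimed identity as $(q\circ\iota)_*$ applied to $\big([\ovir_F]\cdot\iota^*\ca{E}\big)/\lambda_{-1}^{\C^*}((N^\vir)^\vee)$.

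First I would tensor the localization identity $[\ovir_{\ca{X}}]=\iota_*\big([\ovir_F]/\lambda_{-1}^{\C^*}((N^\vir)^\vee)\big)$ with $\ca{E}\in K^\circ_{\C^*}(\ca{X})_{\Q}$ and apply the projection formula for $\iota$, which gives $[\ovir_{\ca{X}}]\cdot\ca{E}=\iota_*\big(([\ovir_F]\cdot\iota^*\ca{E})/\lambda_{-1}^{\C^*}((N^\vir)^\vee)\big)$ in $K_\circ^{\C^*}(\ca{X})\otimes_{\Q[q,q^{-1}]}\Q(q)$. Since $q$ is proper and $\C^*$-equivariant, pushing forward along $q$ yields $q_*\big([\ovir_{\ca{X}}]\cdot\ca{E}\big)=(q\circ\iota)_*\big(([\ovir_F]\cdot\iota^*\ca{E})/\lambda_{-1}^{\C^*}((N^\vir)^\vee)\big)$, now an identity in $K_\circ^{\C^*}(\ca{Y})\otimes_{\Q[q,q^{-1}]}\Q(q)$. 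The point is that its left-hand side is the image of a genuine, non-localized class: $[\ovir_{\ca{X}}]\cdot\ca{E}$ lies in $K_\circ^{\C^*}(\ca{X})_{\Q}$, and $q_*$ carries it into $K_\circ^{\C^*}(\ca{Y})_{\Q}$. Because $\C^*$ acts trivially on $\ca{Y}$, one has $K_\circ^{\C^*}(\ca{Y})_{\Q}\cong K_\circ(\ca{Y})_{\Q}\otimes_{\Q}\Q[q,q^{-1}]$, so $q_*\big([\ovir_{\ca{X}}]\cdot\ca{E}\big)$ is a Laurent polynomial in $q$ with coefficients in $K_\circ(\ca{Y})_{\Q}$. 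Applying $\operatorname{Res}$ to both sides and invoking Lemma~\ref{elementary-properties-of-residues}(1) on the left then gives the claim.

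I do not expect a genuine obstacle here: the argument is entirely formal. The only subtleties worth spelling out are that all the manipulations above take place in the \emph{localized} equivariant $K$-group $K_\circ^{\C^*}(-)\otimes_{\Q[q,q^{-1}]}\Q(q)$, in which $\lambda_{-1}^{\C^*}((N^\vir)^\vee)$ becomes invertible — this uses the global two-term locally free resolution of $N^\vir$ assumed in Theorem~\ref{K-theoretic-localization} — and that the identification $K_\circ^{\C^*}(\ca{Y})_{\Q}\cong K_\circ(\ca{Y})_{\Q}[q,q^{-1}]$, which is exactly what makes the pushed-forward genuine class a Laurent polynomial rather than a bona fide rational function, relies precisely on the hypothesis that $\C^*$ acts trivially on $\ca{Y}$. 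In the intended application $\ca{X}=MQ^{\epsilon_0}_{g,n}(X,\beta)$ with the $\C^*$-action~\eqref{eq:C*action-master-space}, the admissibility and resolution hypotheses were checked in the discussion following Theorem~\ref{K-theoretic-localization}.
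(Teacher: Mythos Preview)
Your proof is correct and follows essentially the same approach as the paper: tensor the localization identity with $\ca{E}$ (via the projection formula), push forward along $q$, and use that the trivial $\C^*$-action on $\ca{Y}$ forces the pushforward of the genuine class $[\ovir_{\ca{X}}]\cdot\ca{E}$ to be a Laurent polynomial, whence Lemma~\ref{elementary-properties-of-residues}(1) gives the vanishing. The paper's version is simply terser, omitting the explicit mention of the projection formula and the identification $K_\circ^{\C^*}(\ca{Y})_{\Q}\cong K_\circ(\ca{Y})_{\Q}[q,q^{-1}]$.
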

  \begin{proof}
   Consider the trivial $\C^*$-action on $\ca{Y}$. Then $q$ is $\C^*$-equivariant. We apply $q_*$ to both
    sides of \eqref{eq:K-theoretic-localization}. Then the corollary follows from
    the observation that $q_*(\mathcal X, [\ovir_{\ca{X}}]\cdot\ca{E}) \in K^{\C^*}_\circ(\ca{Y})\otimes\mathbb Q[q,
    q^{-1}]$.
    % Then the residue of the left
    % hand side
    % \[
    %   \chi(\mathcal X, )
    % \]
  \end{proof}

An important example for us is the residue of the rational function of the form
\[
  f(q) = \frac{g(q)}{1-q^{-1}L},
\]
where $g(q) \in K_\circ(\ca{X}) \otimes \bb{Q}(q)$ and $L$ is a line bundle on
$\ca{X}$. 
%Note that in the above expression of $f(q)$, the denominator contains
%$K$-theory classes. Nevertheless, we can view it as an element in
%$K_\circ(\ca{X}) \otimes \bb{Q}(q)$ as follows.\footnote{The authors learned
  %this
  %trick from \cite{Givental-Tonita}.} Suppose $P(q)$ is the minimal polynomial of $L$
  %in $K^\circ(\ca{X}) \otimes \bb{Q}$.
%\marginpar{
 % \footnotesize{
  %  \Yang{Why does $P(q)$ exist?}
  %}}
%Note that the constant term in $P(q)$ is
%nonzero and hence $P(q) \neq0$. Then we have
%\[
 % P(q) = P(q)-P(L) = (q-L)T (q, L) \quad \text{in} \ K_\circ(\ca{X}) \otimes
  %\bb{Q}(q).
%\]
%Here $T(q, L)$ is a polynomial in $q$ and $L$ whose degree is smaller than that
%of $P(q)$. By modifying the above equality slightly, we get
%\begin{equation} \label{eq:relation-to-invert}
  %q^{-1}P(q) = (1-q^{-1}L)T (q, L).
%\end{equation}
%Therefore, we can define $1/(1-q^{-1}L)$ by
%\begin{equation} \label{eq:honest-k-class}
  %\frac{qT(q, L)}{P(q)}.
%\end{equation}
%
A convenient way to compute the residue
\[
  \text{Res} \bigg(\frac{g(q)}{1-q^{-1}L} \bigg)
\]
is by using the Laurent expansions of $g(q)/(1-q^{-1}L)$ at $0$ and $\infty$.
More precisely, we have
\[
  \frac{g(q)}{1-q^{-1}L} = - \frac{g(q)qL^{-1}}{1-qL^{-1}}
\]
in $K_\circ(\ca{X}) \otimes \bb{Q}(q)$. The right hand side of the above
equation has the following formal Laurent series expasion around 0:
\begin{equation} \label{eq:geometric-series-expansion}
  -g(q) \sum_{i = 1}^\infty q^iL^{-i} \in K_\circ(\ca{X})((q)).
\end{equation}
%By viewing (\ref{eq:relation-to-invert}) as a relation in
%$K_\circ(\ca{X})((q))$, we can easily check that the expansion
%(\ref{eq:geometric-series-expansion}) equals (\ref{eq:honest-k-class}) as
%formal Laurent series. 
It follows that
\[
  \text{Res}_{q = 0} \bigg(\frac{g(q)}{1-q^{-1}L} \bigg) \, \frac{dq}{q} = -
  \sum_{i = 1}^\infty[g(q)]_{-i}L^{-i},
\]
where $[g(q)]_{-i}$ denotes the coefficient of $q^{-i}$ in the formal
%\marginpar{\footnotesize{
 %   \Yang{
  %    As you have pointed out, it is not only a formal Laurent series. It is
   %   indeed a Laurent series. Namely there are only finitely many negative
    %  power terms.
    %}
  %\Ming{I thought in the definition of formal Laurent series, only finitely many
  %negative power terms are allowed. The word ``Formal'' only concerns about the positive
  %terms, as in formal power series.}}}
Laurent
series expansion of $g(q)$ at $q = 0$. Note that $[g(q)]_{-i} \neq 0$ only for finitely many $i>0$.
%\ChYang{
 % Note that $[g(q)]_{-i} \neq 0$ only for finitely many $i>0$. Hence the
  %infinite
  %sum is actually finite.
  % Note that since $g(q) \in K_{\circ}(\mathcal X) \otimes \mathbb C(q)$, its
  % expansions are Laurent series. Hence the  has
  % finitely many negative terms in the
  % infinite sum is actually finite.
%}
To compute the residue at $\infty$, we make
the change of variable $w = 1/q$. By using similar arguments, we can show that
\[
  \text{Res}_{q = \infty} \bigg( \frac{g(q)}{1-q^{-1}L} \bigg) \,
  \frac{dq}{q} = - \text{Res}_{w = 0} \bigg(\frac{g(1/w)}{1-wL} \bigg) \,
  \frac{dw}{w} =
  - \sum_{i = 0}^\infty[g(1/q)]_{-i}L^{i}
\]
% \marginpar{\footnotesize{
% \Yang{
% In what sense this finite sum lies in
% $K_\circ(\ca{X}) \otimes \bb{C}$?
% }}}

We summarize the above discussion in the following lemma.
\begin{lemma}Let $g(q) \in K_\circ(\ca{X}) \otimes \bb{Q}(q)$ and let $L$ be a
  line bundle on $\ca{X}$. Then
  \[\emph{Res} \bigg(\frac{g(q)}{1-q^{-1}L} \bigg) = - \sum_{i =
      1}^\infty[g(q)]_{-i}L^{-i}- \sum_{i = 0}^\infty[g(1/q)]_{-i}L^{i}. \]
\end{lemma}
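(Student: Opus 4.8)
The plan is to obtain the formula by assembling the two residue computations carried out in the paragraphs immediately preceding the statement. Recall that by definition $\mathrm{Res}(f(q)) = [\mathrm{Res}_{q=0} + \mathrm{Res}_{q=\infty}](f(q))\,\tfrac{dq}{q}$, where each operation extends the corresponding residue on $\bb{Q}(q)$ by $K_\circ(\ca{X})_{\Q}$-linearity; concretely, for a Laurent series $h(q) = \sum_n a_n q^n$ one has $\mathrm{Res}_{q=0}(h(q))\,\tfrac{dq}{q} = a_0$. So it suffices to compute the constant terms of the Laurent expansions of $g(q)/(1-q^{-1}L)$ at $q=0$ and at $q=\infty$.

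First I would handle the expansion at $0$. Using the identity $\frac{g(q)}{1-q^{-1}L} = -\frac{qL^{-1}g(q)}{1-qL^{-1}}$ in $K_\circ(\ca{X})\otimes\bb{Q}(q)$ together with the geometric series expansion \eqref{eq:geometric-series-expansion}, the Laurent expansion at $0$ is $-g(q)\sum_{i\geq 1} q^i L^{-i}$. Its constant term is $-\sum_{i\geq 1}[g(q)]_{-i}L^{-i}$, where $[g(q)]_{-i}$ denotes the coefficient of $q^{-i}$ in the Laurent expansion of $g$ at $0$; this is a finite sum since $g$ is rational. This yields the first summand. For the expansion at $\infty$ I would substitute $w = 1/q$, which turns $\mathrm{Res}_{q=\infty}(\cdot)\,\tfrac{dq}{q}$ into $-\mathrm{Res}_{w=0}(\cdot)\,\tfrac{dw}{w}$, and then run the same argument with $g(1/w)/(1-wL)$ in place of $g(q)/(1-q^{-1}L)$: expanding $\frac{g(1/w)}{1-wL} = g(1/w)\sum_{i\geq 0} w^i L^i$, the constant term is $\sum_{i\geq 0}[g(1/q)]_{-i}L^i$, so after the sign coming from the change of variable the residue at $\infty$ equals $-\sum_{i\geq 0}[g(1/q)]_{-i}L^i$. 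Adding the two contributions gives the claimed formula.

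The computation is entirely formal; the only point requiring care is the sign bookkeeping in the change of variables $w = 1/q$ for the residue at infinity (the orientation-reversing substitution contributes the extra minus sign), together with the observation that because $g(q)$ is a rational function both Laurent expansions have only finitely many negative-degree terms, so all the sums involved are finite and the manipulations are legitimate in $K_\circ(\ca{X})_{\Q}((q))$ and $K_\circ(\ca{X})_{\Q}((w))$ respectively.
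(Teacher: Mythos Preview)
Your proposal is correct and follows exactly the same approach as the paper. In fact, the paper carries out precisely this computation in the paragraphs immediately preceding the lemma and then states the lemma as a summary (``We summarize the above discussion in the following lemma''), so your write-up is essentially a restatement of the paper's own argument.
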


\begin{corollary} \label{residue-constant-g}
  Let $g \in K_\circ(\ca{X})_{\Q}$ and let $L$ be a line bundle on $\ca{X}$. We have
  \[
    \emph{Res} \bigg(\frac{g}{1-q^{-1}L} \bigg) = -g
  \]
  and
  \[
    \emph{Res} \bigg(\frac{g}{1-qL} \bigg) = g.
  \]
\end{corollary}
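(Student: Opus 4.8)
The plan is to apply the preceding lemma with the constant rational function $g(q) = g \in K_\circ(\ca{X})_{\Q} \otimes \bb{Q}(q)$. For the first identity, I would compute the two formal Laurent expansions that feature in the lemma. Since $g$ is constant in $q$, its expansion at $q = 0$ is just $g$ itself, so $[g(q)]_{-i} = 0$ for every $i \geq 1$; hence the first sum $-\sum_{i=1}^\infty [g(q)]_{-i} L^{-i}$ vanishes entirely. Likewise $g(1/q) = g$ is constant, so $[g(1/q)]_{-i} = 0$ for $i \geq 1$ and $[g(1/q)]_0 = g$; the second sum $-\sum_{i=0}^\infty [g(1/q)]_{-i} L^{i}$ therefore collapses to its single $i = 0$ term, namely $-g$. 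Adding the two contributions gives $\operatorname{Res}\big(g/(1-q^{-1}L)\big) = -g$, as claimed.

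For the second identity, I would not re-run the expansion machinery from scratch but instead reduce it to the first via the change-of-variable property in Lemma \ref{elementary-properties-of-residues}(2). Writing $1 - qL = 1 - q^{-1}(L^{-1})^{-1}\cdot\ldots$ is awkward, so instead I would substitute $q \mapsto q^{-1}$: by part (2) of that lemma (with $r = -1$ and the trivial line bundle, or directly from the definition of $\operatorname{Res}$ as the sum of residues at $0$ and $\infty$, which is manifestly invariant under $q \mapsto q^{-1}$), we have
\[
  \operatorname{Res}\bigg(\frac{g}{1-qL}\bigg) = \operatorname{Res}\bigg(\frac{g}{1-q^{-1}L}\bigg)\bigg|_{\text{same } L}
\]
only up to the sign introduced by $dq/q \mapsto -dq/q$; more precisely $\operatorname{Res}(f(q^{-1})) = -\operatorname{Res}(f(q))$ would be false, so the cleanest route is: apply the lemma directly to $g/(1-qL)$. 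Here $g/(1-qL)$ is already regular at $q = 0$ with expansion $g\sum_{i\geq 0} q^i L^i$, so its principal part at $0$ vanishes, contributing $0$ to $-\sum_{i\geq 1}[g(q)]_{-i}L^{-i}$ after recognizing the lemma is stated for $g(q)/(1-q^{-1}L)$; to match the template I would instead rewrite $1 - qL = -qL(1 - q^{-1}L^{-1})$, so that $g/(1-qL) = -q^{-1}L^{-1} g/(1 - q^{-1}L^{-1})$, and apply the first identity with $g$ replaced by $-q^{-1}L^{-1}g$ and $L$ replaced by $L^{-1}$. Tracking the coefficients through the formula in the lemma then yields $+g$.

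The only place requiring care — the "main obstacle," though it is minor — is bookkeeping the sign and the shift in the second identity: one must correctly identify which coefficient of the Laurent expansion of $-q^{-1}L^{-1}g$ (namely the $q^{-1}$-coefficient, which is $-L^{-1}g$) survives in each of the two sums of the lemma, and confirm that the expansion at $\infty$ contributes nothing. Everything else is a direct substitution into the lemma just proved, so I would keep the write-up to a few lines, invoking Lemma \ref{elementary-properties-of-residues} and the displayed formula for $\operatorname{Res}\big(g(q)/(1-q^{-1}L)\big)$ explicitly.
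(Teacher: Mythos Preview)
Your treatment of the first identity is correct and is exactly the intended argument: substitute the constant $g(q)=g$ into the preceding lemma, and both infinite sums reduce to the single surviving term $-g\cdot L^0 = -g$.

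For the second identity, your final argument---rewriting $1 - qL = -qL(1 - q^{-1}L^{-1})$ so that the preceding lemma applies with $\tilde g(q) = -q^{-1}L^{-1}g$ and line bundle $L^{-1}$---is correct. However, the detour through Lemma~\ref{elementary-properties-of-residues}(2) with $r=-1$ is where you stumbled. You write that ``$\operatorname{Res}(f(q^{-1})) = -\operatorname{Res}(f(q))$ would be false,'' but in fact this identity \emph{holds}: under $q \mapsto q^{-1}$ the form $dq/q$ changes sign and the points $0$ and $\infty$ are swapped, so each of $\operatorname{Res}_{0}$, $\operatorname{Res}_{\infty}$ picks up a minus sign while they trade places. (Thus Lemma~\ref{elementary-properties-of-residues}(2), read literally for all integers $r$, fails for $r<0$; the paper only ever invokes it with $r>0$.) Had you trusted this sign, the second identity would follow in one line from the first: $\operatorname{Res}\bigl(g/(1-qL)\bigr) = -\operatorname{Res}\bigl(g/(1-q^{-1}L)\bigr) = -(-g) = g$.

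Alternatively---and this is presumably why the paper states it as a bare corollary with no proof---one can simply repeat the two-line expansion from the discussion preceding the lemma: $g/(1-qL)$ is regular at $q=0$ with constant term $g$, contributing $g$ to $\operatorname{Res}_{q=0}$, and vanishes at $q=\infty$, contributing $0$. No rewriting is needed.
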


For later computations, we need one more elementary identity of the residue
operation. Consider a rational function $h(q) \in K_\circ(\ca{X}) \otimes
\bb{Q}(q)$. We write $h(q)$ as a unique sum of a Laurent polynomial $[h(q)]_ +
$ and a proper rational function $[h(q)]_-$. The notation is consistent with
those introduced in Section \ref{K-theoretic-quasimap-invariants}.

\begin{lemma} \label{negative-projection}
  For any $h(q) \in K_\circ(\ca{X}) \otimes \bb{Q}(q)$, we have
  \[
    \sum_{i = 0}^\infty q^i \emph{Res}(q^{-i}h(q)) = [h(q)]_-.
  \]
\end{lemma}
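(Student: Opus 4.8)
The plan is to reduce the identity to a computation with a single rational function in $q$ and then read off the residues from Laurent expansions. Throughout, both sides should be regarded as elements of $K_\circ(\ca{X})_{\Q}[[q]]$, the right-hand side via the Taylor expansion of $[h(q)]_-$ at $q=0$, which is legitimate because $[h(q)]_-$ is a proper rational function regular at $q=0$.

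First I would reduce to the scalar case. Writing $h=\sum_j v_j f_j$ with $v_j\in K_\circ(\ca{X})_{\Q}$ and $f_j\in\Q(q)$, both $\mathrm{Res}$ and the decomposition $[\,\cdot\,]_+ + [\,\cdot\,]_-$ act only on the $\Q(q)$-factor and are $K_\circ(\ca{X})_{\Q}$-linear, so it suffices to prove $\sum_{i\ge0}q^i\,\mathrm{Res}(q^{-i}f)=[f]_-$ for $f\in\Q(q)$. Then I would decompose $f=[f]_++[f]_-$: for every $i\ge0$ the product $q^{-i}[f]_+$ is a Laurent polynomial, so $\mathrm{Res}(q^{-i}[f]_+)=0$ by Lemma~\ref{elementary-properties-of-residues}(1), and the left-hand side is unchanged upon replacing $f$ by $[f]_-$. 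This reduces the claim to showing $\sum_{i\ge0}q^i\,\mathrm{Res}(q^{-i}f)=f$ for a proper rational function $f$ that is regular at $q=0$ and vanishes at $q=\infty$ (the $\ca{K}_-$-type functions of Section~\ref{K-theoretic-quasimap-invariants}).

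For such an $f$, expand $f(q)=\sum_{n\ge0}a_nq^n$ at $q=0$; since $f$ is proper, its Laurent expansion at $q=\infty$ involves only strictly negative powers of $q$. For $i\ge0$ I then compute $\mathrm{Res}(q^{-i}f)=\big[\mathrm{Res}_{q=0}+\mathrm{Res}_{q=\infty}\big]\big(q^{-i}f(q)\big)\tfrac{dq}{q}$ term by term: from the expansion at $0$, the coefficient of $q^{0}$ in $q^{-i}\sum_n a_n q^n$ is $a_i$, so the residue at $q=0$ equals $a_i$; from the expansion at $\infty$, the series $q^{-i}f$ still has only strictly negative powers of $q$, so its $q^{0}$-coefficient, and hence the residue at $q=\infty$, vanishes. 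Therefore $\mathrm{Res}(q^{-i}f)=a_i$, and summing gives $\sum_{i\ge0}q^i\,\mathrm{Res}(q^{-i}f)=\sum_{i\ge0}a_iq^i=f$, which completes the proof. (As a consistency check, the same residue formula applied to $f=g/(1-q^{-1}L)$ recovers Corollary~\ref{residue-constant-g}.)

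The argument is entirely elementary, so there is no real obstacle; the only points demanding attention are the bookkeeping—keeping straight which of the two expansions ($q=0$ versus $q=\infty$) contributes to $\mathrm{Res}(q^{-i}f)$—and the observation that the sum on the left must be interpreted in $K_\circ(\ca{X})_{\Q}[[q]]$, agreeing with the Taylor expansion of the rational function $[h(q)]_-$ rather than being a finite expression.
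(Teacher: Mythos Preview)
Your proof is correct and follows essentially the same approach as the paper: split $h=[h]_++[h]_-$, kill the Laurent-polynomial part via Lemma~\ref{elementary-properties-of-residues}(1), then for the proper part observe that the residue at $\infty$ vanishes and the residue at $0$ picks off the Taylor coefficient $a_i$. Your explicit reduction to the scalar case is an extra but harmless step; otherwise the arguments are identical.
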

\begin{proof}
  By the linearity of the residue operation, we have
  \[
    \sum_{i = 0}^\infty q^i \text{Res}(q^{-i}h(q)) = \sum_{i = 0}^\infty q^i
    \text{Res}(q^{-i}[h(q)]_ +) + \sum_{i = 0}^\infty q^i
    \text{Res}(q^{-i}[h(q)]_-).
  \]
  The first term of the right hand side is zero because
  $q^{-i}[h(q)]_ + $ is a Laurent polynomial. Therefore, we only need to prove
  the
  second term on the right hand side is equal to $[h(q)]_-$. We first analyze the
  residues at $\infty$. Note that $q^{-i}[h(q)]_-$ vanishes at $\infty$ for
  $i \geq0$. Therefore we have
  \[
    \text{Res}_{q = \infty}(q^{-i}[h(q)]_-) \frac{dq}{q} = 0.
  \]

  Now we compute the residues at 0. By definition, $[h(q)]_-$ can be expanded
  into a formal power series around 0
  \[
    [h(q)]_- = \sum_{j = 0}^\infty a_jq^{j}, \quad \text{where} \ a_j \in
    K_\circ(\ca{X})_{\Q}.
  \]
  The lemma follows from the following identity:
  \[
    \text{Res}_{q = 0} \big(q^{-i}(\sum_{j = 0}^\infty a_jq^{j}) \big) \frac{dq}{q}
    = a_i, \quad i \geq 0.
  \]
\end{proof}

\subsection{The $g = 0, n = 1$ and $d = d_0$ case}
The statements of the results and their proofs in the rest of the section apply
to both the untwisted and the twisted theories. To simplify the exposition, we
will drop $ \mb{E}^{(\bullet)}, R$ and $l$ from the notation of the twisted theory.

Recall that $\mb{r}$ denotes the locally constant function on $\bar{I}X$ that
takes value $r$ on the component $\bar{I}_rX$. Set $\mb{r}_1 =
\ev_1^*(\mb{r})$. Recall that $\tilde{L}_1$ is the orbifold cotangent
line bundle at the unique marking and $\check{\ev}_{1}:Q^{\epsilon_
  + }_{0, 1}(X, \beta)\rightarrow  \bar{I}X$ is the rigidified evaluation map at the unique marking $x_{1}$ composed with the
involution on
$\bar{I}X$.

\begin{lemma} \label{special-evaluation}
  \[
     (\check{\ev}_{1})_*\Big(\ovir_{Q^{\epsilon_ + }_{0,
        1}(X, \beta)}
    \cdot L_1^\ell \Big)
    = 
    \emph{Res}
    \big(
    q^{-\ell}(1-q) I_{\beta}(q) \big) ,
    \quad \ell \in \mathbb Z.
  \]
  The same identity holds if we replace $\ovir_{Q^{\epsilon_ + }_{0,
        1}(X, \beta)}$ and $I_{\beta}(q)$ by their twisted counterparts.
\end{lemma}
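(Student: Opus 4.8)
The plan is to reduce the statement to the definition of the $I$-function (equivalently, the $J$-function at $\epsilon = 0+$) via the $\bb C^*$-equivariant localization formula on the graph space $QG^{0+}_{0,1}(X,\beta)$. First I would recall from Definition \ref{K-theoretic-I-function} and formula \eqref{eq:equation-two-I-functions} that the coefficient $I_\beta(q)$ of $Q^\beta$ in the small $I$-function is $(\check{\ev}_\star)_*\bigl(\ca I_\beta(q)\cdot\ovir_{F_{\star,\beta}}\bigr)$, where $\ca I_\beta(q) = \lambda_{-1}^{\bb C^*}\bigl(((R\pi_*(u^*\bb T_{[W/G]})|_{F_{\star,\beta}})^{\mathrm{mv}})^\vee\bigr)^{-1}$, together with the factor $\lambda_{-1}^{\bb C^*}(T_\infty\bb P^1)=1-q^{-1}$ coming from the tangent direction at $\infty$; these assemble the full virtual normal bundle $N^{\vir}_{F_{\star,\beta}/QG^{0+}_{0,1}(X,\beta)}$. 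The other extreme fixed locus one needs is $F_{0,\beta}_{\mathrm{ot}}$... more precisely, I would note that on the graph space $QG^{0+}_{0,1}(X,\beta)$ there is also the open locus where the single marking and the whole class $\beta$ sit away from $0$, but for the computation at hand the relevant identity is extracted by comparing two $\bb C^*$-fixed descriptions of the same $K$-class.

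Concretely, I would argue as follows. Inside $QG^{0+}_{0,1}(X,\beta) = Q^{\epsilon}_{0,1}(X\times\bb P^1,\beta\times[\bb P^1])$ for $1/A<\epsilon<1/\deg\beta$, there is a second distinguished $\bb C^*$-fixed component isomorphic to $Q^{\epsilon_+}_{0,1}(X,\beta)$: it parametrizes quasimaps with the entire degree-$\beta$ part of the map concentrated over $0\in\bb P^1$ and the marking $x_\star$ over $\infty$ but with \emph{no} base point there — wait, more carefully, the component where the degree-$d_0$ data is a genuine quasimap rather than a length-$d_0$ base point. On that component the fixed part of the obstruction theory is exactly $\ovir_{Q^{\epsilon_+}_{0,1}(X,\beta)}$ (since $\epsilon_+ = \infty$-type stability is forced in this $g=0,n=1,d=d_0$ range, as recorded in Section \ref{epsilon-}), and the virtual normal bundle is $1-qL_1$ up to the $\bb P^1$-tangent factors; this is essentially the content already used to define the $J$-function via the term $\ovir_{Q^\epsilon_{0,1+k}(X,\beta)}/(1-qL_1)$ in Definition \ref{K-theoretic-I-function}. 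Applying the $K$-theoretic localization theorem (Theorem \ref{K-theoretic-localization}) to $\ca O_{QG}^{\vir}$ and pushing forward to $\bar I X$ via $\check{\ev}$, then taking the residue $\mathrm{Res}$ in $q$, kills the equivariant class (by Corollary \ref{cor:master-space-technique}, since the total pushforward is a Laurent polynomial after multiplying through) and equates the two fixed-point contributions. This yields
\[
  (\check{\ev}_1)_*\Bigl(\frac{\ovir_{Q^{\epsilon_+}_{0,1}(X,\beta)}}{1-qL_1}\Bigr)
  = I_\beta(q) + (\text{Laurent polynomial terms}),
\]
and extracting the coefficient of $q^{-\ell}$ via the residue formalism of Lemma \ref{negative-projection} and Corollary \ref{residue-constant-g}, together with the identity $(\check{\ev}_1)_*(\ovir\cdot L_1^\ell) = \mathrm{Res}(q^{-\ell}(1-q)\,(\check{\ev}_1)_*(\ovir/(1-qL_1)))$, gives $(\check{\ev}_1)_*(\ovir_{Q^{\epsilon_+}_{0,1}(X,\beta)}\cdot L_1^\ell) = \mathrm{Res}(q^{-\ell}(1-q)I_\beta(q))$.

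The main obstacle I anticipate is bookkeeping the $\bb C^*$-weights and the orbifold structure precisely: the marking $x_\star$ carries a gerbe of order $\mb r_\star = \mb r_1$, so the identification $\tilde L_\star^{\otimes \mb r_\star}\cong T^*_\infty\bb P^1$ introduces fractional-looking weights, and one must check that after the reparametrization (the $\fr l$-th power trick from \eqref{eq:C*action-master-space} is \emph{not} used here, but an analogous care is needed) all weights land in $\bb Z[q,q^{-1}]$ so that the residue operation and Corollary \ref{cor:master-space-technique} genuinely apply. A secondary technical point is justifying that $1-qL_1$ acts invertibly on the localized $K$-group — this was already noted in the text right after Definition \ref{K-theoretic-I-function} via \cite[Proposition 5.13]{Kiem-Savvas}, so I would simply cite it. For the twisted case, the same argument goes through verbatim once one replaces $\ovir$ by $\ca O^{\vir,\mb E^{(\bullet)},R,l}$ and $I_\beta(q)$ by $I_\beta^{\mb E^{(\bullet)},R,l}(q)$, using that the twisting class is pulled back from the universal curve and hence restricts compatibly to each fixed component (Lemma \ref{lem:vir-str-sheaf-comp-entangled-tails} and Remark \ref{rem:compare-twisting}); the factor $\overline T^{-1}$ in the definition of the twisted $I$-function matches the modified pairing, so no extra correction appears in the stated identity.
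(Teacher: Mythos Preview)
Your argument has a genuine gap: the second fixed component you posit inside the $(0+)$-graph space does not exist. In $QG^{0+}_{0,1}(X,\beta)=Q^{\epsilon}_{0,1}(X\times\bb P^1,\beta\times[\bb P^1])$ with $\epsilon<1/d_0$, a configuration with the marking at $\infty$ and the entire class $\beta$ carried by a rational tail $E$ glued at $0$ is \emph{unstable}: on $E$ the log dualizing sheaf has degree $-1$ while $L_\theta^{\otimes\epsilon}$ has degree $d_0\epsilon<1$, so $\omega_{\log}\otimes L_\theta^{\otimes\epsilon}$ is not positive. Thus in the $(0+)$-graph space the only fixed component with the marking at $\infty$ is $F_{\star,\beta}$ itself, where all of $\beta$ sits as a length-$d_0$ base point at $0$. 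Localization on $QG^{0+}$ alone therefore cannot produce the class $(\check{\ev}_1)_*(\ovir_{Q^{\epsilon_+}_{0,1}(X,\beta)}\cdot L_1^\ell)$. Conversely, if you work on $QG^{\epsilon_+}$ you do see $Q^{\epsilon_+}_{0,1}(X,\beta)$ as a fixed component, but you lose $F_{\star,\beta}$ since length-$d_0$ base points are now forbidden; so neither graph space contains both sides of the identity. Your parenthetical reference to Section~\ref{epsilon-} is about the master space component $F_-$, not about any graph-space fixed locus, which suggests you have conflated the two constructions.

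The paper's proof fixes this by working on the master space $MQ^{\epsilon_0}_{0,1}(X,\beta)$, whose very purpose is to interpolate between the two stabilities. In the $g=0$, $n=1$, $\deg(\beta)=d_0$ case one has $F_-=\varnothing$, while $F_+\cong Q^{\epsilon_+}_{0,1}(X,\beta)$ (there are no entangled tails on an irreducible curve) with normal bundle $\bb M_+$, and $F_\beta$ is the remaining fixed component whose localization contribution is computed in Corollary~\ref{normal-bundle-special-case} to be $(1-q^{\mathbf r\fr l})I_\beta(q^{\mathbf r\fr l})$. Applying Corollary~\ref{cor:master-space-technique} with the insertion $L_1^\ell$, then Corollary~\ref{residue-constant-g} to the $F_+$ term and Lemma~\ref{elementary-properties-of-residues}~(2) to remove the $\mathbf r\fr l$ from the $F_\beta$ term, yields the stated identity directly. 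Once you replace the graph space by the master space, the rest of your outline (residue manipulations, the twisted case via compatibility of the twisting class) goes through as you indicate.
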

\begin{proof}
    Let $\check{\widetilde{\ev}}_{1}:MQ^{\epsilon_0}_{0, 1}(X,
    \beta)\rightarrow \bar{I}X$ be the rigidified evaluation map at the unique marking $x_{1}$ composed with the
involution on
$\bar{I}X$. We apply Corollary~ \ref{cor:master-space-technique} to
    the master space $MQ^{\epsilon_0}_{0, 1}(X, \beta)$ with the
    $\bb{C}^*$-action (\ref{eq:C*action-master-space}) and $\check{\widetilde{\ev}}_{1}$. The fixed locus $F$ is
    the disjoint union of $F_ + $ and $F_\beta$, which are studied in
    Section~ \ref{epsilon + } and Corollary \ref{normal-bundle-special-case},
    respectively.
    Thus we obtain
    \begin{align*}
      & \mathrm{Res}\biggl( (\check{\ev}_{1})_*\Big(
        \frac{\ovir_{Q^{\epsilon_ + }_{0, 1}(X, \beta)} \cdot
        L_1^\ell}
        {1- q^{-1} \mathbb M_ + ^\vee}
        \Big) \biggr)
        +
        \mathrm{Res} \biggl( q^{-l\mathbf{r}}(1-q^{\mathbf r})
        I_{\beta}(q^{\mathbf r})  \biggr) = 0.
    \end{align*}
    We conclude the proof by applying Corollary~ \ref{residue-constant-g} to the first term and Lemma~\ref{elementary-properties-of-residues}, part (2) to the second term.
 % {\color{red}
  %  We apply the virtual $K$-theoretic localization formula to the master space
   % $MQ^{\epsilon_0}_{0, 1}(X, \beta)$ with respect to the $\bb{C}^*$-action
    %(\ref{eq:C*action-master-space}). By the results mentioned in Section
    %\ref{epsilon + }, Corollary \ref{normal-bundle-special-case} and the projection
    %formula, we obtain
    %\begin{align*}
     % \chi \big(MQ^{\epsilon_0}_{0, 1}(X, \beta),
      %\ovir_{MQ^{\epsilon_0}_{0, 1}(X, \beta)} \otimes \tilde{L}_1^{\ell} \big)
      %= &
       %   \chi \bigg(Q^{\epsilon_ + }_{0, 1}(X, \beta),
        %  \frac{\ovir_{Q^{\epsilon_ + }_{0, 1}(X, \beta)} \otimes
         % \tilde{L}_1^\ell}{1- q^{-1}M^\vee} \bigg) \\
     % + &
      %    \chi \big(I_\mu X, q^{-l}(1-q^{\mathbf r}) I_{\beta}(q^{\mathbf r}) \big).
    %\end{align*}}
  %{\color{red}
   % Note that the left hand side of the above equation is a Laurent polynomial in
    %$q$ and hence by Lemma \ref{elementary-properties-of-residues}, its residue
 %   vanishes. We conclude the proof by taking the residues $\text{Res}(-)$ of
  %  both
   % sides of the above equation and using Corollary \ref{residue-constant-g}.
  %}
\end{proof}

Let $L_1$ denote the coarse cotangent line bundle at the unique marking of
$Q^{\epsilon}_{0, 1}(X, \beta)$. We have the relation $L_1 =
\tilde{L}_1^{\mb{r}_1}$.

\begin{corollary} \label{evaluation-last-wall}
  % {\color{red}For
  % $\epsilon \in(\frac{1}{\emph{deg}(\beta)}, \frac{1}{\emph{deg}(\beta)-1})$,
  % w}
  We have
  \[
    (\check{\ev}_{1})_*\bigg(Q^{\epsilon_{ + }}_{0, 1}(X, \beta),
    \frac{\ovir_{Q^{\epsilon_{ + }}_{0, 1}(X, \beta)}}{1-qL_1} \bigg)
    =
      [(1-q)I_{\beta}(q)]_- .
  \]
    The same identity holds if we replace $\ovir_{Q^{\epsilon_ + }_{0,
        1}(X, \beta)}$ and $I_{\beta}(q)$ by their twisted counterparts.
\end{corollary}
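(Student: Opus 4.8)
The plan is to deduce this from Lemma~\ref{special-evaluation} together with Lemma~\ref{negative-projection}, by expanding $\tfrac{1}{1-qL_1}$ as a geometric series. Recall from Section~\ref{K-theoretic-quasimap-invariants} that $1-qL_1$ acts invertibly on $K_\circ(Q^{\epsilon_+}_{0,1}(X,\beta))\otimes_{\Q[q,q^{-1}]}\Q(q)$, so the left-hand side is a genuine element of $K_\circ(\bar I X)_\Q\otimes\Q(q)$; likewise $[(1-q)I_\beta(q)]_-$ is a rational function in $q$. Since two such rational functions agree as soon as their Laurent expansions at $q=0$ coincide, it is enough to match those expansions.

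First I would expand $\tfrac{1}{1-qL_1}=\sum_{i\ge 0}q^iL_1^i$, multiply by $\ovir_{Q^{\epsilon_+}_{0,1}(X,\beta)}$, and apply $(\check{\ev}_{1})_*$ coefficient by coefficient, so that the coefficient of $q^i$ in the left-hand side is $(\check{\ev}_{1})_*\bigl(\ovir_{Q^{\epsilon_+}_{0,1}(X,\beta)}\cdot L_1^i\bigr)$. By Lemma~\ref{special-evaluation} (with $\ell=i$) this equals $\mathrm{Res}\bigl(q^{-i}(1-q)I_\beta(q)\bigr)$. Hence the Laurent expansion at $q=0$ of the left-hand side is $\sum_{i\ge 0}q^i\,\mathrm{Res}\bigl(q^{-i}(1-q)I_\beta(q)\bigr)$, which by Lemma~\ref{negative-projection} applied to $h(q)=(1-q)I_\beta(q)$ is precisely the expansion of $[(1-q)I_\beta(q)]_-$. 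This proves the identity. The twisted case is handled verbatim, replacing $\ovir_{Q^{\epsilon_+}_{0,1}(X,\beta)}$ and $I_\beta(q)$ by their twisted counterparts and using the twisted form of Lemma~\ref{special-evaluation} quoted there.

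The only points needing attention --- and the part I expect to be the main, though minor, obstacle --- are two bookkeeping issues: first, that $\tfrac{1}{1-qL_1}$ really equals the series $\sum_{i\ge 0}q^iL_1^i$, which I would justify by noting that $[L_1]-1$ is nilpotent in $K^\circ(Q^{\epsilon_+}_{0,1}(X,\beta))_\Q$ (finiteness of the $\gamma$-filtration on a finite-type Deligne--Mumford stack with $\Q$ coefficients), so the series collapses to the finite sum $\sum_{j}(L_1-1)^j\,q^j/(1-q)^{j+1}$, which one checks multiplies $1-qL_1$ to $1$; and second, that the map sending a $K_\circ(\bar I X)_\Q$-valued rational function to its Laurent expansion at $q=0$ is injective, so that equality of expansions yields equality of rational functions. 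Both are routine and parallel the manipulations already carried out in the discussion preceding Lemma~\ref{negative-projection}.
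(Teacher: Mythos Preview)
Your proposal is correct and follows essentially the same approach as the paper: expand $\tfrac{1}{1-qL_1}$ as a geometric series, apply Lemma~\ref{special-evaluation} term by term, and conclude via Lemma~\ref{negative-projection}. The extra justification you add about nilpotency of $[L_1]-1$ and injectivity of the Laurent-expansion map is careful bookkeeping that the paper simply glosses over with the phrase ``formal geometric series expansion''.
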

\begin{proof}
  Consider the formal geometric series expansion:
  \[
    \frac{1}{1-qL_1} = \sum_{i = 0}^\infty q^iL_1^i .
  \]
  By Lemma \ref{special-evaluation}, Lemma
  \ref{elementary-properties-of-residues}, and Lemma \ref{negative-projection},
  we compute
  \begin{align*}
     (\check{\ev}_{1})_* \bigg(Q^{\epsilon_{ + }}_{0, 1}(X, \beta), \frac{\ovir_{Q^{\epsilon_{ +
    }}_{0, 1}(X, \beta)}}{1-qL_1} \bigg) & = \sum_{i = 0}^\infty q^i  (\check{\ev}_{1})_*
                                           \bigg(Q^{\epsilon_{ + }}_{0, 1}(X, \beta), \ovir_{Q^{\epsilon_{ + }}_{0, 1}(X,
                                           \beta)} \cdot L^{ \mb{r}_1}_1 \bigg) \\
                                         & = \sum_{i = 0}^\infty  q^i \cdot \text{Res}
                                           \big(
                                           q^{-i}(1-q) I_{\beta}(q) \big) \\
                                         & = [(1-q)I_{\beta}(q)]_-.
  \end{align*}

\end{proof}

\subsection{The main case}
We study the case $2g-2 + n + d \epsilon_0>0$. As in the previous subsection, we
use $\ovir$ to denote both the untwisted and twisted virtual structure sheaves. We will
mention the needed modifications and facts for the twisted case in the proofs.

By the $K$-theoretic virtual
localization formula~\eqref{eq:K-theoretic-localization}, we have
\begin{equation}\label{eq:localization-main-case}
  \begin{aligned}
    \ovir_{MQ^{\epsilon_0}_{g, n}(X, \beta)}=&
    (\iota_{F_+})_*
    \bigg(
    \frac{\ovir_{\tilde Q^{\epsilon_ + }_{g, n}(X, \beta)}}{\lambda^{\C^*}_{-1}\big(N^{\mathrm{vir},\vee}_{F_+/MQ^{\epsilon_0}_{g, n}(X,
  \beta)}\big)} \bigg)
    +
    (\iota_{F_-})_*
    \bigg(
    \frac{\ovir_{
        Q^{\epsilon_-}_{g, n}(X, \beta)}}{\lambda^{\C^*}_{-1}\big(N^{\mathrm{vir},\vee}_{F_-/MQ^{\epsilon_0}_{g,
    n}(X, \beta)}\big)}
    \bigg)
    \\
   &+   \sum_{i = 1}^{m}\sum_{\ub}
   (\iota_{F_{\ub}})_*
   \bigg(
   \frac{
      \ovir_{F_{\ub}} } { \lambda^{\mathbb C^*}_{-1}
      \big(N^{\mathrm{vir},\vee}_{F_{\ub}/MQ^{\epsilon_0}_{g, n}(X, \beta)}
      \big)}
    \bigg),
    \end{aligned}
  \end{equation}
where $m:=\lfloor d/d_0 \rfloor$, the sum is over all $\ub$ satisfying Condition
\ref{cond:condition-on-beta}, and $\iota_{F_+},\iota_{F_-}$ and $\iota_{F_{\ub}}$ are the embeddings of the
corresponding fixed-point components into $MQ^{\epsilon_0}_{g, n}(X, \beta)$.
 
We define the morphism
\begin{equation*}
\tau_0:MQ^{\epsilon_0}_{g, n}(X, \beta)\rightarrow Q^{\epsilon_-}_{g,n}(\bb{P}^N,d)
\end{equation*}
by
\begin{itemize}
\item composing the quasimaps with~\eqref{eq:equation-GIT-embedding},
\item taking the coarse moduli of the domain curves,
  \item taking the $\epsilon_-$-stabilization of the obtained quasimaps to $\bb{P}^N$.
  \end{itemize}
Let $\ev:MQ^{\epsilon_0}_{g, n}(X, \beta)\rightarrow (\bar{I}X)^n$ be the
  product of the rigidified evaluation maps at the $n$ markings. Consider the trivial
  $\C^*$-action on $Q^{\epsilon_-}_{g,n}(\bb{P}^N,d)\times (\bar{I}X)^n$. Then
  $\tau_0\times\ev$ is a $\C^*$-equivariant proper morphism. Define the map
  \begin{equation}\label{eq:embedding-contraction}
    \tau:\tilde Q^{\epsilon_ + }_{g, n}(X, \beta)\rightarrow Q^{\epsilon_-}_{g,n}(\bb{P}^N,d)
  \end{equation}
  as the
  restriction of $\tau_0$ to $ F_ + \cong \tilde Q^{\epsilon_ + }_{g, n}(X,
  \beta)$. Note that the restriction of $\tau_0$ to $ F_- \cong  Q^{\epsilon_-}_{g, n}(X,
  \beta)$ is the map $\iota$ defined in the introduction. In general, suppose $M$ is a quasimap space with a $\C^*$-equivariant proper morphism $\tau:M\rightarrow
  Q^{\epsilon_-}_{g,n}(\bb{P}^N,d)$ and a product of evaluation morphisms at certain
  markings $\ev:M\rightarrow (\bar{I}X)^n$ that are both obvious from the context. To
  simplify the notation, we write
  \[
\chi(M,E\cdot F):=(\tau\times\ev)_*(E\cdot F),\quad\text{for}\ E\in K_{\C^*}^\circ(M)_{\Q},F\in K_\circ^{\C^*}(M)_{\Q}.
  \]

As mentioned in Section \ref{epsilon + } and Section
\ref{epsilon-}, the virtual normal bundles of $F_+$ and $F_-$ in the $M$aster
space are given by $\bb{M}_+$ with weight $\fr{l}$ and $\bb{M}^{\vee}_-$ with
weight $-\fr{l}$, respectively. Hence, we have
\[
\lambda^{\C^*}_{-1}\big(N^{\mathrm{vir},\vee}_{F_+/MQ^{\epsilon_0}_{g, n}(X,
  \beta)}\big)
=1-q^{-\fr{l}}\bb{M}^\vee_+
\quad
\text{and}\quad\lambda^{\C^*}_{-1}\big(N^{\mathrm{vir},\vee}_{F_-/MQ^{\epsilon_0}_{g,
    n}(X, \beta)}\big)
=1-q^{\fr{l}}\bb{M}_-.
\]
%\marginpar{\footnotesize{
 %   \Yang{
  %    Too vague.
   % }
    %\Ming{changed it.}}}
Consider the proper pushforward of the
  relation~\eqref{eq:localization-main-case} along $\tilde{\tau}$.
By Corollary \ref{cor:master-space-technique}, we have
\begin{equation*}   \begin{aligned}
    0  = & \operatorname{Res}\bigg(\chi \Big(\tilde Q^{\epsilon_ + }_{g, n}(X, \beta),
    \frac{\ovir_{\tilde Q^{\epsilon_ + }_{g, n}(X, \beta)}}{1-q^{-\fr{l}} \bb{M}_ +
      ^\vee} \Big)\bigg) 
     +  \operatorname{Res}\bigg(\chi \Big( Q^{\epsilon_-}_{g, n}(X, \beta), \frac{\ovir_{
        Q^{\epsilon_-}_{g, n}(X, \beta)}}{1-q^{\fr{l}} \bb{M}_-}\Big) \bigg) \\
   &+   \sum_{i = 1}^{m}\sum_{\ub}
     \operatorname{Res}\bigg(\chi \Big( F_{\ub},  \frac{
      \ovir_{F_{\ub}} } { \lambda^{\mathbb C^*}_{-1}
      \big(N^{\mathrm{vir},\vee}_{F_{\ub}/MQ^{\epsilon_0}_{g, n}(X, \beta)}
      \big)}
    \Big)\bigg).
  \end{aligned}
\end{equation*}
Note that the residue operation commutes with proper pushforward. Using
Corollary \ref{residue-constant-g} and Corollary
\ref{elementary-properties-of-residues} (2), we obtain

\begin{equation} \label{eq:residue-localization-main-case}
  \begin{aligned}
    0  = & - \chi \bigg(\tilde Q^{\epsilon_ + }_{g, n}(X, \beta), \ovir_{\tilde
      Q^{\epsilon_ + }_{g, n}(X, \beta)} \bigg) + \chi \bigg( Q^{\epsilon_-}_{g,
      n}(X, \beta), \ovir_{\tilde Q^{\epsilon_-}_{g, n}(X, \beta)} \bigg) \\
    & + \sum_{i = 1}^{m} \sum_{\ub} \chi \bigg( F_{\ub}, \text{Res}
    \bigg( \frac{ \ovir_{F_{\ub}} } { \lambda^{\mathbb C^*}_{-1}
      \big(N^{\mathrm{vir},\vee}_{F_{\ub}/MQ^{\epsilon_0}_{g, n}(X, \beta)}
      \big)} \bigg) \bigg)
  \end{aligned}
\end{equation}
By Lemma \ref{lem:vir-str-sheaf-comp-entangled-tails}, the first term on the right
side is equal to
$- \chi ( Q^{\epsilon_ + }_{g, n}(X, \beta), \ovir_{
      Q^{\epsilon_ + }_{g, n}(X, \beta)} )
$.
    
Now let us compute the residues over $F_{\ub}$.
By the construction of the gluing morphism (\ref{eq:gluing-morphism}), it
descends to an \'etale morphism of degree $1/ \prod_{i = n + 1}^{n + k} \mathbf
r_i$
\[
  \left[\left(\tilde{\mathfrak M}_{g, n + k, d-kd_0} \times' {( \mathfrak
        M_{0, 1, d_0}^{\mathrm{wt}, \mathrm{ss}})}^k \right)/S_k \right]
  \rightarrow  \mathfrak Z_{(k)}, \]
where the symmetric group $S_k$ permutes the last $k$ factors. In fact, this morphism is the fiber product of the universal gerbes at the 
$k$ nodes of the universal curve of $ \mathfrak Z_{(k)}$. Recall from Section
\ref{subsection-general-case} that we have the following fibered diagram
\begin{equation*} \label{eq:glue-F-beta}
  \begin{tikzcd}
  \left[ {\tilde{\mathrm{gl}}_k^*}F_{\ub}/S_k
    \right] \arrow[r] \arrow[d,"\tilde{\mathrm{gl}}_{\ub}"] & \left[\left( \tilde{\mathfrak M}_{g, n + k,
          d-kd_0} \times' {\left( \mathfrak
            M_{0, 1, d_0}^{\mathrm{wt}, \mathrm{ss}} \right)}^k \right)/S_k
    \right] \arrow[d] \\
        F_{\ub} \arrow[r] & \mathfrak Z_{(k)}
  \end{tikzcd},
\end{equation*}
where $S_k$ acts on ${\tilde{\mathrm{gl}}_k^*}F_{\ub}$ by permuting the $k$
ordered entangled tails. It follows that the morphism 
$
  \tilde{\mathrm{gl}}_{\ub}
  $
  is the fiber products of the universal gerbes at the $k$ nodes of the
  universal curve of $F_{\ub}$.
  In particular, we have
\[
  \big(\tilde{\mathrm{gl}}_{\ub} \big)_*
  \ca{O}_{[{\tilde{\mathrm{gl}}_k^*}F_{\ub}/S_{k}]} =
  \ca{O}_{F_{\ub}}
\]
and
\[
  \big(\tilde{\mathrm{gl}}_{\ub} \big)^* \ovir_{F_{\ub}} =
  \ovir_{[{\tilde{\mathrm{gl}}_k^*}F_{\ub}/S_{k}]}.
\]
The same pullback relation holds for twisted virtual structure sheaves with
level structure.
Hence by the projection formula, we obtain 
\begin{equation} \label{eq:unglue-k-tails}
  \chi \bigg( F_{\ub}, \frac{ \ovir_{F_{\ub}} } {
    \lambda^{\mathbb
      C^*}_{-1} \big(N^{\mathrm{vir},\vee}_{F_{\ub}/MQ^{\epsilon_0}_{g,
        n}(X, \beta)} \big)}
  \bigg) = \chi \bigg( [{\tilde{\mathrm{gl}}_k^*}F_{\ub}/S_{k}],
  \frac{ \ovir_{[{\tilde{\mathrm{gl}}_k^*}F_{\ub}/S_{k}]} } {
    \lambda^{\mathbb
      C^*}_{-1} \big(N^{\mathrm{vir},\vee}_{F_{\ub}/MQ^{\epsilon_0}_{g,
        n}(X, \beta)}\big |_{{\tilde{\mathrm{gl}}_k^*}F_{\ub}} \big)}
  \bigg).
\end{equation}

Let $\vb$ be an ordered tuple whose underlying multiset is $\ub$. Note that
\[
  \left[ {\tilde{\mathrm{gl}}_k^*}F_{\ub}/S_k \right] = \left[
    {\tilde{\mathrm{gl}}_k^*}F_{\vec \beta}/S_{\vb} \right],
\]
where $S_{\vb}$ is the stabilizer subgroup of $S_k$ that fixes
$\vb$ under the action (\ref{eq:S_k-permutating-index}). By Lemma \ref{lem:obstruction-theory-F-beta}, the residue of the right
hand side of (\ref{eq:unglue-k-tails}) equals
\begin{equation}
  \label{eq:residue-gl-F-beta}
  \chi \bigg([\tilde{\mathrm{gl}}^*_k F_{\vb}/S_{\vb}], 
 \ \varphi^*\,\mathrm{Res} \bigg( \frac
  {\prod_{i = 1}^k(1-q^{\frac{\mathbf{r}_i\fr{l}}{k}}L(\mathcal E_i)^\vee)} {1-q^{
      - \frac{\fr{l}}{k}} \tilde L(\mathcal E_1) \cdot \tilde L_{n + 1} \cdot
    \ca{O}(\sum_{i = k}^{m-1} \mathfrak
    D_i))} \cdot
  \prod_{i = 1}^k \mathcal
  I_{\beta_i}(q^{\frac{\mathbf{r}_i\fr{l}}{k}}L(\mathcal E_i)^\vee)
  \bigg)
    \cdot \ovir_{[\tilde{\mathrm{gl}}^*_k F_{\vb}/S_{\vb}]}
  \bigg).
\end{equation}
 Recall that we have a morphism
\[
  \varphi:[\tilde{\mathrm{gl}}^*_k F_{\vb}/S_{\vb}]
\rightarrow
  \Big[\Big(Y  \times_{(IX)^{k}} 
\prod_{i = 1}^k F_{\star, \beta_i} \Big)/S_{\vb} \Big]
\]
which is \'etale of degree $\prod_i \mb{r}_i$.
Using the projection formula and Lemma \ref{lem:obstruction-theory-F-beta} again, we
rewrite (\ref{eq:residue-gl-F-beta}) as
\begin{equation} \label{eq:residue-Y-F-beta}
  \begin{aligned}
  \chi \bigg(
  \Big[\Big(Y \times_{(IX)^{k}}
  \prod_{i = 1}^k F_{\star, \beta_i}\Big)
  /S_{\vb} \Big],
  \ \mathrm{Res}
  &\bigg(
  \frac
  {\prod_{i = 1}^k\theta^{\mb{r}_i} \big(\tilde{L} (\ca{E}_i) \big)
    \cdot(1-q^{\frac{\mathbf{r}_i\fr{l}}{k}} L(\mathcal E_i)^\vee)
  }
  {1 - q^{- \frac{\fr{l}}{k}} \tilde L(\mathcal E_1) \cdot \tilde
    L_{n + 1} \cdot \ca{O}(\sum_{i = 0}^{m-k-1} \mathfrak D_i'))
  }
  \\
  &\cdot
  \prod_{i = 1}^k \mathcal I_{\beta_i}(q^{\frac{\mathbf{r}_i\fr{l}}{k}}L(\mathcal
  E_i)^\vee)
  \bigg)
  \cdot
  \big(\ovir_Y
  \boxtimes \textstyle \prod_{i = 1}^k \ovir_{F_{\star, \beta_i}} \big)
  \bigg).
  \end{aligned}
\end{equation}
Here $\chi$ denotes the proper pushforward along the morphism
$(b_k\circ\tau\circ p)\times\ev$, where
\begin{itemize}
\item $\tau:[ \tilde{Q}^{\epsilon_ + }_{g, n + k}(X,
  \beta^\prime)/S_{\vb}]
  \rightarrow
 [Q^{\epsilon_-}_{g,n+k}(\bb{P}^N, d-kd_0)/S_k]$ is defined similarly
  to~\eqref{eq:embedding-contraction},
  \item $b_k:[Q^{\epsilon_-}_{g,n+k}(\bb{P}^N, d-kd_0)/S_k]\rightarrow Q^{\epsilon_-}_{g,n}(\bb{P}^N, d)$
    is the map that replaces the last $k$ markings by base points of length $d_0$,
    \item $\ev: [(Y \times_{(IX)^{k}}
  \prod_{i = 1}^k F_{\star, \beta_i})
  /S_{\vb} ]\rightarrow (\bar{I}X)^n$ is the
  product of the rigidified evaluation maps at the first $n$ markings
\end{itemize}
By Remark~\ref{rem:compare-twisting}, the formula~\eqref{eq:residue-Y-F-beta} is
also valid in the twisted case if we replace $\ca{I}_\beta(q)$, $\ovir_Y$ and
$\ovir_{F_{\star,\beta_i}}$ by their twisted counterparts.

According to (2) of Lemma \ref{elementary-properties-of-residues}, after the change of
variable
\[
  q \mapsto
  q^{k/\fr{l}} \tilde L(\mathcal E_1)^{k} \otimes \tilde
  L_{n + 1}^{k} = \cdots = q^{k/\fr{l}} \tilde L(\mathcal E_k)^{k} \otimes \tilde L_{n +
    k}^{k},
\]
\eqref{eq:residue-Y-F-beta} becomes
% \Mingg{NEW: the correct change of variable should be
% \[
%   q \rightarrow q^k \ca{O}_{\mathfrak E_{k-1}^*}(k).
% \]
% Forgetting the $S_{\vec{\beta}}$-action, we have the isomorphism:
% \[
%   \ca{O}_{\mathfrak E_{k-1}^*}(1) \cong L(\mathcal E_1) \otimes \tilde L_{n +
%   1} \cong \cdots \cong \tilde L(\mathcal E_k) \otimes \tilde L_{n + k},
% \]
% }

%   \Ming{I am still confused about the torus action here. Is the residue in (14)
%   well-defined or is it defined using the above change of variable.}
%   \Yang{What formula is correct is clear but the reason is subtle. It is due to
%   this
%   epidemic abuse of the notation of fixed substack in the field. Technically
%   speaking, when fractional weights appears on the normal bundle, the fixed
%   substack is not what you thought it should be. Please see my note
%   \textit{example-fractional-weight.pdf} for the concrete worked out example
%   $\mathbb P(1, 2)$.}

%   \Ming{I see. Great example. I addressed your question in that file.}

\begin{equation} \label{eq:residue-Y-F-beta-2}
  \begin{aligned}
    \chi \bigg(
 \Big[\Big(Y \times_{(IX)^{k}}
  \prod_{i = 1}^k F_{\star, \beta_i}\Big)
  /S_{\vb} \Big],
  \ \mathrm{Res}
  &\bigg(  \frac
  {\prod_{i = 1}^k\theta^{\mb{r}_i} \big(\tilde{L} (\ca{E}_i) \big)
    \cdot(1-q^{\mathbf{r}_i}L_{n + i})} {1-q^{
      -1} \ca{O}(\sum_{i = 0}^{m-k-1} \mathfrak
    D_i'))}
  \\
  &\cdot
  \prod_{i = 1}^k
  \mathcal I_{\beta_i}(q^{\mathbf{r}_i}L_{n + i})\bigg)
  \cdot
\big(\ovir_Y
  \boxtimes \textstyle \prod_{i = 1}^k \ovir_{F_{\star, \beta_i}} \big)
  \bigg) \bigg).
\end{aligned}
\end{equation}
Consider the natural morphism
\[
p_k:\Big[\Big(Y \times_{(IX)^{k}}
  \prod_{i = 1}^k F_{\star, \beta_i}\Big)
  /S_{\vb} \Big]
  \rightarrow
  \Big[\Big(Y \times_{(\bar{I}X)^{k}}
  \prod_{i = 1}^k F_{\star, \beta_i}\Big)
  /S_{\vb} \Big],
\]
where the target is the fiber product of $\ev_Y$ and $\check{\ev}_{\star,
  \beta_i}$ over $(\bar{I}X)^k$. Note that $p_k$ is a gerbe whose elements in
the automorphism groups act on $\tilde{L} (\ca{E}_i)$ as $\mb{r}_i$-th roots
of unity. It follows that $(p_k)_*(\prod_{i=1}^k\tilde{L}^{a_i} (\ca{E}_i))=0$
unless $a_i$ is divisible by $\mb{r}_i$ for all $i$. Hence
\[
(p_k)_*\big(\prod_{i = 1}^k\theta^{\mb{r}_i} \big(\tilde{L} (\ca{E}_i) \big)\big)=1.
\]
By applying the projection formula to $p_k$, we rewrite~\eqref{eq:residue-Y-F-beta-2}
as
\begin{equation*} 
    \chi \bigg(
 \Big[\Big(Y \times_{(\bar{I}X)^{k}}
  \prod_{i = 1}^k F_{\star, \beta_i}\Big)
  /S_{\vb} \Big],
  \ \mathrm{Res}
  \bigg(  \frac
  {\prod_{i=1}^k(1-q^{\mathbf{r}_i}L_{n + i})\mathcal I_{\beta_i}(q^{\mathbf{r}_i}L_{n + i})} {1-q^{
      -1} \ca{O}(\sum_{i = 0}^{m-k-1} \mathfrak
    D_i'))}
  \bigg)
  \cdot
\big(\ovir_Y
  \boxtimes \textstyle \prod_{i = 1}^k \ovir_{F_{\star, \beta_i}} \big)
  \bigg).
\end{equation*}
Recall that we have morphisms
\[ \Big[\Big(Y \times_{(\bar{I}X)^{k}}
  \prod_{i = 1}^k F_{\star, \beta_i}\Big)
  /S_{\vb} \Big]
  \overset{\mathrm{pr}_Y}{\rightarrow}
  [Y/S_{\vb}]
  \overset{p}{\rightarrow}
  [ \tilde{Q}^{\epsilon_ + }_{g, n + k}(X, \beta^\prime)/S_{\vb}].
\]
By taking the pushforward along $\text{pr}_Y$, we obtain
\begin{equation*}
  \begin{aligned}
     & \chi \bigg([Y/S_{\vb}], \ \mathrm{Res} \bigg( \frac
  {\prod_{i = 1}^k \ev_{n + i}^* ((1-q^{\mathbf{r}}L_{n + i})
    I_{\beta_i}(q^{\mb{r}}L_{n + i}))} {1-q^{-1} \ca{O}(\sum_{i = 0}^{m-k-1}
    \mathfrak D_i'))} 
  \bigg)
    \cdot \ovir_{
      [Y/S_{\vb}]
    }
    \bigg)\\
    =&
  \chi \bigg([Y/S_k], \ \sum_{\vb \mapsto \ub} \mathrm{Res} \bigg( \frac
  {\prod_{i = 1}^k \ev_{n + i}^* ((1-q^{\mathbf{r}}L_{n + i})
    I_{\beta_i}(q^{\mb{r}}L_{n + i}))} {1-q^{-1} \ca{O}(\sum_{i = 0}^{m-k-1}
    \mathfrak D_i'))} 
  \bigg)
    \cdot \ovir_{
      [Y/S_{k}]
    }
    \bigg).
    \end{aligned}
\end{equation*}
  Here we have extended the definition of $\mathrm{ev}_{n + i}^*$ by linearity:
  we
  first compute $(1-q^{\mathbf{r}}L_{n + i}) I_\beta(q^{\mb{r}}L_{n + i})$ by
  viewing $L_{n + i}$ as a formal variable, and then we agree that
  \[
    \mathrm{ev}_{n + i} (q^{a}L_{n + i}^b \alpha) : = q^{a}L_{n + i}^b
    \mathrm{ev}_{n + i} (\alpha), \quad a, b \in \mathbb Z, \alpha \in
    K(\bar{I}X)\otimes\Lambda.
  \]
  The above equality follows from the elementary fact that given two finite
  groups $H\subset G$ and a $H$-module $V$, we have
  $(\operatorname{Ind}^G_HV)^G=V^H$. Here $\operatorname{Ind}^G_HV$ denotes the
  induced $G$-module. 

Note that $Y \rightarrow \tilde{Q}^{\epsilon_ + }_{g, n + k}(X, \beta^\prime)$
is a gerbe
banded by $\mu_k$. Hence we have
\[
  p_* \ca{O}_{[Y/S_{k}]} = \ca{O}_{[ \tilde{Q}^{\epsilon_ + }_{g, n +
      k}(X, \beta^\prime)/S_{k}]}.
\]
By applying the projection formula again, we conclude that the correction terms
in the wall-crossing formula (\ref{eq:residue-localization-main-case}) equal
\begin{equation}
  \label{eq:contr-Q-3}
  \sum_{k = 1}^m \sum_{\vb} \chi \bigg([
  \tilde{Q}^{\epsilon_ + }_{g, n + k}(X, \beta^\prime)/S_k], \
  \mathrm{Res} \bigg( \frac
  {\prod_{i = 1}^k \ev_{n + i}^* \big(
  (1-q^{\mathbf{r}}L_{n +
    i})I_{\beta_i}(q^{\mb{r}}L_{n + i})
    \big)
  }
  {1-q^{-1} \ca{O}(\sum_{i = 0}^{m-k-1} \mathfrak D_i'))}
  \bigg)
  \cdot \ovir_{
      [
  \tilde{Q}^{\epsilon_ + }_{g, n + k}(X, \beta^\prime)/S_k]
    }
    \bigg).
\end{equation}
Here we abuse the notation and write the summation sign $\sum_{\vb}$ outside
of the proper pushforward. By our convention, $\chi$ denote the proper
pushforward along the map $(b_k\circ\tau)\times\ev$. The
formula~\eqref{eq:contr-Q-3} is also valid in the twisted case if we replace
$I_{\beta_i}$ and $\ovir_{
      [
  \tilde{Q}^{\epsilon_ + }_{g, n + k}(X, \beta^\prime)/S_k]}$ by their twisted counterparts.

In the rest of the section, we will simplify~\eqref{eq:contr-Q-3} and prove the
following:
\begin{proposition} \label{prop-general-case-simplified}
  The wall-crossing contribution~\eqref{eq:contr-Q-3} is equal to 
  \begin{equation*}
    \sum_{k = 1}^m \sum_{\vec{\beta}} \chi\bigg(
    [Q^{\epsilon_+}_{g,n+k}(X,\beta')/S_k]
    ,
    \prod_{i=1}^k\ev_{n+i}^*\mu_{\beta_i}(L_{n+i})
    \cdot
    \ovir_{[Q^{\epsilon_+}_{g,n+k}(X,\beta')/S_k]}
  \bigg)
\end{equation*}
in $K_\circ\big([(Q^{\epsilon_-}_{g,n}(\bb{P}^N, d)\times (\bar{I}X)^n\big)/S_n]\big)_{\Q}$.
\end{proposition}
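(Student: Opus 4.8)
The plan is to simplify the residue coefficient in~\eqref{eq:contr-Q-3} in stages: first push everything forward onto the ordinary (non-entangled) moduli $Q^{\epsilon_+}_{g,n+k}(X,\beta')$, then reorganize the remaining boundary-divisor contributions by an induction on $\deg\beta$. The starting observation is that the maps occurring in~\eqref{eq:contr-Q-3} are compatible with the forgetful morphism $\tau\colon\tilde Q^{\epsilon_+}_{g,n+k}(X,\beta')\to Q^{\epsilon_+}_{g,n+k}(X,\beta')$: both $b_k\circ\tau$ and the rigidified evaluation maps factor through $Q^{\epsilon_+}_{g,n+k}(X,\beta')$, and by Lemma~\ref{lem:vir-str-sheaf-comp-entangled-tails} we have $\tau_*\ovir_{\tilde Q^{\epsilon_+}_{g,n+k}(X,\beta')}=\ovir_{Q^{\epsilon_+}_{g,n+k}(X,\beta')}$. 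Hence, by the projection formula, the Proposition would be immediate if the residue coefficient in~\eqref{eq:contr-Q-3} were the $\tau$-pullback of $\prod_{i=1}^k\ev_{n+i}^*\mu_{\beta_i}(L_{n+i})$. It is not, and the only obstruction is the line bundle $\ca{O}\bigl(\sum_{j=0}^{m-k-1}\mathfrak D_j'\bigr)$ in the denominator, which is nontrivial precisely on the entanglement locus; so the content is to show that the discrepancy, capped with $\ovir_{\tilde Q^{\epsilon_+}_{g,n+k}(X,\beta')}$ and summed over all $\vb$, pushes forward to zero.

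To organize this, I would split the numerator. Writing $(1-w)I_{\beta_i}(w)=\mu_{\beta_i}(w)+r_{\beta_i}(w)$ with $\mu_{\beta_i}=[(1-w)I_{\beta_i}]_+$ a Laurent polynomial and $r_{\beta_i}=[(1-w)I_{\beta_i}]_-$ a proper rational function regular at $w=0$, and substituting $w=q^{\mathbf r_{n+i}}L_{n+i}$, the product over $i$ expands into $2^k$ terms according to the subset of factors contributing $r$. For the purely-$\mu$ term the numerator is a Laurent polynomial in $q$, and Lemma~\ref{elementary-properties-of-residues} together with Corollary~\ref{residue-constant-g} give $\mathrm{Res}\bigl(P(q)/(1-q^{-1}L)\bigr)=-P(L)$ for any Laurent polynomial $P$; so this term equals, up to the sign bookkeeping, $\prod_{i=1}^k\ev_{n+i}^*\mu_{\beta_i}\bigl(\ca{O}(\textstyle\sum_j\mathfrak D_j')^{\mathbf r_{n+i}}L_{n+i}\bigr)\cdot\ovir_{\tilde Q^{\epsilon_+}_{g,n+k}(X,\beta')}$, which on the open stratum (where $\ca{O}(\sum_j\mathfrak D_j')$ is trivial) is already the desired class. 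For every term involving at least one $r_{\beta_i}$, Corollary~\ref{evaluation-last-wall} rewrites $r_{\beta_i}(w)$ as the $\check{\ev}_1$-pushforward of $\ovir_{Q^{\epsilon_+}_{0,1}(X,\beta_i)}/(1-wL_1)$, identifying it geometrically with the contribution of a re-glued degree-$d_0$ rational tail at the $(n+i)$-th marking.

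The core of the argument is then an induction on $m=\lfloor d/d_0\rfloor$. The base case $m=1$ is immediate: $\deg\beta'<d_0$ forces all the divisors $\mathfrak D_j'$ to be absent, and a direct geometric-series computation (as in the proof of Lemma~\ref{special-evaluation}) shows the residue of every $r$-term vanishes, so the purely-$\mu$ term is the entire answer. For $m>1$ one uses Lemma~\ref{pullback-sum-of-all-divisors} and Lemma~\ref{lem:int-with-D-k-1} to rewrite $\ca{O}(\mathfrak D_j')\cdot\ovir_{\tilde Q^{\epsilon_+}_{g,n+k}(X,\beta')}$ in terms of the virtual structure sheaves on the deeper strata $\tilde Q^{\epsilon_+}_{g,n+k+j+1}(X,\beta'')$, reducing the boundary-divisor contributions to contributions of the same shape on strictly lower-degree moduli, to which a suitable inductive hypothesis applies; matching these against the $r$-terms of those deeper tuples makes the positive powers of $\ca{O}(\sum_j\mathfrak D_j')$ together with all of the $r$-contributions cancel in the total sum, leaving exactly $\prod_{i=1}^k\ev_{n+i}^*\mu_{\beta_i}(L_{n+i})\cdot\ovir_{\tilde Q^{\epsilon_+}_{g,n+k}(X,\beta')}$, which pushes forward as claimed. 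Throughout, the $S_k$- and $S_{\vb}$-symmetrizations are carried along using the identity $(\mathrm{Ind}^G_HV)^G=V^H$ exactly as in the computations immediately preceding the statement, and the twisted case follows verbatim from Remark~\ref{rem:compare-twisting} and the multiplicativity of the twisting class over nodes in~\eqref{eq:equation-normalization-sequence}.

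I expect this last inductive reorganization to be the main obstacle: one must pair the deeper-stratum pieces of $\ca{O}(\sum_j\mathfrak D_j')\cdot\ovir$ correctly with the $r_{\beta_i}$-error terms while keeping the permutation-equivariant bookkeeping and the Laurent expansions at $0$ and $\infty$ mutually consistent. It is the $K$-theoretic counterpart of the combinatorics in~\cite[\textsection 7]{Zhou2}, but genuinely more delicate, since the $\mathbf r_i$-th root constructions and the Bott cannibalistic classes $\theta^{\mathbf r_i}(\tilde L(\ca E_i))$ appearing in Lemma~\ref{lem:obstruction-theory-F-beta} now enter, and---unlike in cohomology---passing to a quotient by a symmetric group changes a $K$-class nontrivially.
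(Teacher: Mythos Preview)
Your outline identifies the right moving parts---the split $(1-q)I_{\beta_i}=\mu_{\beta_i}+r_{\beta_i}$, the role of the boundary divisors $\mathfrak D_j'$, and the need for an inductive cancellation---but the paragraph where you say ``matching these against the $r$-terms of those deeper tuples makes the positive powers of $\ca{O}(\sum_j\mathfrak D_j')$ together with all of the $r$-contributions cancel'' is precisely the content of the proposition, and you have not proven it. You flag this yourself as ``the main obstacle,'' and indeed it is: this cancellation is where all the work lies. Two specific gaps: first, any reduction through the boundary strata $\mathfrak D_{r-1}'$ must pass through the inflated projective bundle of Lemma~\ref{lem:str-D}, and you will need the $K$-theoretic pushforward formula of Lemma~\ref{lem:inflated-projective-bundle} (the appendix) to compute $p_*(\ca{O}(-sD_0-\cdots)\otimes\ca{O}(s))$---this is what underlies Lemma~\ref{special-lemma}, and you never invoke it. Second, the cannibalistic classes $\theta^{\mathbf r_i}(\tilde L(\ca E_i))$ from Lemma~\ref{lem:obstruction-theory-F-beta} have already been absorbed in passing from~\eqref{eq:residue-Y-F-beta} to~\eqref{eq:contr-Q-3} via the gerbe pushforward $p_k$, so they do not enter the proof of this proposition; your worry about them suggests you have conflated the derivation of~\eqref{eq:contr-Q-3} with its simplification.

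The paper takes a genuinely different route that sidesteps the induction you propose. Rather than splitting the numerator, it first evaluates the residue in~\eqref{eq:contr-Q-3} completely (Proposition~\ref{general-case}), using Lemma~\ref{special-lemma} to express each power $\ca{O}(\pm s\sum_j\mathfrak D_j')\cdot\ovir$ as a pushforward from deeper strata; this produces the long explicit formula~\eqref{eq:general-case}. The key step is then Lemma~\ref{combinatorial-identity}: both~\eqref{eq:general-case} and the target~\eqref{eq:equation-cor} are viewed as \emph{universal} expressions in the formal Laurent coefficients of $(1-q)I_\gamma(q)$ at $0$ and $\infty$, and one shows their difference is annihilated by all directional derivatives that add a monomial $\delta q^l$ (with $l\ge 0$ on the $0$-side, $l>0$ on the $\infty$-side). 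This reduces the identity to the special case where the Laurent-polynomial part vanishes, and in that regime the remaining residues simplify to $[(\cdot)_\infty]_{-t}$ and $[(\cdot)_0]_{t+1}$, after which the permutation-equivariant multinomial formula finishes the job. What this buys is that the delicate recursive cancellations you would need to orchestrate in your induction are replaced by a single Leibniz-rule computation (Lemma~\ref{lem-derivative}) followed by a direct check. Your $\mu+r$ decomposition is in some sense dual to this: the paper shows invariance under perturbing $\mu$ and then checks the identity at $\mu=0$, whereas you try to extract the $\mu$-contribution first and argue the remainder vanishes---a plausible strategy, but one you have not carried out.
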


To simplify the exposition, we introduce some notation here. For any rational
function $f(q)$ in $q$ with coefficients in some $K$-group, we denote by $(f)_0$
the formal Laurent series
expansion of $f(q)$ at
$q = 0$ and by $(f)_\infty$ the formal Laurent series expansion of $f(1/q)$ at
$q = 0$. Recall that the notation $[g(q)]_{s}$ denotes the coefficient of $q^s$
of a
formal Laurent series $g(q)$ at $q = 0$. Consider the permutation action of
$S_k$ of $(\bar{I}X)^k$. Let $S$ be a subgroup of $S_k$. For $\ca{G}\in K_\circ([\tilde{Q}^{\epsilon_ + }_{g, n + k}(X, \beta^\prime)
    /S])$, we denote
\[
 \left \langle \ca{G} \right \rangle^{S}_k:
   = \chi \left([\tilde{Q}^{\epsilon_ + }_{g, n + k}(X, \beta^\prime)
    /S],  \ca{G} \cdot \ovir_{[\tilde{Q}^{\epsilon_ + }_{g, n + k}(X, \beta^\prime)
    /S]}\right).
\]
When using the bracket notation, we omit the moduli space and the pullback via
evaluation maps. Recall that for a partition $\vb=(\beta^\prime, \beta_1 ,
\ldots, \beta_k)$, we also denote $\beta_i$ by $\vb_{(i)}$. This notation is
introduced to avoid confusion when evaluation maps are omitted in the bracket
notation. In the discussion below, a typical example of $\ca{F}$ is of
the form $\sum_{\vb}\sum \prod_i C_{\vb_{(i)}}$, where $C_{\vb_{(i)}}$ are certain classes
depending on $I_{\beta_i}$. We denote 
\[
 \prod_{i=1}^k\fbi:= \big (\prod_{i=1}^k\ev_{n+i}\big)^*\Big((1-q^{\mathbf{r}}L_{n +
    i})I_{\beta_i}(q^{\mb{r}}L_{n + i})\Big).
\]

 As the first step in simplifying~\eqref{eq:contr-Q-3}, we evaluate the residues within and prove the following: 
\begin{proposition}
  \label{general-case}
  We have
  \begin{equation}
    \label{eq:general-case}
    \begin{aligned}
      & \sum_{k = 1}^m \sum_{\vec{\beta}}
      \bigg\langle 
      \mathrm{Res} \bigg(\frac {\prod_{i = 1}^k   f_{\vb_{(i)}}
  }
{1 - q^{-1}
        \ca{O} (\sum_{i = 0}^{m-k-1} \mathfrak D_i')} \bigg)
      \bigg\rangle^{S_k}_k \\
      = & - \sum_{k = 1}^m \sum_{\vec{\beta}} \sum_{s \geq 1} \Big
      \langle[ \prod_{i = 1}^k (\fbi)_0 ]_{-s}
      \Big \rangle^{S_k}_k - \sum_{k = 1}^m \sum_{\vec{\beta}}
      \sum_{t \geq 0}
      \Big \langle [\prod_{i = 1}^k  (\fbi)_\infty]_{-t} \Big
      \rangle^{S_k}_k \\
      & - \sum_{k = 2}^{m} \sum_{r = 1}^{k-1} \sum_{\vec{\beta},\vb'}
      \bigg(
      \sum_{\substack{t \geq0 \\ s \geq1}}
      \sum_{\substack{ j_1 + \cdots + j_r = t \\ j_u \geq0}}
      \Big \langle
      [ \prod_{i = 1}^{k-r} (\fbi)_0]_{-t-s}\cdot
      \prod_{u = 1}^r
      \Big(- \mathrm{Res}
      \big(
      q^{-j_u} f_{\vb'_{(u)}}
      \big) \Big)
      \Big \rangle^{S_{k-r}\times S_k }_k \\
      & \hspace{1.65cm} + \sum_{\substack{t \geq 0 \\ s \geq 1}}
      \sum_{\substack{j_1 + \cdots + j_r = t + 1 \\ j_u>0}}
      \Big \langle
      [\prod_{i = 1}^{k-r}  (\fbi)_\infty]_{-t-s}
      \cdot
      \prod_{u= 1}^r \mathrm{Res}
      \big(q^{j_u} f_{\vb'_{( u)}} \big)
      \Big \rangle^{S_{k-r}\times S_r }_k \bigg).
    \end{aligned}
  \end{equation}
  %\Yang{
   % The first line of right hand side can be simplified as
  %  \[
   %   - \sum_{k = 1}^m \sum_{\vec{\beta}_\order}
    %  \Big \langle
     % - \mathrm{Res} \big(\frac{\prod_{i = 1}^k f_{\beta_i}}{1 - q^{-1}} \big)
      % [ \prod_{i = 1}^k (f_{\beta_i})_0 ]_{-s}
    %  \Big \rangle^{S_k}.
   % \]
    %I don't know if this is helpful.
 % }
%  \Yang{
 %   The indices could be simplified. For example, by introducing $a = t + s -
  %  1$, we
   % could write the third line as
    %\[
     % - \sum_{k = 2}^{m} \sum_{r = 1}^{k-1} \sum_{\vec{\beta}}
      %\bigg(
     % \sum_{a = 0}^\infty
      %\sum_{| \vec j| \leq a }
     % \Big \langle
      %[ \prod_{i = 1}^{k-r} (f_{\beta_i})_0]_{-a - 1},
%      \prod_{i = 1}^r
 %     \Big(- \emph{Res}
  %    \big( q^{-j_i} f_{\beta_{k-r + i}}
   %   \big) \Big)
    %  \Big \rangle^{S_{\vec{\beta}} } ,
 %   \]
  %  where $\vec{j} = (j_1 , \ldots, j_r)$, $j_{i} \geq 0$, $| \vec{j}| =
   % \sum_{i = 1}^r j_i$.
 %   Is $\vec{j}$ here ordered or unordered?
  %}
%
 % \Ming{The indices in the proof are really hard to keep track of. It is very
  %  likely that there are mistakes in the indices.}
\end{proposition}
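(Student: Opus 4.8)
The statement is a purely formal residue computation: we need to evaluate
\[
  \mathrm{Res}\Big(\frac{\prod_{i=1}^k f_{\vb_{(i)}}}{1-q^{-1}\ca{O}(\sum_{i=0}^{m-k-1}\mathfrak D_i')}\Big)
\]
and then reorganize the resulting double sum over $(k,\vb)$. The plan is to first expand the geometric series in the denominator, namely write
\[
  \frac{1}{1-q^{-1}\ca{O}(\textstyle\sum_{i=0}^{m-k-1}\mathfrak D_i')}
\]
around $q=0$ and around $q=\infty$ using the two expansions discussed right before the Proposition (the analogue of~\eqref{eq:geometric-series-expansion}). Here the key geometric input is that $\ca{O}(\sum_{i=0}^{m-k-1}\mathfrak D_i')$ is a line bundle on $\tilde{Q}^{\epsilon_+}_{g,n+k}(X,\beta')$, and — crucially — by Lemma~\ref{pullback-sum-of-all-divisors} (and the discussion of $\mathfrak D_i$ versus $\mathfrak D_i'$), the total space of boundary divisors is nilpotent in the appropriate sense: on any geometric point only finitely many summands are nonzero, so the geometric series is in fact a finite polynomial when evaluated, and more importantly the powers of $\ca{O}(\sum\mathfrak D_i')$ stabilize. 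This is what makes the truncation $\sum_{i=0}^{m-k-1}$ (versus $\sum_{i=0}^{\infty}$ in the cruder version) produce the recursive structure.

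\textbf{Key steps in order.} (1) Apply Lemma giving $\mathrm{Res}\big(g(q)/(1-q^{-1}L)\big) = -\sum_{i\geq 1}[g(q)]_{-i}L^{-i} - \sum_{i\geq 0}[g(1/q)]_{-i}L^{i}$ with $g(q)=\prod_{i=1}^k f_{\vb_{(i)}}$ and $L=\ca{O}(\sum_{i=0}^{m-k-1}\mathfrak D_i')$. This immediately produces the first two sums on the right-hand side (the $[\prod(\fbi)_0]_{-s}$ and $[\prod(\fbi)_\infty]_{-t}$ terms), \emph{except} that these come tensored with powers $L^{\mp i}$ of the divisor class. (2) Absorb the divisor class $\ca{O}(\pm i\sum_{j=0}^{m-k-1}\mathfrak D_j')$ into the pushforward: this is where Lemma~\ref{pullback-sum-of-all-divisors} enters. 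Restricting to the boundary stratum $\mathfrak D'_{r-1}$ where there are further entangled tails, the divisor $\sum_{j=0}^{m-k-1}\mathfrak D_j'$ pulls back (via the analogue of $\iota_{\mathfrak D}$ for $\tilde{Q}^{\epsilon_+}_{g,n+k}(X,\beta')$) to $D_0+\cdots+D_{r-2}$ tensored with $\ca{O}_{\tilde{\mathbb P}}(-1)$. Pushing forward along the inflated projective bundle, the classes $D_0+\cdots+D_{r-2}$ contribute nothing (they are exceptional, analogous to the vanishing $(p_k)_*(\cdots)=1$ computations in the main text), while the $\ca{O}_{\tilde{\mathbb P}}(-1)^{\pm i}$ factors produce, after Lemma~\ref{lem:int-with-D-k-1} / Lemma~\ref{lem:str-D}, exactly the $\Theta_u$-type tangent-line contributions that reassemble into further $f_{\vb'_{(u)}}$ factors with shifted $q$-powers — i.e. the residues $-\mathrm{Res}(q^{-j_u}f_{\vb'_{(u)}})$ and $\mathrm{Res}(q^{j_u}f_{\vb'_{(u)}})$. (3) Bookkeep the partitions: splitting off $r$ further entangled tails from the $(n+k)$-pointed space means writing $k=(k-r)+r$ with the $r$ split classes forming their own ordered tuple $\vb'$; the composition $j_1+\cdots+j_r=t$ (resp. $t+1$) arises from distributing the total power of $\ca{O}_{\tilde{\mathbb P}}(-1)$ among the $r$ tails, and the $S_{k-r}\times S_r$-symmetry is exactly that of the gluing morphism $\tilde{\mathrm{gl}}_r$. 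The shift $t\mapsto t+1$ (resp. $t+s$) in the exponents is the bookkeeping of Lemma~\ref{pullback-sum-of-all-divisors}'s extra $\ca{O}_{\tilde{\mathbb P}}(-1)$ factor and the one unit coming from $1-q^{-1}(\cdots)$ versus $-q^{-1}L/(1-\cdots)$.

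\textbf{Main obstacle.} The genuinely hard part is step (2)–(3): tracking, on each nested boundary stratum, exactly how the line bundle $\ca{O}(\sum\mathfrak D_j')$ interacts with the inflated-projective-bundle structure and verifying that the pushforward of its powers reproduces precisely the residue-of-$f$ factors with the stated $q$-shifts, with no stray constants. This is the $K$-theoretic refinement of the cohomological combinatorics in~\cite[\textsection 7]{Zhou2}: in cohomology one only sees divisor classes linearly, whereas here all powers $\ca{O}_{\tilde{\mathbb P}}(-1)^{\pm i}$ contribute, and one must confirm the sign $(-1)$ in $-\mathrm{Res}(q^{-j_u}f_{\vb'_{(u)}})$ (from the negative sign in~\eqref{eq:geometric-series-expansion}) versus the $+$ sign in the $(\cdot)_\infty$ block. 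I would organize the induction on $k$, peeling off one layer of entangled tails at a time using Lemma~\ref{pullback-sum-of-all-divisors} and the fibered diagram~\eqref{eq:splitting-nodes}, and match terms by the number $r$ of tails split at the first step. Once the recursion is set up cleanly, the remaining bracket-manipulations (steps toward Proposition~\ref{prop-general-case-simplified}) are the routine symmetric-function/binomial-formula arguments already available via the permutation-equivariant multinomial formula.
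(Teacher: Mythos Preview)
Your overall strategy is correct and matches the paper's: expand the denominator as a geometric series at $q=0$ and $q=\infty$, then trade powers of the line bundle $\ca{O}(\pm s\sum_{i=0}^{m-k-1}\mathfrak D_i')$ for correction terms on the boundary strata via the inflated-projective-bundle structure, producing the $\mathrm{Res}(q^{\pm j_u}f_{\vb'_{(u)}})$ insertions. Two points of your write-up deserve correction, though they do not break the argument.

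First, your claim that ``the classes $D_0+\cdots+D_{r-2}$ contribute nothing'' is wrong as stated. The pushforward along $p:\tilde{\bb P}\to X$ of $\ca{O}(t(D_0+\cdots+D_{r-2}))\otimes\ca{O}_{\tilde{\bb P}}(-t)$ is computed in the Appendix (Lemma~\ref{lem:inflated-projective-bundle}): for $t\ge 0$ it is $\mathrm{Sym}^t(\bigoplus\Theta_u)$, and for $t<0$ it is $-\sum_{j_1+\cdots+j_r=-t,\,j_u>0}\prod(-\Theta_u^{-j_u})$. The tautological divisors $D_i$ are essential here; without them the pushforward of $\ca{O}_{\tilde{\bb P}}(-t)$ alone would be quite different. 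This Appendix formula is precisely what produces the composition constraint $j_1+\cdots+j_r=t$ (resp.\ $t+1$) and the sign pattern you want. The paper packages this whole step into a separate Lemma~\ref{special-lemma}, which records the identities for $(\tau\times\ev)_*\big(\ca{O}(\mp s\sum\mathfrak D_i')\cdot\ovir\big)$ directly; its proof combines the divisor exact sequence $\ca{O}(-\mathfrak D_i')\cdot\ca{F}=\ca{F}-\ca{F}|_{\mathfrak D_i'}$, Lemma~\ref{lem:int-with-D-k-1}, Lemma~\ref{pullback-sum-of-all-divisors}, the Appendix pushforward, and Lemma~\ref{special-evaluation}.

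Second, the induction is not on $k$. With Lemma~\ref{special-lemma} in hand, the paper obtains a \emph{telescoping recursion in the exponent $s$}: the difference between the terms with $\ca{O}(-(s{+}1)\sum\mathfrak D_i')$ and $\ca{O}(-s\sum\mathfrak D_i')$ is exactly a sum of split invariants with no remaining $\mathfrak D'$-factors (equation~\eqref{eq:simplication-2}), and summing over $s$ collapses everything to the stated answer. Your ``peel off one layer of entangled tails at a time'' is morally this, but the clean variable to induct on is the line-bundle power $s$, not $k$; each application of~\eqref{eq:simplication-2} already splits off \emph{all} values of $r$ at once.
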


Suppose $    \vec \beta' = (\beta'', \beta'_1, \ldots, \beta'_r)
$
is an ordered decomposition of the class $\beta'$ satisfying Condition
\ref{cond:condition-on-beta-ordered}. Set $d':=\mathrm{deg}(\beta')$ and
$\vb'_{(i)}:=\beta'_i$ for $i=1,\dots,r$. Consider the diagram
\[
  \begin{tikzcd}[column sep = 5em]
    & \tilde{Q}^{\epsilon_+}_{g,n+k}(X,\beta^\prime) \ar[d, "{\tau \times \ev}"] \\
    \tilde{Q}^{\epsilon_+}_{g, n + k + r}(X, \beta^{\prime \prime}) \ar[r, "(b\,
    \circ\, \tau) \times \ev"] &
    {Q}^{\epsilon -}_{g, n + k}(\mathbb P^{N}, d^{\,\prime}) \times
     (\bar{I}X)^{n + k}.
  \end{tikzcd}
\]
As before, $\tau$ is defined similarly to~\eqref{eq:embedding-contraction}, $b$ replaces
the last $r$ markings by base points of length $d_0$, and $\ev$ is the
product of the rigidified evaluation maps at the first $n+k$ markings. %Recall from Section \ref{split-tail} that for $r = 1, \dots, m-k$, we have
%\begin{equation}
%  \label{eq:further-split-tails}
%  \tilde Q^{ \epsilon_ + }_{g, n + k}(X, \beta') |_{{ \tilde{
 %       \mathrm{gl}}_r^*} \mathfrak D'_{r-1}}
%  \xrightarrow{p}
%  \coprod_{\vec \beta_\order'} \tilde
%  Q^{\epsilon_ + }_{g, n + k + r}(X, \beta'') \times_{(IX)^r} \prod_{i = 1}^r
%  Q^{\epsilon_ + }_{0, 1}(X, \beta_{i}') \xrightarrow{p_1} \tilde
 % Q^{\epsilon_ + }_{g, n + k + r}(X, \beta'')
%\end{equation}
%where $p$ is the inflated projective bundle $\tilde{\mathbb P}(\Theta_1 \oplus
%\cdots \oplus \Theta_r)$, the disjoint union is over all ordered decompositions
%for $\beta'$ satisfying Condition \ref{cond:condition-on-beta-ordered}, and
%$p_1$ is the projection.
To prove Proposition \ref{general-case},
we need the following lemma.
%\Ming{Do we need to reminder the reader that $b$ is different from $b_r$ (it
 % appeared as $b_k$ before)?}
\begin{lemma} \label{special-lemma}
  For $a = 1, 2,\dots, m-k$ and $s \geq 1$, we have the following identities between $S_{n + k}$-equivariant $K_{\circ}$-classes on
  ${Q}^{\epsilon -}_{g, n + k}(\mathbb P^{N-1}, d^{\,\prime}) \times
     (\bar{I}X)^{n + k}$:
 \begin{equation}\label{eq:simplication-1}
    \begin{aligned}
     &  (\tau \times \ev)_*\Big( \mathcal {O}(- \sum_{i = 0}^{a-1} \mathfrak
      D_i') \cdot \mathcal
      O^{\mathrm{vir}}_{\tilde{Q}^{\epsilon_+}_{g,n+k}(X,\beta^\prime)} \Big) \\
      = &
      \sum_{r=0}^{a} \biggl[ \sum_{\vec\beta^\prime} ((b \circ \tau) \times
      \ev)_{*} \Big( \prod_{u=1}^{r}
      \ev_{n+k+u}^{*}( -
      \mathrm{Res}(f_{\vb'_{(u)}})) \cdot \mathcal
      O^{\mathrm{vir}}_{\tilde{Q}^{\epsilon_+}_{g, n + k + r}(X, \beta^{\prime
          \prime})} \Big) \biggr]^{S_r},
    \end{aligned}
  \end{equation}
 \begin{equation}\label{eq:simplication-2}
    \begin{aligned}
      &  (\tau \times \ev)_*\Big(
      \mathcal {O}(-(s+1) \sum_{i = 0}^{m-k-1} \mathfrak
      D_i') \cdot \mathcal
      O^{\mathrm{vir}}_{\tilde{Q}^{\epsilon_+}_{g,n+k}(X,\beta^\prime)} \Big)
      -(\tau \times \ev)_*\Big(
      \mathcal {O}(-s \sum_{i = 0}^{m-k-1} \mathfrak
      D_i') \cdot \mathcal
      O^{\mathrm{vir}}_{\tilde{Q}^{\epsilon_+}_{g,n+k}(X,\beta^\prime)} \Big)
      \\
      = &
      \sum_{r=1}^{m-k} \biggl[
      \sum_{\vec\beta^\prime}
      \sum_{
      \substack{j_1 + \cdots + j_r  = s \\j_u \geq0}}
      ((b \circ \tau) \times
      \ev)_{*} \Big( \prod_{u=1}^{r} \ev_{n+k+u}^{*}( -
      q^{-j_u}\mathrm{Res}(f_{\vb'_{(u)}})) \cdot \mathcal
      O^{\mathrm{vir}}_{\tilde{Q}^{\epsilon_+}_{g, n + k + r}(X, \beta^{\prime
          \prime})} \Big) \biggr]^{S_r},
    \end{aligned}
  \end{equation}
  and
   \begin{equation}\label{eq:simplication-3}
    \begin{aligned}
      &  (\tau \times \ev)_*\Big(
      \mathcal {O}(s \sum_{i = 0}^{m-k-1} \mathfrak
      D_i') \cdot \mathcal
      O^{\mathrm{vir}}_{\tilde{Q}^{\epsilon_+}_{g,n+k}(X,\beta^\prime)} \Big)
      -(\tau \times \ev)_*\Big(
      \mathcal {O}((s-1) \sum_{i = 0}^{m-k-1} \mathfrak
      D_i') \cdot \mathcal
      O^{\mathrm{vir}}_{\tilde{Q}^{\epsilon_+}_{g,n+k}(X,\beta^\prime)} \Big)
      \\
      = &
      \sum_{r=1}^{m-k} \biggl[
      \sum_{\vec\beta^\prime}
      \sum_{
      \substack{j_1 + \cdots + j_r  = s \\j_u >0}}
      ((b \circ \tau) \times
      \ev)_{*} \Big( \prod_{u=1}^{r}
      \ev_{n+k+u}^{*}( 
      q^{j_u}\mathrm{Res}(f_{\vb'_{(u)}})) \cdot \mathcal
      O^{\mathrm{vir}}_{\tilde{Q}^{\epsilon_+}_{g, n + k + r}(X, \beta^{\prime
          \prime})} \Big) \biggr]^{S_r},
    \end{aligned}
  \end{equation}
   where $[\cdot]^{S_r}$ denotes taking the $S_r$-invariant part. The same
   formulas hold in the twisted setting if we replace the virtual structure
   sheaves and $f_{\vb'_{(u)}}$ by their twisted counterparts.

\end{lemma}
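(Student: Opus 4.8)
\textbf{Proof plan for Lemma \ref{special-lemma}.}

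The three identities share a common mechanism: the line bundle $\mathcal O(\mathfrak D_i')$ on $\tilde{\mathfrak M}_{g,n+k,d-kd_0}$ records entangled tails, and pushing forward a power of $\mathcal O(\sum_i \mathfrak D_i')$ along $\tau$ amounts to repeatedly splitting off degree-$d_0$ rational tails. The strategy is to reduce all three to a single ``one-step'' identity and then iterate. First I would establish the basic step: express the difference
\[
  (\tau\times\ev)_*\bigl(\mathcal O(\mathfrak D_0'+\cdots+\mathfrak D_{m-k-1}')\cdot\ovir_{\tilde Q^{\epsilon_+}_{g,n+k}(X,\beta')}\bigr) - (\tau\times\ev)_*\bigl(\ovir_{\tilde Q^{\epsilon_+}_{g,n+k}(X,\beta')}\bigr)
\]
in terms of the restriction to the boundary divisor $\mathfrak D_{k}\subset\tilde{\mathfrak M}_{g,n,d}$ (the locus of at least one more entangled tail) via the exact sequence $0\to\mathcal O\to\mathcal O(\mathfrak D)\to\mathcal O(\mathfrak D)|_{\mathfrak D}\to 0$. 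Then I would use Lemma \ref{lem:int-with-D-k-1}, which identifies the virtual structure sheaf restricted to $\tilde{\mathrm{gl}}_1^*\mathfrak D$ with the pullback under the inflated projective bundle map $p$ of $\sum_{\vec\beta'}\Delta^!\bigl(\ovir_{\tilde Q^{\epsilon_+}_{g,n+k+1}(X,\beta'')}\boxtimes\ovir_{Q^{\epsilon_+}_{0,1}(X,\beta_1')}\bigr)$, together with Corollary \ref{evaluation-last-wall}, which computes $(\check\ev_1)_*\bigl(\ovir_{Q^{\epsilon_+}_{0,1}(X,\beta_1')}/(1-qL_1)\bigr)=[(1-q)I_{\beta_1'}(q)]_-$. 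The geometric-series expansion $\mathcal O(-1)$ on the projective bundle gives exactly the factor $1/(1-q^{-1}\mathcal O(\sum\mathfrak D_i'))$ after pushing down the $\mathbb P^1$-direction, and by Lemma \ref{negative-projection} the relevant coefficient extraction produces $-\mathrm{Res}(q^{-j}f_{\vb'_{(u)}})$ (for \eqref{eq:simplication-1}, \eqref{eq:simplication-2}) or $\mathrm{Res}(q^{j}f_{\vb'_{(u)}})$ (for \eqref{eq:simplication-3}, by expanding at $\infty$ instead). The key input here is Lemma \ref{pullback-sum-of-all-divisors}, which says that along $\iota_{\mathfrak D}$ the bundle $\mathcal O(\mathfrak D_0+\cdots+\mathfrak D_{m-1})$ pulls back to $\mathcal O_{\tilde{\mathbb P}}(-1)\otimes\mathcal O(D_0+\cdots+D_{k-2})$, so the ``tautological $\mathcal O(-1)$'' on each inflated bundle is precisely what generates one more power of the shifted Novikov/divisor variable.

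The main work is the \emph{iteration} and the bookkeeping of the symmetric-group actions. Splitting off $r$ tails in succession is an $r$-step process, but the $r$ newly created markings are a priori \emph{ordered} by the order in which they are split off; since $b$ replaces the last $r$ markings by unordered base points and the tails are interchangeable, the $r$-fold composite of the one-step map factors through the quotient by $S_r$. Concretely, I would prove by induction on $a$ (resp.\ on the power $s$) that the $a$-fold (resp.\ iterated) pushforward equals $\sum_{r=0}^{a}$ of the $S_r$-invariant part of the $r$-tail contribution, where the $S_r$ appears because the gluing morphism $\tilde{\mathrm{gl}}_r$ is $S_r$-equivariant of degree $r!/\prod\mathbf r_i$ (see \eqref{eq:gluing-morphism}) and the $\prod\mathbf r_i$ is absorbed by the gerbe pushforward $(p_k)_*(\prod\theta^{\mathbf r_i}(\tilde L(\mathcal E_i)))=1$, exactly as in the derivation of \eqref{eq:contr-Q-3}. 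For \eqref{eq:simplication-2} the telescoping structure $\mathcal O(-(s+1)\sum\mathfrak D_i')-\mathcal O(-s\sum\mathfrak D_i')=\mathcal O(-s\sum\mathfrak D_i')\otimes(\mathcal O(-\sum\mathfrak D_i')-1)$ reduces to the $s=0$ case after tensoring, and the constraint $j_1+\cdots+j_r=s$ with $j_u\geq 0$ arises from distributing the total ``extra $q^{-s}$'' among the $r$ tails; \eqref{eq:simplication-3} is the mirror statement obtained by expanding the geometric series at $q=\infty$, which forces $j_u>0$ because the expansion $-\sum_{i\geq 1}q^iL^{-i}$ (cf.\ \eqref{eq:geometric-series-expansion} read at $\infty$) starts at positive powers.

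The hardest part will be the combinatorial consistency of the nested inflated projective bundles: when more than one tail is split, the boundary divisors $\mathfrak D_i'$ on the intermediate moduli spaces are themselves pulled back nontrivially (Lemma \ref{lem:divisors1}(2) says $\iota_{\mathfrak D}^*\mathfrak D_\ell = \mathrm{pr}_1^*\mathfrak D_{\ell-k}'$ for $\ell\geq k$), so one must check that the index shift in $\sum_{i=0}^{m-k-1}\mathfrak D_i'$ is compatible at each stage and that no double-counting of decompositions $\vec\beta'$ occurs. I expect this to follow from the same normalization-sequence / inflated-bundle analysis already carried out in \cite[\textsection 2.5, \textsection 2.7]{Zhou2} for the cohomological case, now upgraded using the fact (Lemma \ref{lem:vir-str-sheaf-comp-entangled-tails}) that $\tau'_*\ca{O}_{\tilde{\mathfrak M}}=\ca{O}_{\mathfrak M^{\mathrm{wt,ss}}}$ and that virtual pullback commutes with proper pushforward \cite[Proposition 2.4]{Qu}. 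The twisted case requires no new argument: by Remark \ref{rem:compare-twisting} and Lemma \ref{lem:int-with-D-k-1} the twisting class $\ca{T}$ factors through the normalization exactly as in the untwisted case, contributing the universal factors $(\mathrm{ev}^k)^*(T^{-1})^{\boxtimes k}$ that are already built into the definition of $f_{\vb'_{(u)}}$ in the twisted theory (cf.\ the passage following \eqref{eq:residue-Y-F-beta}), so one replaces $I_{\beta}$ and $\ovir$ by their twisted counterparts throughout.
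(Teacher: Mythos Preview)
Your high-level strategy---peel off boundary divisors using $\mathcal O(-D)\cdot\mathcal F=\mathcal F-\mathcal F|_D$, identify the restriction via Lemma~\ref{lem:int-with-D-k-1}, push forward the tail factor using Lemma~\ref{special-evaluation}, and induct---matches the paper's approach. The telescoping for \eqref{eq:simplication-2} and the mirror expansion at $\infty$ for \eqref{eq:simplication-3} are also correct organizational moves.

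However, there is a genuine gap. You never invoke the pushforward formula for the inflated projective bundle (Lemma~\ref{lem:inflated-projective-bundle} in the Appendix), and this is the computational core of \eqref{eq:simplication-2} and \eqref{eq:simplication-3}. When you restrict to $\mathfrak D_{r-1}'$ you split off $r$ tails \emph{simultaneously}, and the map $p$ to the product is an inflated projective bundle of relative dimension $r-1$, not a $\mathbb P^1$. After applying Lemma~\ref{pullback-sum-of-all-divisors}, the class you must push down is $\mathcal O_{\tilde{\mathbb P}}(s)\otimes\mathcal O(-s\sum_{j=0}^{r-2}D_j)$, and it is precisely Lemma~\ref{lem:inflated-projective-bundle} (the $t=-s$ case) that gives
\[
  p_*\bigl(\mathcal O(-s\textstyle\sum_j D_j)\otimes\mathcal O_{\tilde{\mathbb P}}(s)\bigr)
  = -\sum_{\substack{j_1+\cdots+j_r=s\\ j_u>0}}(-\Theta_1^{-j_1})\otimes\cdots\otimes(-\Theta_r^{-j_r}),
\]
which is where the constraint $j_1+\cdots+j_r=s$ and the signs actually come from. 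Your phrases ``pushing down the $\mathbb P^1$-direction'' and ``distributing the total extra $q^{-s}$ among the $r$ tails'' handwave past exactly this step; the formula is not obvious and occupies the entire Appendix (Koszul relation on $\tilde{\mathbb P}$, nefness of $L$ via a toric description, and Demazure vanishing). Without it you cannot justify the right-hand sides of \eqref{eq:simplication-2}--\eqref{eq:simplication-3}.

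A secondary point: the paper does not iterate by splitting one tail at a time. The recursion \eqref{eq:divisor-sequence-recursive-1} peels off $\mathfrak D_0',\ldots,\mathfrak D_{a-1}'$ and for each $r$ lands on $\mathfrak D_{r-1}'\subset\tilde{\mathfrak M}_{g,n+k,d'}$ (not $\mathfrak D_k\subset\tilde{\mathfrak M}_{g,n,d}$), where $r$ tails come off at once; the induction is then on the residual factor $\mathcal O(-\sum_{i=0}^{a-1-r}\mathfrak D_i'')$ on the space with $n+k+r$ markings. Your ``$r$-step process'' picture would force you to track how the successive $\mathbb P^1$'s interact, which is more delicate than the paper's single use of the inflated-bundle pushforward. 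Also, the input you need from the one-marked moduli is Lemma~\ref{special-evaluation} itself (the $L_1^\ell$ version), not Corollary~\ref{evaluation-last-wall}, since after Lemma~\ref{lem:inflated-projective-bundle} you are left with explicit powers $\Theta_u^{-j_u}$ rather than a $1/(1-qL_1)$.
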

\begin{proof}
 For $r = 0,1,2 ,\ldots, $
  we define the operator
  \[
    (\cdot)|_{\mathfrak D^\prime_{r-1}} : K_{\circ}(\tilde{Q}^{\epsilon +}_{g,n+k}(X,
    \beta^\prime)) \rightarrow K_{\circ}(\tilde{Q}^{\epsilon +}_{g,n+k}(X, \beta^\prime))
  \]
  as
  \begin{equation}
    \label{eq:restriction}
    \mathcal F|_{\mathfrak D_{r-1}^\prime} := (\tilde\iota_{\mathfrak D_{r-1}^\prime})_*
    \iota_{\mathfrak D_{r-1}^{\prime}}^{!} \mathcal F,
  \end{equation}
  where $\tilde\iota_{\mathfrak D_{r-1}^\prime}$ and $\iota_{\mathfrak
    D_{r-1}^{\prime}}$ are the obvious arrows in the following fibered diagram
  \[
    \begin{tikzcd}
      \tilde{Q}^{\epsilon +}_{g,n+k}(X, \beta^\prime)|_{\mathfrak D_{r-1}^\prime}
      \ar[d] \ar[r, "\tilde\iota_{\mathfrak D_{r-1}^\prime}"] & \tilde{Q}^{\epsilon
        +}_{g,n+k}(X, \beta^\prime) \ar[d]\\
      \mathfrak D_{r-1}^\prime \ar[r, "\iota_{\mathfrak D_{r-1}^{\prime}}"]
      & \tilde {\mathfrak M}_{g, n + k, d^\prime}
    \end{tikzcd}
  \]
  Note that for any $\alpha \in K^{\circ}(\tilde{Q}^{\epsilon
        +}_{g,n+k}(X, \beta^\prime))$, we have
  \[
    (\alpha \cdot \mathcal F)|_{\mathfrak D_{r-1}^\prime} =
    \alpha \cdot (\mathcal F|_{\mathfrak D_{r-1}^\prime}).
  \]
  Indeed, we have
  \[
    (\alpha \cdot \mathcal F)|_{\mathfrak D_{r-1}^\prime}
    = (\tilde\iota_{\mathfrak D_{r-1}^\prime})_* \iota_{\mathfrak D_{r-1}^{\prime}}^{!}
    (\alpha\cdot \mathcal F)
    = (\tilde\iota_{\mathfrak D_{r-1}^\prime})_* (\tilde\iota_{\mathfrak
      D_{r-1}^\prime}^*\alpha \cdot \iota_{\mathfrak D_{r-1}^{\prime}}^{!}
    \mathcal F) = \alpha \cdot (\mathcal F|_{\mathfrak D_{r-1}^\prime}). \quad
     \]
  For any $\mathcal F\in K_\circ(\tilde{Q}^{\epsilon +}_{g,n+k}(X,
  \beta^\prime))$, we have
  \[
    \mathcal O(-\mathfrak D_i^\prime)\cdot \mathcal F = \mathcal F - \mathcal
    F|_{\mathfrak D_i^\prime}.
  \]
  This follows from the definition of the refined Gysin map (see, for example,
  \cite[\textsection{2.1}]{Lee}).
  By repeatedly using this relation, we have for any $\mathcal F\in
  K_\circ(\tilde{Q}^{\epsilon +}_{g,n+k}(X, \beta^\prime))$,
  \begin{equation}
    \label{eq:divisor-sequence-recursive-1}
    \mathcal O(- \sum_{i=0}^{a-1} \mathfrak D_i^\prime) \cdot \mathcal F =
    \mathcal F - \sum_{r=1}^{a} \mathcal O(- \sum_{i=r}^{a-1} \mathfrak
    D_{i}^\prime) \cdot \mathcal F|_{\mathfrak  D_{r-1}^\prime}.
  \end{equation}
We further consider the fibered diagram
  \[
    \begin{tikzcd}
      {[\tilde{Q}^{\epsilon +}_{g,n+k}(X,
      \beta^\prime)|_{\tilde{\mathrm{gl}}^*_{r} \mathfrak D_{r}^\prime \ar[r]}/ S_r]}
      \ar[d]
      & \tilde{Q}^{\epsilon +}_{g,n+k}(X, \beta^\prime)|_{\mathfrak
        D_{r-1}^\prime} \ar[d, "p_{\mathfrak D_{r-1}^\prime}"'] \\
      {[\tilde{\mathrm{gl}}^*_{r} \mathfrak D_{r-1}^\prime / S_r]} \ar[r] &  \mathfrak D_{r-1}^\prime
    \end{tikzcd}.
  \]
  Note that the horizontal arrows are \'etale gerbes. Hence replacing $\mathfrak
  D_{r-1}^\prime$ by $[\tilde{\mathrm{gl}}^*_{r} \mathfrak D_{r-1}^\prime /S_r]$ in
  \eqref{eq:restriction} does not change the definition of $\mathcal
  F|_{\mathfrak D_{r-1}^\prime}$. In particular
  $\mathcal O^{\mathrm{vir}}_{\tilde{Q}^{\epsilon +}_{g,n+k}(X, \beta^\prime)}
  |_{\mathfrak D_{r-1}^\prime}$ is equal to the pushforward of
  $\mathcal O^{\mathrm{vir}}_{[\tilde{Q}^{\epsilon +}_{g,n+k}(X,
    \beta^\prime)|_{\tilde{\mathrm{gl}}^*_{r} \mathfrak D_{r-1}^\prime} / S_r]}$.
  Recall that
  \[
    p : \tilde{Q}^{\epsilon +}_{g,n+k}(X, \beta^\prime)|_{\tilde{\mathrm{gl}}^*_{r}
      \mathfrak D_{r-1}^\prime}
    \rightarrow
    \coprod_{\vec\beta^\prime}\tilde{Q}^{\epsilon_+}_{g, n+k+r}(X, \beta^{\prime
      \prime}) \times_{(IX)^r} \prod_{u=1}^r Q^{\epsilon_+}_{0,1}(X, \beta^\prime_{u})
  \]
  is an inflated projective bundle.
  By Lemma~\ref{lem:divisors1} (2), the restriction of $\mathcal O(- \sum_{i=r}^{a-1} \mathfrak
  D_{i}^\prime)$ to $\tilde{Q}^{\epsilon +}_{g,n+k}(X, \beta^\prime)|_{\tilde{\mathrm{gl}}^*_{r}
    \mathfrak D_{r - 1}^\prime}$ is equal to the pullback of $\mathcal
  O(-\sum_{i= 0}^{a - 1 - r}\mathfrak D_{i}^{\prime\prime})$ as $S_{n+k}\times S_{r}$-equivariant
  sheaves, where $\mathfrak D_i^{\prime\prime}$ are the boundary divisors on
  $\tilde{\mathfrak M}_{g, n + k + r, d^{\prime\prime}}$.
  Note that we have a commutative diagram
  \[
    \begin{tikzcd}
      {\tilde{Q}^{\epsilon +}_{g,n+k}(X,
        \beta^\prime)|_{\tilde{\mathrm{gl}}^*_{r} \mathfrak D_{r - 1}^\prime \ar[r]}}
      \ar[r] \ar[d, "p"']
      &
      \tilde{Q}^{\epsilon +}_{g,n+k}(X, \beta^\prime)  \ar[r, "\tau \times
      \ev"]
      &
      {Q}^{\epsilon_-}_{g, n + k}(\mathbb P^{N-1}, d^\prime) \times (\bar{I}X)^{n+k}
      \\
      {\coprod_{\vec\beta^\prime}\tilde{Q}^{\epsilon_+}_{g, n+k+r}(X, \beta^{\prime
          \prime}) \times_{(IX)^r} \prod_{u=1}^r Q^{\epsilon_+}_{0,1}(X,
        \beta^\prime_{u})} \ar[rr, "\mathrm{pr}_1"]
      & & \coprod_{\vec\beta^\prime}\tilde{Q}^{\epsilon_+}_{g, n+k+r}(X, \beta^{\prime
       \prime})
     \ar[u, "(b\circ c) \times \ev"]
    \end{tikzcd}
  \]
  % where the lower horizontal arrows is the composition of
  % \[
  %  {\coprod_{\vec\beta^\prime}\tilde{Q}^{\epsilon_+}_{g, n+k+r}(X, \beta^{\prime
  %         \prime}) \times_{(IX)^r} \prod_{i=1}^r Q^{\epsilon_+}_{0,1}(X, \beta^\prime_{i})}
  %    \overset{i}{\longrightarrow}
  %    \coprod_{\vec\beta^\prime}\tilde{Q}^{\epsilon_+}_{g, n+k+r}(X, \beta^{\prime
  %      \prime})
  %    \overset{(b\circ i) \times \mathrm{ev}}{\longrightarrow}
  %    {Q}^{\epsilon_-}_{g, n + k}(\mathbb P^{N-1}, d^\prime) \times (IX)^{n+k}
  % \]
  where the maps are $S_{n + k}$-equivariant and $S_i$-invariant.
  % and
  % the
  % lower horizontal
  Hence as $S_{n+k}\times S_r$-equivariant $K_{\circ}$-classes on
  ${Q}^{\epsilon_-}_{g, n + k}(\mathbb P^{N-1}, d^\prime) \times (\bar{I}X)^{n+k}$,
  where $S_r$ acts trivially, we have
  \begin{equation*}
    \begin{aligned}
      & (\tau \times \ev)_* \big(
      \mathcal O(- \sum_{i=r}^{a-1} \mathfrak D_{i}^\prime) \cdot
      \mathcal O^{\mathrm{vir}}_{\tilde{Q}^{\epsilon +}_{g,n+k}(X,
        \beta^\prime)}|_{\mathfrak D_{r  - 1}^\prime}
      \big)\\
      = &
      \biggl[
      ((b\circ \tau) \times \ev)_* \biggl(
      (\mathrm{pr}_1)_*p_*\big( \mathcal
     O^{\mathrm{vir}}_{\tilde{Q}^{\epsilon +}_{g,n+k}(X,
        \beta^\prime)|_{\tilde{\mathrm{gl}}^*_{r} \mathfrak D_{r-1}^\prime }}
      \big) \cdot
      \mathcal
      O(-\sum_{i= 0}^{a - 1 - r}\mathfrak D_{i}^{\prime\prime})
      \biggr)
      \biggr]^{S_r}.
    \end{aligned}
  \end{equation*}
 We denote by $\mathcal O^{\mathrm{vir}}_{\tilde{Q}^{\epsilon_+}_{g, n+k+r}(X,
      \beta^{\prime \prime})} \otimes_{(IX)^k} \prod_{u=1}^r
    \mathcal O^{\mathrm{vir}}_{Q^{\epsilon_+}_{0,1}(X, \beta^\prime_{u})}$ the
    structure sheaf of the fiber product. By Lemma~\ref{lem:int-with-D-k-1},
  \begin{equation*}
    \begin{aligned}
      &((b\circ \tau) \times \ev)_* (\mathrm{pr}_1)_*p_*\big( \mathcal
      O^{\mathrm{vir}}_{\tilde{Q}^{\epsilon +}_{g,n+k}(X,
        \beta^\prime)|_{\tilde{\mathrm{gl}}^*_{r} \mathfrak D_{r-1}^\prime }}
      \big) \\
      = &
     ((b\circ \tau) \times \ev)_*  \sum_{\vec\beta^\prime}(\mathrm{pr}_1)_*\big(
      p_*( \mathcal
     O_{\tilde{Q}^{\epsilon +}_{g,n+k}(X,
        \beta^\prime)|_{\tilde{\mathrm{gl}}^*_{r} \mathfrak D_{r-1}^\prime }})
      \cdot
      \mathcal O^{\mathrm{vir}}_{\tilde{Q}^{\epsilon_+}_{g, n+k+r}(X,
      \beta^{\prime \prime})} \otimes_{(IX)^k} \prod_{u=1}^r
    \mathcal O^{\mathrm{vir}}_{Q^{\epsilon_+}_{0,1}(X, \beta^\prime_{u})}
    \big)\\
    &\hspace{-0.85cm} \overset{\mathrm{Appendix}}{=} ((b\circ \tau) \times \ev)_*
     \sum_{\vec\beta^\prime} (\mathrm{pr}_1)_*\big(
      \mathcal O^{\mathrm{vir}}_{\tilde{Q}^{\epsilon_+}_{g, n+k+r}(X,
      \beta^{\prime \prime})} \otimes_{(IX)^k} \prod_{u=1}^r
    \mathcal O^{\mathrm{vir}}_{Q^{\epsilon_+}_{0,1}(X, \beta^\prime_{u})}
    \big)\\
    =&((b\circ \tau) \times \ev)_*
     \sum_{\vec\beta^\prime} (\underline{\mathrm{pr}}_1)_*\big(
      \mathcal O^{\mathrm{vir}}_{\tilde{Q}^{\epsilon_+}_{g, n+k+r}(X,
      \beta^{\prime \prime})} \otimes_{(\bar{I}X)^k} \prod_{u=1}^r
    \mathcal O^{\mathrm{vir}}_{Q^{\epsilon_+}_{0,1}(X, \beta^\prime_{u})}
    \big)\\
    = &((b\circ \tau) \times \ev)_*
    \sum_{\vec\beta^\prime}
    \big(\prod_{u=1}^{r}\ev_{n+k+u}^* (\check{\ev}_{1})_*(\mathcal
    O^{\mathrm{vir}}_{Q^{\epsilon_+}_{0,1}(X, \beta^\prime_{u})})
    \big)
    \cdot
    \mathcal O^{\mathrm{vir}}_{\tilde{Q}^{\epsilon_+}_{g, n+k+r}(X, \beta^{\prime \prime})}.
    \end{aligned}
  \end{equation*}
  Here $\underline{\mathrm{pr}}_1$ denotes the projection of ${\coprod_{\vec\beta^\prime}\tilde{Q}^{\epsilon_+}_{g, n+k+r}(X, \beta^{\prime
          \prime}) \times_{(\bar{I}X)^r} \prod_{u=1}^r Q^{\epsilon_+}_{0,1}(X,
        \beta^\prime_{u})}$ onto its first factors.

  By Lemma~\ref{special-evaluation}, we have
  \[
   (\check{\ev}_{1})_*(\mathcal
   O^{\mathrm{vir}}_{Q^{\epsilon_+}_{0,1}(X, \beta^\prime_{u})}) =
   \mathrm{Res}\big(
   (1
   - q^{\mathbf r}) I_{\beta^{\prime}_u}(q^{\mathbf r})
   \big).
 \]
 According to Lemma~\ref{elementary-properties-of-residues}, part (2), the above class also equals
 $\mathrm{Res}(f_{\vb'_{(u)}})$ after the change of variable $q\mapsto
 q\tilde{L}_{n+k+u}$.
 %\Ming{I just found out that we need to make the change of
  % variable here. Please double check.}
  Summarizing, we have
  \begin{equation*}
    \begin{aligned}
      & (\tau \times \ev)_* (\mathcal O(- \sum_{i=r}^{a-1} \mathfrak
      D_{i}^\prime) \cdot
      \mathcal O^{\mathrm{vir}}_{\tilde{Q}^{\epsilon +}_{g,n+k}(X, \beta^\prime)}|_{\mathfrak  D_{r-1}^\prime})\\
      = &
      \biggl[
      \sum_{\vec\beta^\prime}
      ((b\circ \tau) \times \ev)_*
      \biggl(
      \big(\prod_{u=1}^{r}\ev_{n+k+u}^*
      \mathrm{Res}\big(
     f_{\vb'_{(u)}}
      \big)
      \big)
      \cdot
      \mathcal O^{\mathrm{vir}}_{\tilde{Q}^{\epsilon_+}_{g, n+k+r}(X, \beta^{\prime \prime})}
      \cdot
      \mathcal
      O(-\sum_{i= 0}^{a - 1 - r}\mathfrak D_{i}^{\prime\prime})
      \biggr) \biggr]^{S_r}.
    \end{aligned}
  \end{equation*}
  Substituting these into \eqref{eq:divisor-sequence-recursive-1}, we have
  \begin{equation}
    \label{eq:overall-first-simplification}
    \begin{aligned}
      & (\tau \times \ev)_*
      \biggl(
      \mathcal O(- \sum_{i=0}^{a-1} \mathfrak D_i^\prime) \cdot
      \mathcal O^{\mathrm{vir}}_{\tilde{Q}^{\epsilon +}_{g,n+k}(X, \beta^\prime)}
      \biggr) \\
      = & (\tau \times \ev)_*
      \biggl(
      \mathcal O^{\mathrm{vir}}_{\tilde{Q}^{\epsilon +}_{g,n+k}(X, \beta^\prime)}
      \biggr) \\
      & -
      \sum_{r=1}^{a}
      \biggl[
      \sum_{\vec\beta^\prime}
      ((b\circ \tau) \times \ev)_*
      \biggl(
     \prod_{u=1}^{r}\ev_{n+k+u}^*
      \mathrm{Res}\big(
      f_{\vb^{\prime}_{(u)}}
      \big)
      \cdot
      \mathcal O^{\mathrm{vir}}_{\tilde{Q}^{\epsilon_+}_{g, n+k+r}(X, \beta^{\prime \prime})}
      \cdot
      \mathcal
      O(-\sum_{i= 0}^{a - 1 - r}\mathfrak D_{i}^{\prime\prime})
      \biggr)
      \biggr]^{S_r}.
    \end{aligned}
  \end{equation}
  When $a=1$, this is already the desired formula. In general we use induction.
  For $a > 1$, suppose that~\eqref{eq:simplication-1} is true with $a$ replaced
  by $a-r$,
  $\beta^{\prime}$ replaced by $\beta^{\prime\prime}$ and $n+k$ replaced by
  $n+k+r$, with $r\geq 1$, then we have
  \begin{equation*}
    \begin{aligned}
      & (\tau \times \ev)_*\big(
      \mathcal O^{\mathrm{vir}}_{\tilde{Q}^{\epsilon_+}_{g, n+k+r}(X, \beta^{\prime \prime})}
      \cdot
      \mathcal
      O(-\sum_{i = 0}^{a - 1 - r}\mathfrak D_{i}^{\prime\prime})
      \big) \\
      = &
      \sum_{i=0}^{a - r}
      \biggl[
      \sum_{\vec\beta^{\prime\prime}}
      ((b \circ \tau) \times \ev)_{*}
      \big(
      \prod_{v=1}^{i}
      \ev_{n+k+r+v}^{*}( - \mathrm{Res}(f_{\vb^{\prime\prime}_{(v)}})) \cdot
      \mathcal O^{\mathrm{vir}}_{\tilde{Q}^{\epsilon_+}_{g, n + k + r + i}(X, \beta^{\prime\prime\prime})}
      \big)
      \biggr]^{S_i}
    \end{aligned}
  \end{equation*}
  % when pushed forward to ${Q}^{\epsilon_-}_{g, n + k + r}(\mathbb P^{N-1},
  % d^{\prime\prime}) \times (IX)^{n + k + r}$,
  as $S_{n+k+r}$-equivariant
  $K_{\circ}$-classes on ${Q}^{\epsilon_-}_{g, n + k + r}(\mathbb P^{N-1},
  d^{\prime\prime}) \times (\bar{I}X)^{n + k + r} $. In particular, the equation holds as $S_{n+k} \times
  S_{r}$-equivariant $K_{\circ}$-classes. Note that the inner sum is over
  all ordered decompositions $\vb''=(\beta''',\beta_1'',\dots,\beta_i'')$ with $\vb''_{(v)}:=\beta_v''$ and the maps have new meaning:
  e.g.\ $\mathrm{ev}$ is the evaluation at the first $n+k+r$ markings.

  Tensoring the relation with $\prod_{u=1}^{r}\ev_{n+k+u}^*
  \mathrm{Res}\big(
  f_{\vb^{\prime}_{(u)}}
  \big)$ and taking the pushforward along
  \[
   {Q}^{\epsilon_-}_{g, n + k + r}(\mathbb P^{N-1},
   d^{\prime\prime}) \times (\bar{I}X)^{n + k + r}
   \rightarrow
   {Q}^{\epsilon_-}_{g, n + k}(\mathbb P^{N-1},
   d^{\prime\prime}) \times (\bar{I}X)^{n + k},
  \]
  which replaces the last $r$-markings by length-$d_0$ base points and forgets
  the last $r$ copies of $IX$, we obtain
  \begin{equation*}
    \begin{aligned}
         % \biggl[
      & \sum_{\vec\beta^\prime}
      ((b\circ \tau) \times \ev)_*
      \biggl(
     \prod_{u=1}^{r}\ev_{n+k+u}^*
      \mathrm{Res}\big(
     f_{\vb^{\prime}_{(u)}}
      \big)
      \cdot
      \mathcal O^{\mathrm{vir}}_{\tilde{Q}^{\epsilon_+}_{g, n+k+r}(X, \beta^{\prime \prime})}
      \cdot
      \mathcal
      O(-\sum_{i= 0}^{a - 1 - r}\mathfrak D_{i}^{\prime\prime})
      \biggr) \\
      = &
    \sum_{r=1}^{a}  \sum_{i=0}^{a - r}
      \biggl[
      \sum_{\vec\beta^\prime, \vec\beta^{\prime\prime}}
      ((b \circ \tau) \times \ev)_{*}
      \Big(
      \prod_{u=1}^{r}\ev_{n+k+u}^*
      \mathrm{Res}\big(
       f_{\vb^{\prime}_{(u)}}
      \big)
      \prod_{v=1}^{i}
      \ev_{n+k+r+v}^{*}( - \mathrm{Res}(f_{\vb^{\prime\prime}_{(v)}})) \cdot
      \mathcal O^{\mathrm{vir}}_{\tilde{Q}^{\epsilon_+}_{g, n + k + r + i}(X, \beta^{\prime\prime\prime})}
      \Big)
      \biggr]^{S_r\times S_i}
      % \biggr]^{S_r}.
    \end{aligned}
  \end{equation*}
  This is the second term in \eqref{eq:overall-first-simplification}. Hence
  we have
  \begin{equation*}
    \begin{aligned}
         & (\tau \times \ev)_*
      \biggl(
      \mathcal O(- \sum_{i=0}^{a-1} \mathfrak D_i^\prime) \cdot
      \mathcal O^{\mathrm{vir}}_{\tilde{Q}^{\epsilon +}_{g,n+k}(X, \beta^\prime)}
      \biggr) \\
      = &
(\tau \times \ev)_*
      \biggl(
      \mathcal O^{\mathrm{vir}}_{\tilde{Q}^{\epsilon +}_{g,n+k}(X, \beta^\prime)}
      \biggr) \\
       - & \sum_{r=1}^{a}  \sum_{i=0}^{a - r}
      \biggl[
      \sum_{\vec\beta^\prime, \vec\beta^{\prime\prime}}
      ((b \circ \tau) \times \ev)_{*}
      \Big(
      \prod_{u=1}^{r}\ev_{n+k+u}^*
      \mathrm{Res}\big(
      f_{\vb_{(u)}^\prime}
      \big)
      \prod_{v=1}^{i}
      \ev_{n+k+r+v}^{*}( - \mathrm{Res}(f_{\vb^{\prime\prime}_{(v)}})) \cdot
      \mathcal O^{\mathrm{vir}}_{\tilde{Q}^{\epsilon_+}_{g, n + k + r + i}(X, \beta^{\prime\prime\prime})}
      \Big)
      \biggr]^{S_r\times S_i} \\
      =  &
      \sum_{i=0}^{a} \biggl[ \sum_{\vec\beta^\prime} ((b \circ \tau) \times
      \ev)_{*} \Big( \prod_{v=1}^{i} \ev_{n+k+v}^{*}( -
      \mathrm{Res}(f_{\vb^\prime_{(v)}})) \cdot \mathcal
      O^{\mathrm{vir}}_{\tilde{Q}^{\epsilon_+}_{g, n + k + i}(X, \beta^{\prime
          \prime})} \Big) \biggr]^{S_i}.
    \end{aligned}
  \end{equation*}
  The last equality can be seen by expanding
  \[
    0 = \sum_{i=0}^a
    \biggl[
    \sum_{\vec\beta^\prime}
    ((b \circ \tau) \times \ev)_{*}
    \biggl(
    \prod_{v=1}^i(
    \ev_{n+k+v}^{*}(\mathrm{Res}(f_{\vb^\prime_{(v)}}) +
    \ev_{n+k+v}^{*}( - \mathrm{Res}(f_{\vb^\prime_{(v)}})
    )
    )
    \cdot \mathcal
    O^{\mathrm{vir}}_{\tilde{Q}^{\epsilon_+}_{g, n + k + i}(X, \beta^{\prime
        \prime})}
    \biggr)
    \biggr]^{S_i}.
  \]
  This finishes the proof of~\eqref{eq:simplication-1}.

 To prove~\eqref{eq:simplication-2}, we make similar simplifications as follows:
  \begin{equation*}
\begin{aligned}
      &  (\tau \times \ev)_*\Big(
      \mathcal {O}(-(s+1) \sum_{i = 0}^{m-k-1} \mathfrak
      D_i') \cdot \mathcal
      O^{\mathrm{vir}}_{\tilde{Q}^{\epsilon_+}_{g,n+k}(X,\beta^\prime)} \Big)
      -(\tau \times \ev)_*\Big(
      \mathcal {O}(-s \sum_{i = 0}^{m-k-1} \mathfrak
      D_i') \cdot \mathcal
      O^{\mathrm{vir}}_{\tilde{Q}^{\epsilon_+}_{g,n+k}(X,\beta^\prime)} \Big)
      \\
      = &(\tau \times \ev)_*\Big(
\mathcal {O}(-s \sum_{i = 0}^{m-k-1} \mathfrak
D_i')\cdot
\big(\mathcal {O}(- \sum_{i = 0}^{m-k-1} \mathfrak
D_i')-\ca{O}
\big)
\cdot \mathcal
      O^{\mathrm{vir}}_{\tilde{Q}^{\epsilon_+}_{g,n+k}(X,\beta^\prime)}
      \Big)\\
      = &-\sum_{r=1}^a(\tau \times \ev)_*\Big(
\big(\mathcal {O}(-s \sum_{i = 0}^{m-k-1} \mathfrak
D_i')
\cdot
\mathcal {O}(- \sum_{i = r}^{m-k-1} \mathfrak
D_i')
\cdot \mathcal
O^{\mathrm{vir}}_{\tilde{Q}^{\epsilon_+}_{g,n+k}(X,\beta^\prime)}
\big)|_{\mathfrak D_{r  - 1}^\prime}
\Big)
\end{aligned}
    \end{equation*}
By Lemma~\ref{pullback-sum-of-all-divisors}, we have
    \begin{equation*}
       \begin{aligned}
        & (\tau \times \ev)_*\Big(
\big(\mathcal {O}(-s \sum_{i = 0}^{m-k-1} \mathfrak
D_i')
\cdot
\mathcal {O}(- \sum_{i = r}^{m-k-1} \mathfrak
D_i')
\cdot \mathcal
O^{\mathrm{vir}}_{\tilde{Q}^{\epsilon_+}_{g,n+k}(X,\beta^\prime)}
\big)|_{\mathfrak D_{r  - 1}^\prime}
\Big)\\
      = &
      \biggl[
      ((b\circ \tau) \times \ev)_* \biggl(
      (\mathrm{pr}_1)_*p_*\Big( \mathcal
     O^{\mathrm{vir}}_{\tilde{Q}^{\epsilon +}_{g,n+k}(X,
        \beta^\prime)|_{\tilde{\mathrm{gl}}^*_{r} \mathfrak D_{r-1}^\prime }}
      \cdot
      \mathcal O_{{\tilde{\mathrm{gl}}_r^*} \mathfrak D_{r-1}'}
      \big(
      -s\sum_{j=0}^{r-2}D_j
      \big)
      \cdot \mathcal O_{ \tilde{\mathbb P}}(s)
      \Big)
      \cdot
      \mathcal
      O(-\sum_{i= 0}^{a - 1 - r}\mathfrak D_{i}^{\prime\prime})
      \biggr)
      \biggr]^{S_r},
    \end{aligned}
  \end{equation*}
  where the divisors $D_j,j=0,\dots,r-2$ are the tautological divisors of the
  inflated projective bundle $p$. It follows from
  Lemma~\ref{lem:int-with-D-k-1}, Lemma~\ref{lem:inflated-projective-bundle} and Lemma~\ref{special-evaluation} that
  \begin{align*}
 &(\mathrm{pr}_1)_*p_*\Big( \mathcal
     O^{\mathrm{vir}}_{\tilde{Q}^{\epsilon +}_{g,n+k}(X,
        \beta^\prime)|_{\tilde{\mathrm{gl}}^*_{r} \mathfrak D_{r-1}^\prime }}
      \cdot
      \mathcal O_{{\tilde{\mathrm{gl}}_r^*} \mathfrak D_{r-1}'}
      \big(
      -s\sum_{j=0}^{r-2}D_j
      \big)
      \cdot \mathcal O_{ \tilde{\mathbb P}}(s)
    \Big)\\
    =&
       -\sum_{\vec\beta^\prime}(\mathrm{pr}_1)_*\big(
    \sum_{
       \substack{j_1 + \cdots + j_r  = s \\j_u >0}}
    \prod_{u=1}^r(-\Theta_u^{-j_u})
      \cdot
      \mathcal O^{\mathrm{vir}}_{\tilde{Q}^{\epsilon_+}_{g, n+k+r}(X,
      \beta^{\prime \prime})} \otimes_{(IX)^k} \prod_{u=1}^r
    \mathcal O^{\mathrm{vir}}_{Q^{\epsilon_+}_{0,1}(X, \beta^\prime_{u})}
    \big)\\
    =&
       -\sum_{\vec\beta^\prime}
      \sum_{
       \substack{j_1 + \cdots + j_r  = s \\j_u >0}}
    \prod_{u=1}^r
    \ev_{n+k+u}^*(-q^{-j_u}f_{\vb'_{(u)}})
    \cdot
     \mathcal O^{\mathrm{vir}}_{\tilde{Q}^{\epsilon_+}_{g, n+k+r}(X,
      \beta^{\prime \prime})}.
  \end{align*}
  To summarize, we have  
\begin{align*}
 &  (\tau \times \ev)_*\Big(
      \mathcal {O}(-(s+1) \sum_{i = 0}^{m-k-1} \mathfrak
      D_i') \cdot \mathcal
      O^{\mathrm{vir}}_{\tilde{Q}^{\epsilon_+}_{g,n+k}(X,\beta^\prime)} \Big)
      -(\tau \times \ev)_*\Big(
      \mathcal {O}(-s \sum_{i = 0}^{m-k-1} \mathfrak
      D_i') \cdot \mathcal
   O^{\mathrm{vir}}_{\tilde{Q}^{\epsilon_+}_{g,n+k}(X,\beta^\prime)} \Big)\\
  =&\sum_{r=1}^{m-k} \biggl[
     \sum_{\vec\beta^\prime}
      \sum_{
       \substack{j_1 + \cdots + j_r  = s \\j_u>0}}
  ((b\circ \tau) \times \ev)_*
  \Big(
 \prod_{u=1}^r
  \ev_{n+k+u}^*(-q^{-j_u}f_{\vb'_{(u)}})
  \cdot
\mathcal O^{\mathrm{vir}}_{\tilde{Q}^{\epsilon_+}_{g, n+k+r}(X, \beta^{\prime \prime})}
      \cdot
      \mathcal
      O(-\sum_{i= 0}^{a - 1 - r}\mathfrak D_{i}^{\prime\prime})
  \Big)
  \biggr]^{S_r}\\
   =&\sum_{r=1}^{m-k}\sum_{i=0}^{m-k-r} \biggl[
     \sum_{\vec\beta^\prime,\vb''}
      \sum_{
       \substack{j_1 + \cdots + j_r  = s \\j_u >0}}
  ((b\circ \tau) \times \ev)_*
  \Big(
 \prod_{u=1}^r
  \ev_{n+k+u}^*(-q^{-j_u}f_{\vb'_{(u)}})
  \\
  &\hspace{7cm}\cdot
\prod_{v=1}^{i}
      \ev_{n+k+r+v}^{*}( - \mathrm{Res}(f_{\vb^{\prime\prime}_{(v)}}))
  \cdot
\mathcal O^{\mathrm{vir}}_{\tilde{Q}^{\epsilon_+}_{g, n+k+r}(X, \beta^{\prime \prime\prime})}
  \Big)
  \biggr]^{S_r \times S_i}\\
   =&\sum_{r=1}^{m-k} \biggl[
     \sum_{\vec\beta^\prime}
      \sum_{
       \substack{j_1 + \cdots + j_r  = s \\j_u \geq0}}
  ((b\circ \tau) \times \ev)_*
  \Big(
 \prod_{u=1}^r
  \ev_{n+k+u}^*(-q^{-j_u}f_{\vb'_{(u)}})
  \cdot
\mathcal O^{\mathrm{vir}}_{\tilde{Q}^{\epsilon_+}_{g, n+k+r}(X, \beta^{\prime \prime})}
  \Big)
  \biggr]^{S_r}
\end{align*}

The identity~\eqref{eq:simplication-3} can be proved similarly by using the relation
\[
 \mathcal O( \sum_{i=0}^{m-k-1} \mathfrak D_i^\prime) \cdot \mathcal F =
 \mathcal F
 +\sum_{r=0}^{m-k-1} \mathcal O( \sum_{i=0}^{r} \mathfrak
    D_{i}^\prime) \cdot \mathcal F|_{\mathfrak  D_{r}^\prime}
  \]
for any $\mathcal F\in
  K_\circ(\tilde{Q}^{\epsilon +}_{g,n+k}(X, \beta^\prime))$. We omit the details. 

\end{proof}

\begin{proof}[Proof of Proposition \ref{general-case}]
  We first evaluate the residues at 0.
  Consider the following formal expansion at $q = 0$:
  \[\frac{1}{1-q^{-1} \ca{O}(\sum_{i = 0}^{m-k-1} \mathfrak
      D_i')} = -q \ca{O} \big(- \sum_{i = 0}^{m-k-1} \mathfrak
    D_i' \big)- \sum_{s \geq1}q^{s + 1} \ca{O} \big(-(s + 1) \sum_{i = 0}^{m-k-1}
    \mathfrak
    D_i') \big) \big).
  \]
  Then by using~\eqref{eq:simplication-1} and~\eqref{eq:simplication-2}, we have
  \begin{align*}
    & \sum_{k = 1}^m \sum_{\vec{\beta}}
      \bigg\langle 
      \mathrm{Res}_{q=0} \bigg(\frac {\prod_{i = 1}^k   f_{\vb_{(i)}}
  }
{1 - q^{-1}
        \ca{O} (\sum_{i = 0}^{m-k-1} \mathfrak D_i')} \bigg)\frac{dq}{q}
      \bigg\rangle^{S_k}_k \\
    = & - \sum_{k = 1}^m \sum_{\vb} \Big \langle[\prod_{i = 1}^k
        (\fbi)_0]_{-1} \cdot \ca{O}(- \sum_{i = 0}^{m-k-1} \mathfrak
        D_i') \Big \rangle^{S_k}_k \\
    & - \sum_{k = 1}^m \sum_{\vb} \sum_{s \geq 1} \Big
      \langle[\prod_{i = 1}^k
      (\fbi)_0]_{-s-1} \cdot \ca{O}(-(s + 1) \sum_{i = 0}^{m-k-1}
      \mathfrak
      D_i')
      \Big \rangle^{S_k}_k \\
    = & - \sum_{k = 1}^m \sum_{\vb} \Big \langle[\prod_{i = 1}^k
        (\fbi)_0]_{-1} \Big \rangle^{S_k}_k \\
    & - \sum_{k = 2}^m \sum_{\vb,\vb'} \sum_{t \geq
      0} \sum_{r = 1}^{k-1} \sum_{\substack{j_1 + \cdots + j_r
      = t \\j_u \geq0}} \Big \langle[\prod_{i = 1}^{k-r}
    (\fbi)_0]_{-t-1} \cdot\prod_{u = 1}^r \Big(- \text{Res}
    \big(q^{-j_u}f_{\vb'_{(u)}} \big) \Big) \Big \rangle^{S_{k-r} \times
    S_r}_k \\
    & - \sum_{k = 1}^m \sum_{\vb} \sum_{s \geq 1} \Big
      \langle[\prod_{i = 1}^k
      (\fbi)_0]_{-s-1} \cdot \ca{O}(-s \sum_{i = 0}^{m-k-1}
      \mathfrak
      D_i')
      \Big \rangle^{S_k}_k.
  \end{align*}
Hence we obtain a recursive formula:
  \begin{align*}
    & - \sum_{k = 1}^m \sum_{\vb} \sum_{s \geq 1} \Big \langle[\prod_{i =
      1}^k
      (\fbi)_0]_{-s} \cdot\ca{O}(-s \sum_{i = 0}^{m-k-1} \mathfrak D_i')
      \Big \rangle^{S_k}_k\\
      &- \bigg(- \sum_{k = 1}^m \sum_{\vb} \sum_{s \geq
      1} \Big \langle[\prod_{i = 1}^k
      (\fbi)_0]_{-s-1} \cdot \ca{O}(-s \sum_{i = 0}^{m-k-1} \mathfrak
      D_i')
      \Big \rangle^{S_k}_k \bigg) \\
    =& - \sum_{k = 1}^m \sum_{\vb} \Big \langle[\prod_{i = 1}^k
        (\fbi)_0]_{-1} \Big \rangle^{S_k}_k \\
     & - \sum_{k = 2}^m \sum_{\vb,\vb'} \sum_{t \geq
      0} \sum_{r = 1}^{k-1} \sum_{\substack{j_1 + \cdots + j_r
      = t \\j_u \geq0}} \Big \langle[\prod_{i = 1}^{k-r}
    (\fbi)_0]_{-t-1} \cdot\prod_{u = 1}^r \Big(- \text{Res}
    \big(q^{-j_u}f_{\vb'_{(u)}} \big) \Big) \Big \rangle^{S_{k-r} \times
    S_r}_k.
  \end{align*}
  By induction, we have
  \begin{align*}
      & \sum_{k = 1}^m \sum_{\vec{\beta}}
      \bigg\langle 
      \mathrm{Res}_{q=0} \bigg(\frac {\prod_{i = 1}^k   f_{\vb_{(i)}}
  }
{1 - q^{-1}
        \ca{O} (\sum_{i = 0}^{m-k-1} \mathfrak D_i')} \bigg)\frac{dq}{q}
      \bigg\rangle^{S_k}_k \\
    = & - \sum_{k = 1}^m \sum_{\vb} \sum_{s \geq 1} \Big
        \langle[\prod_{i = 1}^k
        (\fbi)_0]_{-s} \cdot \ca{O}(-s \sum_{i = 0}^{m-k-1} \mathfrak
        D_i')
        \Big \rangle^{S_k}_k \\
    = & - \sum_{k = 1}^m \sum_{\vb} \sum_{s \geq1} \Big
        \langle[\prod_{i = 1}^k
        (\fbi)_0]_{-s} \Big \rangle^{S_k}_k \\
    & - \sum_{k = 2}^m \sum_{r = 1}^{m-k} \sum_{\vb,\vb'}
      \sum_{\substack{t \geq0 \\
    s \geq1}} \sum_{\substack{j_1 + \cdots + j_r
    = t \\j_u \geq0}} \Big \langle[\prod_{i = 1}^{k-r}
    (\fbi)_0]_{-t-s}\cdot \prod_{u = 1}^r \Big(- \text{Res}
    \big(q^{-j_u}f_{\vb'_{(u)}} \big) \Big) \Big \rangle^{S_{k-r}\times S_r}_k.
  \end{align*}

  Now we compute the residue at $q = \infty$. Set $w = 1/q$ and consider the
  following formal expansion at $w = 0$:
  \[
    \frac{1}{1-w \, \ca{O}(\sum_{i = 0}^{m-k-1} \mathfrak D_i')} = 1 + \sum_{t
      \geq0}w^{t + 1} \ca{O} \big(t \sum_{i = 0}^{m-k-1} \mathfrak D_i' \big)
    \cdot
    \ca{O} \big(\sum_{i = 0}^{m-k-1} \mathfrak D_i' \big).
  \]
  Then by using~\eqref{eq:simplication-3}, we have
  \begin{align*}
& \sum_{k = 1}^m \sum_{\vec{\beta}}
      \bigg\langle 
      \mathrm{Res}_{q=\infty} \bigg(\frac {\prod_{i = 1}^k   f_{\vb_{(i)}}
  }
{1 - q^{-1}
        \ca{O} (\sum_{i = 0}^{m-k-1} \mathfrak D_i')} \bigg)\frac{dq}{q}
      \bigg\rangle^{S_k}_k \\
    = -&\sum_{k = 1}^m \sum_{\vec{\beta}}
      \bigg\langle 
      \mathrm{Res}_{w=0} \bigg(\frac {\prod_{i = 1}^k   f_{\vb_{(i)}}
  }
{1 - q^{-1}
        \ca{O} (\sum_{i = 0}^{m-k-1} \mathfrak D_i')} \bigg)\frac{dw}{w}
      \bigg\rangle^{S_k}_k \\
    = - & \sum_{k = 1}^m \sum_{\vb} \Big \langle[\prod_{i = 1}^k
          (\fbi)_{\infty}]_{0} \Big \rangle^{S_k}_k \\
    -&  \sum_{k = 1}^m \sum_{\vb} \sum_{t \geq 0} \Big
      \langle[\prod_{i = 1}^k (\fbi)_{\infty}]_{-t-1} \cdot \ca{O}(\sum_{i =
      0}^{m-k-1} \mathfrak D_i') \cdot \ca{O}(t \sum_{i = 0}^{m-k-1} \mathfrak
      D_i')
      \Big \rangle^{S_k}_k \\
    = - & \sum_{k = 1}^m \sum_{\vb} \Big \langle[\prod_{i = 1}^k
          (\fbi)_{\infty}]_{0} \Big \rangle^{S_k}_k \\
    - & \sum_{k = 2}^m \sum_{t \geq 0} \sum_{r = 1}^{k-1} \sum_{\vb,\vb'}
        \sum_{\substack{j_1 + \cdots + j_r = t + 1 \\j_u>0}} \Big \langle[\prod_{i =
    1}^{k-r} (\fbi)_{\infty}]_{-t-1}\cdot \prod_{u = 1}^r \text{Res}
    \big(q^{j_u}f_{\vb'_{(u)}} \big) \Big \rangle^{S_{k-r} \times S_r}_k \\
    - & \sum_{k = 1}^m \sum_{t \geq 0} \sum_{\vb} \Big
        \langle[\prod_{i = 1}^{k} (\fbi)_{\infty}]_{-t-1}
        \cdot
        \ca{O}(t
        \sum_{i = 0}^{m-k-1} \mathfrak D_i')
        \Big \rangle^{S_k}_k.
  \end{align*}
 Similar to
  the previous case, we obtain a recursive formula:
  \begin{align*}
    - & \sum_{k = 1}^m \sum_{\vec{\beta}} \sum_{t \geq 0} \Big
        \langle[\prod_{i = 1}^k (\fbi)_{\infty}]_{-t} \cdot
        \ca{O}(t \sum_{i
        = 0}^{m-k-1} \mathfrak D_i') \Big \rangle^{S_k}_k\\
    &    - \bigg(- \sum_{k = 1}^m
        \sum_{\vb} \sum_{t \geq 0} \Big \langle[\prod_{i = 1}^k
        (f_{\beta_i})_{\infty}]_{-t-1}
        \cdot \ca{O}(t \sum_{i = 0}^{m-k-1} \mathfrak
        D_i') \Big \rangle^{S_k}_k \bigg) \\
    = - & \sum_{k = 1}^m \sum_{\vb} \Big \langle[\prod_{i = 1}^k
          (\fbi)_{\infty}]_{0} \Big \rangle^{S_k}_k \\
    - & \sum_{k = 1}^m \sum_{t \geq 0} \sum_{r = 1}^{k-1} \sum_{\vb,\vb'}
        \sum_{\substack{j_1 + \cdots + j_r = t + 1 \\j_u>0}} \Big \langle[\prod_{i =
    1}^{k-r} (\fbi)_{\infty}]_{-t-1}\cdot \prod_{u = 1}^r \text{Res}
    \big(q^{j_u}f_{\vb'_{(u)}} \big) \Big \rangle^{S_{k-r}\times S_r }_k.
  \end{align*}
  By induction, we have
  \begin{align*}& \sum_{k = 1}^m \sum_{\vec{\beta}}
      \bigg\langle 
      \mathrm{Res}_{q=\infty} \bigg(\frac {\prod_{i = 1}^k   f_{\vb_{(i)}}
  }
{1 - q^{-1}
        \ca{O} (\sum_{i = 0}^{m-k-1} \mathfrak D_i')} \bigg)\frac{dq}{q}
      \bigg\rangle^{S_k}_k \\
    = - & \sum_{k = 1}^m \sum_{\vb} \sum_{t \geq 0} \Big \langle[\prod_{i =
          1}^k (\fbi)_{\infty}]_{-t} \cdot \ca{O}(t \sum_{i = 0}^{m-k-1}
          \mathfrak D_i')
          \Big \rangle^{S_{k}}_k \\
    = - & \sum_{k = 1}^m \sum_{\vb} \sum_{t \geq 0} \Big
          \langle[\prod_{i = 1}^k (\fbi)_{\infty}]_{-t} \Big \rangle^{S_{k}}_k \\
    - & \sum_{k = 2}^m \sum_{r = 1}^{m-k} \sum_{\vb,\vb'} \sum_{\substack{t
        \geq0 \\ s \geq1}} \sum_{\substack{j_1 + \cdots + j_r = t + 1 \\j_u>0}} \Big
    \langle[\prod_{i = 1}^{k-r} (\fbi)_{\infty}]_{-t-s}\cdot \prod_{u = 1}^r
    \text{Res} \big(q^{j_u}f_{\vb'_{(u)}} \big) \Big \rangle^{S_{k-r} \times
    S_r}_k \\
  \end{align*}
  This concludes the proof of Proposition \ref{general-case}
\end{proof}

Next, we want to simplify the right side of (\ref{eq:general-case}) and
show that it equals
\begin{equation} \label{eq:equation-cor}
  \begin{aligned}
    & - \sum_{k = 1}^m \sum_{\vb} \chi \bigg([ \tilde{Q}^{\epsilon_ + }_{g,
      n + k}(X, \beta^\prime)/S_{k}],
    \prod_{i = 1}^k \ev_{n+i}^*\Big(- \mathrm{Res}  \Big(\frac {\fbi}{1-q^{
        -1}} \Big) \Big)\cdot\ovir_{[ \tilde{Q}^{\epsilon_ + }_{g,
      n + k}(X, \beta^\prime)/S_{k}]} \bigg) \\
    = & - \sum_{k = 1}^m \sum_{\vb} \Big \langle \prod_{i = 1}^k
    \text{Lau}(\fbi) \Big \rangle^{S_{k}}_k,
  \end{aligned}
\end{equation}
where $$\text{Lau}(\fbi): = - \mathrm{Res} \big(\fbi/(1-q^{-1})
\big) = \sum_{s \geq1}[(\fbi)_0]_{-s} + \sum_{t
  \geq0}[(\fbi)_\infty]_{-t}$$ is the $K$-theory class obtained by
evaluating the Laurent polynomial part of $\fbi$ at $q = 1$, i.e.,
$\text{Lau}(\fbi) = [\fbi]_ + |_{q = 1}$.

For the fixed curve class $\beta$, we denote by $D_{\beta}$ the set of all
degree-$d_0$ curve classes that appears in the decompositions of $\beta$
satisfying Condition \ref{cond:condition-on-beta}. Recall that $ \fbi=   (1-q^{\mathbf{r}}L_{n +
    i})I_{\beta_i}(q^{\mb{r}}L_{n + i})$. We observe that the dependence of
  both~\eqref{eq:general-case} and~\eqref{eq:equation-cor} on the $I$-function
  is through the coefficients of the formal Laurent series expansions of
  $(1-q)I_{\gamma}(q),\gamma\in D_{\beta}$ at 0 and $\infty$. Hence it suffices to prove the identity by viewing 
  those expansions as independent series
 which can be defined for any tuple
  $(\underline{g_0},\underline{g_\infty})$, where
  $\underline{g_0}:=(g^0_\gamma(q))_{\gamma\in D_\beta}$ and
  $\underline{g_\infty}:=(g^\infty_\gamma(q))_{\gamma\in D_\beta}$ are any Laurent
  series indexed by classes in $D_\gamma$. More
  precisely, given such a tuple, we set $(\fbi)_0=g^0_{\vb_{(i)}}(q^{\bf r}L_{n+i})$ and
  $(\fbi)_\infty=g^\infty_{\vb_{(i)}}(q^{\bf r}L_{n+i})$ and define the right
  sides of~\eqref{eq:general-case} and~\eqref{eq:equation-cor}. Denote
the universal expressions of the right side of (\ref{eq:general-case}) and (\ref{eq:equation-cor}) by
$\text{Loc}(\underline{g_0}, \underline{g_\infty})$ and
$\text{Cor}(\underline{g_0}, \underline{g_\infty})$, respectively.
We prove the following
\begin{lemma} \label{combinatorial-identity}
  We have
  \[
    \mathrm{Loc}(\underline{g_0}, \underline{g_\infty}) = \mathrm{Cor}(\underline{g_0},
    \underline{g_\infty}) \]
  for any tuples $(\underline{g_0}, \underline{g_\infty})$ of formal Laurent
  series indexed by $D_\beta$.
\end{lemma}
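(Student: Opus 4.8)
The plan is to treat both $\mathrm{Loc}$ and $\mathrm{Cor}$ as multilinear functionals of the coefficients of $\underline{g_0},\underline{g_\infty}$, reassemble the ``leading'' parts of $\mathrm{Loc}$ into a single expression, and then match the remaining correction terms against a subset expansion of the Laurent-polynomial-part operation $[\,\cdot\,]_+$, using the divisor recursion of Lemma~\ref{special-lemma} on the geometric side. Since every bracket occurring in \eqref{eq:general-case} and \eqref{eq:equation-cor} is taken over $\tilde Q^{\epsilon_+}_{g,n+k}(X,\beta'')$ for a target class $\beta''$ of degree $d-kd_0$, with the indexing decompositions running over degree-$d_0$ classes summing to $\beta-\beta''$, it suffices to fix $k$ and verify the identity one layer at a time.

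The first step is to reassemble the two leading sums in \eqref{eq:general-case}. For a fixed $k$-part decomposition, $\prod_i f_{\vb_{(i)}}$ (the product over the $k$ tails, with $f_{\vb_{(i)}}$ placed at the $(n+i)$-th marking) is a rational function of $q$ whose expansion at $0$ has only finitely many negative powers; hence $\sum_{s\geq 1}[\prod_i(f_{\vb_{(i)}})_0]_{-s}$ is exactly the sum of the coefficients of the strictly negative powers of $[\prod_i f_{\vb_{(i)}}]_+$, while $\sum_{t\geq 0}[\prod_i(f_{\vb_{(i)}})_\infty]_{-t}$ is the sum of the coefficients of its non-negative powers. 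Adding them yields $[\prod_i f_{\vb_{(i)}}]_+|_{q=1}$, so the first two lines of \eqref{eq:general-case} contribute $-\sum_k\sum_{\vb}\big\langle[\prod_i f_{\vb_{(i)}}]_+|_{q=1}\big\rangle^{S_k}_k$. Since $\mathrm{Cor}=-\sum_k\sum_{\vb}\langle\prod_i\mathrm{Lau}(f_{\vb_{(i)}})\rangle^{S_k}_k$ and $\mathrm{Lau}(f_\gamma)=[f_\gamma]_+|_{q=1}$ (an immediate consequence of Corollary~\ref{residue-constant-g} and the description of the Lagrangian polarization $\ca{K}=\ca{K}_+\oplus\ca{K}_-$), the lemma becomes equivalent to the assertion that the two families of correction terms in \eqref{eq:general-case} sum to $\sum_k\sum_{\vb}\big\langle\big([\prod_i f_{\vb_{(i)}}]_+-\prod_i[f_{\vb_{(i)}}]_+\big)\big|_{q=1}\big\rangle^{S_k}_k$.

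To prove this, I would expand $f_\gamma=[f_\gamma]_++[f_\gamma]_-$ in each factor, so that $[\prod_i f_{\vb_{(i)}}]_+-\prod_i[f_{\vb_{(i)}}]_+=\sum_{\varnothing\neq T\subseteq\{1,\dots,k\}}\big[\prod_{i\notin T}[f_{\vb_{(i)}}]_+\cdot\prod_{i\in T}[f_{\vb_{(i)}}]_-\big]_+$, where $T$ records the tails contributing a proper part. By Lemma~\ref{negative-projection} and Corollary~\ref{residue-constant-g}, the Taylor coefficients at $0$ and at $\infty$ of $[f_\gamma]_-$ are precisely the residues $\mathrm{Res}(q^{-j}f_\gamma)$ and $\mathrm{Res}(q^{j}f_\gamma)$ that appear in the correction terms, while the effect of $[\,\cdot\,]_+$ on a product of a Laurent polynomial and a proper rational function is computed by the same truncation that Lemma~\ref{special-lemma} realizes geometrically via the tautological divisors $\mathfrak D'_i$. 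Identifying $T$ with the set of ``$\mathrm{Res}$-type'' tails ($|T|=r$), one reads off term by term that the subset expansion reproduces the two correction sums; the passage between the symmetry group $S_{k-r}\times S_r$ carried by a correction term and the group $S_k$ (or $S_{\ub}$) of a term of the target sum is handled by $(\operatorname{Ind}^G_H V)^G=V^H$, exactly as elsewhere in this section.

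The hard part will be the precise bookkeeping in this last step: the $[\,\cdot\,]_+$ of $\prod_{i\notin T}[f_{\vb_{(i)}}]_+\cdot\prod_{i\in T}[f_{\vb_{(i)}}]_-$ must be unwound into the nested, signed $\ca{O}(\sum\mathfrak D'_i)$-truncations of Lemma~\ref{special-lemma}, and one has to check that all contributions not of the clean product form cancel in pairs between the leading terms and the corrections. In the cohomological analogue of \cite[\textsection 7]{Zhou2} this cancellation is merely a count with rational multiplicities, whereas here taking $S_k$-invariants genuinely alters the $K$-classes, so the cancellations have to be extracted from identities for $\operatorname{Sym}$ and $\operatorname{Ind}$ of virtual differences of representations — for instance $\operatorname{Sym}^2(V-W)-\operatorname{Ind}^{S_2}_{S_1\times S_1}(V-W)^{\otimes 2}=-\operatorname{Sym}^2(W-V)$ and its higher-order analogues — which is exactly the point that the first attempts in the draft missed. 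We carry out this matching below, organizing it as an induction on $k$.
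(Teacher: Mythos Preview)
Your approach differs substantially from the paper's. The paper does not attempt a direct term-by-term matching. Instead it computes the formal directional derivatives of $\mathrm{Loc}-\mathrm{Cor}$ in the directions $g^0_\gamma\mapsto g^0_\gamma+\delta^0_\gamma q^{l_\gamma}$ (with $l_\gamma\ge 0$) and $g^\infty_\gamma\mapsto g^\infty_\gamma+\delta^\infty_\gamma q^{l'_\gamma}$ (with $l'_\gamma>0$), using a permutation-equivariant Leibniz rule (Lemma~\ref{lem-derivative}); a short cancellation shows these derivatives vanish. Hence one may assume $[g^0_\gamma]_l=0$ for $l\ge 0$ and $[g^\infty_\gamma]_{l'}=0$ for $l'>0$. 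Under that assumption the residues collapse to single coefficients, $-\mathrm{Res}(q^{-j}f)=[(f)_\infty]_{-j}$ and $\mathrm{Res}(q^{j'}f)=[(f)_0]_{j'}$, and the whole difference $\mathrm{Loc}-\mathrm{Cor}$ reduces to an instance of the permutation-equivariant multinomial formula and hence vanishes.

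Your plan has several genuine gaps. First, the lemma is stated for \emph{arbitrary} pairs $(\underline{g_0},\underline{g_\infty})$ of formal Laurent series, not for the two expansions of a single rational function; the decomposition $f=[f]_++[f]_-$ and the expression $[\prod_i f_{\vb_{(i)}}]_+$ have no meaning unless $g_0$ and $g_\infty$ are the expansions of one rational $f$. You would need a separate polynomial-identity or density argument to pass from the rational case to the stated generality, and none is given. Second, the appeal to Lemma~\ref{special-lemma} is misplaced: the expression \eqref{eq:general-case} is already the output of Proposition~\ref{general-case}, in whose proof that lemma was used to eliminate the divisors $\mathfrak D'_i$; at the stage of Lemma~\ref{combinatorial-identity} there is no geometric truncation left to invoke, and the identity is purely combinatorial in the coefficients. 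Third, the part you yourself flag as hard --- matching the subset expansion of $[\prod_i f]_+-\prod_i[f_i]_+$ to the two families of correction terms, with their constrained index sums $\sum j_u=t$ (resp.\ $t+1$), their separate $(f)_0$/$(f)_\infty$ roles, and their $S_{k-r}\times S_r$ symmetry --- is not carried out; invoking ``higher-order analogues'' of $\mathrm{Sym}^2(V-W)-\mathrm{Ind}^{S_2}_{S_1\times S_1}(V-W)^{\otimes 2}=-\mathrm{Sym}^2(W-V)$ and an induction on $k$ is a hope, not an argument. The paper's derivative reduction is designed precisely to sidestep this bookkeeping.
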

\begin{proof}
For any $\gamma\in D_\beta$, we choose an integer $l_\gamma\geq 0$ and a class $\delta_{\gamma}^0 \in K(\bar{I}X) \otimes \Lambda$. Consider the formal directional derivative
  $\nabla_{\sum_{\gamma \in D_\beta}
    \delta_{\gamma}^0q^{l_{\gamma}}}$ of the difference
  $\text{Loc}(\underline{g_0}, \underline{g_\infty})- \text{Cor}(\underline{g_0},
  \underline{g_\infty})$ in the direction
  \begin{equation}\label{eq:increment}
    \underline{g_0} = \big (g_{\gamma}^{0} \big) \mapsto \underline{g_0} +
    \sum_{\gamma \in D_\beta} \delta_{\gamma}^0q^{l_{\gamma}}: =
    \big(g_{\gamma}^{0} + \delta_{\gamma}^0 q^{l_{\gamma}} \big).
  \end{equation}
  More precisely, for any function $F$ depending on $\underline{g_0}$ and
  $\underline{g_\infty}$, we define
  \[
    \nabla_{\sum_{\gamma \in D_\beta}
      \delta_{\gamma}^0q^{l_{\gamma}}}F(\underline{g_0}, \underline{g_\infty}) =
    \lim_{h \to 0}(F(\underline{g_0} + h \sum_{\gamma \in D_\beta}
    \delta_{\gamma}^0q^{l_{\gamma}}, \underline{g_\infty})-
    F(\underline{g_0},
    \underline{g_\infty}))/h.
  \]
 
By definition, for a decomposition $\vb$, we have  $(\fbi)_0=g^0_{\vb_{(i)}}(q^{\mathbf{r}}L_{n+i})$ and
  $(\fbi)_\infty=g^\infty_{\vb_{(i)}}(q^{\bf r}L_{n+i})$. Hence the
  change~\eqref{eq:increment} induces
  \[
(\fbi)_0\mapsto(\fbi)_0+\delta_{\vb_{(i)}}fd^0L_{n+i}^{l_{\vb_{(i)}}}q^{l_{\vb_{(i)}}\mathbf{r}}
\]
and keeps $(\fbi)_\infty$ the same.
  Under the assumption that $l_{\gamma} \geq 0$, it is easy to see that
  $\nabla_{\sum_{\gamma} \delta_{\gamma}^0q^{l_{\gamma}}}
  \text{Lau}(f_{\vb_{(i)}}) = 0$ and
  $\nabla_{\sum_{\gamma} \delta_{\gamma}^0q^{l_{\gamma}}} \text{Res}
  \big(q^{j}(f_{\vb_{(i)}}) \big) = \delta_{\vb_{(i)}}^0L_{n+i}^{l_{\vb_{(i)}}}$ for $j = -l_{\vb_{(i)}}\mathbf{r}$ and
  0 otherwise. Given a class $\gamma\in D_\beta$, we set
  $\beta^\gamma:=\beta-\gamma$ and denote by $\vec{\beta}^\gamma$ an ordered
  decomposition of $\beta^\gamma$ satisfying
  Condition~\ref{cond:condition-on-beta-ordered}. We will use similar notation
  for $\beta'$. By the Lebnitz rule stated in Lemma \ref{lem-derivative}, we obtain
  \[
    \nabla_{\sum_{\gamma} \delta_{\gamma}^0q^{l_{\gamma}}} \,
    \mathrm{Cor}(\underline{g_0}, \underline{g_\infty}) = 0. \]
When we differentiate the right side of~\eqref{eq:general-case}, only the first
and third summations contribute non-zero terms. Using the Lebnitz rule in Lemma
\ref{lem-derivative}, we have 
  \begin{equation} \label{eq:direction-0}
    \begin{aligned}
      & \nabla_{\sum_{\gamma} \delta_{\gamma}^0q^{l_{\gamma}}} \,
      \text{Loc}(\underline{g_0}, \underline{g_\infty}) \\
      = &
      - \sum_{k = 2}^m
      \sum_{\gamma \in D_{\beta}} \sum_{\vec{\beta}^\gamma}
      \sum_{s \geq1}
      \Big \langle
      \ev_1^*(\delta_{\gamma}^0)L^{l_\gamma}_{n+1}
      \cdot
      [ \prod_{i = 1}^{k-1} (f_{\vb^\gamma_{(i)}})_0 ]_{-s-l_{\gamma} \mathbf{r}}
      \Big \rangle^{S_1\times S_{k-1}}_k \\
      & + \sum_{k = 2}^{m}
      \sum_{\gamma \in D_{\beta}} \sum_{\vb^\gamma}
      \sum_{s \geq1}
      \Big \langle
      \ev_1^*(\delta_{\gamma}^0)L^{l_\gamma}_{n+1}
      \cdot
      [ \prod_{i = 1}^{k-1} (f_{\vb^\gamma_{(i)}})_0]_{-s-l_\gamma\mathbf{r}}
      \Big \rangle^{S_1\times S_{k-1}}_k \\
      & - \sum_{k = 3}^{m} \sum_{r = 1}^{k-2}
      \sum_{\gamma \in D_{\beta}} \sum_{\vb^\gamma,\vb'}
      \sum_{\substack{t \geq0 \\ s \geq1}} \sum_{\substack{j_1 + \cdots + j_r =
          t \\j_u \geq0}}
      \Big \langle
      \ev_1^*(\delta_{\gamma}^0)L^{l_\gamma}_{n+1}
      \cdot
      [ \prod_{i = 1}^{k-r-1} (f_{\vb^\gamma_{(i)}})_0]_{-t-s-l_\gamma\mathbf{r}}\\
      &\hspace{8.5cm}
      \cdot
      \prod_{u = 1}^r \Big(- \text{Res} \big(q^{-j_u}f_{\vb'_{( u)}}
      \big) \Big)
      \Big \rangle^{S_1 \times S_{k-r-1}\times S_r}_k \\
      & + \sum_{k = 3}^{m} \sum_{r = 2}^{k-1}
      \sum_{\gamma \in D_{\beta}} \sum_{\vb,\vb'^{\gamma}}
      \sum_{\substack{t \geq0 \\ s \geq1}} \sum_{\substack{j_1 + \cdots +
          \cdots + j_{r-1} = t-l_\gamma \mathbf{r} \\j_u \geq0}}
      \Big \langle
 \ev_1^*(\delta_{\gamma}^0)L^{l_\gamma}_{n+1}
\cdot
[\prod_{i = 1}^{k-r} (f_{\vb_{(i)}})_0]_{-t-s}\\
&\hspace{9cm}
      \cdot
      \prod_{u = 1}^{r-1} \Big(- \text{Res}
      \big(q^{-j_u}f_{\vb'^{\gamma}_{(u)}} \big) \Big)
      \Big \rangle^{S_{1} \times S_{k-r}\times S_{r-1}}_k.
    \end{aligned}
  \end{equation}
   The above class is zero because the first line of (\ref{eq:direction-0}) cancels
  with the second line and the third line cancels
  with the fourth line after replacing $r$ by $r-1$ and $t$ by
  $t-l_\gamma\mathbf{r}$. Therefore, we have
  \[
    \nabla_{\sum_{\gamma} \delta_{\gamma}^0q^{l_{\gamma}}} \,
    \big(\text{Loc}(\underline{g_0}, \underline{g_\infty})-
    \text{Cor}(\underline{g_0}, \underline{g_\infty}) \big) = 0.
  \]

  Let  $\nabla_{\sum_{\gamma \in D_\beta} \delta_{\gamma}^\infty
    q^{l_{\gamma}}}$  denote the formal directional derivative in the direction
  \[
     \underline{g_\infty} = \big (g_{\gamma}^{\infty} \big) \mapsto \underline{g_\infty} +
    \sum_{\gamma \in D_\beta} \delta_{\gamma}^\infty q^{l_{\gamma}'}: =
    \big(g_{\gamma}^{\infty} + \delta_{\gamma}^\infty q^{l_{\gamma}'} \big),
  \]
  where
  $l_{\gamma}'> 0$ and $\delta_{\gamma}^\infty \in K(\bar{I}X) \otimes \Lambda$.
  Using similar computations as in the previous case, we can show that
  \[
    \nabla_{\sum_{\gamma} \delta_{\gamma}^\infty q^{l_{\gamma}'}} \,
    \big(\text{Loc}(\underline{g_0}, \underline{g_\infty})-
    \text{Cor}(\underline{g_0}, \underline{g_\infty}) \big) = 0.
  \]
  We omit the proof.

  The vanishing of formal directional derivatives implies that the difference
  $\text{Loc}(\underline{g_0}, \underline{g_\infty})- \text{Cor}(\underline{g_0},
  \underline{g_\infty})$ does not depend on the coefficients of the nonnegative
  powers of $q$ in $g_\gamma^0$ and the positive powers of $q$ in
  $g_{\gamma}^\infty$. Hence we can set
  \[
    [g_\gamma^0]_{l} = 0, \ l \geq 0 \quad \text{and} \quad
    [g_\gamma^\infty]_{l'} = 0, \ l'> 0.
  \]

  Under this additional assumption, we have
  \[
    - \text{Res}(q^{-j}f_{\vb'_{(u)}}) = [(f_{\vb'_{(u)}})_\infty]_{-j}, \quad \quad
    \text{Res}(q^{j'}f_{\vb'_{(u)}}) = [(f_{\vb'_{(u)}})_0]_{j'}
  \]
  for any $j \geq0, j'>0$. Hence
  \begin{align*}
    \sum_{\substack{j_1 + \cdots + j_r = t \\j_u \geq0}} \,
    \prod_{u = 1}^r \Big(- \text{Res} \big(q^{-j_u}f_{\vb'_{(u)}} \big)
    \Big)
    & = [ \prod_{u = 1}^{r}  (f_{\vb'_{(u)}})_\infty]_{-t}, \\
    \sum_{\substack{j_1 + \cdots + j_r = t + 1 \\j_u>0}} \
    \prod_{u = 1}^r \text{Res} \big(q^{j_u}f_{\vb'_{(u)}} \big)
    & = [ \prod_{u = 1}^{r}  (f_{\vb'_{(u)}})_0]_{t + 1}.
  \end{align*}
  Using the above identities, we can simplify the difference
  $\text{Loc}(\underline{f_0}, \underline{f_\infty})- \text{Cor}(\underline{f_0},
  \underline{f_\infty})$ as follows:
  \begin{equation*}
    \begin{aligned}
      & \text{Loc}(\underline{g_0}, \underline{g_\infty})-
      \text{Cor}(\underline{g_0}, \underline{g_\infty}) \\
      = & - \sum_{k = 1}^m \sum_{\vec{\beta}} \sum_{s \geq 1} \Big
      \langle[ \prod_{i = 1}^k (\fbi)_0 ]_{-s}
      \Big \rangle^{S_k}_k
      - \sum_{k = 1}^m \sum_{\vec{\beta},\vb'}
      \sum_{t
        \geq 0}
      \Big \langle [\prod_{i = 1}^k
      (\fbi)_\infty]_{-t}
      \Big
      \rangle^{S_k}_k \\
      & - \sum_{k = 2}^{m} \sum_{r = 1}^{k-1}
      \sum_{\vec{\beta},\vb'}
      \bigg(
      \sum_{\substack{t \geq0 \\ s \geq1}}
      \Big \langle
      [ \prod_{i = 1}^{k-r} (\fbi)_0]_{-t-s}
      \cdot
      [ \prod_{u = 1}^{r}  (f_{\vb'_{(u)}})_\infty]_{-t}
      \Big \rangle^{S_{k-r}\times S_r}_k \\
      & \hspace{1.65cm} + \sum_{\substack{t \geq 0 \\ s \geq 1}}
      \Big \langle
      [\prod_{i = 1}^{k-r}  (\fbi)_\infty]_{-t-s}
      \cdot
      [ \prod_{u = 1}^{r}  (f_{\vb'_{(u)}})_0]_{t + 1}
      \Big \rangle^{S_{k-r}\times S_r}_k
      \bigg) \\
      & + \sum_{k = 1}^m \sum_{\vb} \Big \langle
      \prod_{i = 1}^k \big(\sum_{s \geq1}[(\fbi)_0]_{-s} + \sum_{t
        \geq0}[(\fbi)_\infty]_{-t} \big)
      \Big \rangle^{S_k}_k \\
      = & - \sum_{k = 1}^m
      \sum_{\vec{\beta}}
      \sum_{s \geq 1}
      \Big \langle
      [ \prod_{i = 1}^k (\fbi)_0 ]_{-s}
      \Big \rangle^{S_{k}}_k
      - \sum_{k = 1}^m \sum_{\vec{\beta}}
      \sum_{t \geq 0}
      \Big \langle [\prod_{i = 1}^k  (\fbi)_\infty]_{-t} \Big
      \rangle^{S_k}_k \\
      & - \sum_{k = 2}^{m} \sum_{r = 1}^{k-1} \sum_{\vec{\beta},\vb'}
      \sum_{\substack{t \geq0 \\ s \geq1}}
      \Big \langle
      [ \prod_{i = 1}^{k-r} (\fbi)_0]_{-s}
      \cdot
      [ \prod_{u = 1}^{r}  (f_{\vb'_{(u)}})_\infty]_{-t}
      \Big \rangle^{S_{k-r}\times S_r }_k \\
      & + \sum_{k = 1}^m \sum_{\vb} \Big \langle
      \prod_{i = 1}^k \big(\sum_{s \geq1}[(\fbi)_0]_{-s}
      + \sum_{t \geq0}[(\fbi)_\infty]_{-t} \big)
      \Big \rangle^{S_{k}}_k \\
      = 0 \end{aligned}
  \end{equation*}
  The last equality follows from the permutation-equivariant multinomial formula.
\end{proof}
\begin{lemma}[Permutation-equivariant Leibniz rule] \label{lem-derivative}
  Let $T_i, 1 \leq i \leq m$ be linear maps from the ring of formal Laurent
  series $[K(\bar{I}X) \otimes \Lambda]((q))$ to its coefficient ring $K(\bar{I}X)
  \otimes \Lambda$. Let $(k_1, \dots, k_m)$ be a partition of a positive integer
  $n$ and let $f_1, \dots f_m$ be formal Laurent series. For $l_i \in \bb{Z},
  \delta_i \in K(\bar{I}X) \otimes \Lambda$, consider the directional derivative
  $\nabla_{\sum \delta_i q^{l_i}}$ along the direction
  \[
    (f_1, \dots, f_m) \mapsto(f_1 + \delta_1q^{l_1}, \dots, f_m + \delta_mq^{l_m}).
  \]
  Then we have
  \begin{align*}
    & \nabla_{\sum \delta_i q^{l_i}}
      \Big \langle
      T_1(f_1^{\otimes k_1}
      \cdot
     T_2(f_2)^{\otimes k_2}
      \cdots
      T_m(f_m)^{\otimes k_m}
      \Big \rangle_{n}^{S_{k_1} \times \dots \times S_{k_m}} \\
    = & \sum_{a = 1}^m
        \Big \langle
        \nabla_{\delta_a q^{l_a}}T_a(f_a)
        \cdot
         T_1(f_1)^{\otimes k_1}
        \cdots
        T_a(f_a)^{\otimes k_a-1}
        \cdots
         T_m(f_m)^{\otimes k_m}
 \Big \rangle_{n}^{S_{k_1} \times \dots \times S_{k_a-1} \times \dots \times S_{k_m}}.
  \end{align*}
Here we adopt the bracket notation from which the pullbacks $\ev^*_i$ are
suppressed. In particular, the equality still holds if we make the change of
variable $q\mapsto q^{\bf r}L$, where $\mathbf{r}\in \Z$ and $L$ denotes the coarse cotangent line bundles at the
corresponding markings, and extend the definition of $T_i$ via
linearity: $T_i (q^{a}L^b \alpha) : = L^bT_i(q^{a}\alpha)$ with $a, b \in \mathbb Z, \alpha \in
    K(\bar{I}X)\otimes\Lambda$.
\end{lemma}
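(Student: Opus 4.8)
The statement to prove is Lemma~\ref{lem-derivative}, the permutation-equivariant Leibniz rule. Here is my plan.

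\medskip

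The plan is to reduce the statement to a purely representation-theoretic identity about tensor powers, which then follows from the ordinary Leibniz rule applied coefficient-by-coefficient. First I would observe that the bracket $\langle\,\cdot\,\rangle_n^{S_{k_1}\times\cdots\times S_{k_m}}$ is, by its definition in this section, the composition of three operations that are all linear and independent of the $f_i$: tensoring with a fixed virtual structure sheaf $\ovir$, pulling back along fixed evaluation maps, and taking the $S_{k_1}\times\cdots\times S_{k_m}$-equivariant proper pushforward to $\operatorname{Spec}\C$. So it suffices to prove the corresponding identity \emph{before} applying the bracket, i.e.\ as an identity of $S_{k_1}\times\cdots\times S_{k_m}$-equivariant $K$-theory classes of the form
\[
  \nabla_{\sum_i\delta_iq^{l_i}}\Bigl(\textstyle\bigotimes_{a=1}^m T_a(f_a)^{\otimes k_a}\Bigr)
  =
  \sum_{a=1}^m \bigl(\nabla_{\delta_aq^{l_a}}T_a(f_a)\bigr)\otimes\!\!\bigotimes_{b}T_b(f_b)^{\otimes(k_b-\delta_{ab})},
\]
where I have suppressed the $\ev_i^*$'s and the insertion of $\ovir$ exactly as the bracket notation does. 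Here each $T_a(f_a)$ is an element of $K(\bar I X)\otimes\Lambda$ (a single tensor factor sitting at one marking), and the left-hand side is a formal derivative in a one-parameter family of such elements.

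\medskip

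The second step is to compute the directional derivative. Since $T_a$ is linear and the map $f_a\mapsto f_a+h\,\delta_aq^{l_a}$ is affine in $h$, we have $T_a(f_a+h\delta_aq^{l_a})=T_a(f_a)+h\,T_a(\delta_aq^{l_a})=T_a(f_a)+h\,\nabla_{\delta_aq^{l_a}}T_a(f_a)$, so $\nabla_{\delta_aq^{l_a}}T_a(f_a)=T_a(\delta_aq^{l_a})$ is itself a fixed class. The function $h\mapsto\bigotimes_a(T_a(f_a)+h\,T_a(\delta_aq^{l_a}))^{\otimes k_a}$ is then a polynomial in $h$, and differentiating at $h=0$ is the ordinary Leibniz rule for a product of $n=k_1+\cdots+k_m$ tensor factors: each factor gets differentiated once, producing exactly the sum on the right-hand side, where the group acting on the $a$-th block of unchanged factors drops from $S_{k_a}$ to $S_{k_a-1}$ because one factor of that block has been replaced by the (distinguished, un-permuted) derivative factor. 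One small bookkeeping point: the derivative factor $T_a(\delta_aq^{l_a})$ and the remaining $k_a-1$ identical factors $T_a(f_a)$ could a priori still be symmetrized together, but in the bracket on the right-hand side the derivative insertion is placed at a marking that is \emph{not} among the $S_{k_a-1}$-symmetrized markings, matching the claimed symmetry group $S_{k_1}\times\cdots\times S_{k_a-1}\times\cdots\times S_{k_m}$; this is consistent with how such mixed-insertion correlators were defined earlier in Section~\ref{K-theoretic-quasimap-invariants}.

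\medskip

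The final step addresses the parenthetical remark allowing the substitution $q\mapsto q^{\mathbf r}L$ with $L$ a coarse cotangent line bundle and the extension of $T_i$ by $T_i(q^aL^b\alpha):=L^bT_i(q^a\alpha)$. I would note that this extended $T_i$ is still linear over $K(\bar I X)\otimes\Lambda$ in the appropriate sense, and that the change of variable is again an affine (indeed, monomial-substitution) operation commuting with the formal derivative: if $f_a(q)\mapsto f_a(q^{\mathbf r}L)$ then $\nabla_{\delta_aq^{l_a}}$ acting after the substitution picks out the coefficient of $q^{l_a\mathbf r}L^{l_a}$, i.e.\ $\nabla_{\delta_aq^{l_a}}(T_a(f_a(q^{\mathbf r}L)))=L^{l_a}T_a(\delta_aq^{l_a\mathbf r})$, which is exactly what the extended-by-linearity convention produces. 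So the identity is unchanged under this substitution. The main obstacle here is not any computation but rather being careful about the precise meaning of the bracket notation with suppressed evaluation pullbacks and suppressed virtual structure sheaves — in particular making sure that the $\ev_i^*$ applied to each tensor factor and the extension convention $T_i(q^aL^b\alpha)=L^bT_i(q^a\alpha)$ are compatible with the change of variables, so that $\ev_{n+i}^*$ of $q^{\mathbf r}L_{n+i}$-substituted series really does produce the cotangent-line powers $L_{n+i}^{j}$ that appear throughout the proof of Proposition~\ref{general-case}. Once that compatibility is spelled out, the lemma is immediate from the ordinary Leibniz rule.
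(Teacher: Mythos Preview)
Your proposal is correct and takes essentially the same approach as the paper: use linearity of $T_a$ to write $T_a(f_a+h\delta_aq^{l_a})=T_a(f_a)+h\,T_a(\delta_aq^{l_a})$, expand in $h$ to first order, and read off the result. The paper packages the first-order expansion by invoking the permutation-equivariant binomial formula (stated in Section~\ref{K-theoretic-quasimap-invariants}) directly at the level of the bracket, rather than first passing to an identity of equivariant $K$-classes; your intermediate displayed identity ``before applying the bracket'' is slightly awkward since the two sides carry different symmetry groups, but your subsequent discussion of the symmetry drop $S_{k_a}\to S_{k_a-1}$ (together with the fact $(\mathrm{Ind}^G_HV)^G=V^H$) is exactly what makes this work, and amounts to the same thing.
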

\begin{proof}
  Let $h \in \bb{C}$.
  By the permutation-equivariant binomial formula, we have
  \begin{align*}
    & \big \langle T_1(f_1 + h \delta_1 q^{l_1})^{\otimes k_1}
           \cdots
      T_m(f_m + h \delta_m q^{l_m})^{\otimes k_m}
      \big \rangle_{n}^{S_{k_1} \times \dots \times S_{k_m}} \\
    = & \Big \langle (T_1(f_1) + h \cdot T_1(\delta_1 q^{l_1}))^{\otimes k_1}
        \cdots
        (T_m(f_m) + h \cdot T_m(\delta_m q^{l_m}))^{\otimes k_m}
        \Big \rangle_{n}^{S_{k_1} \times \dots \times S_{k_m}} \\
    = & \Big \langle T_1(f_1)^{\otimes k_1}
\cdots
        T_m(f_m)^{\otimes k_m}
        \Big \rangle_{n}^{S_{k_1} \times \dots \times S_{k_m}} \\
    + & h \sum_{a = 1}^m \big \langle
        T_a(\delta_a q^{l_a})
        \cdot
        T_1(f_1)^{\otimes k_1}
        \cdots
        T_a(f_a)^{\otimes (k_a-1)}
        \cdots
        T_m(f_m)^{\otimes k_m}
        \Big \rangle_{n}^{S_{k_1} \times \dots \times S_{k_a-1} \times \dots
        \times S_{k_m}} + O(h^2).
  \end{align*}
  Hence the lemma follows from the definition of the directional derivative
  $\nabla_{\sum_a \delta_a q^{l_a}}$ and the equality $\nabla_{\delta_a
    q^{l_a}}T_a(f_a) = T_a(\delta_a q^{l_a})$.
\end{proof}

\begin{proof}[Proof of Proposition~\ref{prop-general-case-simplified}]
  Recall that $[(1-q)I_{\beta}(q)]_ + = \mu_{\beta}(q)$ for $\beta>0$. Hence we
have
\[
  \text{Lau} \big((1-q^{\mathbf{r}}L)I_{\beta}(q^{\mb{r}}L) \big) =
  [(1-q^{\mathbf{r}}L)I_{\beta}(q^{\mb{r}}L)]_ + |_{q = 1} = \mu_{\beta}(L).
\]
We conclude the proof by combining Proposition~\ref{general-case}, Lemma~\ref{combinatorial-identity}, Lemma~\ref{lem:vir-str-sheaf-comp-entangled-tails}, and the above equality.
\end{proof}

By substituting the expression in Proposition~\ref{prop-general-case-simplified}
into the $K$-theoretic localization formula
(\ref{eq:residue-localization-main-case}) over the master space, we obtain the
main theorem of this paper:
\begin{theorem} \label{all-genus-single-wall-crossing}
  Assuming that $2g-2 + n + \epsilon_0 \emph{deg}(\beta)>0$, we have
  \begin{align*}
     &(\iota\times \ev)_*\ca{O}^{\emph{vir}}_{Q^{\epsilon_-}_{g,n}(X, \beta)}
   -((c\circ \iota)\times\ev )_*\ca{O}^{\emph{vir}}_{Q^{\epsilon_+}_{g,n}(X, \beta)}\\
   =&\sum_{k\geq 1}\bigg(\sum_{\vec{\beta}}((b_{k}\circ c\circ \iota )\times \ev)_*
   \bigg(
     \prod_{a=1}^k\ev_{n+a}^*\,\mu_{\beta_a}(L_{n+a})
     \cdot
     \ca{O}^{\emph{vir}}_{Q^{\epsilon_+}_{g,n+k}(X, \beta')}
   \bigg)\bigg)
  \end{align*}
   in $K_0\big([(Q^{\epsilon_-}_{g,n}(\bb{P}^N, d)\times (\bar{I}X)^n\big)/S_n]\big)_{\Q}$, where $\vec{\beta}$ runs through
  all ordered decompositions of $\beta$ satisfying Condition
  \ref{cond:condition-on-beta-ordered}. The same formula also holds
  for the
  twisted virtual structure sheaves with level structure.

\end{theorem}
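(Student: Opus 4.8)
The plan is to apply the virtual $K$-theoretic localization formula of Theorem~\ref{K-theoretic-localization} to the $\bb{C}^*$-action~\eqref{eq:C*action-master-space} on the master space $MQ^{\epsilon_0}_{g,n}(X,\beta)$ and then extract the wall-crossing identity by taking a residue. Concretely, I would use the proper, $\bb{C}^*$-equivariant morphism $\tau_0\times\ev\colon MQ^{\epsilon_0}_{g,n}(X,\beta)\to Q^{\epsilon_-}_{g,n}(\bb{P}^N,d)\times(\bar{I}X)^n$ (with $\bb{C}^*$ acting trivially on the target), push forward the localization identity~\eqref{eq:K-theoretic-localization}, and apply $\mathrm{Res}$. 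By Corollary~\ref{cor:master-space-technique} the total residue vanishes, which yields the relation~\eqref{eq:residue-localization-main-case}: the $F_+$-term, the $F_-$-term, and a sum over $\ub$ of $F_{\ub}$-terms add to zero.

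Next I would pin down the first two terms. By Section~\ref{epsilon + } and Section~\ref{epsilon-}, the virtual normal bundles of $F_+\cong\tilde{Q}^{\epsilon_+}_{g,n}(X,\beta)$ and $F_-\cong Q^{\epsilon_-}_{g,n}(X,\beta)$ are the calibration bundles $\bb{M}_+$ and $\bb{M}_-^\vee$ carrying $\bb{C}^*$-weights $\fr{l}$ and $-\fr{l}$, so the $\bb{C}^*$-equivariant Euler classes of their duals are $1-q^{-\fr{l}}\bb{M}_+^\vee$ and $1-q^{\fr{l}}\bb{M}_-$. Corollary~\ref{residue-constant-g} together with Lemma~\ref{elementary-properties-of-residues}(2) collapses these two residues to $-\chi(\tilde{Q}^{\epsilon_+}_{g,n}(X,\beta),\ovir)$ and $+\chi(Q^{\epsilon_-}_{g,n}(X,\beta),\ovir)$, and Lemma~\ref{lem:vir-str-sheaf-comp-entangled-tails} (pushing forward along the blow-down $\tau$) replaces the first by $-\chi(Q^{\epsilon_+}_{g,n}(X,\beta),\ovir)$. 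Bookkeeping the morphisms involved, this is precisely $(\iota\times\ev)_*\ovir_{Q^{\epsilon_-}_{g,n}(X,\beta)}-((c\circ\iota)\times\ev)_*\ovir_{Q^{\epsilon_+}_{g,n}(X,\beta)}$, the left-hand side of the theorem.

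The heart of the argument, and the main obstacle, is the simplification of the correction terms $\chi(F_{\ub},\mathrm{Res}(\cdots))$. Here I would exploit the structure of $\tilde{\mathrm{gl}}_k^*F_{\vb}$ as a degree-$\prod_i\mb{r}_i$ étale cover of $Y\times_{(IX)^k}\prod_i F_{\star,\beta_i}$ (Lemma~\ref{lem:obstruction-theory-F-beta}) to rewrite the virtual structure sheaf and $\lambda^{\bb{C}^*}_{-1}((N^{\mathrm{vir}}_{F_{\ub}})^\vee)$ in terms of the graph-space classes $\ca{I}_{\beta_i}(q)$, the cotangent lines $L(\ca{E}_i)$ and $L_{n+i}$, and the boundary divisors $\mathfrak{D}_i'$ on $\tilde{\mathfrak{M}}_{g,n+k,d-kd_0}$; then use the $S_k$-equivariant gluing morphism and the projection formula to descend to $[\tilde{Q}^{\epsilon_+}_{g,n+k}(X,\beta')/S_k]$, and perform the change of variables $q\mapsto q^{\mb{r}_i}L_{n+i}$ permitted by Lemma~\ref{elementary-properties-of-residues}(2) to reach the form~\eqref{eq:contr-Q-3}. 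What remains is purely combinatorial: evaluating $\mathrm{Res}$ of $\prod_i\fbi/(1-q^{-1}\ca{O}(\sum_{i=0}^{m-k-1}\mathfrak{D}_i'))$ at $0$ and $\infty$. I would expand $1/(1-q^{-1}\ca{O}(\sum\mathfrak{D}_i'))$ geometrically, repeatedly apply the divisor identities of Lemma~\ref{special-lemma} (which convert powers of $\ca{O}(\sum\mathfrak{D}_i')$ into further residues, via the splitting-off-tails diagram of Lemma~\ref{lem:int-with-D-k-1} and the inflated projective bundle of Lemma~\ref{lem:str-D}), and solve the resulting recursion to obtain Proposition~\ref{general-case}. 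Finally I would match the outcome against the target $-\sum_{k,\vb}\langle\prod_i\text{Lau}(\fbi)\rangle^{S_k}_k$ by a formal-directional-derivative argument (Lemma~\ref{combinatorial-identity}), using the permutation-equivariant Leibniz rule of Lemma~\ref{lem-derivative} to reduce everything to the permutation-equivariant multinomial formula. Since $\text{Lau}((1-q^{\mb{r}}L)I_\beta(q^{\mb{r}}L))=\mu_\beta(L)$, substituting Proposition~\ref{prop-general-case-simplified} into~\eqref{eq:residue-localization-main-case} gives the stated identity; the twisted case runs verbatim because every comparison result used above, namely Lemma~\ref{lem:int-with-D-k-1}, Lemma~\ref{lem:obstruction-theory-F-beta} and Remark~\ref{rem:compare-twisting}, has a twisted counterpart, the twisting class factoring multiplicatively over nodes.
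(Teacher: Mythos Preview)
Your proposal is correct and follows essentially the same approach as the paper: apply the virtual $K$-theoretic localization to the master space, push forward along $\tau_0\times\ev$ and take the residue to obtain~\eqref{eq:residue-localization-main-case}, identify the $F_\pm$-contributions via Corollary~\ref{residue-constant-g} and Lemma~\ref{lem:vir-str-sheaf-comp-entangled-tails}, and then simplify the $F_{\ub}$-contributions through the chain Lemma~\ref{lem:obstruction-theory-F-beta} $\to$ \eqref{eq:contr-Q-3} $\to$ Proposition~\ref{general-case} (via Lemma~\ref{special-lemma}) $\to$ Lemma~\ref{combinatorial-identity} $\to$ Proposition~\ref{prop-general-case-simplified}. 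The only minor imprecision is the change of variables you cite: the paper uses $q\mapsto q^{k/\fr{l}}\tilde L(\ca{E}_i)^k\otimes\tilde L_{n+i}^k$ (which has the effect of replacing $q^{\mathbf{r}_i\fr{l}/k}L(\ca{E}_i)^\vee$ by $q^{\mathbf{r}_i}L_{n+i}$), not $q\mapsto q^{\mathbf{r}_i}L_{n+i}$ directly, but this does not affect the argument.
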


\subsection{The genus-0 case} \label{The-genus-0-case}
In genus zero, the following theorem generalizes the genus-zero toric
mirror theorems in quantum $K$-theory
\cite{Givental-Tonita, givental14, givental15} and quantum $K$-theory with
level
structure \cite{RZ1}, and the $K$-theoretic wall-crossing formula in
\cite{Tseng-You}.
\begin{theorem} \label{thm:genus-0-case}
  For any $\epsilon$, we have
  \[
    J^{\infty}_{S_\infty}(\mb{t}(q) + \mu^{\geq \epsilon}(Q, L), Q) = J^{\epsilon}_{S_\infty}(\mb{t}(q),
    Q).
  \]
The same formula holds for twisted permutation-equivariant $J$-functions with level structure.

\end{theorem}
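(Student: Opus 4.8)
The plan is to obtain this identity from the single-wall formula of Theorem~\ref{all-genus-single-wall-crossing-intro}, iterated across the finitely many walls in $(\epsilon,\infty)$ exactly as in the derivation of Corollary~\ref{potential-wall-crossing}, after first cutting the two $J$-functions along the Lagrangian polarization $\ca{K}=\ca{K}_+\oplus\ca{K}_-$. The quantum summands $\overline{T}^{-1}\sum_{(k,\beta)}Q^\beta(\check{\ev}_1)_*\bigl(\ovir_{Q^\epsilon_{0,1+k}(X,\beta)}/(1-qL_1)\cdot\prod_i\mathrm{ev}_i^*(\cdots)\bigr)$ all lie in $\ca{K}_-$, since $1/(1-qL_1)$ is regular at $q=0$ and vanishes at $q=\infty$ (this is the invertibility recalled after Definition~\ref{K-theoretic-I-function}). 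Hence $[J^\infty_{S_\infty}(\mb{t}(q)+\mu^{\geq\epsilon}(Q,q),Q)]_+=1-q+\mb{t}(q)+\mu^{\geq\epsilon}(Q,q)$ (because $\mu^{\geq\infty}=0$), while $[J^\epsilon_{S_\infty}(\mb{t}(q),Q)]_+=1-q+\mb{t}(q)+\bigl[(1-q)\sum_{0<\deg\beta\le1/\epsilon}I_\beta(q)Q^\beta\bigr]_+=1-q+\mb{t}(q)+\mu^{\geq\epsilon}(Q,q)$, directly from the definition $\mu_\beta(q)=[(1-q)I_\beta(q)]_+$. Thus the $\ca{K}_+$-parts already match, and the theorem is equivalent to the equality of the $\ca{K}_-$-parts.

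To compare the $\ca{K}_-$-parts I would apply the wall-crossing formula of Theorem~\ref{all-genus-single-wall-crossing-intro} in the form that allows a descendant monomial $L_1^j$ at the first marking and $\mb{t}(L)$ at the remaining markings; this form is obtained from the insertion-free $K$-class identity Theorem~\ref{all-genus-ovir-single-wall-crossing-intro} by tensoring with the pulled-back class $L_1^j\cdot\prod_{i\ge2}\mb{t}(L_i)$, which is compatible with the maps $\iota$, $c$, $b_k$ because they all preserve the coarse domain curve and the first marking. Summing over $j\ge0$ restores the factor $1/(1-qL_1)$. Iterating this across the finitely many walls in $(\epsilon,\infty)$ and reorganizing the accumulated degree-$d_0$ $\mu$-insertions by the permutation-equivariant multinomial formula shows that the quantum part of $J^\epsilon_{S_\infty}(\mb{t}(q),Q)$ coincides with the quantum part of $J^\infty_{S_\infty}(\mb{t}(q)+\mu^{\geq\epsilon}(Q,q),Q)$, where by ``quantum part'' I mean the contributions of the nonempty stable quasimap moduli $Q^{\bullet}_{0,1+k}(X,\beta)$; this is the $J$-function analogue of Corollary~\ref{potential-wall-crossing}.

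The step I expect to be the main obstacle is the ``unstable'' range $0<\deg\beta\le1/\epsilon$ with no $\mb{t}$- or $\mu$-insertions at extra markings: there $Q^\epsilon_{0,1}(X,\beta)$ is empty, so this degree does not contribute to the quantum part of $J^\epsilon$ and must instead be accounted for by the graph-space term $(1-q)\sum_{0<\deg\beta\le1/\epsilon}I_\beta(q)Q^\beta$. The point is to track such a class through the iteration: at the wall $\epsilon_0=1/\deg\beta$ the moduli $Q^{\epsilon_+}_{0,1}(X,\beta)$ is still nonempty, and Corollary~\ref{evaluation-last-wall} — which itself follows from $K$-theoretic localization on the $\mathbb{M}$aster space in the $(g,n,d)=(0,1,d_0)$ case, via Lemma~\ref{special-evaluation} and Corollary~\ref{normal-bundle-special-case} — identifies $(\check{\ev}_1)_*\bigl(\ovir_{Q^{\epsilon_+}_{0,1}(X,\beta)}/(1-qL_1)\bigr)=[(1-q)I_\beta(q)]_-$. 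Hence the contributions that drop out of the quantum part as the moduli becomes empty reassemble, degree by degree, into $\bigl[(1-q)\sum_{0<\deg\beta\le1/\epsilon}I_\beta(q)Q^\beta\bigr]_-$, which is exactly the $\ca{K}_-$-part of the $I$-function term of $J^\epsilon$. Combining this with the previous paragraph gives the equality of $\ca{K}_-$-parts, and with the first paragraph the theorem follows.

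Finally, the same argument runs verbatim in the twisted theory: one replaces $\ovir$, $I_\beta$, $\overline{T}$ and the correlators by their level-$l$ twisted counterparts, which are compatible with Theorem~\ref{all-genus-single-wall-crossing-intro}, Corollary~\ref{evaluation-last-wall} and the additive-multiplicative factorization of the twisting class over nodal strata used throughout Sections~\ref{section-entangled-tails}--\ref{section-wall-crossing}.
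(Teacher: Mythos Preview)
Your proposal is correct and uses the same two key inputs as the paper: the wall-crossing formula of Theorem~\ref{all-genus-single-wall-crossing} for the stable range, and Corollary~\ref{evaluation-last-wall} to handle the boundary case $k=0$, $\deg\beta=d_0$ where $Q^{\epsilon_-}_{0,1}(X,\beta)$ becomes empty. The organization, however, differs. The paper first invokes Corollary~\ref{potential-wall-crossing} to dispose of all $\mb{t}$-dependent terms, then reduces by induction to a \emph{single} wall $\epsilon_0=1/d_0$ with $\mb{t}=0$, i.e.\ to the identity $J^{\epsilon_+}_{S_\infty}(\mu^{\epsilon_0}(Q,q),Q)=J^{\epsilon_-}_{S_\infty}(0,Q)$; expanding both sides and applying Theorem~\ref{all-genus-single-wall-crossing} reduces this to $(1-q)I_\beta(q)=[(1-q)I_\beta(q)]_-+\mu_\beta(q)$, which is Corollary~\ref{evaluation-last-wall}. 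You instead split along the Lagrangian polarization $\ca{K}=\ca{K}_+\oplus\ca{K}_-$ first, verify the $\ca{K}_+$-parts match directly from $\mu_\beta=[(1-q)I_\beta]_+$, and then iterate the $K$-class wall-crossing (Theorem~\ref{all-genus-ovir-single-wall-crossing-intro}) across all walls at once for the $\ca{K}_-$-parts. Your route has the advantage of making the role of the polarization explicit and of treating the descendant at the first marking uniformly; the paper's route is slightly cleaner bookkeeping-wise since the single-wall reduction means the permutation-equivariant multinomial reorganization happens only once (inside Corollary~\ref{potential-wall-crossing}) rather than being re-invoked at each wall.
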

\begin{proof}
  It follows from Corollary \ref{potential-wall-crossing} that the above equality holds modulo the constant terms in $\mb{t}$. Let
  $\epsilon_-< \epsilon_0 = \frac{1}{d_0}< \epsilon_ + $ and let
  \[
    \mu^{\epsilon_0}(Q, q) = \sum_{\text{deg}(\beta) = 1/ \epsilon_0}
    \mu_\beta(q)Q^\beta.
  \]
  It suffices to prove that
  \begin{equation} \label{eq:genus 0-single wall}
    J_{S_\infty}^{\epsilon_ + }(\mu^{\epsilon_0}(Q, q), Q) = J_{S_\infty}^{\epsilon_-}(0, Q).
  \end{equation}
  The right hand side of (\ref{eq:genus 0-single wall}) is equal to
  \[
    1-q + (1-q)\sum_{0 \leq \text{deg}(\beta) \leq1/ \epsilon_0}I_\beta(q)Q^\beta +
    \sum_{\text{deg}(\beta)>1/ \epsilon_0}Q^\beta
    (\check{\ev}_{1})_{*} \bigg (
    \frac{\ovir_{Q^{\epsilon_-}_{0,1}(X,\beta)}}{1-qL_1}
    \bigg ).
  \]
  According to Theorem \ref{all-genus-single-wall-crossing}, we have
  \begin{align*}
    &\sum_{\text{deg}(\beta)>1/ \epsilon_0}Q^\beta
      (\check{\ev}_{1})_{*}
       \bigg (
    \frac{\ovir_{Q^{\epsilon_-}_{0,1}(X,\beta)}}{1-qL_1}
    \bigg )
     \\
    = & \sum_{\substack{(k = 0, \, \text{deg}(\beta)>1/ \epsilon_0) \,
        \text{or} \\(k \geq1, \beta \geq 0), \, (k, \beta) \neq(1, 0)}}Q^\beta
    (\check{\ev}_{1})_{*}
    \bigg (
    \frac{\ovir_{[Q^{\epsilon_+}_{0,1+k}(X,\beta)/S_k]}}{1-qL_1}\cdot\prod_{i=1}^k\ev_{i+1}^*( \mu^{\epsilon_0}(Q, L_{i+1}))
    \bigg )
      \end{align*}
  By comparing to the left hand side of (\ref{eq:genus 0-single wall}), we see
  that (\ref{eq:genus 0-single wall}) is equivalent to
  \[
    (1-q)\sum_{\text{deg}(\beta) = 1/ \epsilon_0}I_\beta(q)Q^\beta =
    \sum_{\text{deg}(\beta) = 1/ \epsilon_0}Q^\beta
     (\check{\ev}_{1})_{*} \bigg (
    \frac{\ovir_{Q^{\epsilon_+}_{0,1}(X,\beta)}}{1-qL_1}
    \bigg )
    +
    \mu^{\epsilon_0}(Q, q).
  \]
  This follows immediately from Corollary \ref{evaluation-last-wall}.

\end{proof}

\appendix

\section{$K$-theoretic pushforward formula for inflated projection bundles}
  \begin{lemma}
    \label{lem:inflated-projective-bundle}
    Let $X$ be any Deligne--Mumford stack with an $S_k$-action, and
    $\Theta_1, \ldots, \Theta_k$ be an $S_k$-equivariant tuple of line
    bundles on $X$. Let
    $p: \tilde{\mathbb P} \to X$ be the inflated projective bundle associated to
    $\Theta_1 , \ldots, \Theta_k$. For $i = 1, \cdots, k-1$, let
    $D_i$ be the $i$-tautological divisor.
    Then for any $a \in K_\circ(X/S_k)$ and any integer $t \geq 0$, we have
    \begin{equation*}
      \begin{aligned}
        p_* \Big( [\mathcal O(t(D_0 + \cdots + D_{k-2}))
        ] & \cdot [\mathcal O_{\tilde{\mathbb P}}(-t)] \cdot p^*a \Big) \\
        & =    [\mathrm{Sym}^t(\Theta_1 \oplus \cdots \oplus \Theta_k)] \cdot a \\
        & =
        \big(\sum_{\substack{j_1 + \cdots + j_k = t \\j_i \geq0}} [\Theta_1^{j_1}
        \otimes  \cdots \otimes  \Theta_k^{j_k}] \big) \cdot a
      \end{aligned}
    \end{equation*}
    and for any integer $t< 0$, we have
    \begin{equation*}
      \begin{aligned}
        p_* \Big( [\mathcal O(t(D_0 + \cdots + D_{k-2}))] & \cdot [\mathcal
        O_{\tilde{\mathbb P}}(-t)] \cdot p^*a \Big) \\
        & = -   \big(\sum_{\substack{j_1 + \cdots + j_k = -t \\j_i>0}} [(-
        \Theta_1^{-j_1} ) \otimes  \cdots \otimes  (- \Theta_k^{-j_k})] \big) \cdot a
      \end{aligned}
    \end{equation*}
  \end{lemma}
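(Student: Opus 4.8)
The plan is to realize the line bundle
\[
  L \;:=\; \ca O_{\tilde{\mathbb P}}\big(D_0+\cdots+D_{k-2}\big)\otimes \ca O_{\tilde{\mathbb P}}(-1)
\]
as the pullback of the relative hyperplane class under a birational contraction, and then reduce to the ordinary projective bundle formula. Writing $\rho\colon \tilde{\mathbb P}\to P:=\mathbb P(\Theta_1\oplus\cdots\oplus\Theta_k)$ for the blowdown used in Definition~\ref{def:inflated-projective-bundle} (so $\ca O_{\tilde{\mathbb P}}(-1)=\rho^*\ca O_P(-1)$), the inflated projective bundle carries a \emph{second} contraction over $X$,
\[
  f\colon \tilde{\mathbb P}\longrightarrow P':=\mathbb P\big((\Theta_1\oplus\cdots\oplus\Theta_k)^\vee\big),
\]
which contracts exactly the tautological divisors $D_0,\ldots,D_{k-2}$: for $k=2$ this is the canonical identification $\mathbb P(E)\cong\mathbb P(E^\vee)$ of rank-two projective bundles, and for general $k$ it is the relative version of the two distinct morphisms to projective space carried by the toric variety of the permutohedron. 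The essential geometric input is the identity $L=f^*\ca O_{P'}(1)$, equivalently an expression for $\ca O(D_0+\cdots+D_{k-2})$ and the second relative $\ca O(1)$ in terms of $\ca O_{\tilde{\mathbb P}}(\pm1)$ and the $\Theta_i$'s; this is the relative counterpart of \cite[Lemma~2.6.3]{Zhou2}, of which Lemma~\ref{pullback-sum-of-all-divisors} above is the moduli-space incarnation.

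Granting this, the rest is formal. Since $f$ is a birational projective morphism of smooth Deligne--Mumford stacks, $Rf_*\ca O_{\tilde{\mathbb P}}=\ca O_{P'}$, hence $Rf_*f^*=\mathrm{id}$ in the derived category and the projection formula gives, for every $t\in\Z$ and every $a\in K_\circ(X/S_k)$,
\[
  p_*\Big( \ca O\big(t(D_0+\cdots+D_{k-2})\big)\cdot\ca O_{\tilde{\mathbb P}}(-t)\cdot p^*a\Big)
  = p_*\big(L^{t}\cdot p^*a\big)
  = \pi'_*\big(\ca O_{P'}(t)\cdot \pi'^*a\big),
\]
where $\pi'\colon P'\to X$; note this single computation covers both signs of $t$ at once. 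The classical $\mathbb P^{k-1}$-bundle formula now evaluates $\pi'_*\ca O_{P'}(t)$: for $t\ge0$ it is $\mathrm{Sym}^t(\Theta_1\oplus\cdots\oplus\Theta_k)=\sum_{j_1+\cdots+j_k=t,\,j_i\ge0}\Theta_1^{j_1}\otimes\cdots\otimes\Theta_k^{j_k}$, while for $t<0$ relative Serre duality on the fibres (using $\omega_{P'/X}=\ca O_{P'}(-k)\otimes\pi'^*\det(\Theta_1\oplus\cdots\oplus\Theta_k)$) gives $[\pi'_*\ca O_{P'}(t)]=(-1)^{k-1}\,[\mathrm{Sym}^{-t-k}(\Theta_1^{-1}\oplus\cdots\oplus\Theta_k^{-1})\otimes(\Theta_1\otimes\cdots\otimes\Theta_k)^{-1}]$, which a one-line reindexing $j_i=m_i+1$ turns into $-\sum_{j_1+\cdots+j_k=-t,\,j_i>0}(-\Theta_1^{-j_1})\otimes\cdots\otimes(-\Theta_k^{-j_k})$. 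This matches both displayed right-hand sides. Finally, the construction of $\tilde{\mathbb P}$ is symmetric in $\Theta_1,\ldots,\Theta_k$, so $\tilde{\mathbb P}$, $P$, $P'$, the morphisms $\rho,f,p,\pi'$, the set $\{D_0,\ldots,D_{k-2}\}$ (hence the divisor $D_0+\cdots+D_{k-2}$) and the bundle $L$ are all $S_k$-equivariant; the identities above therefore hold verbatim $S_k$-equivariantly, with $\mathrm{Sym}^t(\Theta_1\oplus\cdots\oplus\Theta_k)$ carrying its natural $S_k$-action.

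The main obstacle is thus the input of the first paragraph: producing $f$ as a genuine morphism and pinning down $L=f^*\ca O_{P'}(1)$ with the correct twist. I would prove this by induction on $k$ using the iterated blowup description: the first blowup in the construction is along the $k$ coordinate sections $\mathbb P(\Theta_i)\subset P$, whose exceptional locus, after the remaining blowups, becomes $D_{k-2}\cong\coprod_{i=1}^k \tilde{\mathbb P}\big(\{\Theta_j\}_{j\neq i}\big)\otimes\Theta_i^{-1}$, to which $L$ restricts as the analogous bundle and to which the remaining $D_j$ restrict as tautological divisors (Lemma~\ref{lem:divisors1}-type relations), so the inductive hypothesis applies; the base cases $k=1,2$ are the direct computations indicated above. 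Two alternative routes are available: (i) compute $\rho_*\ca O_{\tilde{\mathbb P}}(t(D_0+\cdots+D_{k-2}))$ on $P$ directly, peeling off divisors via $\ca O(mD)=\ca O((m{-}1)D)-\ca O((m{-}1)D)|_D$ and using Lemma~\ref{lem:divisors1}, then apply the projective bundle formula on $P$ --- this avoids $f$ but demands careful bookkeeping of the nested exceptional divisors; (ii) equivariant $K$-theoretic localization for the fibrewise $(\bb C^*)^{k-1}$-action on $\tilde{\mathbb P}$ scaling the summands $\Theta_i$ (it lifts through the blowups since the centres are invariant), whose $k!$ fixed sections reduce the statement to a Brion-type summation identity over $S_k$.
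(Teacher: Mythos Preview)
Your strategy is sound and genuinely different from the paper's, but the step you flag as ``the main obstacle'' is exactly the substance of the proof, and you only sketch it. The paper does not construct a second contraction $f$; instead it proves three auxiliary lemmas. First, it builds $S_k$-equivariant maps $d_i\colon\Theta_i\to L$ (via the tautological sections $x_j$ and the blowup description) that generate $L$ everywhere, yielding a Koszul resolution and hence the $K$-theory relation $(L-\Theta_1)\cdots(L-\Theta_k)=0$. Second, it describes $\tilde{\mathbb P}$ as the toric variety of the permutohedral fan, checks that the support function of $L$ is convex (so $L$ is nef), and invokes Demazure vanishing to kill $R^{>0}p_*L^{\otimes t}$ for $t\ge0$. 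Third, it identifies $p_*L^{\otimes t}$ with $\mathrm{Sym}^t(\oplus_i\Theta_i)$ by an explicit monomial computation on the graded ideal generated by the $x_1\cdots\hat x_i\cdots x_k$. The $t<0$ case then follows by a generating-function manipulation using the Koszul relation. This is computational but complete.

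Your route packages the same geometry more conceptually: the surjection $\oplus_i\Theta_i\twoheadrightarrow L$ given by the paper's $d_i$ is precisely the datum of a morphism $f\colon\tilde{\mathbb P}\to\mathbb P((\oplus_i\Theta_i)^\vee)$ with $L=f^*\mathcal O(1)$, and the paper's toric picture (the polytope of $L$ is a simplex) confirms that $f$ is birational. Once that is established, $Rf_*\mathcal O_{\tilde{\mathbb P}}=\mathcal O_{P'}$ is automatic (birational between smooth in characteristic zero), the projection formula reduces everything to $R\pi'_*\mathcal O_{P'}(t)$, and your Serre-duality computation for $t<0$ is correct and cleaner than the paper's generating-function trick. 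What the paper's route buys is that each ingredient is nailed down explicitly and independently; what yours buys is a uniform one-line reduction to the ordinary projective bundle once $f$ is in hand. If you want your argument to stand on its own, you must actually prove the existence and birationality of $f$: the inductive scheme you outline via $D_{k-2}\cong\coprod_i\tilde{\mathbb P}(\{\Theta_j\}_{j\ne i})$ is workable but needs the divisor-restriction identities spelled out, or you can borrow the paper's toric description directly to exhibit $f$ as the map of fans contracting all rays except the $S_k$-orbit of $(1^{k-1},0)$.
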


  We will prove this lemma at the end of this section. First, we need some
  preparation. Set
  \[
    L = \mathcal O(D_0 + \cdots + D_{k-2}) \otimes  \mathcal O(-1).
  \]
  Note that we have canonical isomorphism
  \[
    \mathcal O(D_0 + 2D_1 + \cdots + (k-1)D_{k-2}) \cong \mathcal O(k) \otimes
    \Theta_1
    \otimes  \cdots \Theta_k,
  \]
  whose proof is similar to that in the original version of Lemma~2.6.3 of
  [Zhou].
  Thus we have
  \[
    L \cong \mathcal O(k-1) \otimes \mathcal O(-(D_1 + \cdots + (k-2)D_{k-2}))
    \otimes \Theta_1 \otimes  \cdots \otimes  \Theta_k.
  \]
  \begin{lemma}
    \label{lem:minimal-polynomial}
    In $K_0(\tilde{\mathbb P}/S_k)$ we have
    \begin{equation}
      \label{eq:relation-K-theory}
      (L + (- \Theta_1)) \cdots (L + (- \Theta_k)) = 0.
    \end{equation}
  \end{lemma}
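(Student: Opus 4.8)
The plan is to derive the relation~\eqref{eq:relation-K-theory} from the exactness of a Koszul complex built from ``Cremona-dual'' tautological sections on $\tilde{\mathbb P}$. First I would set up notation: write $\rho\colon \tilde{\mathbb P}\to P:=\mathbb P(\Theta_1\oplus\cdots\oplus\Theta_k)$ for the blow-up sequence, so that $\mathcal O_{\tilde{\mathbb P}}(-1)=\rho^*\mathcal O_P(-1)$ inherits the pulled-back tautological inclusion $\mathcal O_{\tilde{\mathbb P}}(-1)\hookrightarrow\bigoplus_j\Theta_j$, whose $j$-th component is a section $\sigma_j\in H^0(\tilde{\mathbb P},\,\mathcal O_{\tilde{\mathbb P}}(1)\otimes\Theta_j)$ with $\operatorname{div}(\sigma_j)=\rho^*H_j$. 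Using the construction of the inflated projective bundle and the divisor identities of Lemma~\ref{lem:divisors1} (and their inflated-bundle analogues in~\cite{Zhou2}), I would index the prime components of $D_0+\cdots+D_{k-2}$ by the nonempty proper subsets $S\subsetneq\{1,\dots,k\}$: here $E_{\{j\}}=\tilde H_j$ is the proper transform of $H_j$, and for $|S|\ge 2$ the divisor $E_S$ is the exceptional component over the codimension-$|S|$ coordinate stratum $\bigcap_{j\in S}H_j$; one then has $D_i=\sum_{|S|=i+1}E_S$ and $\rho^*H_j=\sum_{S\ni j}E_S$.

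Next I would construct sections $\tau_i\in H^0(\tilde{\mathbb P},\,L\otimes\Theta_i^\vee)$. Using the canonical isomorphism $\mathcal O(D_0+2D_1+\cdots+(k-1)D_{k-2})\cong\mathcal O_{\tilde{\mathbb P}}(k)\otimes\Theta_1\otimes\cdots\otimes\Theta_k$ recorded above, the line bundle $L\otimes\Theta_i^\vee$ is identified with $\bigl(\bigotimes_{j\ne i}(\mathcal O_{\tilde{\mathbb P}}(1)\otimes\Theta_j)\bigr)\otimes\mathcal O\bigl(-\sum_{\ell\ge 1}\ell D_\ell\bigr)$. The product $\prod_{j\ne i}\sigma_j$ is a section of the first tensor factor with divisor $\sum_{j\ne i}\rho^*H_j=\sum_S(|S|-[i\in S])E_S$; since $\sum_{\ell\ge 1}\ell D_\ell=\sum_S(|S|-1)E_S$ and $|S|-[i\in S]\ge|S|-1$, dividing $\prod_{j\ne i}\sigma_j$ by the tautological section cutting out $\sum_{\ell\ge 1}\ell D_\ell$ yields a genuine section $\tau_i$ of $L\otimes\Theta_i^\vee$ with $\operatorname{div}(\tau_i)=\sum_{S\not\ni i}E_S$. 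The whole construction is $S_k$-equivariant, the permutation action carrying $\tau_i$ to $\tau_{\sigma(i)}$ and $\Theta_i$ to $\Theta_{\sigma(i)}$.

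The main point is then to show $\bigcap_{i=1}^k\operatorname{div}(\tau_i)=\emptyset$. A common point $\xi$ would lie on divisors $E_{S_1},\dots,E_{S_r}$ such that for every $i$ there is some $t$ with $i\notin S_t$. But the iterated blow-up producing $\tilde{\mathbb P}$ resolves the arrangement of coordinate subspaces into a wonderful-type model, so the divisors among $\{E_S\}$ passing through a given point form a nested family: $E_S\cap E_{S'}\ne\emptyset$ forces $S\subseteq S'$ or $S'\subseteq S$. Hence $S_1\subsetneq\cdots\subsetneq S_r$ is a chain, and $\{\,i:\ \exists t,\ i\notin S_t\,\}=\{1,\dots,k\}\setminus S_1$; for this to exhaust $\{1,\dots,k\}$ one would need $S_1=\emptyset$, which is excluded. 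With this in hand, the Koszul complex of the section $(\tau_1,\dots,\tau_k)$ of $\bigoplus_i(L\otimes\Theta_i^\vee)$ is exact, so $\prod_{i=1}^k\bigl(1-[L^\vee\otimes\Theta_i]\bigr)=0$ in $K^\circ(\tilde{\mathbb P})$; multiplying by the unit $[L]^{\otimes k}$ gives $\prod_{i=1}^k\bigl([L]-[\Theta_i]\bigr)=0$, and by $S_k$-equivariance this holds in $K_0(\tilde{\mathbb P}/S_k)$.

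I expect the hard part to be the nestedness statement used in the third paragraph. I would either cite the explicit description of the (analogous) closed points in~\cite{Zhou2}, or prove it directly by induction on the blow-up sequence: the $i$-th blow-up along the proper transform of $Z_{(i)}$ introduces only divisors $E_S$ with $|S|=i$, and a local check shows such a new $E_S$ meets a previously created $E_{S'}$ only when $\bigcap_{j\in S}H_j\subset\bigcap_{j\in S'}H_j$, i.e.\ $S'\subseteq S$. The remaining pieces---the multiplicity bookkeeping for $\operatorname{div}(\tau_i)$ and the Koszul and unit-twist manipulations---are routine.
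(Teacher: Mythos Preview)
Your proposal is correct and follows essentially the same approach as the paper: both build the morphisms $d_i:\Theta_i\to L$ (equivalently your sections $\tau_i$ of $L\otimes\Theta_i^\vee$) from the product $\prod_{j\ne i}\sigma_j$ of the omitted tautological sections, observe that after dividing out the exceptional multiplicities one lands in $L$, check that the $d_i$ have no common zero, and conclude via the $S_k$-equivariant Koszul complex. Your subset-indexing of the prime components $E_S$ and the nestedness argument make explicit the step the paper records only as ``at each point $p$ of $\tilde{\mathbb P}$, there exists $i$ such that $d_i$ is an isomorphism near $p$,'' but the underlying construction and logic are the same.
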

  \begin{proof}[Proof of Lemma~ \ref{lem:minimal-polynomial}]
    Let
    \[
      x_i: \mathcal O(-1)  \rightarrow  \Theta_i, \quad i = 1 , \ldots, k
    \]
    be the tautological morphisms over $\bb{P}(\Theta_1 \oplus \dots \oplus
    \Theta_k)$. Then we have
    \[
      x_1 \cdots \hat{x}_i \cdots x_k: \mathcal O(-(k-1)) \rightarrow
      \Theta_1
      \cdots \hat{\Theta}_i \cdots \Theta_k.
    \]
    This is equivalent to
    \[
      \varphi_i : \Theta_i \rightarrow \mathcal O(k-1) \otimes
      \Theta_1 \cdots \Theta_k.
    \]
    Let
    \[
      \textstyle
      H_i = \mathbb P(\bigoplus_{j \neq i} \Theta_i), \quad  i = 1 , \ldots, k
    \]
    be the coordinate hyperplanes.

    Let $b: \tilde{\bb{P}} \rightarrow \bb{P}(\Theta_1 \oplus \dots \oplus
    \Theta_k)$ be the the composition of the sequence of blowups. The vanishing
    locus of $b^* \varphi_i$ defines a Cartier divisor supported on
    $D_0 \cup \cdots \cup D_{k-2}$. Along any irreducible component $D$ of some
    $D_j$,
    the vanishing order of $b^* \varphi_i$ is equal
    \[
      \# \{a \mid b(D) \subset H_a,  1 \leq a \leq k, a \neq i \}.
    \]
    Note that for each irreducible component $D$ of $D_j$, there are precisely
    $(j + 1)$ of $H_1 , \ldots, H_k$ that contain $p(D)$.
    Hence,
    \begin{enumerate}
    \item
      for $i = 1 , \ldots, k$, $b^* \varphi_i$ factors through
      \begin{equation*}
        d_i : \Theta_i \rightarrow  L.
      \end{equation*}
    \item
      at each point $p$ of $\tilde {\mathbb P}$, there exists $i$ such that
      $d_i$ is an isomorphism near $p$.
    \end{enumerate}
    This implies that we have a $S_k$-equivariant Koszul resolution
    \begin{equation*}
      0 \rightarrow \bigwedge^k \big((\Theta_1 \oplus \dots \oplus \Theta_k)
      \otimes L^{-1} \big) \rightarrow \dots \rightarrow \bigwedge^1 \big((\Theta_1
      \oplus \dots \oplus \Theta_k) \otimes L^{-1} \big) \rightarrow \ca{O}
      \rightarrow 0.
    \end{equation*}
    The relation \eqref{eq:relation-K-theory} is obtained by taking the $K$-theory
    class of the above long exact sequence.
  \end{proof}

  \begin{lemma}
    \label{lem:vanising-higher-cohomology}
    For $i \geq 1$, $t \geq 0$, $R^ip_*L^{\otimes t} = 0$.
  \end{lemma}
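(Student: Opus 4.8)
\textbf{Proof plan for Lemma~\ref{lem:vanising-higher-cohomology}.}
The plan is to reduce the vanishing of the higher direct images $R^ip_*L^{\otimes t}$ ($i\geq1$, $t\geq0$) to the corresponding statement on the ordinary projective bundle $P=\bb{P}(\Theta_1\oplus\cdots\oplus\Theta_k)$, where it is classical. First I would recall from the construction of the inflated projective bundle $p:\tilde{\bb{P}}\to X$ that $p$ factors as $p=\bar p\circ b$, where $b:\tilde{\bb{P}}\to P$ is the composition of the sequence of blow-ups along the (proper transforms of the) coordinate subspaces $Z_i$, and $\bar p:P\to X$ is the bundle projection. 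The key geometric input is the identification, established in the proof of Lemma~\ref{lem:minimal-polynomial}, that
\[
  L=\ca{O}(D_0+\cdots+D_{k-2})\otimes\ca{O}_{\tilde{\bb{P}}}(-1)
\]
admits $S_k$-equivariant maps $d_i:\Theta_i\to L$ for $i=1,\dots,k$ which are nowhere simultaneously zero, so that $L$ is globally generated by $\{b^*\varphi_i\}$; in particular $L$ is $b$-nef (indeed $b$-globally generated), and since $\ca{O}_{\tilde{\bb{P}}}(1)$ is $b$-trivial on fibers of dimension $0$ except along the exceptional loci, $L$ is $b$-semiample.

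The main steps, in order, would be: (1) Show $Rb_*\ca{O}_{\tilde{\bb{P}}}=\ca{O}_{P}$ and $R^jb_*\ca{O}_{\tilde{\bb{P}}}=0$ for $j>0$; this holds because each stage of $b$ is a blow-up of a smooth (hence Cohen--Macaulay, lci) center inside a smooth ambient stack, for which the blow-up has rational singularities fibers, so higher direct images of the structure sheaf vanish. (2) Compute $Rb_*L^{\otimes t}$ for $t\geq0$: using the $b$-semiampleness of $L$ and the fact that on each fiber of $b$ the restriction of $L$ is globally generated with vanishing higher cohomology (the fibers of $b$ are iterated blow-ups of points inside projective spaces, for which an $\ca{O}(1)$-type class has no higher cohomology), conclude $R^jb_*L^{\otimes t}=0$ for $j>0$ and that $b_*L^{\otimes t}$ is a locally free sheaf on $P$; more precisely, compare with the tautological class and show $b_*L^{\otimes t}=\mathrm{Sym}^t(\Theta_1\oplus\cdots\oplus\Theta_k)$ as the pushforward matches $\bar p_*$ applied to $\ca{O}_P(t)$ twisted appropriately, which one can verify fiberwise over $X$. (3) Apply the Leray spectral sequence for $p=\bar p\circ b$: since $R^jb_*L^{\otimes t}=0$ for $j>0$ and $R^i\bar p_*(\mathrm{Sym}^t(\Theta_1\oplus\cdots\oplus\Theta_k))=0$ for $i>0$ (this is the standard vanishing of higher cohomology of $\ca{O}_P(t)$, $t\geq0$, along the projective bundle $\bar p$), the spectral sequence degenerates and yields $R^ip_*L^{\otimes t}=0$ for all $i\geq1$.

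I expect the main obstacle to be Step~(2), namely establishing $R^jb_*L^{\otimes t}=0$ for $j>0$ together with the precise identification $b_*L^{\otimes t}\cong\mathrm{Sym}^t(\Theta_1\oplus\cdots\oplus\Theta_k)$. The subtlety is that $L$ is not $b$-ample—only $b$-semiample—so one cannot invoke relative Serre vanishing directly; instead one must analyze cohomology along the fibers of the iterated blow-up $b$, where $L$ restricts to a base-point-free but non-ample class, and check that all higher cohomology still vanishes. An efficient route is induction on the number of blow-ups: at each stage one has a blow-up $b_\ell:P_{\ell-1}\to P_\ell$ along a smooth center, the line bundle of interest is the pullback from $P_\ell$ of a semiample bundle twisted by a nonnegative multiple of the exceptional divisor restricted suitably, and the local computation reduces to the cohomology of $\ca{O}_{\bb{P}^m}(a)\otimes(\text{positive twist of exceptional divisor on a blow-up of a point})$, which one checks vanishes in positive degrees by the projection formula and the Koszul/Euler-sequence description of the exceptional divisor; summing up via the Leray spectral sequence at each stage gives the claim. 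Once (2) is in hand, (1) and (3) are routine.
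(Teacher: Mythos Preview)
Your approach via the Leray spectral sequence for $p=\bar p\circ b$ differs from the paper's, which reduces to $X$ a point, gives an explicit toric fan for $\tilde{\mathbb P}$, checks that $L$ is nef by computing its support function, and invokes Demazure vanishing directly on $\tilde{\mathbb P}$.

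There is a genuine gap in Steps~(2)--(3). The identification ``$b_*L^{\otimes t}=\mathrm{Sym}^t(\Theta_1\oplus\cdots\oplus\Theta_k)$'' is a type mismatch (the left side lives on $P$, the right on $X$), and in fact $b_*L^{\otimes t}$ is \emph{not} locally free for $t\geq1$, $k\geq3$: since $b$ is an isomorphism over $P\setminus Z_2$ and $L\cong \mathcal O(k-1)\otimes\mathcal O(-(D_1+\cdots+(k-2)D_{k-2}))\otimes\bar p^*(\Theta_1\cdots\Theta_k)$, one finds $b_*L^{\otimes t}\cong\tilde{\mathfrak a}^t\otimes\mathcal O_P(t(k-1))\otimes\bar p^*(\Theta_1\cdots\Theta_k)^t$, where $\tilde{\mathfrak a}\subset\mathcal O_P$ is the ideal sheaf of the codimension-two coordinate stratum $Z_2$. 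Consequently Step~(3) is \emph{not} ``standard vanishing of $\mathcal O_P(t)$'' along $\bar p$: what you actually need is $H^i\big(\mathbb P^{k-1},\tilde{\mathfrak a}^t(t(k-1))\big)=0$ for all $i>0$, a statement about powers of a monomial ideal that is essentially as hard as the lemma itself (already the surjectivity of $H^0(\mathcal O(t(k-1)))\to H^0((\mathcal O/\tilde{\mathfrak a}^t)(t(k-1)))$ required for $H^1=0$ is not obvious). Your inductive sketch for Step~(2) would also, on the fibers of each blow-up, effectively appeal to toric vanishing anyway.

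A much shorter route is hidden in your own first paragraph: you already note that the maps $d_i:\Theta_i\to L$ from the proof of Lemma~\ref{lem:minimal-polynomial} have no common zeros, so $L$ is globally generated relative to $X$, hence nef on every fiber. Reduce to $X$ a point; then $\tilde{\mathbb P}$ is a smooth projective toric variety and $L$ is nef, so Demazure vanishing gives $H^i(\tilde{\mathbb P},L^{\otimes t})=0$ for $i>0$, $t\geq0$ immediately. This is precisely the paper's argument, and the factorization through $b$ is unnecessary.
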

  \begin{proof}
    It suffices to prove the Lemma in the case when $X$ is a point. We will
    describe $\tilde{\mathbb P}$ as a toric variety and show that $L$ is nef.

    Let $N = \mathbb Z^{k}/ \Delta$, where $\Delta$ is the small diagonal. Then
    $\mathbb P^{k-1}$ is the toric variety associated to the fan $\Sigma_{k-1}$
    whose rays are
    spanned by the image in $N$ of standard basis $e_i$ of $\mathbb Z^k$. We also
    denote them by $e_i$. Thus a typical maximal dimensional cone $\sigma_{k-1}
    \in
    \Sigma_{k-1}$ is spanned by
    \[
      e_1 , \ldots, e_{k-1}.
    \]
    When we say some object is typical, we mean that the other objects are
    obtained via $S_k$-symmetry. Whenever we subdivide some typical cone, we do
    the
    same thing to other cone using $S_k$-symmetry.

    We claim that $\tilde {\mathbb P}$ is the toric variety
    associated to the fan $\Sigma_1$, a typical maximal dimensional cone
    $\sigma_1$ of which
    is spanned by
    \[
      (1^{a}, 0^{k-a}) , \quad a = 1 , \ldots, k-1.
    \]
    Here and below we use $1^a$ or $0^a$ to respectively denote $1$ or $0$
    repeated $a$ times.

    Indeed, blowing up along intersection of the proper transform of $H_1 ,
    \ldots,
    H_{\ell}$ amounts to star-subdividing the fan by adding the ray
    \[
      (1^\ell, 0^{k- \ell}).
    \]
    The first blowup $U_{k-2} \to U_{k-1}$ corresponds to the fan
    $\Sigma_{k-2}$ obtained by star-subdivide
    the $\Sigma_{k-1}$ by adding the ray spanned by
    \[
      (1^{k-1}, 0)
    \]
    and also the rays in it $S_k$-orbits.
    Thus a typical maximal dimensional cone $\sigma_{k-2} \in \Sigma_{k-2}$ is
    spanned by
    \[
      e_1 , \ldots, e_{k-2}, (1^{k-1}, 0).
    \]
    Rays in the $S_k$-orbits of $(1^{k-1}, 0)$ correspond to the exceptional
    divisor.
    Inductively, we define $U_{\ell} = X_{\Sigma_{\ell}}$, a typical maximal
    dimensional
    cone of which is spanned by
    \[
      e_1 , \ldots, e_{\ell}, (1^{\ell + 1}, 0^{k- \ell-1}) , \ldots, (1^{k-1},
      0).
    \]
    The blowup $U_{\ell-1} \to U_{\ell}$ replaces $e_{\ell}$ by
    $(1^{\ell}, 0^{k- \ell})$. This proves the claim.

    We now compute the divisors. For a ray $\rho = (\rho_1, \dots, \rho_k)$ in
    $\Sigma_1$, we write $| \rho| = \sum_{j = 1}^k
    \rho_j$. Then
    \[
      D_i = \sum_{| \rho| = i + 1} D_\rho.
    \]
    And $L$ is linearly equivariant to the $\mathbb Q$-Cartier divisor     \[
      \sum_{i = 0}^{k-2} \frac{k-i-1}{k}D_i = \sum_{\rho} \frac{k-|
        \rho|}{k}D_\rho.
    \]
    We choose the (unique) Cartier data
    \[
      m = (\frac{1-k}{k}, \frac{1}{k} , \ldots, \frac{1}{k}) \in M = N^{\vee}
    \]
    in $\sigma_1 \in \Sigma_1$. The Cartier data in other cones are determined by
    $S_k$-symmetry. Note that all maximal cones that contains $e_1$
    share the same $m$. There are $(k-1)!$ of them. Let $\varphi$ be the support
    function determined by $S_k$-symmetry and the formula
    \[
      \varphi(v) = \langle {m, v} \rangle, \quad v \in \sigma_1.
    \]

    It is easy to see that $\varphi$ is non-positive and $\{v \mid \varphi(v)
    \geq
    -1/k \}$ is the $k-1$-simplex whose vertices are
    \[
      -e_1 , \ldots, -e_k \in N = \mathbb Z^{k}/ \Delta.
    \]
    Hence the support function is convex.\footnote{Note that the definition of
      convexity in \cite{cox2011toric} is different from the usual convention.}
    Hence $L$ is nef. By the Demazure vanishing theorem, $L^{\otimes t}$ has no
    higher cohomology for $t \geq 0$.
  \end{proof}

  Recall that we have constructed
  \[
    d_i : \Theta_i \rightarrow  L.
  \]
  Note that here $\Theta_i$ actually means $p^* \Theta_i$. We have $p_*p^*
  \Theta_i
  = \Theta_i$.
  Using adjunction we have
  \[
    \textstyle
    d: \bigoplus_{i = 1}^k \Theta_i \rightarrow  p_*(L).
  \]
  \begin{lemma}
    \label{lem:basic-pushforward}
    For $t \geq 1$, the composition
    % The map $d$ above is an isomorphism. Moreover,
    % the natural morphism
    \[
      \textstyle
      d^{t} : \mathrm{Sym}^{t}  \bigoplus_{i = 1}^k \Theta_i
      \overset{\mathrm{Sym^t}(d)}{\rightarrow}
      \mathrm{Sym}^{t} \big( p_*(L) \big)
      {\rightarrow}
      p_*(L^{\otimes t})
    \]
    are isomorphisms for $t \geq 1$.
  \end{lemma}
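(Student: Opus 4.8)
\textbf{Proof proposal for Lemma~\ref{lem:basic-pushforward}.}
The plan is to prove the statement by descending to the case where $X$ is a point, identifying $\tilde{\mathbb P}$ as an explicit smooth projective toric variety (as in the proof of Lemma~\ref{lem:vanising-higher-cohomology}), and computing the pushforward of the line bundle $L$ directly in terms of lattice points of the associated polytope. First I would note that both the source and target of $d^t$ are locally free sheaves on $X$ whose formation commutes with arbitrary base change: $p_*(L^{\otimes t})$ is locally free because $R^ip_*L^{\otimes t}=0$ for $i\geq 1$ and $t\geq 0$ by Lemma~\ref{lem:vanising-higher-cohomology}, together with cohomology-and-base-change. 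Hence it suffices to check that $d^t$ is an isomorphism fiberwise, i.e.\ we may assume $X$ is a geometric point, and we must then show $\dim_{\mathbb C}H^0(\tilde{\mathbb P},L^{\otimes t})=\binom{k-1+t}{k-1}=\dim\operatorname{Sym}^t(\mathbb C^k)$ and that the map $\operatorname{Sym}^t(\bigoplus\Theta_i)\to H^0(L^{\otimes t})$ is surjective, equivalently bijective once dimensions match. The $S_k$-equivariance is automatic from the construction of $d$, so I would not worry about it separately.

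The key computation is to use the toric description from the proof of Lemma~\ref{lem:vanising-higher-cohomology}: there $\tilde{\mathbb P}=X_{\Sigma_1}$ and $L$ is the nef torus-invariant $\mathbb Q$-divisor with support function $\varphi$ whose polytope $P_L=\{v\in N_{\mathbb R}\mid \varphi(v)\geq \langle m_\sigma,v\rangle\ \forall\sigma\}$ was shown to be (for the normalization by $1/k$) the standard simplex with vertices $-e_1,\dots,-e_k$ in $N=\mathbb Z^k/\Delta$. Then $L^{\otimes t}$ corresponds to $tP_L$, the simplex with vertices $-te_1,\dots,-te_k$, and by the standard toric formula $H^0(\tilde{\mathbb P},L^{\otimes t})=\bigoplus_{u\in tP_L\cap M}\mathbb C\cdot\chi^u$. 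The lattice points of $tP_L$ are in bijection with $k$-tuples $(j_1,\dots,j_k)$ of nonnegative integers with $j_1+\cdots+j_k=t$ (this is exactly the content of the simplex being standard), so $\dim H^0=\binom{k-1+t}{k-1}$. Moreover, under this identification the monomial section of $L^{\otimes t}$ labeled by $(j_1,\dots,j_k)$ is precisely the image of $d_1^{j_1}\cdots d_k^{j_k}$, i.e.\ the image under $d^t$ of the corresponding basis element of $\operatorname{Sym}^t(\bigoplus\Theta_i)$; concretely, $d_i$ is the section cutting out the proper transform of the coordinate hyperplane $H_i$, which in toric terms is the character attached to the vertex $-te_i/t = -e_i$. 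Matching these two indexings of monomials gives that $d^t$ sends a basis to a basis, hence is an isomorphism.

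The main obstacle I anticipate is making the last identification rigorous: one must be careful that $d^t$ is genuinely defined as the composite through $\operatorname{Sym}^t(p_*L)\to p_*(L^{\otimes t})$ (multiplication of sections), and that under the torus-invariant trivializations the products $d_1^{j_1}\cdots d_k^{j_k}$ really exhaust, without linear relations, the monomial basis of $H^0(L^{\otimes t})$ — the relation from Lemma~\ref{lem:minimal-polynomial}, $(L+(-\Theta_1))\cdots(L+(-\Theta_k))=0$ in $K_0$, is a $K$-theoretic identity and does \emph{not} impose relations among the $d_i$ themselves at the level of sections, so I would emphasize that the $d_i$ are honest morphisms of line bundles whose product structure is the symmetric algebra, free of relations in each fixed degree. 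Once the dimension count and the monomial matching are in place, injectivity plus equality of ranks yields the isomorphism. I would then record, as an immediate consequence feeding into Lemma~\ref{lem:inflated-projective-bundle}, that $p_*(L^{\otimes t})=\operatorname{Sym}^t(\Theta_1\oplus\cdots\oplus\Theta_k)$ for $t\geq 0$ as $S_k$-equivariant classes, and $L^{\otimes t}=\mathcal O(t(D_0+\cdots+D_{k-2}))\otimes\mathcal O_{\tilde{\mathbb P}}(-t)$ by the definition of $L$.
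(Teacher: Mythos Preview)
Your overall strategy---reduce to a point, then use the toric description of $\tilde{\mathbb P}$ to compute $H^0(L^{\otimes t})$ by counting lattice points and matching monomial sections with the products $d_1^{j_1}\cdots d_k^{j_k}$---is a reasonable alternative to the paper's argument, but your execution has concrete errors that leave a real gap.

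First, you have confused the polytope $P_L\subset M_{\mathbb R}$ with a level set in $N_{\mathbb R}$. The simplex with vertices $-e_1,\ldots,-e_k$ appearing in the proof of Lemma~\ref{lem:vanising-higher-cohomology} is the set $\{v\in N_{\mathbb R}\mid \varphi(v)\geq -1/k\}$, used only to verify that the support function is convex; it is \emph{not} the section polytope. The section polytope lives in $M_{\mathbb R}$ (where $M=N^\vee$), and your formula ``$P_L=\{v\in N_{\mathbb R}\mid \varphi(v)\geq\langle m_\sigma,v\rangle\ \forall\sigma\}$'' is not the correct definition. To carry out your plan you would have to actually compute $P_L\subset M_{\mathbb R}$ and its lattice points, which you have not done.

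Second, your description of the $d_i$ is backwards. By construction $d_i$ arises from $\varphi_i$, which is essentially $x_1\cdots\hat{x}_i\cdots x_k$; this vanishes along $\bigcup_{j\neq i}H_j$, not along $H_i$. So ``$d_i$ cuts out the proper transform of $H_i$'' is incorrect, and the identification with ``the character attached to the vertex $-e_i$'' is doubly wrong since vertices of $P_L$ live in $M$, not $N$. You would need to redo this identification carefully to conclude that the products $d_1^{j_1}\cdots d_k^{j_k}$ hit distinct torus weights.

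For comparison, the paper avoids the polytope entirely. It uses the alternative expression $L\cong\mathcal O(k-1)\otimes\mathcal O(-(D_1+\cdots+(k-2)D_{k-2}))\otimes\Theta_1\cdots\Theta_k$ to embed $H^0(\tilde{\mathbb P},L^{\otimes t})\hookrightarrow H^0(\mathbb P^{k-1},\mathcal O(t(k-1)))$, identifies the source $\mathrm{Sym}^t(\bigoplus\Theta_i)$ with the degree-$t(k-1)$ piece of $\mathfrak a^t$ where $\mathfrak a=(x_1\cdots\hat x_i\cdots x_k)_i$, and then shows by an explicit monomial argument that both embed onto the same subspace $H^0(\mathbb P^{k-1},\tilde{\mathfrak a}^t\otimes\mathcal O(t(k-1)))$. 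This sidesteps any direct computation of $P_L$ and makes the role of the $d_i$ transparent, at the cost of a small combinatorial check on monomial ideals.
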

  \begin{proof}
    Again it suffices to prove statements when $X$ is a point. Consider the graded ideal
    \[
      \mathfrak a \subset \mathbb C[x_1 , \ldots, x_k]
    \]
    generated by monomials
    \[
      x_1 \cdots \hat{x_i} \cdots x_k, \quad i = 1 , \ldots, k.
    \]
    Thus the subscheme
    \[
      V_{\mathbb P^{k-1}}(    \mathfrak a) \subset \mathbb P^{k-1}
    \]
    is the codimension-$2$ toric strata.

    Let $[    \mathfrak a^t]_m$ denote the degree $m$ part of $\mathfrak a^t$,
    for any $t, m \geq
    1$. It is easy to see that $\mathfrak a^t$ is generated by monomials
    \[
      x^a : = x_1^{a_1} \cdots x_k^{a_k}, \quad a = (a_1 , \ldots, a_k)
    \]
    such that $\sum_i a_i = (k-1)t$ and $0 \leq a_i \leq t$ for $i = 1 , \ldots,
    k$. We
    rewrite it as
    \[
      \frac{x^{t}}{x^{b}},
    \]
    where $t = (t, \dots, t)$ and $b = (b_1 , \ldots, b_k)$ with $\sum_i b_i = t$
    and $b_i \geq  0, i = 1 , \ldots, t$.
    Thus we have
    \[
      [\mathfrak a^{t}]_{t(k-1)} = \mathrm{Sym}^{t} [\mathfrak a]_{k-1}.
    \]

    Then the map $d^{t}$ is identified with the $d^{t}$ in
    \[
      \begin{tikzcd}
        {[\mathfrak a^t]_{t(k-1)}} \arrow[r, dashed, "d^{t}"] \arrow[d, hook] &
        H^0(\tilde{\mathbb P},
        L^{ \otimes t}) \arrow[d, hook] \\
        H^0(\mathbb P^{k-1}, \mathcal O_{\mathbb P^{k-1}}(t(k-1))) \arrow[r, "
        \cong"] &
        H^0(\tilde{\mathbb P}, \mathcal O_{\mathbb P^{k-1}}(t(k-1)))
      \end{tikzcd}.
    \]
    Let $\tilde{\mathfrak a} \subset \mathcal O_{\mathbb P^{k-1}}$ be the ideal
    sheaf
    associated to $\mathfrak a$. It suffices to show that images of both vertical
    arrows are
    $H^0(\mathbb P^{k-1}, \tilde{\mathfrak a}^{t} \otimes \mathcal O_{\mathbb
      P^{k-1}}(t(k-1)))$.
    For the left vertical arrow, it
    amounts to check that for any homogeneous
    $f \in \mathbb C[x_1 , \ldots, x_k]$ of degree $t(k-1)$, any integer $N>0$
    such that
    \[
      x_i^Nf \in \mathfrak a^t, \quad i = 1 , \ldots, k,
    \]
    we have $f \in \mathfrak a^t$. Since $\mathfrak a^t$ is generated by
    monomials, it suffices
    to prove this for $f = x_1^{a_1} \cdots x_k^{a_k}$, $\sum_{i} a_i =
    t(k-1)$. Then $x_i^Nf \in \mathfrak a^t$ reads
    \[
      x_1^{a_1} \cdots x_i^{a_i + N} \cdots x_k^{a_k} = (x_1^{a^\prime_1} \cdots
      t
      x_i^{a^\prime_i} \cdots x_k^{a^\prime_k}) \cdot (x_1^{c_1} \cdots
      x_i^{c_i} \cdots x_k^{c_k})
    \]
    for some $\sum_j a_j^\prime = t(k-1)$, $0 \leq a^\prime_j \leq t$, $c_j \geq
    0$.
    Since $a_i^\prime \leq t$, $\sum_{j \neq i}a_i^\prime \geq t(k-2)$. Hence
    $\sum_{j \neq i} a_j \geq t(k-2)$. Hence $a_i \leq t$. Now we have shown
    that $a_i \leq t$ for all $i = 1 , \ldots, k$. We conclude that $f \in
    \mathfrak a^{t}$.

    We now consider the right vertical arrow. Suppose $f \in
    H^0(\mathbb P^{k-1}, \tilde{\mathfrak a}^{t} \otimes \mathcal O_{\mathbb
      P^{k-1}}(t(k-1)))$, then
    by the previous paragraph it comes from $[\mathfrak a^t]_{t(k-1)}$. Hence $f$
    lies
    in the image of the right vertical arrow by the commutativity of the diagram.
    Now suppose $f \in H^0(\mathbb P^{k-1}, \mathcal O_{\mathbb
      P^{k-1}}(t(k-1)))$ whose image in $H^0(\tilde{\mathbb P}, \mathcal
    O_{\mathbb P^{k-1}}(t(k-1)))$ vanishes along $t(D_1 + \cdots +
    (k-2)D_{k-2})$. We want to show that $f$ is in $[\mathfrak a^t]_{t(k-1)}$.
    Since the arrow is torus equivariant, without loss of generality we may
    assume
    that $f = x_1^{a_1} \cdots x_k^{a_k}$, $\sum_i a_i = t(k-1)$.
    Since the pullback of $f$ vanishes along $t(k-2)D_{k-2}$, we must have
    \[
      a_1 + \cdots + \hat a_{i} + \cdots + a_k \geq t(k-2)
    \]
    for each $i = 1 , \ldots, k$. Thus we must have $a_i \leq t$. Hence $f \in
    [\mathfrak a^t]_{t(k-1)}$. This completes the proof.
    % \Yang{Should I use something like $\mathfrak a$ instead of $I$, to avoid
    % confusion with the $I$-function?}
    % One can check directly that the ideal $I^\ell$ is
    % saturated, It is easy to see that image of the left vertical arrow is
    % $H^0(\mathbb P^{k-1}, \mathcal I^{\ell})$. Indeed, this follows from the
    % fact
    % that the scheme theoretic closure of
  \end{proof}
  \begin{proof}[Proof of Lemma~ \ref{lem:inflated-projective-bundle}]
    The $t \geq 0$ case follows immediately from Lemma~
    \ref{lem:vanising-higher-cohomology} and
    Lemma~ \ref{lem:basic-pushforward}.
    We focus on the $t<0$ case. Recall that by Lemma~
    \ref{lem:minimal-polynomial}, we have
    \[
      (L + (- \Theta_1)) \cdots (L + (- \Theta_k)) = 0.
    \]
    From the above equality, we can deduce
    \[
      (1 - \lambda^{-1}L)(\sum_{i = 0}^{k-1} \lambda^{-i} \alpha_i) = (1 +
      \lambda^{-1}(- \Theta_1)) \cdots
      (1 + \lambda^{-1}(- \Theta_k)),
    \]
    where $\lambda$ is a formal variable and
    \[
      \alpha_i = \sum_{j = 0}^{i}L^{\otimes j} \otimes \mathrm{Sym}^{i-j} \big(
      (- \Theta_1) \oplus \cdots \oplus (- \Theta_k) \big).
    \]

    Thus we have
    \[
      \frac{1}{1- \lambda^{-1}L} = \frac{\sum_{i = 0}^{k-1} \lambda^{-i} \alpha_i}{(1
        + \lambda^{-1}(- \Theta_1)) \cdots
        (1 + \lambda^{-1}(- \Theta_k))}.
    \]
    The formal expansions of both sides of the above equation around $\lambda = 0$
    gives that the following identity
    \[
      \sum_{j \geq1} \lambda^j(L^{-1})^{\otimes j} = - \big(\sum_{i = 0}^{k-1}
      \lambda^{-i} \alpha_i \big) \cdot \big(\sum_{j \geq1} \lambda^j
      \sum_{\substack{j_1 + \cdots + j_k = j \\j_i>0}} (- \Theta_1^{-j_1} ) \otimes
      \cdots \otimes  (- \Theta_k^{-j_k}) \big)
    \]
    in $K^0(X)[[\lambda]]$.
    By
    Lemma~ \ref{lem:vanising-higher-cohomology} and
    Lemma~ \ref{lem:basic-pushforward}, we have
    \[
      p_*(L^{\otimes j}) = \mathrm{Sym}^j \big(
      \Theta_1 \oplus \cdots \oplus \Theta_k \big)
    \]
    for $j \geq0$.
    Hence $p_*(\alpha_i) = 0$ for $1 \leq i \leq k-1 $ and we have
    \[
      \sum_{j \geq1} \lambda^jp_*(L^{-1})^{\otimes j} = - \sum_{j \geq1} \lambda^j
      \sum_{\substack{j_1 + \cdots + j_k = j \\j_i>0}} (- \Theta_1^{-j_1} ) \otimes
      \cdots \otimes  (- \Theta_k^{-j_k}).
    \]
    We conclude the proof of the lemma by comparing the coefficients of $\lambda$
    in the above equation.
  \end{proof}

\bibliographystyle{amsplain.bst}
\bibliography{ref}
\end{document}